%
%

\documentclass{memo-l}


\usepackage{amsmath}
\usepackage{amsfonts}
\usepackage{amssymb}
\usepackage{amscd}
\usepackage{amsthm}
\usepackage{framed}
\usepackage{graphicx}
\usepackage{latexsym}
\usepackage[numbers]{natbib}  
\usepackage{multirow}
\usepackage{tikz}
\usetikzlibrary{positioning}
\usetikzlibrary{arrows}

\usepackage[inline]{enumitem}   
\makeatletter
\newcommand{\inlineitem}[1][]{%
\ifnum\enit@type=\tw@
    {\descriptionlabel{#1}}
  \hspace{\labelsep}%
\else
  \ifnum\enit@type=\z@
       \refstepcounter{\@listctr}\fi
    \quad\@itemlabel\hspace{\labelsep}%
\fi}
\makeatother

\newcommand{\CC}{\mathbb C}
\newcommand{\HH}{\mathbb H}

\newcommand{\NN}{\mathbb N}
\newcommand{\cD}{\mathcal D}
\newcommand{\cA}{\mathcal A}

\newcommand{\PP}{\mathbb P}
\newcommand{\QQ}{\mathbb Q}
\newcommand{\RR}{\mathbb R}
\newcommand{\ZZ}{\mathbb Z}

\newcommand{\SL}{\mathop{\mathrm {SL}}\nolimits}
\newcommand{\SO}{\mathop{\mathrm {SO}}\nolimits}
\newcommand{\Sp}{\mathop{\mathrm {Sp}}\nolimits}
\newcommand{\Orth}{\mathop{\null\mathrm {O}}\nolimits}

\newcommand{\im}{\mathop{\mathrm {Im}}\nolimits}
\newcommand{\rank}{\mathop{\mathrm {rk}}\nolimits}
\newcommand{\latt}[1]{{\langle{#1}\rangle}}
\newcommand{\ord}{\mathop{\mathrm {ord}}\nolimits}
\newcommand{\II}{\mathop{\mathrm {II}}\nolimits}
\def\Grit{\mathbf{G}}
\def\Borch{\mathbf{B}}

\def\dim{\operatorname{dim}}

\def\det{\operatorname{det}}
\def\w{\operatorname{w}}
\newcommand{\bQ}{\mathbf{Q}_\mathfrak{g}}
\newcommand{\bP}{\mathbf{P}_\mathfrak{g}}

\newenvironment{psmallmatrix}
  {\left(\begin{smallmatrix}}
{\end{smallmatrix}\right)}

\usepackage{hyperref}
\hypersetup{colorlinks,linkcolor={red},citecolor={blue},urlcolor={blue}}

\newtheorem{theorem}{Theorem}[chapter]
\newtheorem{lemma}[theorem]{Lemma}
\newtheorem{proposition}[theorem]{Proposition}
\newtheorem{corollary}[theorem]{Corollary}
\newtheorem{conjecture}[theorem]{Conjecture}

\theoremstyle{definition}
\newtheorem{definition}[theorem]{Definition}

\newtheorem{Notation}[theorem]{Notation}
\newtheorem{question}[theorem]{Question}
\newtheorem{Argument}[theorem]{Argument}
\newtheorem{remark}[theorem]{Remark}

\numberwithin{section}{chapter}
\numberwithin{equation}{chapter}
\numberwithin{table}{chapter}


\begin{document}

\frontmatter

\title{Hyperbolization of Affine Lie Algebras}


\author{Kaiwen Sun}
\address{School of Mathematical Sciences, University of Science and Technology of China, Hefei 230026, Anhui, China}
\email{kwsun@ustc.edu.cn}

\author{Haowu Wang}
\address{School of Mathematics and Statistics, Wuhan University, Wuhan 430072, Hubei, China}
\email{haowu.wangmath@whu.edu.cn}

\author{Brandon Williams}

\address{Institute for Mathematics, Heidelberg University, 69120 Heidelberg, Germany}

\email{bwilliams@mathi.uni-heidelberg.de}

\date{December 27, 2023}

\subjclass[2020]{11F27, 11F50, 11F55, 17B22, 17B65, 17B69}

\keywords{affine Lie algebras, hyperbolic Kac--Moody algebras, Borcherds--Kac--Moody algebras, vertex algebras, Weyl--Kac character formula, denominator identity, Jacobi forms, automorphic products}

\dedicatory{Dedicated to Valery Gritsenko on the occasion of his 70th birthday}

\begin{abstract}
In 1983, Feingold and Frenkel discovered a relation between Siegel modular forms of genus two and a rank-three hyperbolic Kac--Moody algebra extending the affine Lie algebra of type $A_1$. It inspires a problem to explore more general relations between affine Lie algebras, hyperbolic Kac--Moody algebras and modular forms. In this paper, we give an automorphic answer to this problem. We classify hyperbolic Borcherds--Kac--Moody superalgebras whose super-denominators define reflective automorphic products of singular weight on lattices of type $2U\oplus L$. As a consequence, we prove that there are exactly $81$ affine Lie algebras $\hat{\mathfrak{g}}$ which have extensions to hyperbolic BKM superalgebras for which the leading Fourier--Jacobi coefficients of super-denominators coincide with the denominators of $\hat{\mathfrak{g}}$. We find that $69$ of them appear in Schellekens' list of semi-simple $V_1$ structures of holomorphic CFT of central charge $24$, while $8$ of them correspond to the $\mathcal{N}=1$ structures of holomorphic SCFT of central charge $12$ composed of $24$ chiral fermions. The last $4$ cases are related to exceptional modular invariants from nontrivial automorphisms of fusion algebras. This clarifies the relationship of affine Lie algebras, vertex algebras and hyperbolic BKM superalgebras at the level of modular forms. 
\end{abstract}

\maketitle

\tableofcontents

\mainmatter
\chapter{Introduction}

Affine Lie algebras are the simplest class of infinite dimensional Kac--Moody Lie algebras, and they have numerous connections with other areas of mathematics and theoretical physics. The next simplest class of Kac--Moody algebras after the affine Lie algebras are the hyperbolic Lie algebras. In 1983, as an extension of the affine Lie algebra of type $A_1$, Feingold and Frenkel \cite{FF83} investigated the hyperbolic Kac--Moody algebra with Cartan matrix 
$$
\left( \begin{array}{ccc}
2 & -2 & 0 \\
-2 & 2 & -1 \\
0 & -1 & 2
\end{array} \right),
$$
and found that its characters are related to Siegel modular forms of genus 2 and even weight \cite{Igu62}. This motivates us to consider the modularity of the denominator function as Siegel modular forms, and suggests exploring more general relations between affine Lie algebras, hyperbolic Kac--Moody algebras and modular forms of several variables. 

In 1988, Borcherds \cite{Bor88} introduced generalized Kac--Moody algebras, now often called Borcherds--Kac--Moody or simply BKM algebras. These infinite-dimensional Lie algebras are also defined in terms of Chevalley--Serre generators and relations that are encoded in a generalized Cartan matrix, and they differ from Kac--Moody algebras mainly by allowing the diagonal entries of the Cartan matrix to be non-positive. In other words, simple roots are allowed to be imaginary, whereas simple roots in a Kac--Moody algebra are always real.
The best known example of a BKM algebra is the monster Lie algebra. In 1992, Borcherds \cite{Bor92} constructed this algebra as the BRST cohomology related to the monster vertex algebra \cite{Bor86, FLM88} by means of the no-ghost theorem from string theory. By considering the action of the monster group on the denominator identity of the monster Lie algebra, Borcherds proved the celebrated monstrous moonshine conjecture. Furthermore, he observed that the denominator functions of some BKM algebras are modular forms on orthogonal groups of signature $(l,2)$. In 1995 and 1998 Borcherds \cite{Bor95, Bor98} developed the theory of singular theta lift to construct modular forms for arithmetic subgroups of $\Orth(l,2)$ that have infinite product expansions. These are called automorphic products, or Borcherds products, and they are natural candidates for the denominator functions of BKM algebras. Similarly to affine Lie algebras, BKM algebras and automorphic products also have many applications in physics. For example, Harvey and Moore \cite{HV96, HV98} proposed that BKM algebras should play as the underlying organizing structure of BPS states in string compactifications; in particular, the denominators of BKM algebras might be generating functions for BPS states.

In 1996, Gritsenko and Nikulin \cite{GN96a} constructed an automorphic correction of the rank-three hyperbolic Lie algebra considered previously by Feingold and Frenkel. More precisely, they extended this hyperbolic Lie algebra to a (hyperbolic) BKM algebra by adding infinitely many imaginary simple roots, so that the denominator of the resulting BKM algebra is exactly the Igusa cusp form of weight $35$ on $\Sp_2(\ZZ)$ \cite[Theorem 3]{Igu64}. Later, they constructed automorphic corrections for other hyperbolic Lie algebras in a series of papers \cite{GN96b, GN98a, GN98, GN02, GN18}. A common feature of these corrections is that the resulting BKM algbera only has finitely many real simple roots, a Weyl chamber of finite volume and a Weyl vector of negative norm.  Moreover, its denominator usually defines a cuspidal automorphic product on $\Orth(l,2)$. These corrections extend affine Lie algebras in a nice way, but they have several features that are not preferred from our perspective: 
\begin{itemize}
\item[(1)] for the automorphic correction $\mathfrak{G}$ of an affine Lie algebra $\hat{\mathfrak{g}}$, the multiplicity of an imaginary root of $\hat{\mathfrak{g}}$ in $\mathfrak{G}$ is strictly greater than its multiplicity in $\hat{\mathfrak{g}}$;
\item[(2)] the set of imaginary simple roots is very complicated, although the set of real simple roots is easy to describe;
\item[(3)] it is not clear what the vertex algebra does, nor how to construct $ \mathfrak{G}$ naturally beyond simply listing generators and relations.
\end{itemize}

In this paper, we extend affine Lie algebras to (hyperbolic) BKM algebras in a different way. Certain affine Kac--Moody algebras $\hat{\mathfrak{g}}$ will be extended to BKM algebras $\mathcal{G}_\mathfrak{g}$ that have infinitely many real simple roots and that satisfy:
\begin{itemize}
\item[(a)]  for any root $\alpha$ of $\hat{\mathfrak{g}}$, the root multiplicities of $\alpha$ in $\hat{\mathfrak{g}}$ and $\mathcal{G}_\mathfrak{g}$ are the same, which ensures that $\mathcal{G}_\mathfrak{g}$ can be viewed as a graded module over $\hat{\mathfrak{g}}$ in some sense;
\item[(b)] the imaginary simple roots of $\mathcal{G}_\mathfrak{g}$ are negative integral multiples of the Weyl vector; 
\item[(c)] the Lie algebras $\hat{\mathfrak{g}}$ and $\mathcal{G}_\mathfrak{g}$ are closely related to some exceptional vertex algebras, and in many cases $\mathcal{G}_\mathfrak{g}$ have natural constructions with known symmetry groups inherited from  vertex algebras, like the monster Lie algebra. 
\end{itemize}
Our main results are about the classification and construction of such extensions, which are connected with various types of modular forms. We will show that there are exactly $81$ affine Lie algebras $\hat{\mathfrak{g}}$ that extend to hyperbolic BKM algebras in such a nice way. These extensions are related to three special types of vertex algebra, and they are called hyperbolizations of affine Lie algebras.  In the remainder of the introduction, we will explain the setup and state the main theorems.

\section{Denominators of affine Lie algebras and Jacobi forms}
Let $\mathfrak{g}$ be a finite-dimensional simple Lie algebra of rank $r$ and let $\Delta^+_\mathfrak{g}$ be a set of positive roots. The product side of the denominator identity of the associated affine Lie algebra $\hat{\mathfrak{g}}$ is the holomorphic function
$$
\vartheta_{\mathfrak{g}}(\tau,\mathfrak{z})=\eta(\tau)^r \prod_{\alpha\in \Delta_\mathfrak{g}^+}\frac{\vartheta(\tau, \latt{\alpha,\mathfrak{z}})}{\eta(\tau)},
$$
where $\eta$ and $\vartheta$ are the Dedekind eta function and the odd Jacobi theta function, respectively:
\begin{align*}
\eta(\tau)&=q^{\frac{1}{24}}\prod_{n=1}^\infty(1-q^n), \quad \tau\in \HH, \; q=e^{2\pi i\tau},\\
\vartheta(\tau,z)&=-q^{\frac{1}{8}}\zeta^{-\frac{1}{2}}\prod_{n=1}^\infty(1-q^{n-1}\zeta)(1-q^n\zeta^{-1})(1-q^n), \quad z\in \CC, \; \zeta= e^{2\pi iz}.
\end{align*}
The function $\vartheta_{\mathfrak{g}}$ is an example of lattice-index Jacobi forms (see, e.g., \cite{CG13, GSZ19}). Such Jacobi forms are generalizations of classical Jacobi forms introduced by Eichler and Zagier \cite{EZ85}.  Let $L$ be an even positive definite lattice. A Jacobi form of integral weight $k$ and index $L$ is a holomorphic function $\varphi : \HH \times (L \otimes \CC) \rightarrow \CC$ that is modular under $\SL_2(\ZZ)$ and doubly quasi-periodic, namely
\begin{align*}
\varphi \left( \frac{a\tau +b}{c\tau + d},\frac{\mathfrak{z}}{c\tau + d} 
\right)& = (c\tau + d)^k 
\exp{\left(i \pi t \frac{c(\mathfrak{z},\mathfrak{z})}{c 
\tau + d}\right)} \varphi ( \tau, \mathfrak{z} ), \quad A=\begin{psmallmatrix}
a & b \\
c & d
\end{psmallmatrix} \in \SL_2(\ZZ),\\
\varphi (\tau, \mathfrak{z}+ x \tau + y)&= 
\exp{\big(-i \pi t \big( (x,x)\tau +2(x,\mathfrak{z})\big)\big)} 
\varphi (\tau, \mathfrak{z} ), \quad x,y\in L,
\end{align*}
and whose Fourier expansion satisfies a boundary condition. The function $\vartheta_{\mathfrak{g}}$ is a Jacobi form of weight $r/2$ and index $P_\mathfrak{g}^\vee(h_\mathfrak{g}^\vee)$ for some character, where $P_\mathfrak{g}^\vee$ is the dual of the root lattice and $h_\mathfrak{g}^\vee$ is the dual Coxeter number. Note that Jacobi forms defined by an expression similar to $\vartheta_{\mathfrak{g}}$ are called theta blocks following Gritsenko--Skoruppa--Zagier \cite{GSZ19}.

\section{Automorphic products of singular weight}
A modular form of integral weight $k$ and trivial character for an arithmetic subgroup $\Gamma<\Orth(l,2)$ is a holomorphic function on the associated type IV symmetric domain which satisfies 
\begin{align*}
F(t\mathcal{Z})&=t^{-k} F(\mathcal{Z}), \quad t\in \CC^\times,\\
F(g\mathcal{Z})&=F(\mathcal{Z}),   \qquad g \in \Gamma. 
\end{align*}
Let $M$ be an even lattice of signature $(l,2)$.  The input into Borcherds' theta lift is a vector-valued modular form of weight $1-l/2$ with integral Fourier expansion for the Weil representation of $\SL_2(\ZZ)$ attached to the discriminant form $M'/M$, and the output is a meromorphic modular form for a certain subgroup of $\Orth(M)$ which has an infinite product expansion at any $0$-dimensional cusp and whose divisors are linear combinations of hyperplanes. 

Since the denominators of affine Lie algebras satisfy modularity, it is natural to focus on hyperbolic BKM algebras whose denominators are modular. Let $\mathcal{G}$ be a BKM algebra whose denominator function coincides with the Fourier expansion of an $\Orth(l,2)$-modular form $F$ at some $0$-dimensional cusp.  Since $F$ has an infinite product expansion, by Bruinier's converse theorem \cite{Bru02, Bru14} one expects that it can be constructed by the Borcherds lift. In this case, the roots and their multiplicities are encoded in the Fourier expansion of the input. When $F$ has singular weight, that is, weight $l/2-1$, the Fourier expansion is supported only on isotropic vectors, which often forces the imaginary simple roots of $\mathcal{G}$ to be negative integral multiples of the Weyl vector. Moreover, it is conjectured in this particular case that $\mathcal{G}$ can be constructed as the BRST cohomology related to some vertex algebra, similarly to the monster Lie algebra. This suggests focusing on BKM algebras whose denominators are automorphic products of singular weight. The zeros of $F$ that contain the cusp are actually hyperplanes orthogonal to real roots of $\mathcal{G}$, hence $F$ is anti-invariant under the reflections through these hyperplanes. It is natural to expect that $F$ is anti-invariant under all reflections associated with zeros of $F$. This has been proven by the last two named authors \cite{WW23}. It follows that $F$ is a reflective modular form. 

A non-constant modular form on $\Gamma < \Orth(M)$ is called reflective if it vanishes only on mirrors of reflections fixing the lattice $M$. Reflective modular forms were introduced in 1998 by Borcherds \cite{Bor95, Bor98} and Gritsenko--Nikulin \cite{GN98}, and their classification has been an active project for the past thirty years (see \cite{GN02, Bar03, Sch06, DHS15, Sch17, Ma17, Ma18, Dit19, OS19, Wan21, Wan21b, Wan22, DS22, Wan23a, Wan23b}), because they have nice applications to hyperbolic reflection groups \cite{Bor98, Bor00, GN98a, GHS07, GH14}, algebraic geometry \cite{Bor96, BKP98, GHS07, Ma18, Gri18} and free algebras of modular forms \cite{Wan21a, WW20c} in addition to infinite-dimensional Lie algebras. 

\section{Main results} 
BKM algebras whose denominator functions are reflective automorphic products of singular weight are exceptional. The main examples are the fake monster algebra \cite{Bor90} and their twists by the Conway group $\mathrm{Co}_0$ \cite{Bor92, Sch04, WW22}. There are conjecturally only finitely many such algebras and constructing and classifying them is an open problem.  Many partial results have been proved towards such a classification \cite{Bor90, Bor92, Nie02, Bar03, Sch00, Sch04, Sch06, Sch17, Dit19, WW22, DS22}. In this paper we contribute some new results in this direction. 

Let $U$ be an even unimodular lattice of signature $(1,1)$ and let $L$ be an even positive-definite lattice. The input of the Borcherds lift on $2U\oplus L$ can be identified with Jacobi forms of weight $0$ and index $L$. 
We will identify affine Lie algebras, which naturally extends to BKM algebras or superalgebras whose denominators or super-denominators are reflective Borcherds products of singular weight on lattices of type $2U\oplus L$. The setting is inspired by the following result. 

\begin{theorem}\label{MTH1}
Let $F$ be a reflective Borcherds product of singular weight on $2U\oplus L$ whose Jacobi form input has Fourier expansion
$$
\phi(\tau,\mathfrak{z}) = \sum_{n\in \ZZ} \sum_{\ell \in L'} f(n,\ell) q^n \zeta^\ell
$$
satisfying that $f(0,\ell)\geq 0$ for all $\ell\in L'$. 
If $L$ is the Leech lattice, then $F$ is the denominator of the fake monster algebra and $\phi$ is the full character of the Leech lattice vertex operator algebra. Otherwise, the set 
$$
\mathcal{R}=\{ \ell \in L' : \; \ell\neq 0, \; f(0,\ell) \neq 0 \}
$$
determines a finite-dimensional semi-simple Lie algebra 
$$
\mathfrak{g}=\bigoplus_{j=1}^s\mathfrak{g}_{j,k_j}
$$
with the same rank as $L$ such that the identity
\begin{equation}\label{MEQ}
C:= \frac{\dim \mathfrak{g}}{24} - a = \frac{h_j^\vee}{k_j}
\end{equation}
holds for any $1\leq j\leq s$ and such that the leading Fourier--Jacobi coefficient of $F$ at the $1$-dimensional cusp determined by $2U$ coincides with the denominator of the associated affine Lie algebra $\hat{\mathfrak{g}}$. For any $1\leq j\leq s$, $\mathfrak{g}_j$ is a simple ideal of $\mathfrak{g}$, $k_j$ is a positive integer indicating the level of $\mathfrak{g}_j$, and $h_j^\vee$ is the dual Coxeter number of $\mathfrak{g}_j$. The number $a$ equals $f(-1,0)$, which has to be $0$ or $1$. When $a=0$, $k_j>1$ for any $1\leq j\leq s$. The cases $a=0$ and $a=1$ are called symmetric and anti-symmetric, respectively. 
\end{theorem}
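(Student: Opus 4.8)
The plan is to extract all Lie-theoretic data from the single Fourier--Jacobi coefficient of $F$ that survives at leading order at the $1$-dimensional cusp coming from the splitting $2U\oplus L=U\oplus(U\oplus L)$, and to match it against the theta block $\vartheta_{\mathfrak{g}}$. Since $2U\oplus L$ has signature $(\rank L+2,2)$, the singular weight equals $\rank L/2$; as the weight of a Borcherds product is $f(0,0)/2$, the singular-weight hypothesis is exactly $f(0,0)=\rank L$. I would first invoke the converse theorem (Bruinier, together with \cite{WW23}) to write $F=\Borch(\phi)$ as an explicit infinite product on $2U\oplus L$ in Gritsenko coordinates $(\tau,\mathfrak{z},\sigma)$, with $s=e^{2\pi i\sigma}$:
\[
F=q^{A}\zeta^{\mathfrak{b}}s^{C}\prod_{(n,\ell,m)>0}\bigl(1-q^{n}\zeta^{\ell}s^{m}\bigr)^{f(nm,\ell)},
\]
where $(A,\mathfrak{b},C)$ is the Weyl vector. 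Taking the leading term in $s$ kills every factor with $m>0$ and replaces the remaining exponents by $f(0,\ell)$, so that the leading Fourier--Jacobi coefficient depends only on the numbers $f(0,\ell)$ defining $\mathcal{R}$ and on the value $f(0,0)=\rank L$, the latter producing a factor $\eta(\tau)^{\rank L}$.

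The heart of the argument is to identify this leading coefficient with an affine denominator. Being a Fourier--Jacobi coefficient of a holomorphic modular form of singular weight, it is a holomorphic Jacobi form of weight $\rank L/2$, hence a singular Jacobi form whose Fourier support is as constrained as possible. Comparing its product expansion with
\[
\vartheta_{\mathfrak{g}}(\tau,\mathfrak{z})=\eta(\tau)^{\rank L}\prod_{\alpha\in\Delta_{\mathfrak{g}}^{+}}\frac{\vartheta(\tau,\latt{\alpha,\mathfrak{z}})}{\eta(\tau)}
\]
term by term forces the set $\mathcal{R}\cup\{0\}$ to be the root system of a semisimple Lie algebra $\mathfrak{g}$, with $f(0,\ell)=1$ on each root; the positivity hypothesis $f(0,\ell)\geq0$ guarantees that these multiplicities are genuine root multiplicities. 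The mirrors of $F$ must span the quadratic space, which shows that $\mathcal{R}$ has full rank and hence $\rank\mathfrak{g}=\rank L$, and the orthogonal decomposition of $L$ into simple pieces reads off the simple ideals $\mathfrak{g}_{j}$ together with their levels $k_{j}$, recorded by the norms of the corresponding vectors of $\mathcal{R}$ inside $L$. This identifies the leading Fourier--Jacobi coefficient with the denominator of $\hat{\mathfrak{g}}$.

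The dimension formula \eqref{MEQ} then follows by comparing three scalar invariants of the leading coefficient $\vartheta_{\mathfrak{g}}$. A direct computation with the $\eta$- and $\vartheta$-factors (equivalently, the Freudenthal--de Vries strange formula applied to each $\mathfrak{g}_{j}$) gives its $q$-order, namely $A=\dim\mathfrak{g}/24$. Matching its index $P^{\vee}_{\mathfrak{g}}(h^{\vee}_{\mathfrak{g}})$ against the scaling of $L$ imposed at the cusp yields, factor by factor, the common value $C=h_{j}^{\vee}/k_{j}$. Finally, the explicit Weyl-vector formula for $\Borch(\phi)$ relates the two hyperbolic components of the Weyl vector by their difference $A-C=f(-1,0)=a$, so that $C=\dim\mathfrak{g}/24-a$, which is \eqref{MEQ}; in particular the constancy of $C$ in $j$ ties the levels of the simple factors together. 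That $a=f(-1,0)\in\{0,1\}$ is the reflectivity constraint on the deepest polar coefficient (hyperbolic norm $-2$), and $a=0\Rightarrow k_{j}>1$ follows by feeding $a=0$ into $C=h_{j}^{\vee}/k_{j}$ and using $f(0,\ell)\geq0$ to exclude the level-one solutions. The Leech alternative is the boundary case $\mathcal{R}=\varnothing$: the absence of roots together with singular weight, reflectivity and $f(0,\ell)\geq0$ forces $L$ to be the unique even lattice of rank $24$ without roots, and then $\phi$ is the Leech vertex algebra character and $F$ the fake monster denominator.

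\textbf{Main obstacle.} The technical core is the second step: showing that the singular Jacobi form arising as the leading coefficient, which a priori is only a product with unknown exponents $f(0,\ell)$, must in fact be an affine denominator. This requires simultaneously exploiting holomorphy of $F$ (integrality of, and the isotropic support of, all of its Fourier coefficients, which is the defining feature of singular weight), the reflectivity of the divisor, and the positivity $f(0,\ell)\geq0$, in order to force every multiplicity to be $0$ or $1$ and to make $\mathcal{R}$ close up into a full-rank root system. Controlling the interaction between distinct simple factors and between roots realized at different levels inside a single $L$ is where I expect the argument to be most delicate.
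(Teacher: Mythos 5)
Your proposal reproduces the numerical skeleton of the paper's argument correctly (the $q$-order $A=\dim\mathfrak{g}/24$ of the leading coefficient, the index-proportionality $\sum_{\ell\in\mathcal{R}}(\ell,\mathfrak{z})^2=2C(\mathfrak{z},\mathfrak{z})$ giving $C=h_j^\vee/k_j$ factor by factor, and the relation $A-C=f(-1,0)$, which in the paper is \cite[Proposition 2.6]{Gri18} together with \cite[Lemma 2.1]{Wan23a} controlling the full principal part). But the step you yourself flag as the ``main obstacle'' is a genuine gap, and it is the actual content of the theorem: nothing in an $s$-expansion at the $1$-dimensional cusp forces the generalized theta block $\eta^{\rank L}\prod_{\ell>0}(\vartheta(\tau,(\ell,\mathfrak{z}))/\eta)^{f(0,\ell)}$ to close up into an affine denominator, so ``comparing term by term with $\vartheta_\mathfrak{g}$'' is circular. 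The paper does not work cusp-locally here. It first invokes \cite[Theorem 1.2]{WW23}: a singular-weight holomorphic Borcherds product has only \emph{simple} zeros and is \emph{anti-invariant} under the reflections attached to them; simplicity of zeros is what pins $f(0,\ell)\in\{0,1\}$ and $f(-1,0)\in\{0,1\}$ (your positivity hypothesis alone gives neither), and also rules out $a\ell\in\mathcal{R}$ for $a>1$. Then, for $\ell_1,\ell_2\in\mathcal{R}$, reflectivity forces the reflection in $(0,0,\ell_1,1,0)$ to fix $2U\oplus L$, whence $\sigma_{\ell_1}(\ell_2)\in L'$ and $2(\ell_1,\ell_2)/(\ell_1,\ell_1)\in\ZZ$, and the Eichler criterion transports the reflected divisor to the standard vector $(0,0,\sigma_{\ell_1}(\ell_2),1,0)$, so anti-invariance yields $f(0,\sigma_{\ell_1}(\ell_2))=1$: the set $\mathcal{R}$ is stable under its own reflections and hence a rescaled root system. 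This global use of $\widetilde{\Orth}^+(2U\oplus L)$ is the missing idea. (Your cusp-local route could in principle be salvaged by citing the classification of singular-weight holomorphic theta blocks --- every such theta block is an affine denominator, the result of \cite{Wan23-star} quoted in the paper's remarks --- but you neither cite nor prove it. Similarly, your claim that ``the mirrors of $F$ must span'' is not a general property of reflective forms; full rank of $\mathcal{R}$ comes from the identity $\sum_{\ell\in\mathcal{R}}(\ell,\mathfrak{z})^2=2C(\mathfrak{z},\mathfrak{z})$ with $C>0$.)

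A second, concrete error: your derivation of ``$a=0\Rightarrow k_j>1$'' by ``feeding $a=0$ into $C=h_j^\vee/k_j$'' fails, because the numerical equation with $a=0$ does admit level-one solutions --- for instance $\mathfrak{g}=A_{1,1}^{16}$ has $\dim\mathfrak{g}/24=2=h^\vee/k$ with $k=1$. The correct argument is lattice-geometric: by the Eichler criterion all primitive norm-$2$ vectors of $2U\oplus L$ lie in a single $\widetilde{\Orth}^+$-orbit, so in the symmetric case, where $F$ does not vanish on $\lambda^\perp$ for $\lambda\in U$ with $\lambda^2=2$, no $\ell\in\mathcal{R}$ can have norm $2$; since the long roots of $\mathcal{R}_j(1/k_j)$ have norm $2/k_j$, this forces $k_j\geq 2$. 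Finally, note that $A-C=f(-1,0)$ is not ``the explicit Weyl-vector formula'': $A$ and $C$ are a priori independent sums over the $f(0,\ell)$, and the identity is a nontrivial modularity constraint on weight-$0$ Jacobi forms whose principal part has first been shown (via reflectivity, since $(\ell,\ell)-2n>2$ for all $n<0$ unless $(n,\ell)=(-1,0)$) to reduce to the single term $f(-1,0)q^{-1}$.
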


If the Fourier expansion of $F$ defines the (super)-denominator of a BKM (super)-algebra $\mathcal{G}$, then the (super)-denominator has the form
$$
e^\rho \prod_{\alpha>0} \big( 1 - e^{-\alpha} \big)^{f(nm,\ell)},
$$
where $\rho$ is the Weyl vector of $F$, and where $(n,\ell,m)$ are coordinates of positive roots $\alpha \in U\oplus L'$ with $n\in\ZZ$, $m\in \NN$, $\ell\in L'$ and $\alpha^2=\ell^2-2nm$. The above $\hat{\mathfrak{g}}$ is embedded into $\mathcal{G}$ as the sum of the root spaces associated with roots of type $\pm (n,\ell, 0)$. In this way, $\mathcal{G}$ can be regarded as a graded module over $\hat{\mathfrak{g}}$ graded by $m\in \NN$. This leads us to define $\mathcal{G}$ as a \textit{hyperbolization} of $\hat{\mathfrak{g}}$ and $F$ as a \textit{hyperbolization} of the denominator of $\hat{\mathfrak{g}}$.  It turns out that there are only $81$ affine Lie algebras with a hyperbolization:

\begin{theorem}\label{MTH2}
There are $81$ possibilities for the semi-simple Lie algebra $\mathfrak{g}$ in Theorem \ref{MTH1} and they fall into three categories:
\begin{enumerate}
\item $69$ make up Schellekens' list of semi-simple $V_1$ structures of holomorphic vertex operator algebras of central charge $24$;
\item $8$ correspond to the $\mathcal{N}=1$ structures of holomorphic vertex operator superalgebras $F_{24}$ of central charge $12$ composed of $24$ fermions;
\item The remaining $4$ cases $A_{1,16}$, $A_{1,8}^2$, $A_{1,4}^4$ and $A_{2,9}$ possess an exceptional modular invariant that comes from a nontrivial automorphism of the fusion algebra. 
\end{enumerate}
\end{theorem}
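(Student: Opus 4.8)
The plan is to reduce the classification to the single Diophantine constraint \eqref{MEQ} supplied by Theorem \ref{MTH1}, solve it completely, and then decide which of the finitely many numerical solutions are actually realized by an input Jacobi form. First I would record the reformulation of \eqref{MEQ} in terms of central charges. Writing $c_j=k_j\dim\mathfrak{g}_j/(k_j+h_j^\vee)$ for the central charge of the affine vertex algebra $\mathfrak{g}_{j,k_j}$ and substituting $k_j=h_j^\vee/C$ from \eqref{MEQ} gives
\[
c_j=\frac{\dim\mathfrak{g}_j}{1+C},\qquad \sum_{j=1}^s c_j=\frac{\dim\mathfrak{g}}{1+C}=\frac{24(C+a)}{1+C}.
\]
In the anti-symmetric case $a=1$ this sum is exactly $24$, so the affine part has total central charge $24$ and the data $(\mathfrak{g},\{k_j\})$ is precisely what Schellekens' character constraints impose on the weight-one space of a holomorphic vertex operator algebra of central charge $24$; in the symmetric case $a=0$ the sum is $24C/(1+C)$, which governs the free-fermion side.

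Next I would establish finiteness, so that the search is a terminating computation. Because $F$ has singular weight $\rank(L)/2=\rank(\mathfrak{g})/2$ and is reflective, I would first show that $\rank(L)$ is bounded by $24$ (the Leech lattice realizing the extreme case, which Theorem \ref{MTH1} has already separated off). Hence $\rank(\mathfrak{g})\le 24$, there are only finitely many semisimple types to consider, their dual Coxeter numbers $h_j^\vee$ are bounded, and \eqref{MEQ} together with $k_j=h_j^\vee/C\in\ZZ_{>0}$ leaves finitely many admissible pairs $(C,a)$. For each such pair I would list the simple factors whose dual Coxeter number is divisible by $C$, and then solve $\sum_j\dim\mathfrak{g}_j=24(C+a)$ subject to $\rank(\mathfrak{g})=\rank(L)$; this is the bulk of the bookkeeping and produces the candidate list.

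The substantive step is to decide realizability: for each numerical candidate one must determine whether there exists a weight-$0$ index-$L$ Jacobi form $\phi=\sum f(n,\ell)q^n\zeta^\ell$ with $f(0,\ell)\ge 0$, $f(-1,0)=a$, whose $q^0$-part reproduces the root system of $\mathfrak{g}$ and whose Borcherds lift has singular weight with leading Fourier--Jacobi coefficient $\vartheta_{\mathfrak{g}}$. I would organize this by category. For $a=1$ the required $\phi$ is, up to normalization, the graded character of a holomorphic vertex operator algebra of central charge $24$ whose weight-one Lie algebra is $\mathfrak{g}$; the realizable candidates are therefore exactly the entries of Schellekens' list with semisimple weight-one space of full rank $24$, of which there are $69$ (the cases $V_1=0$ and $V_1$ abelian being excluded, the latter coinciding with the Leech case). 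For $a=0$ the analogous analysis identifies the input with the $\mathcal N=1$ structures of the central-charge-$12$ holomorphic super vertex algebra $F_{24}$ built from $24$ free fermions, yielding $8$ cases. The remaining candidates are realized not by a diagonal character but by a non-diagonal modular invariant of the affine characters coming from a nontrivial automorphism of the fusion algebra, which one constructs explicitly and checks to assemble into a valid singular-weight input; these are the four algebras $A_{1,16}$, $A_{1,8}^2$, $A_{1,4}^4$ and $A_{2,9}$.

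The hard part will be the exclusion and construction steps rather than the enumeration. On one hand, \eqref{MEQ} is only a necessary condition, and it admits numerical solutions that cannot support any consistent $\phi$; ruling these out requires the finer integrality and modular-invariance constraints on the full Jacobi form together with the matching to the known classifications. On the other hand, the four exceptional cases are the most delicate, since they fall outside the standard holomorphic (super) vertex algebra framework and must be produced by hand from the Cappelli--Itzykson--Zuber-type classification of modular invariants of affine Lie algebras; verifying that each such invariant gives a genuine weight-$0$ index-$L$ Jacobi form of the required shape, and that no further exceptional invariants intervene, is where the main effort lies. Summing the three categories gives $69+8+4=81$.
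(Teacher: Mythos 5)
Your reduction to Equation \eqref{MEQ}, the central-charge reformulation, and the three realization mechanisms for the $81$ surviving cases all match the paper (Proposition \ref{cor:solutions} gives $221$ anti-symmetric and $17$ symmetric solutions; Theorems \ref{th:VOA}, \ref{th:SVOA} and \ref{th:extra} supply the constructions). The genuine gap is in the elimination step. You assert that for $a=1$ ``the required $\phi$ is, up to normalization, the graded character of a holomorphic vertex operator algebra of central charge $24$,'' so that realizability reduces to Schellekens' classification and the $152$ spurious solutions die for free. That implication is not available: the hypotheses of Theorem \ref{MTH1} only produce a weakly holomorphic weight-$0$ Jacobi form whose $q^0$-term is non-negative; nothing forces \emph{all} Fourier coefficients to be non-negative, nor the input to decompose as an $\NN$-linear combination of affine characters with integral conformal weights — and these are precisely the constraints Schellekens used to eliminate his $152$ extra solutions. (The failure mode is real: in the symmetric case the actual inputs $\phi_\mathfrak{g}$ have negative coefficients.) The paper flags exactly this point — ``We also have the same extra solutions to eliminate, but we have to use a completely different approach'' — and its mechanism is lattice-theoretic, not representation-theoretic: from the bounds $\bQ < L < \bP$ and the integrality of $L(C)$ one passes to an even overlattice $K$ of $L$ (reflective products lift to overlattices by the arrow-operator argument, cf.\ Lemma \ref{lem:sym-overlattice}) containing a \emph{forbidden component}, i.e.\ an even lattice $K_1$ such that $2U\oplus K_1$ carries no reflective Borcherds product of \emph{any} weight, verified via Borcherds' obstruction criterion against cusp forms for the dual Weil representation. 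Chapters \ref{sec:exclude-anti-root} and \ref{sec:exclude-root} carry this out with $78$ forbidden components. Your appeal to ``finer integrality and modular-invariance constraints together with the matching to the known classifications'' is circular at this point — Schellekens classified VOAs, not Jacobi-form inputs of reflective products — and in particular your proposal has no mechanism to kill the five extraneous symmetric solutions $A_{2,4}B_{2,4}$, $A_{2,2}D_{4,4}$, $A_{2,2}^2B_{2,2}^2$, $A_{4,2}C_{4,2}$, $A_{6,2}B_{4,2}$, which satisfy \eqref{symC} but admit no product.

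Two smaller points. First, even on the construction side, identifying $\phi$ with $\chi_V$ is not the end of the argument: the index lattice must be H\"ohn's orbit lattice $L_\mathfrak{g}$ (not $\bQ$), and reflectivity of $\Borch(\chi_V)$ on $2U\oplus L_\mathfrak{g}$ is nontrivial — the paper proves it either by matching with twisted denominators of the fake monster algebra (when order equals level) or by explicit computation of singular Fourier coefficients (Lemmas \ref{lem:B12}, \ref{lem:F4A2}, \ref{lem:C4}). Second, your finiteness step via a rank bound of $24$ is not how the paper proceeds: the solutions of \eqref{MEQ} of rank larger than $18$ are excluded by citing the general result that singular-weight reflective products cannot exist there (\cite[Theorem 1.5]{WW23}), while the admissible ranks turn out to be $\{1,2,4,6,8,10,12,16,24\}$; a self-contained bound of the kind you sketch is not established in your proposal either.
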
   

Case (1) is anti-symmetric, while Cases (2) and (3) are symmetric. Schellekens' list \cite{Sch93} was established using the solutions of Equation \eqref{MEQ} with $a=1$. Holomorphic vertex operator superalgebras of central charge $12$ were classified by Creutzig, Duncan
and Riedler \cite{CDR18}, and the $\mathcal{N}=1$ structures of $F_{24}$ were determined in \cite{F24}, corresponding to solutions of Equation \eqref{MEQ} with $a=0$ and $C=1$. The exceptional modular invariants mentioned in (3) were discovered around the 1990s by Moore and Seiberg \cite{Moore:1988ss}, Verstegen \cite{Verstegen:1990my} and Gannon \cite{Gannon:1994sp}. The $4$ exotic cases satisfy Equation \eqref{MEQ} with $a=0$ and $C<1$. 

We now present hyperbolizations of these affine Lie algebras. 

\begin{theorem}\label{MTH3}
For any $\mathfrak{g}$ in Theorem \ref{MTH2} there exists a singular-weight reflective Borcherds product $\Psi_\mathfrak{g}$ on some lattice $2U\oplus L_\mathfrak{g}$ whose leading Fourier--Jacobi coefficient equals the denominator of $\hat{\mathfrak{g}}$. Moreover, the Jacobi form input $\phi_\mathfrak{g}$ can be expressed as a $\ZZ$-linear combination of full characters of the affine vertex operator algebra generated by $\hat{\mathfrak{g}}$. 
\end{theorem}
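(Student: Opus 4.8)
The plan is to reverse the analysis of Theorem \ref{MTH1}: rather than extract $\mathfrak{g}$ from a given form, for each $\mathfrak{g}$ in Theorem \ref{MTH2} I would write down an explicit weight-$0$ index-$L_\mathfrak{g}$ weak Jacobi form $\phi_\mathfrak{g}$ as a $\ZZ$-linear combination of affine characters, take its Borcherds lift $\Psi_\mathfrak{g}$, and then verify by general principles that $\Psi_\mathfrak{g}$ has singular weight, is reflective, and has the prescribed leading Fourier--Jacobi coefficient. Here $L_\mathfrak{g}$ is the positive-definite lattice whose roots realize $\mathfrak{g}=\bigoplus_j \mathfrak{g}_{j,k_j}$ at the levels $k_j$ (a rescaled orthogonal sum of the root lattices of the $\mathfrak{g}_j$ determined by the $k_j$), so that $\rank L_\mathfrak{g}=r$.

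First I would record the two structural facts that make the verification uniform. Because $2U$ is unimodular, the Weil representation of $2U\oplus L_\mathfrak{g}$ is that of $L_\mathfrak{g}$, so a weight-$0$ index-$L_\mathfrak{g}$ weak Jacobi form is exactly an admissible input for the lift. The weight of the lift equals $\tfrac12 f(0,0)$, so requiring $f(0,0)=r$ produces precisely the singular weight $(r+2)/2-1=r/2$. Secondly, the first nonvanishing Fourier--Jacobi coefficient of $\Psi_\mathfrak{g}$ at the cusp attached to $2U$ is, up to a constant, the theta block
$$
\eta(\tau)^{f(0,0)}\prod_{\ell>0}\left(\frac{\vartheta(\tau,\latt{\ell,\mathfrak{z}})}{\eta(\tau)}\right)^{f(0,\ell)},
$$
the product running over a positive system for $\mathcal{R}$. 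Comparing with the definition of $\vartheta_\mathfrak{g}$, this coefficient is the affine denominator as soon as $f(0,0)=r$ and $f(0,\ell)$ equals the multiplicity (namely $1$) of $\ell$ as a root of $\mathfrak{g}$. Thus the whole theorem reduces to exhibiting, for each $\mathfrak{g}$, a weight-$0$ index-$L_\mathfrak{g}$ weak Jacobi form whose $q^0$-term is $r+\sum_{\alpha\in\Delta_\mathfrak{g}}\zeta^\alpha$, whose $q^{-1}$-term is the constant $a\in\{0,1\}$, and whose lift is holomorphic with divisor supported only on root mirrors.

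The construction of $\phi_\mathfrak{g}$ proceeds by category. For the $69$ cases of Theorem \ref{MTH2}(1) I would take $\phi_\mathfrak{g}$ to be the refined graded character $Z_V(\tau,\mathfrak{z})=\mathrm{tr}_V\, q^{L_0-1}\zeta^{h}$ of the holomorphic $c=24$ vertex operator algebra $V$ with $V_1=\mathfrak{g}$; its branching into integrable level-$k_j$ modules exhibits it as a $\ZZ_{\geq0}$-combination of affine characters, its $q^{-1}$-term is $\dim V_0=1=a$, and its $q^0$-term reads off $V_1=\mathfrak{g}$, giving exactly $r+\sum_\alpha\zeta^\alpha$. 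For the $8$ cases of (2) I would instead use the super-character of the $c=12$ holomorphic SVOA $F_{24}$ carrying the relevant $\mathcal{N}=1$ structure, which is symmetric ($a=0$); for the $4$ exceptional cases of (3) I would take the $\ZZ$-linear combination of level-$k_j$ characters dictated by the exceptional modular invariant coming from the fusion-algebra automorphism. In all three families, weight-$0$ modularity and the index follow from the Kac--Peterson transformation theory of affine characters together with the $\SL_2(\ZZ)$-invariance of the chosen combination.

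The main obstacle is the final verification that each $\Psi_\mathfrak{g}$ is a genuine holomorphic reflective form of singular weight, i.e.\ that the full Fourier expansion of $\phi_\mathfrak{g}$ --- not merely its $q^0$ and $q^{-1}$ parts --- has nonnegative integral coefficients and is supported, after the lift, only on isotropic classes and root mirrors. For families (1) and (2) this is guaranteed by the ambient (S)VOA: holomorphy of the character controls the principal part, positivity of graded dimensions gives the nonnegativity, and the singular weight forces the isotropic support as in the discussion preceding Theorem \ref{MTH1}. The delicate family is (3), where there is no ambient holomorphic vertex algebra: here I would verify modular invariance of the combination directly from the nontrivial automorphism of the fusion algebra, and check case by case (these are $A_{1,16}$, $A_{1,8}^2$, $A_{1,4}^4$ and $A_{2,9}$) that the resulting $\phi_\mathfrak{g}$ has the required positivity and that its lift is holomorphic with the reflective divisor of singular weight $r/2$. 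This case-by-case confirmation for the four exotic invariants is where I expect the real work to lie.
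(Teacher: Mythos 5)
Your skeleton matches the paper's (take $\phi_\mathfrak{g}$ to be a character combination of an associated vertex algebra, lift it, and read off the leading Fourier--Jacobi coefficient as a theta block), but there are two genuine gaps, and they are exactly where the paper's real work lies.

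First, your choice of lattice is wrong. You take $L_\mathfrak{g}$ to be the rescaled orthogonal sum of root lattices determined by the levels, i.e.\ essentially $\mathbf{Q}_\mathfrak{g}=\oplus_j Q_j^\vee(k_j)$. The lift of $\chi_V$ is indeed a holomorphic Borcherds product of singular weight on $2U\oplus\mathbf{Q}_\mathfrak{g}$, but it is in general \emph{not reflective} there: the paper's Lemma on $C_{4,10}$ exhibits a singular Weyl orbit whose vector has order $10$ modulo $\mathbf{Q}_\mathfrak{g}=\ZZ^4(20)$ but order $5$ modulo the larger lattice $D_4'(20)$, so the corresponding divisor is reflective only after passing to the overlattice. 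The correct $L_\mathfrak{g}$ is H\"ohn's orbit lattice in the anti-symmetric cases, the maximal even sublattice of the coweight lattice $\mathbf{P}_\mathfrak{g}$ in the symmetric $C=1$ cases, and $\mathbf{P}_\mathfrak{g}$ itself in the four exotic cases; and the fact that $\chi_V$ is even a Jacobi form of the larger index $L_\mathfrak{g}$ is itself nontrivial, resting on H\"ohn's simple-current decomposition $V\cong\bigoplus_{\alpha} W_{\iota(\alpha)}\otimes V_{\alpha+L_\mathfrak{g}}$ and the resulting theta decomposition.

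Second, your claim that for families (1) and (2) reflectivity ``is guaranteed by the ambient (S)VOA'' is false. Positivity and integrality of graded dimensions give holomorphy of the product, and $f(0,0)=r$ gives singular weight; the singular-weight support condition constrains the Fourier expansion of the \emph{output} on isotropic vectors, but says nothing about the \emph{divisor}. Reflectivity requires that every nonzero singular coefficient $f(n,\ell)$ with $2n<(\ell,\ell)$ satisfy $(\ell,\ell)-2n=2/t$ with $t\ell\in L_\mathfrak{g}$ for some positive integer $t$, a condition on conformal weights that no general VOA axiom delivers. The paper proves it by identifying $\Borch(\chi_V)$ with the $g$-twisted denominator $\Phi_g$ of the fake monster algebra (invoking the reflectivity theorem of \cite{WW22}) for the eight $\mathrm{Co}_0$-classes with order equal to level, and by explicit Fourier-coefficient computations, decomposed into Weyl orbits up to $q^{\hat\delta_L}$, for the three remaining classes ($B_{12,2}$, $A_{2,2}F_{4,6}$, $C_{4,10}$); the symmetric $C=1$ cases similarly rely on \cite{DW21}. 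Ironically, the exotic family (3), which you flagged as the hard part, is the easiest in the paper: there $\delta_L=2$, so all singular coefficients already appear in the $q^0$-term and reflectivity is immediate from the root-system shape of that term.
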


The construction will be briefly summarized here. If $\mathfrak{g}$ is in Schellekens' list, then $L_\mathfrak{g}$ is the orbit lattice in H\"{o}hn's construction \cite{Hoh17} of the holomorphic VOA of central charge $24$ with $V_1=\mathfrak{g}$, and $\phi_\mathfrak{g}$ is the full character of the VOA. If $\mathfrak{g}$ is of symmetric type, $L_\mathfrak{g}$ is the maximal even sublattice of the coweight lattice of $\mathfrak{g}$. If $\mathfrak{g}$ determines an $\mathcal{N}=1$ structure of $F_{24}$, then the Jacobi form input can be expressed in terms of characters of $F_{24}$ as
$$
\phi_{\mathfrak{g}}=(\chi_{\mathrm{NS}} - \chi_{\widetilde{\mathrm{NS}}} - \chi_{\mathrm{R}})/2. 
$$
Finally, we will explain the relation between Jacobi form inputs and exceptional modular invariants for the remaining four $\mathfrak{g}$. The Jacobi form input for $\mathfrak{g}=A_{1,16}$ can be written in terms of affine characters as  
$$
\phi_{A_{1,16}}=\chi^{A_{1,16}}_{2,\frac19}+\chi^{A_{1,16}}_{14,\frac{28}{9}}-\chi^{A_{1,16}}_{8,\frac{10}{9}},
$$
and we find that the difference between the simple current modular invariant and the exceptional modular invariant \cite{Moore:1988ss} is given by $|\phi_{A_{1,16}}|^2$. Similar relations hold for the other three $\mathfrak{g}$. Note that these $L_\mathfrak{g}$ are chosen so that the resulting BKM superalgebra has root lattice $U\oplus L_\mathfrak{g}'$. 

Clearly, the Borcherds products $\Psi_\mathfrak{g}$ in Theorem \ref{MTH3} are closely related to vertex algebras. We therefore expect that the BRST cohomology related to these vertex algebras defines the BKM (super)-algebras with $\Psi_\mathfrak{g}$ as the (super)-denominators. This type of realization has been achieved in \cite{Bor90, HS03, CKS07, HS14, Mol21, DS22, F24} under some technical assumptions for $\mathfrak{g}$ from Schellekens' list and the $\mathcal{N}=1$ structure of $F_{24}$.  However, such a realization is completely open for $\mathfrak{g}$ related to the four exceptional modular invariants.

Affine Lie algebras, vertex algebras and BKM (super)-algebras are therefore closely connected from the point of view of the attached modular forms. The connections are illustrated in Figure \ref{fig:hyperbolization}.

\begin{figure}[h]
\centering
\begin{tikzpicture}[node distance=2.2cm, auto]
\node (A) {Affine Lie algebra};
\node (AA) [right of=A] {$ $};
\node (B) [right of=AA] {Vertex algebra};
\node (AAA) [right of=B] {$ $};
\node (C) [right of=AAA] {BKM superalgebra};
\node (D) [below of=A] {Theta block};
\node (E) [below of=B] {Weight $0$ Jacobi form};
\node (F) [below of=C] {Borcherds product};
\draw[->] (A) to node {} (B);
\draw[->] (B) to node {\small{BRST}} (C);
\draw[->] (D) to node {} (E);
\draw[->] (E) to node {Lift} (F);
\draw[->] (A) to node {\small{Denominator}} (D);
\draw[->] (B) to node {\small{Character}} (E);
\draw[->] (C) to node {\small{Denominator}} (F);
\draw[->, bend left] (F) to node {\small{Leading FJ coefficient}} (D);
\draw[->, bend left] (A) to node {{\color{purple}Hyperbolization}} (C);
\draw[blue, ->] (A) to node {\small{Character}} (E);
\end{tikzpicture}
\caption{Hyperbolization of affine Lie algebras}
\label{fig:hyperbolization}
\end{figure}
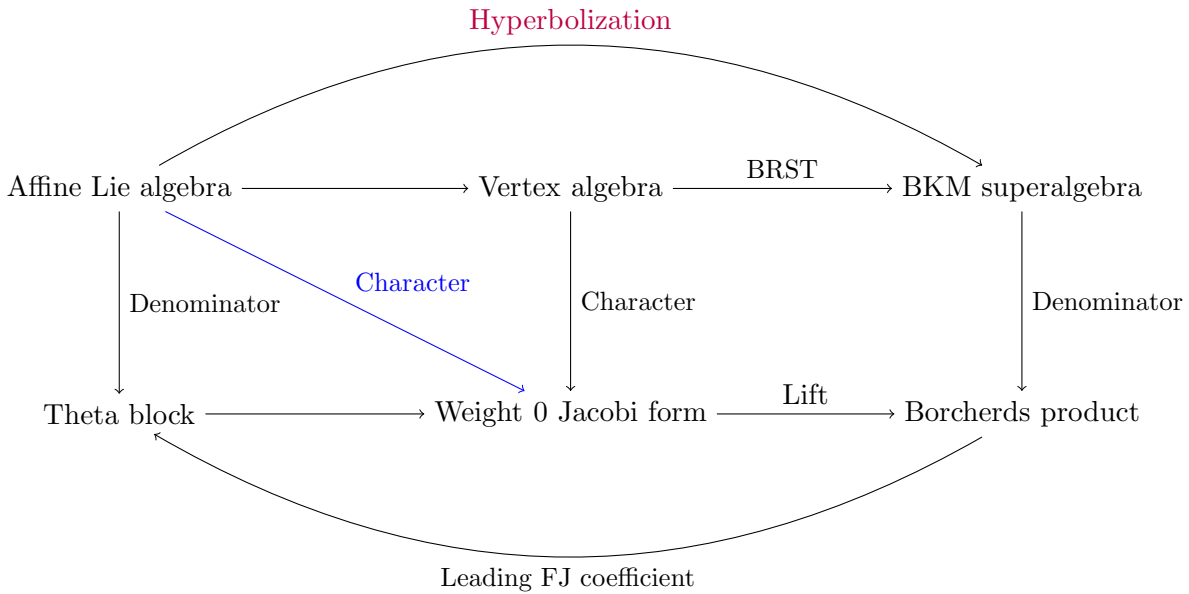

There are some remarks related to the main theorems above. The monster Lie algebra can be regarded as a hyperbolization of the trivial Lie algebra, and the associated vertex algebra is the monster vertex operator algebra. This corresponds to the degenerate case $L=0$ of Theorem \ref{MTH1}. The fake monster Lie algebra is a hyperbolization of the abelian Lie algebra of dimension $24$, and the associated vertex algebra is the Leech lattice vertex operator algebra.  This corresponds to the special case of Theorem \ref{MTH1} where $L$ is equal to the Leech lattice. These two cases are in some sense complementary to Theorem \ref{MTH2}. 
If the $q^0$-term of $\phi$ in Theorem \ref{MTH1} has negative Fourier coefficients, then $\mathcal{R}$ determines a semi-simple Lie superalgebra and the corresponding BKM superalgebra will have odd real roots. We will consider hyperbolizations of affine Lie superalgebras in a separate paper. In addition, it is worthwhile to extend these theorems to reflective Borcherds products of singular weight on lattices of type $U(N)\oplus U\oplus L$. This will cover some interesting BKM superalgebras, including the fake monster Lie superalgebra (see Remark \ref{rem:fE8} and Remark \ref{rem:Conway-SCFT}).

\section{Outline of the proof}
The proof of Theorem \ref{MTH1} relies on some previous results. We know from \cite{WW23} that singular-weight reflective Borcherds products have only simple zeros and are anti-invariant under reflections associated with their zeros. Therefore, the nonzero coefficients $f(0,\ell)$ of the Jacobi form input have to be $1$ if $\ell \neq 0$, and $f(0,0)$ equals the rank of $\mathfrak{g}$ because the Borcherds product has singular weight. The theorem follows by extending an argument used by the second named author \cite{Wan19, Wan21a, Wan23a} to classify reflective modular forms. Theorem \ref{MTH1} further shows that the central charge of the affine vertex operator algebra generated by $\hat{\mathfrak{g}}$ is 
$$
c_\mathfrak{g}= \frac{24(C+a)}{C+1}.
$$
In particular, $c_\mathfrak{g}=24$ if $a=1$, and $c_\mathfrak{g}=12$ if $a=0$ and $C=1$. This motivates the three groupings in Theorem \ref{MTH2}. 

To prove Theorem \ref{MTH2} we first solve equations of type \eqref{MEQ}. 
In the anti-symmetric case, Equation \eqref{MEQ} was first derived by Schellekens \cite{Sch93} in the context of conformal field theories. Schellekens found $221$ solutions to this equation and eliminated $152$ of them to arrive at his list. We also have the same extra solutions to eliminate, but we have to use a completely different approach. In the symmetric case, there are $17$ solutions to Equation \eqref{MEQ} and we have to rule out $5$ of them. In the setting of Theorem \ref{MTH1}, we can prove that $L(C)$ is an integral lattice and that $L$ is bounded by
$$
\bQ < L < \bP,
$$
where $\bQ$ and $\bP$ are the coroot lattice and coweight lattice of $\mathfrak{g}$, respectively. On one hand, for every extraneous $\mathfrak{g}$, we will be able to find an even overlattice $K$ of $L$ for which there is no reflective Borcherds product on $2U\oplus K$ satisfying certain constraints. On the other hand, we prove that if $2U\oplus L$ has a singular-weight reflective Borcherds product then $2U\oplus K$ also has a reflective Borcherds product of the same type. Taken together, this allows us to rule out all $157$ extraneous solutions of Equation \eqref{MEQ}. 

We will now sketch the proof of Theorem \ref{MTH3}, beginning with the anti-symmetric case. Let $V$ be a holomorphic VOA of central charge $24$ with semi-simple $V_1=\mathfrak{g}$. The full character $\chi_V$ of $V$ is known to be a Jacobi form of weight $0$ and lattice index $\bQ$ with non-negative integral Fourier expansion \cite{Zhu96, Miy00, KM12}. This immediately implies that the singular theta lift of $\chi_V$, denoted $\Borch(\chi_V)$, defines a holomorphic Borcherds product of singular weight on $2U\oplus \bQ$. It remains to find an extension $L$ of $\bQ$ for which $\Borch(\chi_V)$ is reflective on $2U\oplus L$. We recognize that $L$ should be the H\"{o}hn' orbit lattice $L_\mathfrak{g}$. H\"{o}hn \cite{Hoh17, Lam19, BLS22} proposed a construction of $V$ as the simple current extension of the tensor product of the lattice VOA $V_{L_\mathfrak{g}}$ and a certain VOA $W_{\mathfrak{g}}$ of central charge $24-\rank(\mathfrak{g})$ with trivial weight-one subspace. This construction corresponds to the theta decomposition of $\chi_V$ as a Jacobi form of index $L_\mathfrak{g}$. It is possible to prove directly that $\Borch(\chi_V)$ is reflective on $2U\oplus L_\mathfrak{g}$ by computing the Fourier expansion of $\chi_V$; however, this is only feasible for certain specific $\mathfrak{g}$. We carry out the calculation for $\mathfrak{g}=B_{12,2}$, $A_{2,2}F_{4,6}$ and $C_{4,10}$, in which case $[L_\mathfrak{g}:\bQ]\leq 2$. 

To complete the proof for the remaining $\mathfrak{g}$, we relate $\Borch(\chi_V)$ to a twisted denominator of the fake monster algebra. By \cite{Hoh17, Lam19}, there exists a conjugacy class $[g]$ of the Conway group $\mathrm{Co}_0$ such that $W_\mathfrak{g}$ is isomorphic to the orbifold  $V_{\Lambda_g}^{\hat{g}}$, where $\Lambda_g$ is the coinvariant sublattice of the Leech lattice $\Lambda$. Moreover, $\Borch(\chi_V)$ for distinct $V$ define the same modular form on type IV symmetric domain if $V_1=\mathfrak{g}$ correspond to the same $[g]$. Combining this fact with our previous calculation of $\chi_V$, we prove that $\Borch(\chi_V)$ is reflective if $V_1$ corresponds to a class $[g]$ whose order is distinct from its level (see the last sentence of Section \ref{subsec:BKM}).  The last two named authors \cite{WW22} proved that the $g$-twisted denominator of the fake monster algebra defines a reflective Borcherds product $\Phi_g$ of singular weight on $U(n_g)\oplus U\oplus \Lambda^g$ if $g$ has the same level and order $n_g$, where $\Lambda^g$ is the sublattice of $\Lambda$ fixed by $g$. We identify $\Borch(\chi_V)$ with the associated $\Phi_g$ and thus prove that $\Borch(\chi_V)$ is reflective if $V_1$ corresponds to some $[g]$ with the equal level and order. As a by-product, we find that the BKM algebra constructed as the BRST cohomology related to $V$ and the $g$-twist of the fake monster algebra are isomorphic if $g$ has equal order and level, but they are not isomorphic if the order and level of $g$ are distinct. 

Then we consider the symmetric case with $C=1$. For any $\mathcal{N}=1$ structure of $F_{24}$, we show that there exists a $\mathrm{Co}_0$-conjugacy class $[g]$ with level equal to its order such that the Borcherds product $\Borch(\phi_\mathfrak{g})$ is the $g$-twisted denominator of the fake monster algebra. This is proven by identifying their Jacobi form inputs and relies on the construction of these $g$-twisted denominators as Gritsenko (additive) lifts due to Dittmann and the second named author \cite{DW21}. We then confirm that $\Borch(\phi_\mathfrak{g})$ is a singular-weight reflective Borcherds product and that the BKM superalgebra constructed in \cite{F24} as the BRST cohomology related to $F_{24}$ is isomorphic to the $g$-twist of the fake monster algebra. 

In the final case, we determine the Jacobi form inputs in terms of affine characters and prove that their singular theta lifts are reflective by direct calculation.

\section{Outline of the paper}

In Chapter \ref{sec:preliminaries}, we quickly introduce Jacobi forms, affine Kac--Moody algebras, reflective modular forms, automorphic products, and Borcherds--Kac--Moody algebras. We also fix some notation that will be used frequently later on.  

In Chapter \ref{sec:hyperbolization} we define the hyperbolization of an affine Lie algebra and explain its motivation. 

In Chapter \ref{sec:root-system}, we present the proof of Theorem \ref{MTH1} and the solutions of Equation \eqref{MEQ}.  

In Chapter \ref{sec:main-results} we state the full version of the main results.

In Chapter \ref{sec:construct-anti}, we first review holomorphic VOA of central charge $24$ and H\"{o}hn's construction, as well as their relations with the twisted denominators of the fake monster algebra. We then construct hyperbolizations of the affine Lie algebras on Schellekens' list. 

In Chapter \ref{sec:construct-symmetric-C=1}, we begin by introducing holomorphic SVOA of central charge $12$ and type $F_{24}$, and then construct hyperbolizations associated with the eight $\mathcal{N}=1$ structures of $F_{24}$.  As an application, we construct many exceptional modular invariants by considering the conformal embedding from the affine VOA generated by the $\mathcal{N}=1$ structure to $F_{24}$ and considering an automorphism of the $D_{12,1}$ fusion algebra.

In Chapter \ref{sec:construct-symmetric-C<1}, we construct hyperbolizations of the remaining affine Lie algebras and explain their connection with some exceptional modular invariants.  

Combining Chapters \ref{sec:construct-anti}-\ref{sec:construct-symmetric-C<1} completes the proof of Theorem \ref{MTH3}.

In Chapter \ref{sec:additive lifts}, we present a uniform construction of the $12$ symmetric singular-weight reflective Borcherds products as Gritsenko (additive) lifts. 

In Chapter \ref{sec:Fourier-expansion}, we compute the Fourier expansions of these reflective Borcherds products of singular weight at the $0$-dimensional cusp determined by one copy of $U$. 

Chapter \ref{sec:exclude-anti-root} is devoted to the proof of the anti-symmetric case of Theorem \ref{MTH2}.

Chapter \ref{sec:exclude-root} contains the proof of the symmetric case of Theorem \ref{MTH2}.

In Chapter \ref{sec:paramodular} we give an application of our main results. We use the pull-back to construct a new infinite series of anti-symmetric Siegel paramodular forms of weight $3$. 

In Chapter \ref{sec:problems}, we raise some related questions and conjectures.

This article ends with several long tables. 

\vspace{5mm}
\noindent
\textbf{Acknowledgements} 
KS thanks Thomas Creutzig, Zhihao Duan, Kimyeong Lee, and Hiraku Nakajima for useful discussions. HW thanks Valery Gritsenko, Xingjun Lin, and Nils Scheithauer for fruitful discussions. KS and HW are grateful for the hospitality of the IBS Center for Geometry and Physics, where part of this work was done. KS would also like to thank KIAS, MPIM Bonn, Uppsala University and Isaac Newton Institute for Mathematical Sciences in Cambridge (during the program \textit{Black holes: bridges between number theory and holographic quantum information}, by EPSRC grant no EP/R014604/1) for support and hospitality where work on this paper was undertaken. 
The authors sincerely thank the referee for careful reading and valuable comments.


\chapter{Preliminaries}\label{sec:preliminaries}
In this chapter, we will first review Jacobi forms and denominator identities of affine Lie algebras. We then define reflective modular forms on orthogonal groups $\Orth(n,2)$ and review Borcherds' theory of automorphic products, as well as the denominator identities of Borcherds--Kac--Moody algebras. This background is necessary to state and prove the main theorems. 

\section{Jacobi forms of lattice index}
Let $\ZZ$ and $\NN$ denote the sets of integers and non-negative integers, respectively. Let $L$ be an even integral positive-definite lattice with bilinear form $(-,-)$ and dual lattice 
$$
L'=\{ v\in L\otimes \QQ : \; (x,y)\in \ZZ, \; \text{for all $y\in L$}  \}.
$$
For any nonzero integer $a$, the lattice with abelian group $L$ and bilinear form $a(-,-)$ is denoted by $L(a)$. Let $\HH$ be the complex upper half plane. The Dedekind eta function 
$$
\eta(\tau)=q^{1/24}\prod_{j=1}^\infty(1-q^j), \quad q=e^{2\pi i\tau}, \; \tau\in \HH
$$
is a modular form of weight $\frac{1}{2}$ on $\SL_2(\ZZ)$ with a multiplier system of order $24$. We denote this multiplier system by $\upsilon_{\eta}$. 
\begin{definition}
Let $D$ be a positive integer, $k\in\frac{1}{2}\ZZ$ and $t\in \frac{1}{2}\NN$. A holomorphic function $\varphi : \HH \times (L \otimes \CC) \rightarrow \CC$ is 
called a {\it weakly holomorphic} Jacobi form of weight $k$, index $t$ and character (or multiplier system) $\upsilon_\eta^D$ for $L$
if it satisfies 
\begin{align*}
\varphi \left( \frac{a\tau +b}{c\tau + d},\frac{\mathfrak{z}}{c\tau + d} 
\right)& = \upsilon_\eta^D(A) (c\tau + d)^k 
\exp{\left(i \pi t \frac{c(\mathfrak{z},\mathfrak{z})}{c 
\tau + d}\right)} \varphi ( \tau, \mathfrak{z} ),\\
\varphi (\tau, \mathfrak{z}+ x \tau + y)&= 
(-1)^{t\big((x,x)+(y,y)\big)}\exp{\big(-i \pi t \big( (x,x)\tau +2(x,\mathfrak{z})\big)\big)} 
\varphi (\tau, \mathfrak{z} ),
\end{align*}
for $A=\begin{psmallmatrix}
a & b \\
c & d
\end{psmallmatrix} \in \SL_2(\ZZ)$ and $x,y\in L$, 
and if its Fourier expansion  takes the form
\begin{equation*}
\varphi ( \tau, \mathfrak{z} )=\sum_{ \substack{-\infty \ll n\in  \ZZ +\frac{D}{24} \\ \ell \in \frac{1}{2}L' } }f(n,\ell)q^n\zeta^\ell, \quad \zeta^\ell=e^{2\pi i (\ell,
\mathfrak{z})},
\end{equation*}
where $n\gg -\infty$ means that $n$ is bounded from below.
If $f(n,\ell) = 0$ whenever $n < 0$,
then $\varphi$ is called a {\it weak} Jacobi form.  If $ f(n,\ell) = 0 $ whenever
$ 2nt - (\ell,\ell) < 0 $,
then $\varphi$ is called a {\it holomorphic}
Jacobi form. 
\end{definition}

We denote the vector spaces of weakly holomorphic, weak and holomorphic Jacobi forms of weight $k$, index $t$ and character $\upsilon_\eta^D$ for $L$ by 
$$
J^{!}_{k,L,t}(\upsilon_\eta^D) \supset J^{\w}_{k,L,t}(\upsilon_\eta^D) \supset J_{k,L,t}(\upsilon_\eta^D),
$$
respectively. We simply write $J^{!}_{k,L,t}\supset J^{\w}_{k,L,t} \supset  J_{k,L,t}$ if the character is trivial. 
The spaces $J_{k, t}$ of classical Jacobi forms introduced by Eichler--Zagier \cite{EZ85} 
are simply $J_{k,A_1, t}$ for the lattice $A_1=\ZZ$ with $(x,x)=2x^2$. If $\varphi \in J^{!}_{k,L,t}(\upsilon_\eta^D)$ is nonzero for some $t>0$ then $L(t)$ is necessarily an integral lattice \cite{CG13}. Jacobi forms of index $t$ for $L$ are also called Jacobi forms of (lattice) index $L(t)$ and we sometimes write 
$$
J^!_{k,L(t)}=J^!_{k,L,t}, \quad J^{\w}_{k,L(t)}=J^{\w}_{k,L,t}, \quad J_{k,L(t)}=J_{k,L,t}. 
$$

We will use the following Hecke operators to construct Jacobi forms later. 

\begin{proposition}[Proposition 3.1 in \cite{CG13}]\label{prop:Hecke}
Let $\varphi \in J_{k,L,t}^!(\upsilon_{\eta}^D)$. Assume that $k\in \ZZ$ and $D$ is an even divisor of $24$. If $Q=24/D$ is odd, we further assume that $t\in \ZZ$. Then for any positive integer $m$ coprime to $Q$ we have 
$$
\varphi \lvert_{k,t}T_{-}^{(Q)}(m)(\tau,\mathfrak{z})=m^{-1}\sum_{\substack{ad=m,\,a>0\\ 0\leq b <d}}a^k \upsilon_{\eta}^D(\sigma_a)\varphi\left(\frac{a\tau+bQ}{d},a\mathfrak{z}\right)\in J_{k,L,mt}^!(\upsilon_{\eta}^{D x}),
$$
where $x,y\in \ZZ$ such that $mx+Qy=1$, 
and
$$\sigma_a= \left(\begin{array}{cc}
dx+Qdxy & -Qy \\ 
Qy & a
\end{array}  \right) \in \SL_2(\ZZ).$$ 
Moreover, the Fourier coefficients $f_m(-,-)$ of $\varphi \lvert_{k,t}T_{-}^{(Q)}(m)$ are linear combinations of the Fourier coefficients $f(-,-)$ of $\varphi$, i.e. 
$$
f_m(n,\ell)=\sum_{a \mid (n,\ell,m)}a^{k-1} \upsilon_{\eta}^D(\sigma_a)f\left( \frac{nm}{a^2},\frac{\ell}{a}\right),
$$
where $a\! \mid\! (n,\ell,m)$ means that $a\! \mid\! nQ$, $a^{-1}\ell\in \frac{1}{2}L'$ and $a\! \mid\! m$.
\end{proposition}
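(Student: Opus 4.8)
The plan is to realize the operator $T_-^{(Q)}(m)$ as a finite average of slash operators over a set of coset representatives in $\GL_2^+(\QQ)$, and then to verify the three defining properties of a weakly holomorphic Jacobi form: the $\SL_2(\ZZ)$-modularity with the asserted multiplier $\upsilon_\eta^{Dx}$, the elliptic transformation law of index $mt$, and the Fourier expansion, from which the coefficient formula drops out. Concretely, for the upper-triangular matrices $M_{a,b}=\begin{psmallmatrix} a & bQ \\ 0 & d\end{psmallmatrix}$ with $ad=m$, $a>0$ and $0\le b<d$, I would use the weight-$k$ index-$t$ slash action of $\GL_2^+(\QQ)$ normalized by $(\det)^{k-1}$, for which
$$(\varphi|_{k,t}M_{a,b})(\tau,\mathfrak{z})=m^{-1}a^k\,\varphi\Big(\tfrac{a\tau+bQ}{d},\,a\mathfrak{z}\Big),$$
so that $\varphi|_{k,t}T_-^{(Q)}(m)=\sum_{a,b}\upsilon_\eta^D(\sigma_a)\,\varphi|_{k,t}M_{a,b}$ is exactly the stated sum. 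The factor $Q=24/D$ in the translation entry is forced: since $\upsilon_\eta^D$ is a character of order $Q$ of $\SL_2(\ZZ)$, translation by $Q$ acts trivially on the multiplier, so the $M_{a,b}$ with $0\le b<d$ form the correct finite family of representatives, and the hypothesis $\gcd(m,Q)=1$ guarantees this family is stable (up to left multiplication by $\SL_2(\ZZ)$) under right multiplication by $\SL_2(\ZZ)$.

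The heart of the argument, and the main obstacle, is the $\SL_2(\ZZ)$-modularity together with the precise tracking of the multiplier. For $\gamma\in\SL_2(\ZZ)$ I would show that right multiplication $M_{a,b}\mapsto M_{a,b}\gamma$ permutes the cosets, i.e.\ that there exist $\gamma'_{a,b}\in\SL_2(\ZZ)$ and a reindexing $(a,b)\mapsto(a',b')$ with $M_{a,b}\gamma=\gamma'_{a,b}M_{a',b'}$. Applying the transformation law of $\varphi$ under $\gamma'_{a,b}$ produces a factor $\upsilon_\eta^D(\gamma'_{a,b})$, and matching coefficients of each $\varphi|_{k,t}M_{a',b'}$ reduces the whole claim to a cocycle identity of the form
$$\upsilon_\eta^D(\sigma_a)\,\upsilon_\eta^D(\gamma'_{a,b})=\upsilon_\eta^{Dx}(\gamma)\,\upsilon_\eta^D(\sigma_{a'}).$$
This is where the auxiliary matrices $\sigma_a$ and the relation $mx+Qy=1$ earn their keep: the $\sigma_a$ are chosen precisely so that the product of eta-multipliers telescopes to $\upsilon_\eta^{Dx}(\gamma)$. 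I expect this bookkeeping to be delicate, because the eta-multiplier is not a naive homomorphism (it is a character of order $Q$ of $\SL_2(\ZZ)^{\mathrm{ab}}\cong\ZZ/12$, carrying Dedekind-sum data). The verification should use that $\upsilon_\eta^D$ factors through $\ZZ/12$ and that, on a $Q$-th root of unity, raising to the $m$-th power agrees with raising to the $x$-th power once $mx\equiv 1\pmod Q$; this is the classical interaction between a Hecke operator at $m$ and a nebentypus-type character, transported to the eta-multiplier.

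The elliptic transformation law of index $mt$ is comparatively routine: replacing $\mathfrak{z}$ by $\mathfrak{z}+x\tau+y$ with $x,y\in L$ and using that each $M_{a,b}$ scales $\mathfrak{z}\mapsto a\mathfrak{z}$, one checks that the index-$t$ quasi-periodicity factor of $\varphi$ reassembles into the index-$mt$ factor, with the sign $(-1)^{mt((x,x)+(y,y))}$ coming out correctly; it is here that the hypothesis $t\in\ZZ$ for odd $Q$ is used, to keep a genuinely half-integral index compatible with the scalings by $a$. Finally, the Fourier expansion and the coefficient formula follow by substitution: writing $\varphi=\sum f(n,\ell)q^n\zeta^\ell$ with $n\in\ZZ+\tfrac{D}{24}$, the inner sum $\sum_{0\le b<d}e^{2\pi i\,nbQ/d}$ is a geometric sum that vanishes unless $d\mid nQ$ and equals $d$ otherwise (note $nQ\in\ZZ$ because $DQ=24$). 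Reindexing the surviving terms by $(N,\ell')=(na/d,\,a\ell)$ and simplifying the prefactor $m^{-1}a^k d=a^{k-1}$ gives
$$f_m(N,\ell')=\sum_{a\mid(N,\ell',m)}a^{k-1}\upsilon_\eta^D(\sigma_a)\,f\Big(\tfrac{Nm}{a^2},\tfrac{\ell'}{a}\Big),$$
where the divisibility conditions $a\mid NQ$, $a^{-1}\ell'\in\tfrac12 L'$ and $a\mid m$ emerge automatically from $n\in\ZZ+\tfrac{D}{24}$, $\ell\in\tfrac12 L'$ and $ad=m$. This is the asserted formula; boundedness of $n$ from below is preserved under the slash, so the output is again weakly holomorphic.
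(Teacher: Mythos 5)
First, a point of comparison: the paper does not prove this proposition at all --- it is imported verbatim from \cite{CG13}, Proposition 3.1 --- so your attempt must be measured against the standard argument given there, which your outline does follow in structure: the $(\det)^{k-1}$-normalized slash action (so that indeed $\varphi|_{k,t}M_{a,b}=m^{-1}a^k\varphi\big(\tfrac{a\tau+bQ}{d},a\mathfrak{z}\big)$), the permutation of the representatives $M_{a,b}$ under right multiplication by $\SL_2(\ZZ)$ (correctly tied to $\gcd(m,Q)=1$, which makes $Q$ invertible modulo $d$), the geometric sum over $b$ using $nQ\in\ZZ$, and the parity analysis locating exactly where $t\in\ZZ$ is needed when $Q$ is odd are all right and are the actual steps of the cited proof, as is the reindexing that yields the coefficient formula with the conditions $a\mid nQ$, $a^{-1}\ell\in\frac12 L'$, $a\mid m$.

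However, at the step you yourself identify as the heart, your justification is wrong as stated, and this is a genuine gap. You claim that on $Q$-th roots of unity ``raising to the $m$-th power agrees with raising to the $x$-th power once $mx\equiv1\pmod Q$''; that identity is equivalent to $m^2\equiv1\pmod Q$, which is false for general $Q$ and holds here only because $Q\mid 24$ and $(\ZZ/24\ZZ)^\times$ has exponent $2$. This fact is not cosmetic: run your cocycle reduction on the generator $T$. One finds $M_{a,b}T=T^nM_{a,b'}$ with $n\equiv ad^{-1}\pmod Q$ and $a'=a$, so the $a$-th block of the sum acquires the factor $\upsilon_\eta^D(T)^{ad^{-1}}$, whose exponent depends on $a$, while $\sigma_{a'}=\sigma_a$, so the weights $\upsilon_\eta^D(\sigma_a)$ cannot absorb any discrepancy in this check; since the blocks attached to distinct divisors $a$ are (generically) linearly independent, you are forced to prove $ad^{-1}\equiv x\pmod Q$ for \emph{every} $a\mid m$, i.e.\ $a^2\equiv1\pmod Q$ --- precisely the exponent-two property of $Q\mid 24$ (which also explains why one may rewrite $\upsilon_\eta^{Dx}=\upsilon_\eta^{Dm}$). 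Without making this explicit, your $T$-check appears to fail block by block. Relatedly, the $S$-cocycle --- the only place the specific matrices $\sigma_a$ are actually tested --- is deferred to ``delicate bookkeeping''; note that since $D$ is even, $\upsilon_\eta^D$ is an honest character of $\SL_2(\ZZ)$ factoring through $\SL_2(\ZZ)^{\mathrm{ab}}\cong\ZZ/12\ZZ$ (your phrasing suggesting it is ``not a naive homomorphism'' applies to $\upsilon_\eta$ itself, not to its even powers), so this verification is finite and mechanical --- evaluate the classes of $\sigma_a$, $\gamma'_{a,b}$ and $\gamma$ in $\ZZ/12\ZZ$ --- but until it is executed, your proposal asserts, rather than proves, the crux of the proposition.
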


\section{Affine Kac--Moody algebras and theta blocks}\label{subsec:theta-blocks}
We review untwisted affine Kac--Moody algebras following \cite{Kac90} and identify their denominators as important examples of Jacobi forms. 

Let $\mathfrak{g}$ be a simple Lie algebra of rank $r$ with Cartan subalgebra $\mathfrak{h}$ and root system $\Delta_\mathfrak{g}$. We fix a set of simple roots $\{ \alpha_1, ..., \alpha_r \} \subset \mathfrak{h}^*$ and denote the set of positive roots by $\Delta_\mathfrak{g}^+$ and the highest root by $\theta$. Note that $\dim \mathfrak{g} = r + |\Delta_\mathfrak{g}|$. The invariant symmetric bilinear form $\latt{-,-}$ on $\mathfrak{h}^*$ is normalized such that long roots have (square) norm two; in particular, $\latt{\theta, \theta}=2$. We identify $\mathfrak{h}$ with $\mathfrak{h}^*$ and define the coroot of a root $\alpha$ as 
$$
\alpha^\vee=2\alpha/\latt{\alpha, \alpha}.
$$
The fundamental weights $w_i\in \mathfrak{h}^*$ are defined by 
$$
\latt{w_i, \alpha_i^\vee} = \delta_{i,j}, \quad 1\leq i,j \leq r,
$$
where $\delta_{i,j}=1$ if $i=j$ and $0$ otherwise. 
The Weyl vector $\rho_\mathfrak{g}$ is defined as
$$
\rho_\mathfrak{g}=\frac{1}{2}\sum_{\alpha\in \Delta_\mathfrak{g}^+} \alpha = \sum_{j=1}^r w_j.
$$
Let $Q_\mathfrak{g}$ be the rational lattice generated by the roots of $\mathfrak{g}$ and let $Q_\mathfrak{g}^\vee$ be the even integral lattice generated by the coroots, or equivalently by the long roots of $\mathfrak{g}$. The weight lattice $P_\mathfrak{g}$, generated by the fundamental weights, is the dual of the coroot lattice:
$$
P_\mathfrak{g}=(Q_\mathfrak{g}^\vee)'=\{ x\in Q_\mathfrak{g}\otimes\QQ : \latt{x, \alpha^\vee}\in \ZZ, \; \alpha\in \Delta_\mathfrak{g}  \}\supset Q_\mathfrak{g}.
$$
The coweight lattice $P_\mathfrak{g}^\vee$ is the dual of the root lattice:
$$
P_\mathfrak{g}^\vee=Q_\mathfrak{g}'=\{ x\in Q_\mathfrak{g}\otimes\QQ : \latt{x, \alpha}\in \ZZ, \; \alpha\in \Delta_\mathfrak{g}  \}\supset Q_\mathfrak{g}^\vee.
$$
The reflection associated with a root $\alpha$ is defined as
$$
\sigma_{\alpha}(x)= x - \latt{x,\alpha^\vee}\alpha, \quad x\in Q_\mathfrak{g}\otimes \RR.
$$
The reflections associated with simple roots generate the Weyl group $W_\mathfrak{g}$. 
The coroot of $\theta$ can be written as an $\NN$-linear combination of the coroots of simple roots,
$$
\theta = \theta^\vee = \sum_{j=1}^r a_j^\vee \alpha_j^\vee.
$$
The integers $a_j^\vee$ are called comarks. The number 
$$
h_\mathfrak{g}^\vee=1+\sum_{j=1}^r a_j^\vee = \frac{1}{r}\sum_{\alpha\in \Delta_\mathfrak{g}^+} \latt{\alpha, \alpha}
$$
is the dual Coxeter number and it satisfies the identity
$$
\sum_{\alpha\in \Delta_\mathfrak{g}^+} \latt{\alpha, \mathfrak{z}}^2 = h_\mathfrak{g}^\vee\latt{\mathfrak{z},\mathfrak{z}}, \quad \mathfrak{z}\in Q_\mathfrak{g}\otimes\CC.
$$
The above identity implies that the rescaled lattice $P_\mathfrak{g}^\vee(h_\mathfrak{g}^\vee)$ is integral. 

The classification of irreducible root systems into types $A_n$ for $n\geq 1$, $B_n$ for $n\geq 2$, $C_n$ for $n\geq 3$, $D_n$ for $n\geq 4$, and the exceptional systems $E_6$, $E_7$, $E_8$, $F_4$ and $G_2$ is well-known. We use the same symbol to stand for the corresponding root lattice with its normalized bilinear form.  Some useful data is summarized in Table \ref{tab:data} for convenience; for the coordinates of the fundamental weights and the values of the comarks, see \cite{Bou82}.

\begin{table}[ht]
\caption{Data related to the irreducible root systems}
\label{tab:data}
\renewcommand\arraystretch{1.5}
\[
\begin{array}{|c|c|c|c|c|c|c|c|c|c|}
\hline 
\Delta_\mathfrak{g} & A_n & B_n & C_n & D_n & E_6 & E_7 & E_8 & G_2 & F_4  \\ 
\hline 
|\Delta_\mathfrak{g}| & n(n+1) & 2n^2 & 2n^2 & 2n(n-1) & 72 & 126 & 240 & 12 & 48 \\ 
\hline
h^\vee_{\mathfrak{g}} & n+1 & 2n-1 & n+1 & 2(n-1)& 12 & 18 & 30 & 4 & 9 \\
\hline
Q^\vee_\mathfrak{g} & A_n & D_n & nA_1 & D_n & E_6 & E_7 & E_8 & A_2 & D_4 \\
\hline
P^\vee_\mathfrak{g} & A_n' & \ZZ^n & D_n'(2) & D_n' & E_6' & E_7' & E_8 & A_2 & D_4 \\
\hline
\end{array} 
\]
\end{table}

The untwisted affine Kac--Moody algebra $\hat{\mathfrak{g}}$ is an extension of $\mathfrak{g}$ defined by
$$
\hat{\mathfrak{g}} = \CC[t,t^{-1}]\otimes \mathfrak{g} \oplus \CC K \oplus \CC d,
$$
where $K$ is a central element and $d$ is a derivative. The algebra $\hat{\mathfrak{g}}$ is an infinite-dimensional Lie algebra with affine Cartan subalgebra $\hat{\mathfrak{h}}=\mathfrak{h}\oplus \CC K \oplus \CC d$. We write $\hat{\mathfrak{h}}^*=\mathfrak{h}^*\oplus \CC \hat{w}_0 \oplus \CC \delta$ with
\begin{align*}
&\hat{w}_0(\mathfrak{h}\oplus \CC d)=0, \qquad\hat{w}_0(K)=1,\\
&\delta(\mathfrak{h}\oplus \CC K)=0, \qquad\ \delta(d)=1.
\end{align*}
We embed $\mathfrak{h}^*$ into $\hat{\mathfrak{h}}^*$ and define 
$$
\alpha_0=\delta-\theta \quad \text{and} \quad \alpha_0^\vee=K-\theta^\vee.
$$
Then $\{\alpha_0, \alpha_1,\cdots, \alpha_r\}$ is a set of simple roots of $\hat{\mathfrak{g}}$ and $\{\alpha_0^\vee, \alpha_1^\vee,\cdots, \alpha_r^\vee\}$ is the corresponding set of coroots. Setting  
$$
\hat{w}_i = w_i + a_i^\vee \hat{w}_0, \quad 1\leq i\leq r,
$$
$\{ \hat{w}_0, \hat{w}_1,\cdots ,\hat{w}_r \}$ is a set of fundamental weights of $\hat{\mathfrak{g}}$. The Weyl vector of $\hat{\mathfrak{g}}$ is defined by 
$$
\hat{\rho}_\mathfrak{g}= \rho_\mathfrak{g} + h_\mathfrak{g}^\vee \hat{w}_0 - (\dim \mathfrak{g} / 24)\delta,
$$
such that the norm of $\hat{\rho}_\mathfrak{g}$ is zero; indeed, by the ``strange formula'' of Freudenthal--de Vries,
$$
h_\mathfrak{g}^\vee \cdot \dim \mathfrak{g} =12 \latt{\rho_\mathfrak{g},\rho_\mathfrak{g}}.
$$
The integrable highest weight representations of $\hat{\mathfrak{g}}$ are indexed by dominant integral weights, which are elements of
$$
\hat{P}_+ = \sum_{j=0}^r \NN \hat{w}_j + \CC \delta. 
$$
The level of $\lambda = \sum_{j=0}^r x_j \hat{w}_j + c\delta \in \hat{P}_+$ is the integer
$$
\lambda(K) = \lambda_0 + \sum_{j=1}^r x_j a_j^\vee.
$$
The character of the irreducible highest weight module labeled by a weight $\lambda \in \hat{P}_+$ of level $k$ is given by the \textit{Weyl--Kac character formula},
\begin{equation}
\chi_\lambda^{\hat{\mathfrak{g}}} = \frac{\sum_{\sigma\in W_{\hat{\mathfrak{g}}}} (-1)^{\ell(\sigma)} e^{\sigma(\lambda+\rho_{\hat{\mathfrak{g}}})} }{e^{\rho_{\hat{\mathfrak{g}}}} \prod_{\alpha \in \Delta_{\hat{\mathfrak{g}}}^+}(1-e^{-\alpha})^{\mathrm{mult}(\alpha)}},  
\end{equation}
where $W_{\hat{\mathfrak{g}}}$ is the Weyl group of $\hat{\mathfrak{g}}$, a semi-direct product of $W_\mathfrak{g}$ by a certain group of translations, $\ell(\sigma)$ is the length of $\sigma$, $\Delta_{\hat{\mathfrak{g}}}^+$ is the set of positive roots of $\hat{\mathfrak{g}}$, and $\mathrm{mult}(\alpha)$ is the multiplicity of $\alpha$, i.e. the dimension of the root space $\hat{\mathfrak{g}}_\alpha$. When $\lambda=0$, we have $\chi_\lambda^{\hat{\mathfrak{g}}}=1$, which gives the \textit{Macdonald--Weyl denominator identity}:
\begin{equation}
e^{\rho_{\hat{\mathfrak{g}}}} \prod_{\alpha \in \Delta_{\hat{\mathfrak{g}}}^+}(1-e^{-\alpha})^{\mathrm{mult}(\alpha)} = \sum_{\sigma\in W_{\hat{\mathfrak{g}}}} (-1)^{\ell(\sigma)}e^{\sigma(\rho_{\hat{\mathfrak{g}}})}.    
\end{equation}
The algebra $\hat{\mathfrak{g}}$ has real roots (with norm $>0$) and imaginary roots (with norm $= 0$). The set of positive real roots is
$$
\Delta_{\hat{\mathfrak{g}}}^{+,\mathrm{re}} = \{ \alpha + n\delta : 0 < n \in \ZZ, \alpha \in \Delta_\mathfrak{g} \} \cup \Delta_\mathfrak{g}^+. 
$$
The set of positive imaginary roots is 
$$
\Delta_{\hat{\mathfrak{g}}}^{+,\mathrm{im}} = \{ n\delta : 0< n \in \ZZ \}.
$$
Every real root has multiplicity one, but every imaginary root has multiplicity $r$. 

Let $(\tau, \mathfrak{z})\in \HH \times \mathfrak{h}$. By interpreting the formal variable $e^{\alpha}$ as $e^{2\pi i \alpha(\mathfrak{z}-\tau d)}$, the character $\chi_\lambda^{\hat{\mathfrak{g}}}(\tau, \mathfrak{z})$ defines a holomorphic function on $\HH\times \CC^r$. The product side of the denominator identity can be written as a theta block (introduced by Gritsenko--Skoruppa--Zagier \cite{GSZ19})
\begin{equation}
\vartheta_{\mathfrak{g}}(\tau,\mathfrak{z})=\eta(\tau)^r \prod_{\alpha\in \Delta_\mathfrak{g}^+}\frac{\vartheta(\tau, \latt{\alpha,\mathfrak{z}})}{\eta(\tau)},
\end{equation}
where the odd Jacobi theta function
$$
\vartheta(\tau,z)=\sum_{n\in\ZZ}\left(\frac{-4}{n}\right)q^{\frac{n^2}{8}}\zeta^{\frac{n}{2}} =-q^{\frac{1}{8}}\zeta^{-\frac{1}{2}}\prod_{n=1}^\infty(1-q^{n-1}\zeta)(1-q^n\zeta^{-1})(1-q^n) 
$$
is the denominator of $\chi_\lambda^{\hat{\mathfrak{g}}}$ for the $A_1$ Lie algebra $\mathfrak{g}$. Here, $z\in \CC$ and $\zeta= e^{2\pi iz}$ as before. 
Note that $\vartheta_\mathfrak{g}$ is a holomorphic Jacobi form of (singular) weight ${r}/{2}$ and lattice index $P_\mathfrak{g}^\vee(h_\mathfrak{g}^\vee)$ with multiplier system $\upsilon_\eta^{\dim \mathfrak{g}}$.  By \cite{Kac90}, all characters $\chi_\lambda^{\hat{\mathfrak{g}}}$ of fixed level $k$ form a vector-valued weakly holomorphic Jacobi form of weight $0$ and lattice index $Q_\mathfrak{g}^\vee(k)$ which is invariant under the action of the Weyl group $W_\mathfrak{g}$ on  $\mathfrak{z}$. We refer to \cite[Theorem 13.8]{Kac90} for the precise transformation laws with respect to the generators of $\SL_2(\ZZ)$. 

Let $k$ be a positive integer. The irreducible $\hat{\mathfrak{g}}$-module $L_{\hat{\mathfrak{g}}}(k,0)$ associated with $k\hat{w}_0$ has a canonical structure as a simple rational vertex operator algebra. This is called the affine VOA generated by $\hat{\mathfrak{g}}$ at level $k$. Such VOAs are physically realized as the well-known Wess--Zumino--Witten (WZW) models, which are nonlinear sigma models describing mapping fields from Riemann surfaces to Lie group manifolds. The central charge of $L_{\hat{\mathfrak{g}}}(k,0)$ is 
\begin{equation}\label{cc}
    c_\mathfrak{g}=\frac{k \dim \mathfrak{g}}{k + h_\mathfrak{g}^\vee}.
\end{equation}
The irreducible modules of $L_{\hat{\mathfrak{g}}}(k,0)$ are the irreducible $\hat{\mathfrak{g}}$-modules $L_{\hat{\mathfrak{g}}}(k,\lambda)$ associated with level $k$ dominant integral weights $k\hat{w}_0+\lambda \in \hat{P}_+$ for any $\lambda\in \sum_{j=1}^r \NN w_j$ that satisfies
$$
\latt{\lambda, \theta^\vee}=\sum_{j=1}^r a_j^\vee \lambda_j\leq k, \quad \text{where} \quad \lambda=\sum_{j=1}^r \lambda_j w_j.
$$
The conformal weight of $L_{\hat{\mathfrak{g}}}(k,\lambda)$ is 
\begin{equation}\label{conformalw}
    h_\lambda=\frac{\latt{\lambda, \lambda + 2\rho_\mathfrak{g}}}{2(k+h^\vee_\mathfrak{g})}.
\end{equation}
The full character of $L_{\hat{\mathfrak{g}}}(k,\lambda)$ is actually $\chi_{k\hat{w}_0+\lambda}^{\hat{\mathfrak{g}}}(\tau,\mathfrak{z})$ as defined above. When the root system of $\mathfrak{g}$ is $R_r$ as in Table \ref{tab:data} and $\lambda=\sum_{j=1}^r \lambda_j w_j$, we often write $\chi_{k\hat{w}_0+\lambda}^{\hat{\mathfrak{g}}}$ as
\begin{equation}\label{eq:symbol-char}
\chi^{R_{r,k}}_{\lambda_1\cdots\lambda_r,h_\lambda}. 
\end{equation}

\begin{Notation}\label{notation}
Let $\mathfrak{g}=\oplus_{j=1}^s \mathfrak{g}_j$ be a semi-simple Lie algebra. We will often have to associate with each simple ideal $\mathfrak{g}_j$ a positive integer $k_j$, called the level. In this case, we indicate the levels by writing $\mathfrak{g}=\oplus_{j=1}^s \mathfrak{g}_{j,k_j}$. For simplicity we write
$$
h_j^\vee = h_{\mathfrak{g_j}}^\vee, \quad Q_j = Q_{\mathfrak{g}_j}, \quad Q_j^\vee = Q_{\mathfrak{g}_j}^\vee, \quad P_j = P_{\mathfrak{g}_j}, \quad P_j^\vee = P_{\mathfrak{g}_j}^\vee. 
$$
We further fix two lattices
$$
\mathbf{Q}_\mathfrak{g} = \bigoplus_{j=1}^s Q_j^\vee(k_j) \quad \text{and} \quad \mathbf{P}_\mathfrak{g} = \bigoplus_{j=1}^s P_j^\vee(k_j).
$$
Let $V_\mathfrak{g}$ denote the affine vertex operator algebra 
$$
\bigotimes_{j=1}^s L_{\hat{\mathfrak{g}}_j}(k_j,0).
$$
Clearly, the denominator of $\hat{\mathfrak{g}}$ is given by the theta block
\begin{equation}\label{eq:theta-blcok}
\vartheta_\mathfrak{g}:=\vartheta_{\mathfrak{g}_1}\otimes \vartheta_{\mathfrak{g}_2}\otimes \cdots \otimes \vartheta_{\mathfrak{g}_s}.   
\end{equation}
\end{Notation}

\begin{remark} Some remarks on the denominator identities are in order:
\begin{enumerate}
\item Gritsenko, Skoruppa and Zagier \cite{GSZ19} found a direct proof of the denominator identity based on their theory of theta blocks.
\item  The function $\vartheta_\mathfrak{g}(\tau,\mathfrak{z})/\eta(\tau)^{\dim \mathfrak{g}}$ is equal to the modular Jacobian of the generators of the ring of weak Jacobi forms for $Q_\mathfrak{g}^\vee$ invariant under $W_\mathfrak{g}$ (see \cite{Wan21}).
\item If a theta block defines a holomorphic Jacobi form of singular weight, then it has to be the denominator of an affine Lie algebra (\cite{Wan23-star}). Therefore, there is a one-to-one correspondence between affine Lie algebras and singular-weight theta blocks.  
\end{enumerate}
\end{remark}

\section{Reflective modular forms on orthogonal groups}
Let $M$ be an even integral lattice of signature $(l,2)$ with $l\geq 3$. We choose one of the two connected components of 
$$
\{\mathcal{Z} \in  M\otimes \CC:  (\mathcal{Z}, \mathcal{Z})=0, (\mathcal{Z},\bar{\mathcal{Z}}) < 0\}
$$
and label it $\cA(M)$. The symmetric domain of type IV attached to $M$ is
$$
\cD(M):=\cA(M) / \mathbb{C}^{\times}=\{[\mathcal{Z}] \in  \PP(M\otimes \CC):  \mathcal{Z} \in \cA(M) \}.
$$
Let $\Orth^+(M)$ denote the subgroup of $\Orth(M\otimes\RR)$ that preserves $M$ and $\cA(M)$.  Let $\Gamma$ be a finite-index subgroup of $\Orth^+(M)$. The most important example of $\Gamma$ will be the \textit{discriminant kernel}
$$
\widetilde{\Orth}^+(M)=\{ g \in \Orth^+(M):\; g(v) - v \in M, \; \text{for all $v\in M'$} \},
$$
where $M'$ is the dual lattice of $M$. 

\begin{definition}
Let $k\in \ZZ$ and $\chi: \Gamma\to \CC^\times$ be a character. A holomorphic function $F: \cA(M)\to \CC$ is called a modular form of weight $k$ and character $\chi$ on $\Gamma$ if it satisfies
\begin{align*}
F(t\mathcal{Z})&=t^{-k}F(\mathcal{Z}), \quad\  \text{for all $t \in \CC^\times$},\\
F(g\mathcal{Z})&=\chi(g)F(\mathcal{Z}), \quad \text{ for all $g\in \Gamma$}.
\end{align*}
\end{definition}
Modular forms can be represented by their Fourier expansions. Let $c$ be a primitive isotropic vector of $M$ and choose $c'\in M'$ satisfying $(c,c')=1$. Then $M_{c,c'}=M\cap c^\perp \cap (c')^\perp$ is an even lattice of signature $(l-1,1)$. Around the cusp $c$, one can identify the symmetric domain $\cD(M)$ with a tube domain $\HH_{c,c'}$, a connected component of 
$$
\{ Z = X + iY :\; X, Y \in M_{c,c'}\otimes\RR, \; (Y,Y)<0  \}.
$$
This induces an action of $\Orth^+(M)$ on $\HH_{c,c'}$ and an automorphy factor, which allows us to define modular forms of half-integral weight. A modular form of trivial character on $\widetilde{\SO}^+(M)$ can be expanded on $\HH_{c,c'}$ as
$$
F(Z)=\sum_{\lambda \in M_{c,c'}', \; (\lambda, \lambda)\leq 0} c(\lambda) e^{2\pi i(\lambda, Z)}.
$$
Modular forms $F$ on general $\Gamma$ have similar expansions with $M_{c,c'}$ replaced by a finite-index sublattice. If $F$ is nonzero, then either $k=0$, in which case $F$ is constant, or $k\geq l/2-1$. The smallest possible positive weight $l/2-1$ is called the \textit{singular weight}.  When $F$ has singular weight, its Fourier coefficients $c(\lambda)$ are zero whenever $(\lambda,\lambda)\neq 0$.

Let $\lambda \in M\otimes\QQ$ be a vector of positive norm. The \textit{rational quadratic divisor} associated with $\lambda$ is 
$$
\lambda^\perp=\{ [\mathcal{Z}] \in \cD(M) : (\mathcal{Z}, \lambda)=0 \}.
$$
We define the associated reflection $\sigma_\lambda \in \Orth^+(M\otimes\QQ)$ as
$$
\sigma_\lambda(x)=x-\frac{2(x,\lambda)}{(\lambda,\lambda)}\lambda, \quad x\in M\otimes\QQ.
$$
The divisor $\lambda^\perp$ is called \textit{reflective} if $\sigma_\lambda$ fixes $M$, i.e. $\sigma_\lambda \in \Orth^+(M)$. If $\lambda$ is a primitive vector in $M'$, then $\lambda^\perp$ is reflective if and only if there exists a positive integer $d$ such that $(\lambda, \lambda)=2/d$ and $d\lambda \in M$. More precisely, the order of $\lambda$ in $M'/M$ is either $d$, or $d/2$ in which case $d/2$ is necessarily even. A non-constant (holomorphic) modular form on $\Gamma$ is called \textit{reflective} if its zero divisor is a linear combination of reflective rational quadratic divisors. 

\section{Borcherds products}\label{subsec:Borcherds}
Let $M$ be an even lattice of signature $(b^+,b^-)$ with discriminant form $D_M:=(M'/M, \mathfrak{q})$, where $\mathfrak{q}(x)=(x,x)/2$ is the induced quadratic form. Let $\mathrm{Mp}_2(\mathbb{Z})$ be the metaplectic group, which consists of pairs $A = (A, \phi_A)$, where $A = \begin{psmallmatrix} a & b \\ c & d \end{psmallmatrix} \in \mathrm{SL}_2(\mathbb{Z})$ and $\phi_A$ is a holomorphic square root of $\tau \mapsto c \tau + d$ on $\mathbb{H}$, with the standard generators 
$$
T = \big(\begin{psmallmatrix} 1 & 1 \\ 0 & 1 \end{psmallmatrix}, 1\big) \quad  \text{and} \quad S = \big(\begin{psmallmatrix} 0 & -1 \\ 1 & 0 \end{psmallmatrix}, \sqrt{\tau}\big).
$$
The \emph{Weil representation} $\rho_M$ is the unitary representation of $\mathrm{Mp}_2(\mathbb{Z})$ on the group ring $\mathbb{C}[D_M] = \mathrm{span}(e_x: \, x \in D_M)$ defined by 
$$
\rho_M(T) e_x = \mathbf{e}(-\mathfrak{q}(x)) e_x \quad \text{and} \quad \rho_M(S) e_x = \frac{\mathbf{e}( \mathrm{sign}(M) / 8)}{\sqrt{|D_M|}} \sum_{y \in D_M} \mathbf{e}(( x,y)) e_y,
$$
where $\mathbf{e}(t)=e^{2\pi it}$ for $t\in \CC$, and $\mathrm{sign}(M)=b^+ - b^- \mod 8$. The dual representation of $\rho_M$ is the complex conjugate of $\rho_M$; moreover, $\bar{\rho}_M=\rho_{M(-1)}$. 

Let $k \in \frac{1}{2}\mathbb{Z}$. A holomorphic function $f : \mathbb{H} \rightarrow \mathbb{C}[D_M]$ is called a \emph{weakly holomorphic modular form} of weight $k$ if $f$ satisfies
$$
\phi_A(\tau)^{-2k} f(A \cdot \tau) = \rho_M(A) f(\tau), \quad \text{for all $A \in \mathrm{Mp}_2(\mathbb{Z})$,}
$$ 
and if $f$ is represented by a Fourier series of the form 
\begin{equation}
f(\tau) = \sum_{x \in D_M} \sum_{\substack{n \in \mathbb{Z} - \mathfrak{q}(x)\\ n \gg -\infty}} c_x(n) q^n e_x.    
\end{equation}
The finite sum $c_x(n)q^n e_x$ with $n<0$ is called the \textit{principal part} of $f$. 
If $f$ is holomorphic at infinity, i.e. its principal part is zero, then it is a \textit{holomorphic modular form}. Note that $k+\mathrm{sign}(M)/2 \in \ZZ$ if non-zero $f$ exist, and that if $\mathrm{sign}(M)$ is even then $\rho_M$ factors through a representation of $\SL_2(\ZZ)$. We denote the spaces of weakly holomorphic and holomorphic modular forms of weight $k$ for $\rho_M$ respectively by
$$
M_k^!(\rho_M) \supset M_k(\rho_M).
$$ 

There is a natural homomorphism $\Orth(M) \to \Orth(D_M)$ with kernel $\widetilde{\Orth}(M)$. We define an action of $\Orth(D_M)$ on modular forms $f = \sum c_x(n) q^n e_x$ by
$$
\sigma(f)=\sum c_x(n)q^n e_{\sigma(x)}, \quad \sigma \in \Orth(D_M).
$$

We now assume that $M$ has signature $(l,2)$ with $l\geq 3$.
Let $f$ be a weakly holomorphic modular form of weight $1-l/2$ for $\rho_M$ with integral principal part. The Borcherds singular theta lift \cite{Bor95, Bor98} produces a meromorphic modular form $\Borch(f)$ of weight $c_0(0)/2$ and some character (or multiplier system) on 
$$
\Orth^+(M,f)=\{ \sigma \in \Orth^+(M): \sigma(f)=f\} \supset \widetilde{\Orth}^+(M)
$$
which has an infinite product expansion at any $0$-dimensional cusp involving the Fourier coefficients of $f$ (see below). Moreover, the divisor of $\Borch(f)$ is a linear combination of rational quadratic divisors $\lambda^\perp$, each with multiplicity 
$$
\sum_{d=1}^\infty c_{d\lambda}(-d^2\lambda^2/2),
$$
where $\lambda \in M'$ are primitive vectors of positive norm.

There is an identification between modular forms for the Weil representation and Jacobi forms. Let $L$ be an even positive-definite lattice of rank $\rank(L)$. Then one has the isomorphism
\begin{equation}
\begin{split}
M_{k-\frac{1}{2}\rank(L)}^!(\rho_L) &\stackrel{\sim}{\longrightarrow} J_{k,L}^! \\
f(\tau)=\sum_{\gamma\in L'/L} f_\gamma(\tau)e_\gamma &\longmapsto \sum_{\gamma\in L'/L} f_\gamma(\tau) \Theta_{L,\gamma}(\tau,\mathfrak{z}),
\end{split}
\end{equation}
where $\Theta_{L,\gamma}$ is the Jacobi theta function of $L+\gamma$, defined as
$$
\Theta_{L,\gamma}(\tau,\mathfrak{z}) = \sum_{\ell \in L+\gamma} e^{\pi i(\ell,\ell)\tau + 2\pi i(\ell, \mathfrak{z})}, \quad (\tau,\mathfrak{z})\in\HH\times (L\otimes\CC). 
$$
This map can be extended to modular forms with characters and it induces an isomorphism between holomorphic modular forms. It follows that a holomorphic Jacobi form of weight $\frac{1}{2}\rank(L)$ and index $L$ is a $\CC$-linear combination of $\Theta_{L,\gamma}$. We call $\frac{1}{2}\rank(L)$ the \textit{singular weight} for Jacobi forms. 

It will be convenient to represent Borcherds products in terms of Jacobi forms. Let $\phi \in J_{0,L,1}^!$ be a weakly holomorphic Jacobi form of trivial character with Fourier expansion
$$
\phi(\tau,\mathfrak{z})=\sum_{n\in \ZZ}\sum_{\ell\in L'}f(n,\ell)q^n\zeta^\ell.
$$
The terms $f(n,\ell)q^n\zeta^\ell$ with $2n-(\ell,\ell)<0$ are called \textit{singular Fourier coefficients} and they correspond to the principal part of the preimage of $\phi$ under the above isomorphism. 

\begin{remark}\label{rem:singular}
The Fourier expansion of $\phi$ satisfies $f(n,\ell)=f(n,-\ell)$ and $f(n_1,\ell_1)=f(n_2,\ell_2)$ if $2n_1-(\ell_1,\ell_1)=2n_2-(\ell_2,\ell_2)$ and if $\ell_1-\ell_2\in L$. Therefore, any singular Fourier coefficient of $\phi$ already appears as a summand $f(n,\ell)q^n\zeta^\ell$ where $n\leq \hat{\delta}_L$, $\ell \in L'$ and $2n<(\ell,\ell)$, where $\hat{\delta}_L$ is the largest integer less than $\delta_L/2$ and
\begin{equation}
\delta_L := \max\big\{ \min\{(y,y): y\in L + x \} : x \in L' \big\}.
\end{equation}
\end{remark}

Let $U$ be a hyperbolic plane, i.e. an even unimodular lattice of signature $(1,1)$. We write 
$$
U=\ZZ e + \ZZ f, \quad \text{where} \quad e^2=f^2=0 \quad \text{and} \quad (e,f)=-1.
$$
Let $U_1=\ZZ e_1 +\ZZ f_1$ be a second hyperbolic plane and define $M=U_1\oplus U\oplus L$. We fix coordinates on the tube domain about $U_1$ by writing
$$
\HH(L)=\{ Z= \tau f + \mathfrak{z} +\omega e : \tau, \omega \in \HH,\; \mathfrak{z}\in L\otimes\CC, \; 2\im(\tau)\im(\omega) -(\mathfrak{z},\mathfrak{z})>0  \},
$$
such that 
$$
-(\alpha, Z)=n\tau - (\ell, \mathfrak{z}) + m\omega  \quad \text{for} \quad \alpha=ne+\ell+mf\in U\oplus L'.
$$
These coordinates are chosen such that the infinite expansions below match the denominators of Borcherds--Kac--Moody algebras. 

\begin{Notation}\label{notation2}
We write $v=x_1e_1+xe+\ell+yf+y_1f_1\in U_1\oplus U\oplus L'$ in the coordinate $(x_1,x,\ell,y,y_1)$ and $\alpha=xe+\ell+yf\in U\oplus L'$ in the coordinate $(x,\ell,y)$. In particular, $v^2=(\ell,\ell)-2(xy+x_1y_1)$. 
\end{Notation}

\begin{theorem}[{\cite[Theorem 4.2]{Gri18}}]\label{th:product}
Assume the above $\phi$ has integral singular Fourier coefficients. Then the Borcherds theta lift of $\phi$ is a meromorphic modular form of weight $f(0,0)/2$ on $\widetilde{\Orth}^+(M)$ and it can be expanded on an open subset of the tube domain $\HH(L)$ as
$$
\Borch(\phi)(Z)=q^A \zeta^{\vec{B}} \xi^C\prod_{\substack{n,m\in\ZZ,\; \ell
\in L'\\ (n,\ell,m)>0}}\Big(1-q^n \zeta^{-\ell} \xi^m\Big)^{f(nm,\ell)}, 
$$ 
where $q=\exp(2\pi i \tau)$, $\zeta^\ell=\exp(2\pi i (\ell, \mathfrak{z}))$, $\xi=\exp(2\pi i \omega)$, where the positivity condition $(n,\ell,m)>0$ means that either $m>0$, or $m=0$ 
and $n>0$, or $m=n=0$ and $\ell>0$, and where the Weyl vector $\rho= (-A,\vec{B}, -C)$ of $\Borch(\phi)$ is defined by
$$
A=\frac{1}{24}\sum_{\ell\in L'}f(0,\ell),\quad
\vec{B}=\frac{1}{2}\sum_{\ell>0} f(0,\ell)\ell, \quad C=\frac{1}{2\rank(L)}\sum_{\ell\in L'}f(0,\ell)(\ell,\ell).
$$
The Fourier--Jacobi expansion of $\Borch(\phi)$ on $\HH(L)$ is
\begin{equation}
\Borch(\phi)(Z)=
\biggl(\Theta_{f(0,\ast)}
(\tau,\mathfrak{z})\cdot \xi^C \biggr)
\exp \left(-\sum_{m=1}^\infty \Big(\phi |_0 T_{-}^{(1)}(m)\Big) (\tau, \mathfrak{z}) \cdot \xi^m \right),
\end{equation}
where the leading Fourier--Jacobi coefficient is given by a generalized theta block
\begin{equation*}
\Theta_{f(0,\ast)}(\tau,\mathfrak{z})
=\eta(\tau)^{f(0,0)}\prod_{\ell >0}
\biggl(\frac{\vartheta(\tau,(\ell,\mathfrak{z}))}{\eta(\tau)} 
\biggr)^{f(0,\ell)}.
\end{equation*}
\end{theorem}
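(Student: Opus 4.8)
The plan is to derive every assertion from Borcherds' theorem (Section \ref{subsec:Borcherds}) by rewriting his general statement in the Jacobi-form coordinates of Notation \ref{notation2}. First I would record that $M=U_1\oplus U\oplus L$ has signature $(\rank(L)+2,2)$ and, since the two hyperbolic planes are unimodular, discriminant form $D_M=D_L$, so $\rho_M=\rho_L$. Through the isomorphism $M^!_{-\frac12\rank(L)}(\rho_L)\cong J^!_{0,L}$ recalled above, $\phi$ corresponds to a weakly holomorphic modular form $f=\sum_\gamma f_\gamma e_\gamma$ of weight $1-l/2$ (with $l=\rank(L)+2$) for $\rho_M$. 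The theta decomposition supplies the dictionary $c_{\bar v}(N)=f(N+\tfrac12(v,v),v)$ between the Fourier coefficients of $f$ and those of $\phi$; in particular the principal part of $f$ is carried exactly by the singular Fourier coefficients of $\phi$, so the integrality hypothesis makes the lift applicable and $c_0(0)=f(0,0)$.

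Second, I would invoke Borcherds' theorem to conclude that $\Borch(\phi):=\Borch(f)$ is a meromorphic modular form of weight $c_0(0)/2=f(0,0)/2$ on $\widetilde{\Orth}^+(M)$, whose divisor is the prescribed combination of rational quadratic divisors and which admits a product expansion at every $0$-dimensional cusp. Specializing to the cusp attached to the isotropic plane $U_1$ and using $Z=\tau f+\mathfrak{z}+\omega e$, a positive vector $\alpha=ne+\ell+mf$ satisfies $\tfrac12(\alpha,\alpha)=\tfrac12(\ell,\ell)-nm$, so by the dictionary Borcherds' exponent $c_{\bar\alpha}\!\left(-\tfrac12(\alpha,\alpha)\right)$ is exactly $f(nm,\ell)$. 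This reproduces the stated product $\prod(1-q^n\zeta^{-\ell}\xi^m)^{f(nm,\ell)}$ over the chamber $(n,\ell,m)>0$, together with the open subset of $\HH(L)$ on which it converges.

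Third, I would read the Weyl vector $\rho=(-A,\vec{B},-C)$ off Borcherds' general formula for the linear term of the lift, evaluating its three components against the $q^0$-part $\sum_\ell f(0,\ell)\zeta^\ell$ of $\phi$: the zeroth, first and second moments of $\{f(0,\ell)\}$ yield respectively $A=\tfrac1{24}\sum_\ell f(0,\ell)$, $\vec{B}=\tfrac12\sum_{\ell>0}f(0,\ell)\ell$ and $C=\tfrac1{2\rank(L)}\sum_\ell f(0,\ell)(\ell,\ell)$. Finally, to obtain the Fourier--Jacobi expansion I would split the product at $m=0$. Using $f(0,\ell)=f(0,-\ell)$ (Remark \ref{rem:singular}) and the triple-product form of $\vartheta$, the $m=0$ factors together with the prefactor $q^A\zeta^{\vec{B}}$ collapse into the generalized theta block $\Theta_{f(0,\ast)}(\tau,\mathfrak{z})$; taking the logarithm of the $m\geq 1$ part, expanding $\log(1-x)=-\sum_{k\geq1}x^k/k$ and reindexing by the total $\xi$-power reorganizes the coefficients into $-\sum_{m\geq1}\big(\phi|_0T_{-}^{(1)}(m)\big)\xi^m$, precisely by the coefficient formula of Proposition \ref{prop:Hecke} in weight $0$.

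I expect the main obstacle to be the Weyl-vector computation. Borcherds expresses $\rho$ through a regularized theta integral together with an auxiliary sum determined by the chosen isotropic vector, and matching this to the three elementary moments above — in particular producing the exact constant $\tfrac1{24}$ in $A$, which can be cross-checked against the $q$-order of the theta block since $\tfrac{f(0,0)}{24}+\tfrac1{12}\sum_{\ell>0}f(0,\ell)=\tfrac1{24}\sum_\ell f(0,\ell)$ — is the technical heart of the argument. A secondary subtlety is organizational: one must verify that the positivity condition $(n,\ell,m)>0$ names a genuine Weyl chamber compatible with the domain of convergence, and that the Hecke resummation of the $m\geq1$ product is an honest analytic identity rather than a merely formal manipulation.
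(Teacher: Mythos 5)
Your proposal is correct and is essentially the proof of this statement: the paper itself gives no argument, importing the theorem verbatim from Gritsenko \cite[Theorem 4.2]{Gri18}, and your reconstruction — the theta-decomposition dictionary $c_{\bar v}(N)=f\big(N+\tfrac12(v,v),v\big)$ feeding Borcherds' lift so that the exponent $c_{\bar\alpha}\big({-\tfrac12(\alpha,\alpha)}\big)$ becomes $f(nm,\ell)$, the Weyl vector read off as the three moments of the $q^0$-term, and the logarithm/Hecke resummation of the $m\geq 1$ factors into $-\sum_{m\geq 1}\big(\phi|_0T_-^{(1)}(m)\big)\xi^m$ — is exactly how that result is derived from \cite{Bor98}. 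Your cross-check of the constant $\tfrac1{24}$ against the $q$-order of the generalized theta block, using $f(0,\ell)=f(0,-\ell)$, is likewise the standard consistency verification, so there is nothing to correct.
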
 

We also need the following additive lifts with characters to construct orthogonal modular forms.

\begin{theorem}[Theorem 3.2 in \cite{CG13}]\label{th:additive}
Let $D$ be an even divisor of $24$, $k\in\NN$, $t\in \frac{1}{2}\NN$. If $Q=24/D$ is odd, we further assume that $t\in\NN$. Let $\varphi \in J_{k,L,t}(\upsilon_{\eta}^D)$.  Let $G_k(\tau)$ be the normalized Eisenstein series of weight $k$ on $\SL_2(\ZZ)$ whose Fourier coefficient at $q$ is 1. Then the function 
$$ 
\Grit(\varphi)(Z)=f(0,0)G_k(\tau)+\sum_{0<m\in 1+Q\ZZ}\Big(\varphi \lvert_{k,t}T_{-}^{(Q)}(m)\Big)(\tau,\mathfrak{z})\cdot \xi^{m/Q}
$$
is a holomorphic modular form of weight $k$ and a certain character on the group $\widetilde{\Orth}^+(U_1\oplus U\oplus L(Qt))$. The form $\Grit(\varphi)$ is always invariant under the action of the involution $(\omega,\mathfrak{z},\tau)\mapsto (\tau,\mathfrak{z},\omega)$. 
\end{theorem}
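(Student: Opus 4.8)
The plan is to realize $\Grit(\varphi)$ as a Fourier--Jacobi series assembled from the Hecke images $\varphi\lvert_{k,t}T_{-}^{(Q)}(m)$ and to verify modularity by checking invariance under a generating set of $\widetilde{\Orth}^+(U_1\oplus U\oplus L(Qt))$. The structural input I would use is that this group is generated by the parabolic subgroup $P$ stabilizing the $0$-dimensional cusp attached to $U_1$ (which restricts on the tube domain $\HH(L)$ to the Jacobi group acting on the $(\tau,\mathfrak{z})$-variables, up to the weight-$k$ automorphy factor) together with the single involution $V\colon(\omega,\mathfrak{z},\tau)\mapsto(\tau,\mathfrak{z},\omega)$ exchanging the two hyperbolic planes. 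Thus it suffices to prove that $\Grit(\varphi)$ is holomorphic, that it transforms correctly under $P$, and that it is $V$-invariant.

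Invariance under $P$ I expect to be bookkeeping. Writing $Z=\tau f+\mathfrak{z}+\omega e$ and expanding in $\xi=e^{2\pi i\omega}$, the required transformation law of the $\xi^{m/Q}$-coefficient under $P$ is exactly the statement that $\varphi\lvert_{k,t}T_{-}^{(Q)}(m)$ is a holomorphic Jacobi form of weight $k$ and index $mt$ for $L$ with a \emph{fixed} character. This is supplied by Proposition \ref{prop:Hecke}: the image lies in $J_{k,L,mt}^!(\upsilon_\eta^{Dx})$ with $mx+Qy=1$, and since we restrict the sum to $m\in 1+Q\ZZ$ we have $x\equiv 1\pmod Q$, so that $\upsilon_\eta^{Dx}=\upsilon_\eta^{D}$ is the same character for every term (here the order $24=DQ$ of $\upsilon_\eta$ is used). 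The constant term $f(0,0)G_k(\tau)$ accounts for the weight-$k$ Eisenstein contribution of the $\xi^0$-coefficient.

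The heart of the proof, and the main obstacle, is $V$-invariance. I would establish it by computing the Fourier expansion explicitly: substituting the divisor-sum formula of Proposition \ref{prop:Hecke}, the coefficient of $q^n\zeta^\ell\xi^{m/Q}$ in $\Grit(\varphi)$ equals
$$
\sum_{a\mid(n,\ell,m)}a^{k-1}\,\upsilon_\eta^{D}(\sigma_a)\,f\!\left(\frac{nm}{a^2},\frac{\ell}{a}\right),
$$
where the divisibility $a\mid(n,\ell,m)$ unwinds to the conditions $a\mid nQ$, $a\mid m$ and $a^{-1}\ell\in\tfrac12 L'$. Since $V$ exchanges $q\leftrightarrow\xi$ (i.e. $\tau\leftrightarrow\omega$) while fixing $\zeta$, it carries the $q^n\xi^{m/Q}$-term to the $q^{m/Q}\xi^{n}$-term, whose coefficient is computed from $\varphi\lvert_{k,t}T_{-}^{(Q)}(nQ)$ and, by the same formula, gives a divisor sum over $a\mid(m/Q,\ell,nQ)$ with the identical summand $f(nm/a^2,\ell/a)$. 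These divisibility conditions are symmetric under the exchange, so the two coefficients agree and $\Grit(\varphi)\circ V=\Grit(\varphi)$. The delicate points are precisely to match the rescaling of the $\omega$-variable against the index shift $mt\mapsto Qmt$ so that $V$ is a genuine isometry of $U_1\oplus U\oplus L(Qt)$, and to verify that the multiplier $\upsilon_\eta^{D}(\sigma_a)$ is invariant under the exchange; this is where the parity of $Q$ and the extra hypothesis $t\in\NN$ for odd $Q$ enter.

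Finally, I would settle holomorphy and the cusp condition. Convergence on $\HH(L)$ follows from polynomial bounds on the Hecke coefficients together with the contracting factor $\xi^{m/Q}$, and holomorphy at the cusps amounts to showing the Fourier expansion is supported on vectors of nonnegative norm in the positive cone. Because $\varphi$ is a holomorphic Jacobi form, $f(n,\ell)=0$ whenever $2nt-(\ell,\ell)<0$; the divisor-sum formula propagates this vanishing to the coefficients of $\Grit(\varphi)$, giving holomorphy at the cusp attached to $U_1$, and then $V$-invariance gives it at the second cusp. Recording the weight $k$, tracking the induced character on $\widetilde{\Orth}^+(U_1\oplus U\oplus L(Qt))$, and combining the $P$- and $V$-invariance via the generation statement completes the proof.
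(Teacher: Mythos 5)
The host paper gives no proof of this statement---it is imported verbatim as \cite[Theorem 3.2]{CG13}---and your proposal reconstructs essentially the argument of that reference (going back to Gritsenko's arithmetic lifting): assemble the lift as a Fourier--Jacobi series whose coefficients $\varphi\lvert_{k,t}T_{-}^{(Q)}(m)$, $m\equiv 1 \pmod{Q}$, are holomorphic Jacobi forms of index $mt$ with the \emph{single} character $\upsilon_\eta^{D}$ (Proposition \ref{prop:Hecke}, via $Dx\equiv D \bmod 24$), verify the divisor-sum symmetry of the Fourier coefficients under $nQ\leftrightarrow m$ to obtain invariance under the exchange involution $V$, and conclude with Gritsenko's lemma that the parabolic (Jacobi) subgroup together with $V$ generates the relevant discriminant kernel. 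The one step you flag but do not carry out---that $\upsilon_\eta^{D}(\sigma_a)$ is unchanged when the complementary divisor $d=m/a$ is replaced by $nQ/a$---is exactly the multiplier computation performed in \cite{CG13}, and it is precisely where the hypotheses that $D$ is even, and that $t\in\NN$ when $Q$ is odd, enter; since you correctly locate this, your sketch is the standard proof with that single verification deferred.
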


For a given lattice $M$, we will often need to determine all holomorphic, reflective Borcherds products and in some cases prove that no such products exist. To do this, we apply Borcherds's obstruction criterion \cite{Bor99} which states that a formal sum $$\sum_{n < 0} \sum_{x \in D_M} c_x(n) q^n e_x$$ occurs as the principal part of a weakly-holomorphic modular form of weight $\kappa$ for $\rho_M$ if and only if the identity $$\sum_{n < 0} \sum_{x \in D_M} c_x(n) a_x(-n) = 0$$ holds for every cusp form $\sum_{n > 0} \sum_{x \in D_M} a_x(n) q^n e_x$  of weight $2-\kappa$ for the dual representation $\rho_{M(-1)}$.
We construct a basis of cusp forms for $\rho_{M(-1)}$ following \cite{Wil2018} and realize the problem of computing holomorphic reflective products as the problem of enumerating integral lattice points in a polyhedral cone defined by finitely many inequalities (non-negative order along reflective divisors) and finitely many linear equations. 
The solution can be conveniently expressed by the notion of a Hilbert basis, i.e. a minimal system of reflective products $F_1,...,F_r$ such that every holomorphic reflective product can be written in the form $$F = F_1^{n_1} \cdot ... \cdot F_r^{n_r}$$ with non-negative integers $n_1,...,n_r$. Given the polyhedral cone, we used the software Normaliz \cite{Normaliz} to find a Hilbert basis.

\begin{remark}
This computation will mainly be applied in Chapter \ref{sec:exclude-anti-root} to prove that certain lattices $L$ are ``forbidden components" in the sense that they cannot occur as direct summands in a lattice that admits a holomorphic, reflective Borcherds product (of any weight). For this, it turns out to be sufficient to prove that $L$ itself does not admit holomorphic reflective products, i.e. that the above polyhedral cone contains no integral points. This is an easier problem than computing a Hilbert basis that can also be solved using Normaliz \cite{Normaliz}.
\end{remark}

\section{Borcherds--Kac--Moody superalgebras}\label{subsec:BKM}
Borcherds--Kac--Moody superalgebras are infinite dimensional Lie superalgebras introduced by Borcherds \cite{Bor88} in 1988 which generalize affine Kac--Moody algebras. They can be defined in terms of Chevalley--Serre generators and relations which are encoded in a generalized Cartan matrix. The restrictions on the generalized Cartan matrix are weaker; in particular, simple roots are allowed to have non-positive norm (i.e. they can be imaginary roots). BKM superalgebras also have a character formula for highest-weight modules and a denominator identity. We will review the denominator identity following \cite{Ray06} because it contains valuable information about the roots, the root multiplicities and the Weyl group. 

Let $\mathcal{G}=\mathcal{G}_0\oplus \mathcal{G}_1$ be a BKM superalgebra with even and odd components $\mathcal{G}_0$ and $\mathcal{G}_1$, respectively. If $\mathcal{G}_1$ is trivial, $\mathcal{G}$ is a BKM algebra.  Let $\mathcal{H}$ be the generalized Cartan subalgebra of $\mathcal{G}$. Let $\Delta^+$ be the set of positive roots of $\mathcal{G}$. We have the root space decomposition
$$
\mathcal{G}=\bigoplus_{\alpha \in \Delta^+} \mathcal{G}_\alpha \oplus \mathcal{H} \oplus \bigoplus_{\alpha \in \Delta^+} \mathcal{G}_{-\alpha}.
$$
For $\alpha \in \Delta^+$, we define
$$
\mathrm{mult}_0(\alpha) = \dim (\mathcal{G}_\alpha\cap \mathcal{G}_0),  \quad \mathrm{mult}_1(\alpha) = \dim (\mathcal{G}_\alpha\cap \mathcal{G}_1), \quad
\mathrm{mult}(\alpha) = \dim (\mathcal{G}_\alpha),
$$
and define the super-multiplicity $s$-$\mathrm{mult}(\alpha)$ as 
$$
\mathrm{mult}_0(\alpha) - \mathrm{mult}_1(\alpha). 
$$

The sets of positive even roots and odd roots are respectively
$$
\Delta^+_0=\{ \alpha\in \Delta^+: \mathrm{mult}_0(\alpha) >0 \} \quad \text{and} \quad \Delta^+_1=\{ \alpha\in \Delta^+: \mathrm{mult}_1(\alpha) >0 \}.
$$
Let $I$ be a countable set indexing the simple roots $\alpha_i$ and $S$ be the subset of $I$ indexing odd roots. By expanding a root as $\alpha=\sum_{i\in I} k_i \alpha_i$, we define the height and even height of $\alpha$ respectively as
$$
\mathrm{ht}(\alpha) = \sum_{i\in I} k_i \quad \text{and} \quad \mathrm{ht}_0(\alpha) = \sum_{i\in I \backslash S } k_i.
$$
We further define two formal sums
$$
T=e^\rho \sum_{\mu}(-1)^{\mathrm{ht}(\mu)} e^{-\mu}, \quad T'=e^\rho \sum_{\mu}(-1)^{\mathrm{ht}_0(\mu)} e^{-\mu},
$$
where $\rho$ is the Weyl vector of $\mathcal{G}$ and the sums are taken over all sums $\mu$ of distinct pairwise orthogonal imaginary simple roots. Let $W$ be the Weyl group generated by reflections associated with real simple roots. In this paper, we assume that $\mathcal{G}$ has no odd real roots. Under this assumption, one has the denominator identity 
\begin{equation}
    e^\rho \cdot \frac{\prod_{\alpha \in \Delta_0^+}(1-e^{-\alpha})^{\mathrm{mult}_0(\alpha)}}{\prod_{\alpha \in \Delta_1^+}(1+e^{-\alpha})^{\mathrm{mult}_1(\alpha)}} = \sum_{\sigma\in W} \det(\sigma) \sigma(T)
\end{equation}
and the super-denominator identity 
\begin{equation}
    e^\rho \cdot \frac{\prod_{\alpha \in \Delta_0^+}(1-e^{-\alpha})^{\mathrm{mult}_0(\alpha)}}{\prod_{\alpha \in \Delta_1^+}(1-e^{-\alpha})^{\mathrm{mult}_1(\alpha)}} = \sum_{\sigma\in W} \det(\sigma) \sigma(T'). 
\end{equation}
An affine Kac--Moody  algebra $\mathcal{G}$ is a BKM algebra with no imaginary simple roots. Therefore, $T=e^{\rho}$, and we recover the denominator identity for affine Kac--Moody algebras. 

Following Borcherds, we study BKM superalgebras that have a hyperbolic root lattice (i.e. its signature is of type $(l,1)$) and whose denominator or super-denominator is the Fourier expansion of a holomorphic modular form $F$ on $\Orth(l+1,2)$ at some $0$-dimensional cusp. The product side of the denominator identity suggests that $F$ can usually be constructed as a Borcherds product \cite{Bru02, Bru14}, in which case the roots and their multiplicities are encoded in a weakly holomorphic modular form for the Weil representation of $\SL_2(\ZZ)$. The real roots of $\mathcal{G}$ are orthogonal to hyperplanes through the cusp. By definition of Weyl group, these hyperplanes are reflective as zeros of $F$. Thus $F$ is locally reflective at the cusp in some sense. In general, $F$ also vanishes on hyperplanes not through the cusp. If such hyperplanes are also mirrors of reflections in the modular group of $F$, then $F$ would be a reflective modular form, i.e. $F$ is globally reflective. This holds in many cases (see \cite{WW23} for the singular-weight case). 

This is particularly interesting when $F$ is of singular weight. In this case, imaginary simple roots are easy to describe, and in many cases they are just the multiples of the Weyl vector. Moreover, one can write the sum side of the denominator identity explicitly as in \cite{Bor98, Sch04, Sch06}, and it is expected that such a BKM superalgebra has a natural construction as the BRST cohomology of a suitable vertex algebra. In this paper, we will classify this type of BKM superalgebra under some mild conditions and work out the corresponding vertex algebras. Our classification is closely related to the fake monster algebra and its twists, so we will review that theory as well. 

In 1990 Borcherds \cite{Bor90} constructed the fake monster algebra $\mathbb{G}$ as the BRST cohomology related to the Leech lattice VOA. The root lattice of the BKM algebra $\mathbb{G}$ is $U\oplus \Lambda$, where $\Lambda$ is the Leech lattice. The Weyl vector is $\rho=(-1,0,0)$ (see Notation \ref{notation2}) and the real roots are $\alpha\in U\oplus \Lambda$ with $\alpha^2=2$. The real simple roots are characterized by the equality $(\rho,\alpha)=1$. The imaginary simple roots are $n\rho$ for negative integers $n$, each with multiplicity $24$. The denominator of $\mathbb{G}$ defines a reflective Borcherds product $\Phi_{12}$ of singular weight on $2U\oplus \Lambda$ whose input is the full character of the Leech lattice VOA (see \cite{Bor95}). Let $\mathrm{Co}_0=\Orth(\Lambda)$ be the Conway group.  Borcherds \cite{Bor92} proved that the denominator identity of $\mathbb{G}$ is a cohomological identity and that $\mathrm{Co}_0$ acts on $\mathbb{G}$ naturally. In this way, he obtained a twisted denominator identity for each conjugacy class $[g]$ of $\mathrm{Co}_0$ and proved that it is the (untwisted) super-denominator identity of a BKM superalgebra, denoted $\mathbb{G}_g$. The root lattice of $\mathbb{G}_g$ is $L_g=U\oplus \Lambda^g$, where 
$$
\Lambda^g=\{ v \in \Lambda: g(v)=v \}
$$
is the fixed-point sublattice. The real simple roots are the roots $\alpha$ of $L_g$ satisfying 
$$
(\rho, \alpha) = \alpha^2/2
$$
and the imaginary simple roots are $m\rho$ with super-multiplicity $\sum_{k|(m,n_g)}b_k$ for all negative integers $m$, where $n_g$ is the order of $g$ and where $b_k$ describe the cycle shape $\prod_{b_k\neq 0}k^{b_k}$ of $g$. The last two named authors \cite{WW22} proved that the twisted denominator function associated with $g$ defines a holomorphic Borcherds product (denoted $\Phi_g$) of singular weight on $U(N_g)\oplus U\oplus \Lambda^g$, where $N_g$ is the level of $g$, and that $\Phi_g$ is reflective on $U(N_g)\oplus U\oplus \Lambda^g$ if $N_g=n_g$, confirming a conjecture of Borcherds \cite[\S 15, Example 3]{Bor95}. Note that the level $N_g$ is the smallest multiple of $n_g$ for which $N_g\sum_{k|n_g}\frac{b_k}{k}$ is divisible by $24$, and the associated eta quotient $\eta_g(\tau)=\prod_{k|n_g}\eta(k\tau)^{b_k}$ is a weakly holomorphic modular form of weight $\frac{1}{2}\sum_{k|n_g}b_k$ on $\Gamma_0(N_g)$. 

\chapter{The hyperbolization of affine Kac--Moody algebras}\label{sec:hyperbolization}

It is well-known how to realize affine Kac--Moody algebras as extensions of semi-simple Lie algebras. In this chapter, we introduce extensions of 
affine Kac--Moody algebras to BKM superalgebras that we call their hyperbolizations.

Let $L$ be an even positive-definite lattice of rank $\rank(L)$, let $U$ and $U_1$ be two hyperbolic planes, and define $M=U_1\oplus U\oplus L$. Let $\mathcal{G}$ be a BKM superalgebra with root lattice $U\oplus L'$ and generalized Cartan subalgebra $\mathcal{H}$ with no odd real roots. We assume that the super-denominator of $\mathcal{G}$ is given by the Fourier expansion of a Borcherds product $F$ for $\widetilde{\Orth}^+(M)$ at the $0$-dimensional cusp determined by $U_1$. 

Every divisor $\lambda^\perp$ of $F$ lies in the $\widetilde{\Orth}^+(M)$-orbit of the rational quadratic divisor associated with some primitive vector $\alpha\in U\oplus L'$. If $\alpha^\perp$ appears in the divisor of $F$, then $\alpha$ is a real root. Since we have assumed that $\mathcal{G}$ has no odd real roots, the divisor $\alpha^\perp$ is always a zero and therefore $F$ is holomorphic. Moreover, the associated reflection $\sigma_\alpha$ lies in the Weyl group of $\mathcal{G}$. Therefore, $\sigma_\alpha$ fixes the root lattice $U\oplus L'$, so it also fixes $M'=U_1\oplus U\oplus L'$, which implies that $\sigma_\alpha \in \Orth^+(M')=\Orth^+(M)$ and finally that $F$ is a reflective modular form. 

We denote by $\phi$ the Jacobi form input of $F$ and write its expansion at $\infty$ as
$$
\phi(\tau,\mathfrak{z}) = \sum_{n\in\ZZ,\; \ell\in L'} f(n,\ell)q^n \zeta^\ell \in J_{0,L}^!.
$$
By Theorem \ref{th:product}, the product side of the denominator identity is (see Notation \ref{notation2})
$$
e^\rho \prod_{\alpha>0}\left(1-e^{-\alpha}\right)^{f(nm,\ell)}, \quad \alpha=(n,\ell,m)\in U\oplus L',\quad \rho=(-A,\vec{B}, -C).
$$
In particular, if $f(nm,\ell)\neq 0$ then $\alpha$ is a root of $\mathcal{G}$ and 
$$
\mathrm{mult}_0(\alpha)-\mathrm{mult}_1(\alpha)=f(nm,\ell).
$$
The weight of $F$ is given by $f(0,0)/2$. The root $\alpha$ is positive if and only if either $m>0$, or $m=0$ and $n>0$, or $m=n=0$ and $\ell>0$. 

Let $\mathcal{R}_+$ be the set of positive roots of $\mathcal{G}$ of type $(n,\ell,0)$. For any positive integer $n$, $(n,0,0)$ is an imaginary root in $\mathcal{R}_+$. All other roots in $\mathcal{R}_+$ are (even) real and have multiplicity $f(0,\ell)$. Since the multiplicity of a real root is always $1$ (see e.g. \cite[Corollary 2.3.42]{Ray06}), we have $f(0,\ell)=1$. 

Assume further that none of $(n,0,0)$ is odd. Then the even roots $(n,0,0)$ always have multiplicity $f(0,0)$. Now consider the sum of root spaces
$$
\mathcal{G}^0:=\bigoplus_{\alpha\in \mathcal{R}_+} \mathcal{G}_{\alpha} \oplus \mathcal{H} \oplus \bigoplus_{\alpha\in \mathcal{R}_+} \mathcal{G}_{-\alpha}.
$$
If $F$ is of singular weight, i.e. $f(0,0)=\rank(L)$, then $\mathcal{G}^0$ defines an affine Kac--Moody algebra $\hat{\mathfrak{g}}$ for which $\mathfrak{g}$ is a semi-simple Lie algebra whose positive roots are the roots of type $(0,\ell,0)$ in $\mathcal{R}^+$ (here, we can view $K=\delta=(1,0,0)$ and $d=\hat{w}_0=(0,0,-1)$). The leading Fourier--Jacobi coefficient of $F$ at the $1$-dimensional cusp determined by $U_1\oplus U$ is a holomorphic Jacobi form of singular weight given by the theta block
$$
\eta(\tau)^{\rank(L)} \prod_{(0,\ell,0)\in \mathcal{R}^+} \frac{\vartheta(\tau, (\ell, \mathfrak{z}))}{\eta(\tau)}, \quad \mathfrak{z}\in L\otimes\CC,
$$
which equals the denominator $\vartheta_\mathfrak{g}$ of $\hat{\mathfrak{g}}$. In particular, $\mathcal{G}$ is an extension of $\hat{\mathfrak{g}}$. For $m>0$ we define $\mathcal{G}^m$ to be the sum of root spaces associated with roots of type $\pm (*,*,m)$. Then   
$$
\mathcal{G}=\bigoplus_{m=0}^\infty \mathcal{G}^m
$$
is a graded module over $\hat{\mathfrak{g}}=\mathcal{G}^0$. From this point of view, it is natural to regard $\mathcal{G}$ as a \textit{hyperbolization} of $\hat{\mathfrak{g}}$. 

Motivated by the discussion above, we introduce the following definition. 

\begin{definition}\label{def:hyperbolization}
Let $\hat{\mathfrak{g}}$ be an affine Kac--Moody algebra. A BKM superalgebra $\mathcal{G}$ without odd real roots is called a \textit{hyperbolization} of $\hat{\mathfrak{g}}$ if there exists an even positive-definite lattice $L$ such that the root lattice of $\mathcal{G}$ is $U\oplus L'$ and the super-denominator of $\mathcal{G}$ defines a holomorphic Borcherds product $F$ of (singular) weight $\rank(L)/2$ on $2U\oplus L$ whose leading Fourier--Jacobi coefficient is the denominator $\vartheta_\mathfrak{g}$ of $\hat{\mathfrak{g}}$. We also call $F$ a \textit{hyperbolization} of $\vartheta_\mathfrak{g}$.  
\end{definition}

The notion of hyperbolization was first introduced by Gritsenko and the second named author in \cite[Theorem 1.2]{Gri12}, \cite[Section 6.3]{GW19} and \cite[Remark 3.11]{GW20}.

In this paper, we will classify affine Kac--Moody algebras that admit hyperbolizations and construct a hyperbolization for every such affine Kac--Moody algebra. Moreover, we express every Jacobi form input $\phi$ as a $\ZZ$-linear combination of full characters of an associated vertex operator (super)algebra. 

\chapter[Root systems associated with singular reflective modular forms]{Root systems associated with reflective Borcherds products of singular weight}\label{sec:root-system}
In this chapter, we classify reflective automorphic products of singular weight on lattices of type $2U\oplus L$ whose Jacobi form inputs have non-negative $q^0$-terms. Although Jacobi form inputs that have negative coefficients in their $q^0$-terms can lift to holomorphic Borcherds products, the corresponding BKM superalgebras in these cases will have odd real roots.

By an argument similar to \cite[Theorem 6.2]{Wan19} and \cite[Lemma 4.5]{Wan21a}, we will show that any such Borcherds product belongs to a root system satisfying certain strong restrictions. These restrictions yield a finite list of root systems and a finite list of candidates for affine Kac--Moody algebras that admit hyperbolizations.

\begin{theorem}\label{th:root-system}
Let $L$ be an even positive-definite lattice of rank $\rank(L)$ and let $U$ be a hyperbolic plane. Suppose there is a reflective Borcherds product $F$ of singular weight on $\widetilde{\Orth}^+(2U\oplus L)$ whose Jacobi form input has non-negative $q^0$-term. Then either $(a)$ or $(b)$ below holds:
\begin{itemize}
    \item[(a)] The lattice $L$ is the Leech lattice, $F$ is the denominator $\Phi_{12}$ of the fake monster algebra, and the Jacobi form input is the full character of the Leech lattice VOA. 
    \item[(b)] There exists a semi-simple Lie algebra $\mathfrak{g}=\oplus_{j=1}^s\mathfrak{g}_{j,k_j}$ of rank $\rank(L)$ satisfying restrictions $(1)$ or $(1')$ and $(2)$ and $(3)$, where $\mathfrak{g}_{j}$ are simple Lie algebras and $k_j$ are positive integers for $1\leq j\leq s$. We refer to Notation \ref{notation} for the symbols below. Let $\lambda\in U$ with $\lambda^2=2$.
    \begin{itemize}
        \item[(1)] If $F$ vanishes on $\lambda^\perp$, then 
        \begin{equation}\label{antiC}
        C:=\frac{\dim \mathfrak{g}}{24} -1 = \frac{h_j^\vee}{k_j}, \quad \text{for $1\leq j\leq s$}. 
        \end{equation}
        
        \item[($1'$)] If $F$ does not vanish on $\lambda^\perp$, then 
        \begin{equation}\label{symC}
        C:=\frac{\dim \mathfrak{g}}{24} = \frac{h_j^\vee}{k_j} \quad \text{and} \quad k_j>1, \quad \text{for $1\leq j\leq s$}. 
        \end{equation}

        \item[(2)] The rescaled lattice $L(C)$ is integral and $L$ is bounded by
        \begin{equation}
        \mathbf{Q}_\mathfrak{g}<L< \mathbf{P}_\mathfrak{g}.
    \end{equation} 

        \item[(3)] The leading Fourier--Jacobi coefficient of $F$ at the $1$-dimensional cusp determined by $2U$ coincides with the denominator of the affine Kac--Moody algebra $\hat{\mathfrak{g}}$.
    \end{itemize}
\end{itemize}
\smallskip
    
    Let $c_\mathfrak{g}$ denote the central charge of the affine VOA generated by $\hat{\mathfrak{g}}=\oplus_{j=1}^s\hat{\mathfrak{g}}_{j,k_j}$. Then $c_\mathfrak{g}$ is always $24$ in case $(1)$ and in case $(1')$ we have 
    \begin{equation}
        c_\mathfrak{g} = \frac{24C}{C+1}. 
    \end{equation}
\end{theorem}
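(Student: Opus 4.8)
The plan is to read off every piece of data from the leading Fourier--Jacobi coefficient of $F$ supplied by Theorem~\ref{th:product}, and then to pin down the two scalars $A$ and $C$ in the Weyl vector $\rho=(-A,\vec B,-C)$. First I would record the reductions forced by singular weight and reflectivity. Since the weight of $F$ is $f(0,0)/2$ and equals $\rank(L)/2$, we get $f(0,0)=\rank(L)$; and since by \cite{WW23} a singular-weight reflective product has only simple zeros and is anti-invariant under the reflections in its zeros, the hypothesis $f(0,\ell)\ge 0$ forces $f(0,\ell)\in\{0,1\}$ for $\ell\ne 0$. Thus $\mathcal{R}=\{\ell\in L':\ell\ne 0,\ f(0,\ell)=1\}$ is the support of the $q^0$-term, and anti-invariance of $F$ under each $\sigma_\ell$ ($\ell\in\mathcal{R}$) permutes $\mathcal{R}$, so $\mathcal{R}$ is stable under its own reflection group and hence a finite reduced root system.

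Next I would invoke Theorem~\ref{th:product}: the leading Fourier--Jacobi coefficient is the theta block $\eta^{\rank(L)}\prod_{\ell>0}(\vartheta(\tau,(\ell,\mathfrak{z}))/\eta)^{f(0,\ell)}$, a holomorphic Jacobi form of singular weight. By the remark following Notation~\ref{notation} (the theorem of \cite{Wan23-star}), such a theta block must be the denominator $\vartheta_\mathfrak{g}$ of an affine Kac--Moody algebra; this is restriction~(3) and realizes $\mathcal{R}_+$ as the positive roots of a semi-simple $\mathfrak{g}=\oplus_j\mathfrak{g}_{j,k_j}$ of rank $\rank(L)$. The dichotomy between (a) and (b) is exactly where the classification argument of \cite{Wan19, Wan21a, Wan23a} enters, and I expect it to be the main obstacle: one must show that either $\mathcal{R}$ has full rank $\rank(L)$ (case (b)), or $\mathcal{R}$ is deficient, in which case the reflective-form constraints force $L$ to be the Leech lattice and $F=\Phi_{12}$ (case (a)). Ruling out every even positive-definite lattice without a full-rank reflective root system except Leech is the delicate part.

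Assuming case (b), the two equalities for $C$ come out by direct computation. From $A=\tfrac{1}{24}\sum_\ell f(0,\ell)=\tfrac{1}{24}(\rank(L)+|\mathcal{R}|)$ and $\dim\mathfrak{g}=\rank\mathfrak{g}+|\Delta_\mathfrak{g}|$ with $\rank\mathfrak{g}=\rank(L)$, we get $A=\dim\mathfrak{g}/24$. Reflectivity is crucial here: a singular coefficient $f(n,\ell)$ with $2n-(\ell,\ell)<0$ contributes a divisor on vectors of norm $(\ell,\ell)-2n>0$, and a reflective divisor has norm $2/d\le 2$, so $(\ell,\ell)-2n\le 2$; this leaves only $f(0,\ell)$ with $\ell\in\mathcal{R}$ and $f(-1,0)$, and in particular kills $f(-1,\ell)$ for $\ell\ne 0$. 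Since the leading Fourier--Jacobi coefficient has index $C$, its theta-block elliptic transformation gives $\sum_{\ell\in\mathcal{R}_+}(\ell,\mathfrak{z})^2=C\,(\mathfrak{z},\mathfrak{z})_L$; restricting to the $j$-th simple block, writing $(\cdot,\cdot)_L=k_j\langle\cdot,\cdot\rangle$ there (which \emph{defines} the level $k_j$) and using the dual-Coxeter identity $\sum_{\alpha\in\Delta_j^+}\langle\alpha,\mathfrak{z}\rangle^2=h_j^\vee\langle\mathfrak{z},\mathfrak{z}\rangle$ yields $C=h_j^\vee/k_j$ for all $j$ (that $C$ is a single scalar is what forces $h_j^\vee/k_j$ to be independent of $j$). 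Finally, applying the $\mathfrak{z}$-Laplacian to $\phi$, evaluating at $\mathfrak{z}=0$, and subtracting $\tfrac{\rank(L)}{12}E_2\,\phi(\tau,0)$ produces a weakly holomorphic weight-$2$ form on $\SL_2(\ZZ)$, whose constant term must vanish; reading it off gives $A-C=\sum_\ell f(-1,\ell)=f(-1,0)=:a$. As the multiplicity of $\lambda^\perp$ for $\lambda=e-f$ is precisely $f(-1,0)$, the value $a\in\{0,1\}$ records whether $F$ vanishes on $\lambda^\perp$, so $C=\dim\mathfrak{g}/24-a$ is exactly \eqref{antiC} in case (1) and \eqref{symC} in case (1$'$); the side condition $k_j>1$ in the symmetric case follows because $k_j=1$ would place a norm-$2$ root in $L$ and force $f(-1,0)=1$ via the discriminant relation of Remark~\ref{rem:singular}.

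The remaining claims are short corollaries. Restriction~(2): since the leading Fourier--Jacobi coefficient is a nonzero Jacobi form of index $C$, the lattice $L(C)$ is integral by \cite{CG13}; the reflections $\sigma_\ell$ fix $L$, so every coroot $2\ell/(\ell,\ell)_L$ lies in $L$, giving $\mathbf{Q}_\mathfrak{g}\subseteq L$, while $\mathcal{R}\subseteq L'$ together with the definition of the dual lattice gives $(x,\ell)_L\in\ZZ$ for all $x\in L$, hence $L\subseteq\mathbf{P}_\mathfrak{g}$. For the central charge, substituting $h_j^\vee=Ck_j$ into \eqref{cc} gives $c_\mathfrak{g}=\sum_j\frac{k_j\dim\mathfrak{g}_j}{k_j+h_j^\vee}=\sum_j\frac{\dim\mathfrak{g}_j}{1+C}=\frac{\dim\mathfrak{g}}{1+C}=\frac{24A}{1+C}$. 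In case (1) we have $A=C+1$, so $c_\mathfrak{g}=24$; in case (1$'$) we have $A=C$, so $c_\mathfrak{g}=24C/(C+1)$, as claimed.
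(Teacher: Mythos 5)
Your derivation of the two scalar identities is sound and is essentially the paper's argument in disguise: your heat-operator computation (Laplacian at $\mathfrak{z}=0$ minus $\tfrac{\rank(L)}{12}E_2\,\phi(\tau,0)$, constant term of a weakly holomorphic weight-$2$ form on $\SL_2(\ZZ)$ vanishes) reproves exactly the identity $A-f(-1,0)=C$ that the paper imports from \cite[Proposition 2.6]{Gri18}, and your elliptic-transformation identity $\sum_{\ell\in\mathcal{R}_+}(\ell,\mathfrak{z})^2=C(\mathfrak{z},\mathfrak{z})$ is the second Gritsenko identity. Your treatment of $(1)$, $(1')$, $(2)$, $(3)$, the interpretation of $a=f(-1,0)$ as the multiplicity of $\lambda^\perp$, the exclusion of $f(-1,\ell)$ for $\ell\neq 0$, the argument for $k_j>1$ in the symmetric case via the coefficient relation of Remark \ref{rem:singular} (note that a norm-$2$ root lies in $L$, not merely $L'$, precisely because reflectivity with $d=1$ forces $d\ell\in L$), and the central charge computation all match the paper.

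The genuine gap is the dichotomy between (a) and (b), which you flag as ``the main obstacle'' and then misdiagnose. There is no ``deficient rank'' case to rule out, and no classification of lattices without full-rank root systems is needed: the identity you yourself derived, $\sum_{\ell\in\mathcal{R}_+}(\ell,\mathfrak{z})^2=C(\mathfrak{z},\mathfrak{z})$, settles it. If $\mathcal{R}\neq\emptyset$ then $C>0$, and since the left-hand side is a positive semidefinite form whose kernel is the orthogonal complement of the span of $\mathcal{R}$, equality with $C(\mathfrak{z},\mathfrak{z})$ forces $\mathcal{R}$ to span $L\otimes\QQ$ --- full rank is automatic, and you are in case (b). If $\mathcal{R}=\emptyset$ then $C=0$, and the scalar identity gives $f(-1,0)=\rank(L)/24$; since $f(-1,0)\in\{0,1\}$ and $\rank(L)\geq 1$, this forces $f(-1,0)=1$ and $\rank(L)=24$. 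At that point external input is genuinely required and cannot be reconstructed from first principles: \cite[Proposition 3.2]{Ma17} shows $2U\oplus L\cong\II_{26,2}$, hence $L=\Lambda$, and \cite[Theorem 3.5]{WW22} identifies $F=\Phi_{12}$ with input the Leech character. Your proposal supplies no argument at all for this step, so as written case (a) and the full-rank claim in case (b) are unproven. Two secondary imprecisions: (i) your assertion that ``anti-invariance of $F$ under each $\sigma_\ell$ permutes $\mathcal{R}$'' elides the actual mechanism --- the relevant reflection is $\sigma_{(0,0,\ell_1,1,0)}$ on the big lattice $M$, whose fixing of $M'$ yields the crystallographic integrality $2(\ell_1,\ell_2)/(\ell_1,\ell_1)\in\ZZ$, and one must move the image divisor back to standard position $(0,0,\sigma_{\ell_1}(\ell_2),1,0)^\perp$ via the Eichler criterion \cite[Proposition 3.3]{GHS09} before reading off $f(0,\sigma_{\ell_1}(\ell_2))=1$ (your alternative route through \cite{Wan23-star} would bypass this, but that result is not how the paper proceeds); (ii) you define $k_j$ by the rescaling of the bilinear form but never prove $k_j\in\ZZ_{>0}$ --- this follows from the reflectivity condition you quote, since a long root $\ell$ of the $j$-th block has $(\ell,\ell)=2/k_j=2/d$ with $d\ell\in L$, so $k_j=d$.
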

We call case $(1)$ (resp. $(1')$) the \textit{anti-symmetric} (resp. \textit{symmetric}) case, since $F$ is anti-invariant (resp. invariant) under the involution $(\omega,\mathfrak{z},\tau)\mapsto (\tau,\mathfrak{z},\omega)$. 

\begin{proof}
This is a more precise version of \cite[Theorem 2.2]{Wan23a} in the specific case of singular weight. We provide a detailed proof here as it combines the proof of \cite[Theorem 2.2]{Wan23a} with some recent results proved in \cite{WW23}, and because it explains the construction of the Lie algebra $\mathfrak{g}$.

Let $\phi\in J_{0,L}^!$ be the Jacobi form input of $F$, and write its Fourier expansion as
$$
\phi(\tau,\mathfrak{z}) = \sum_{n\in \ZZ,\; \ell\in L'} f(n,\ell) q^n \zeta^\ell. 
$$
By assumption, $f(0,\ell)$ are non-negative integers. 
Each vector $v=(n,\ell,1) \in U\oplus L'$ is primitive in $M:=2U\oplus L$ and satisfies $v^2=(\ell,\ell)-2n$. The multiplicity of $v^\perp$ in the divisor of $F$ is
$$
\sum_{d=1}^\infty f(d^2n, d\ell).
$$ 
By \cite[Lemma 2.1 and Equation (2.1)]{Wan23a}, the Fourier expansion of $F$ begins with
$$
\phi(\tau,\mathfrak{z}) = f(-1,0)q^{-1} + \sum_{\ell \in L'}f(0,\ell)\zeta^\ell + O(q). 
$$
Since $F$ is of singular weight, $f(0,0)=\rank(L)$. We know from \cite[Theorem 1.2]{WW23} that a holomorphic Borcherds product of singular weight has only simple zeros. Therefore, $f(-1,0)$ and $f(0,\ell)$ can only be $0$ or $1$. By definition, $f(-1,0)=1$ in the anti-symmetric case and $f(-1,0)=0$ in the symmetric case. We define
$$
\mathcal{R}=\{ \ell \in L'\backslash \{0\} : f(0,\ell)=1 \}.
$$
By \cite[Proposition 2.6]{Gri18}, we have
\begin{align}\label{eq:A}
\frac{|\mathcal{R}|+\rank(L)}{24} -f(-1,0) &= \frac{1}{2\rank(L)}\sum_{\ell \in \mathcal{R}} (\ell, \ell) =:C, \\
\label{eq:B}
\sum_{\ell \in \mathcal{R}} (\ell, \mathfrak{z})^2 &= 2C (\mathfrak{z}, \mathfrak{z}).
\end{align}

If $\mathcal{R}$ is empty, then $C=0$, $f(-1,0)=1$ and $\rank(L)=24$. In this case, \cite[Proposition 3.2]{Ma17} shows that $M$ is isomorphic to the even unimodular lattice of signature $(26,2)$, which then forces $L$ to be the Leech lattice $\Lambda$ and $F$ to be the Borcherds form $\Phi_{12}$ (see \cite[Theorem 3.5]{WW22}). The Jacobi form input of $\Phi_{12}$ is given by 
$$
\Theta_{\Lambda,0}(\tau,\mathfrak{z})/\eta^{24}(\tau)=q^{-1}+24+O(q),
$$
which equals the full character of the Leech lattice VOA. 

Now assume that $\mathcal{R}$ is non-empty. Equation \eqref{eq:B} implies that $C> 0$ and that $L\otimes\QQ$ is spanned by $\mathcal{R}$ over $\QQ$. We will now show that $\mathcal{R}$ is a rescaled root system. Let $\ell_1, \ell_2 \in \mathcal{R}$. Then $-\ell_1 \in \mathcal{R}$ because $f(0,\ell)=f(0,-\ell)$. For any integer $a>1$, $a\ell_1 \not\in \mathcal{R}$, as otherwise $(0,\ell_1,1)^\perp$ would have multiplicity at least $2$ in the divisor of $F$, contradicting the fact that all zeros of $F$ are simple. Since $F$ is reflective, the reflection $\sigma_{(0,0,\ell_1,1,0)}$ fixes $M$ (see Notation \ref{notation2}). From
$$
\sigma_{(0,0,\ell_1,1,0)}\big((0,0,\ell_2,1,0)\big)= \big(0,0,\sigma_{\ell_1}(\ell_2),1-2(\ell_1,\ell_2)/(\ell_1,\ell_1),0\big)=:\lambda \in M
$$
we obtain
$$
\sigma_{\ell_1}(\ell_2)\in L' \quad \text{and} \quad 2(\ell_1,\ell_2)/(\ell_1,\ell_1) \in \ZZ.
$$
We know from \cite[Theorem 1.2]{WW23} that $F$ is anti-invariant under $\sigma_{(0,0,\ell_1,1,0)}$ and therefore vanishes on $\lambda^\perp$. Since $(0,0,\sigma_{\ell_1}(\ell_2),1,0)$ is primitive in $M'$, by the Eichler criterion (see e.g. \cite[Proposition 3.3]{GHS09}) there exists $g\in \widetilde{\Orth}^+(M)$ such that $g(\lambda)=(0,0,\sigma_{\ell_1}(\ell_2),1,0)$. Therefore, $F$ also vanishes on $(0,0,\sigma_{\ell_1}(\ell_2),1,0)^\perp$, from which it follows that $f(0,\sigma_{\ell_1}(\ell_2))=1$ and that $\sigma_{\ell_1}(\ell_2)\in \mathcal{R}$. This shows that $\mathcal{R}$ is a (rescaled) root system.

Now we can decompose $\mathcal{R}$ into rescaled irreducible root systems
$$
\mathcal{R}=\bigoplus_{j=1}^s \mathcal{R}_j(a_j),
$$
where $\mathcal{R}_j$ are standard irreducible root systems and $a_j$ are rational numbers by which the (squared) norms of the roots in $\mathcal{R}_j$ are rescaled. Since $(0,0,\ell,1,0)^\perp$ is reflective for any $\ell \in \mathcal{R}$, there exists a positive integer $d$ such that $\ell^2=2/d$ and $d \ell \in L$. It follows that $a_j=1/k_j$ for some positive integers $k_j$. In the symmetric case, there are no $2$-roots in $\mathcal{R}$, so every $k_j$ is larger than $1$. Let $\mathfrak{g}_{j,k_j}$ be the simple Lie algebra of type $\mathcal{R}_j$ with level $k_j$. We see from \eqref{eq:A} and \eqref{eq:B} that the semi-simple Lie algebra $\mathfrak{g}=\oplus_{j=1}^s \mathfrak{g}_{j,k_j}$ satisfies condition $(1)$ or $(1')$ of the theorem. 

By \cite[Corollary 4.5]{Wan19}, $L(C)$ is an integral lattice. The set $\mathcal{R} = \oplus_{j=1}^s \mathcal{R}_j(1/k_j)$ generates a sublattice of $L'$ over $\ZZ$, from which it follows that
$$
\bigoplus_{j=1}^s Q_j(1/k_j) < L' \quad \text{and therefore}  \quad L<\bigoplus_{j=1}^s P_j^\vee(k_j).
$$
The vectors $\ord(\ell) \ell$ for $\ell \in \mathcal{R}$ generate a sublattice of $L$ over $\ZZ$, where $\ord(\ell)$ is the order of $\ell$ in $L'/L$. It is easy to check that this lattice contains the lattice generated by long roots of $\oplus_{j=1}^s \mathcal{R}_j(k_j)$, and therefore $\oplus_{j=1}^s Q_j^\vee(k_j) < L$. This proves claim (2).

Finally, let $\Delta_j^+$ be the set of positive roots of $\mathcal{R}_j$ and let $\latt{-,-}$ be the normalized bilinear form on $\mathcal{R}_j$ that was fixed in Section \ref{subsec:theta-blocks}. For $\alpha \in \Delta_j^+$, we have $\alpha / k_j \in \mathcal{R} \subsetneq L'$ and $$(\alpha/k_j, \mathfrak{z})=k_j\latt{\alpha/k_j, \mathfrak{z}}=\latt{\alpha, \mathfrak{z}}.$$
Therefore, the leading Fourier--Jacobi coefficient of $F$ is a Jacobi form of lattice index $L(C)$ given by
$$
\eta(\tau)^{\rank(L)} \prod_{0<\ell \in \mathcal{R}} \frac{\vartheta(\tau, (\ell, \mathfrak{z}))}{\eta(\tau)} = \prod_{j=1}^s \Big( \eta(\tau)^{\rank(\mathfrak{g}_{j})} \prod_{\alpha \in \Delta_j^+}\frac{\vartheta(\tau, \latt{\alpha, \mathfrak{z}})}{\eta(\tau)} \Big),
$$
which is actually the denominator of $\hat{\mathfrak{g}}$. Note that 
$$
\bigoplus_{j=1}^s Q^\vee_j(h_j^\vee)<L(C)< \bigoplus_{j=1}^s P^\vee_j(h_j^\vee),
$$
which matches the index $L(C)$ of the above leading Fourier--Jacobi coefficient of $F$. 

The last assertion follows from the central charge formula
\begin{align*}
    c_\mathfrak{g} = \sum_{j=1}^s \frac{k_j \dim \mathfrak{g}_{j}}{ k_j+h_j^\vee } = \sum_{j=1}^s \frac{\dim \mathfrak{g}_{j}}{1 + C} = \frac{\dim \mathfrak{g}}{1+C}
\end{align*}
and because $\dim \mathfrak{g} = 24(C+1)$ in the anti-symmetric case and $\dim \mathfrak{g} = 24C$ in the symmetric case. 
\end{proof}

\begin{remark}
In the above theorem, when $\mathcal{R}_j$ is of type $E_8$, $F_4$ or $G_2$, we have $Q_j^\vee = P_j^\vee$. Let $J$ denote the subset of such $j$. Then we have (see \cite[Theorem 2.2 (4)]{Wan23a})
$$
L=K\oplus \bigoplus_{j\in J}P_j^\vee(k_j), \quad \text{where} \quad \bigoplus_{j\neq J}Q_j^\vee(k_j) < K < \bigoplus_{j\neq J}P_j^\vee(k_j). 
$$  
\end{remark}

\begin{proposition}\label{cor:solutions}
Equation \eqref{antiC} has $221$ solutions, and they are listed in Table \ref{table:anti1}. Equation \eqref{symC} has $17$ solutions, and they are listed in Table \ref{table:sym}.     
\end{proposition}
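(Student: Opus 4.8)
The plan is to treat \eqref{antiC} and \eqref{symC} as finite Diophantine systems and to bound every free parameter before carrying out an exhaustive search. Writing $C = p/q$ in lowest terms, the first observation is that the denominator $q$ is severely constrained: in the anti-symmetric case $\dim\mathfrak{g} = 24(C+1)$ is a positive integer, so $24C \in \ZZ$ and hence $q \mid 24$; the symmetric case is identical, since there $\dim\mathfrak{g} = 24C$. Next I would bound the number $s$ of simple factors. By the central charge formula \eqref{cc}, the contribution of $\mathfrak{g}_{j,k_j}$ is $c_j = k_j\dim\mathfrak{g}_j/(k_j + h_j^\vee) = \dim\mathfrak{g}_j/(C+1)$, which is increasing in $k_j$ and therefore bounded below by $\dim\mathfrak{g}_j/(1+h_j^\vee) \geq 1$, using the elementary inequality $\dim\mathfrak{g}_j \geq h_j^\vee + 1$ valid for every simple Lie algebra (with equality only for $A_1$). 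Since $c_\mathfrak{g} = \sum_j c_j$ equals $24$ in the anti-symmetric case and $24C/(C+1) < 24$ in the symmetric case, we obtain $s \leq 24$ in both situations.

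It remains to bound $C$ itself, equivalently to bound the rank of each factor. Here I would use the fact that the dimension of a simple Lie algebra grows at least quadratically in its dual Coxeter number: a case-by-case check over the types in Table \ref{tab:data} gives $\dim\mathfrak{g}_j \geq \tfrac14 (h_j^\vee)^2$. Combined with $h_j^\vee = C k_j \geq C$ and $\dim\mathfrak{g}_j \leq \dim\mathfrak{g} = 24(C+1)$, this yields $24(C+1) \geq \tfrac14 C^2$, so $C$ is bounded by an explicit constant. Together with $q \mid 24$ this leaves only finitely many candidate values of $C$, and for each such $C$ the set of admissible factors $(\mathcal{R}_j, k_j)$ — those simple types whose dual Coxeter number equals $C k_j$ for some positive integer $k_j$ (with $k_j \geq 2$ in the symmetric case) and whose dimension does not exceed $24(C+1)$ — is finite, because $h_j^\vee = C k_j$ together with the dimension bound caps the rank.

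The final step is the enumeration: for each admissible $C$, list the admissible simple factors and determine every multiset of them whose dimensions sum to $24(C+1)$ (anti-symmetric) or $24C$ (symmetric), subject to the common-ratio constraint $h_j^\vee/k_j = C$. In the anti-symmetric case this is precisely the equation solved by Schellekens \cite{Sch93}, and the search reproduces his $221$ solutions, recorded in Table \ref{table:anti1}; in the symmetric case the same procedure with the extra constraint $k_j \geq 2$ produces the $17$ solutions of Table \ref{table:sym}. I expect the main difficulty to be not any single step but the completeness and exactness of this search: one must verify that the common-ratio condition is imposed simultaneously on \emph{all} factors, that no small-rank or exceptional factor ($G_2$, $F_4$, $E_8$, where $Q_j^\vee = P_j^\vee$) is overlooked, and that the totals come out to be exactly $221$ and $17$. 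This bookkeeping is best organized by iterating over the finitely many values of $C$ and is naturally carried out with computer assistance, after which the counts are confirmed directly against the tables.
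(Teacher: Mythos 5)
Your proposal is correct, but it takes a more self-contained route than the paper, whose entire proof is a citation: Schellekens \cite{Sch93} is invoked for the $221$ solutions of \eqref{antiC}, and the $17$ solutions of \eqref{symC} are said to follow ``in a similar way.'' You instead reconstruct the finiteness reduction explicitly, and your bounds check out: $24C\in\ZZ$ (hence $q\mid 24$) follows in both cases since $\dim\mathfrak{g}=24(C+a)$; the bound $s\leq 24$ follows from $c_j=\dim\mathfrak{g}_j/(1+C)\geq \dim\mathfrak{g}_j/(1+h_j^\vee)\geq 1$; and the inequality $\dim\mathfrak{g}_j\geq \tfrac14(h_j^\vee)^2$ holds uniformly over Table \ref{tab:data}, with the tightest case $E_8$ ($248$ versus $225$), yielding an explicit cap on $C$ and hence a finite search space. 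One small infelicity: under the constraint $h_j^\vee=Ck_j$ the quantity $\dim\mathfrak{g}_j/(C+1)$ does not actually vary with $k_j$, so the phrase ``increasing in $k_j$'' is misplaced, but the inequality you actually use, $k_j\dim\mathfrak{g}_j/(k_j+h_j^\vee)\geq \dim\mathfrak{g}_j/(1+h_j^\vee)$ for $k_j\geq 1$, is valid, so nothing breaks. (In the symmetric case the extra constraint $k_j\geq 2$ even gives the sharper bound $C\leq 24$.) What each approach buys: the paper gains brevity by delegating the enumeration to Schellekens, at the cost that the $17$ symmetric solutions are not really proved in the text; your approach gains a verifiable, reproducible algorithm independent of the CFT literature, at the cost that the exact counts $221$ and $17$ still rest on a (computer-assisted) exhaustive search rather than a hand check — a caveat that applies equally to the original source, so it is not a gap relative to the paper's own standard of proof.
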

\begin{proof}
Equation \eqref{antiC} was first derived by Schellekens \cite{Sch93} in 1993 in the context of holomorphic vertex operator algebras of central charge $24$, and he determined the solutions that are listed in Table \ref{table:anti1}. The solutions of Equation \eqref{symC} can be found in a similar way.
\end{proof}

\chapter[The classification of hyperbolizations]{The classification of affine Lie algebras with hyperbolizations}\label{sec:main-results}

In this chapter, we state the main theorems and explain the ideas behind the proofs. 

Our first main result is the classification of root systems associated with reflective Borcherds products of singular weight on lattices of type $2U\oplus L$, which was introduced in Theorem \ref{th:root-system}. 

\begin{theorem}\label{th:non-existence}
Suppose $2U\oplus L$ has a reflective Borcherds product of singular weight whose Jacobi form input has non-negative $q^0$-term. Then the associated semi-simple Lie algebra $\mathfrak{g}$ defined in Theorem \ref{th:root-system} is one of the $81$ Lie algebras colored blue in Tables \ref{table:anti1} and \ref{table:sym}.   
\end{theorem}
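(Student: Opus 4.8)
By Theorem~\ref{th:root-system}, every reflective Borcherds product of singular weight on $2U\oplus L$ with non-negative $q^0$-term determines a semi-simple $\mathfrak{g}$ solving \eqref{antiC} or \eqref{symC}, and Proposition~\ref{cor:solutions} enumerates the $221$ anti-symmetric and $17$ symmetric solutions. Exactly $81$ of these are marked blue, so the content of the theorem is to exclude the remaining $157$: the $152$ extraneous anti-symmetric cases treated in Chapter~\ref{sec:exclude-anti-root} and the $5$ extraneous symmetric cases treated in Chapter~\ref{sec:exclude-root}. The plan is, for each extraneous $\mathfrak{g}$, to use the bound $\mathbf{Q}_\mathfrak{g}<L<\mathbf{P}_\mathfrak{g}$ from Theorem~\ref{th:root-system}(2) to confine $L$ to finitely many lattices, and then to derive a contradiction by showing that a reflective product on $2U\oplus L$ would force a reflective product on a well-chosen overlattice that admits none.

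The main new ingredient is a monotonicity statement: if $2U\oplus L$ carries a singular-weight reflective Borcherds product and $K$ is an even, $W_\mathfrak{g}$-invariant overlattice of $L$ contained in $\mathbf{P}_\mathfrak{g}$, then $2U\oplus K$ carries a holomorphic reflective Borcherds product (whose weight need no longer be singular). I would prove this by averaging the weight-$0$ Jacobi input $\phi$ of index $L$ over the isotropic glue group $K/L\subset D_L$ to produce a weight-$0$ input of index $K$; equivalently, by applying the natural trace map $M^!(\rho_L)\to M^!(\rho_K)$ attached to $K/L$. Because $\mathcal{R}\subset \mathbf{P}_\mathfrak{g}'\subset K'$ and $K$ is $W_\mathfrak{g}$-invariant, each reflection $\sigma_\ell$ with $\ell\in\mathcal{R}$ still fixes $K$, so the divisor of the lifted product remains supported on reflective mirrors; and because the simple-zero property \cite[Theorem~1.2]{WW23} pins down the singular Fourier coefficients of $\phi$, their non-negative combinations under the trace map keep the lift holomorphic. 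Hence the averaged input lifts to a holomorphic reflective product on $2U\oplus K$.

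It then suffices, for each extraneous $\mathfrak{g}$, to exhibit one even $W_\mathfrak{g}$-invariant overlattice $K$ of $L$ inside $\mathbf{P}_\mathfrak{g}$ on which \emph{no} holomorphic reflective Borcherds product exists at all. Here I would invoke the ``forbidden component'' reduction of the remark in Section~\ref{subsec:Borcherds}: it is enough to locate an orthogonal summand $K_0$ of such a $K$ for which $2U\oplus K_0$ admits no holomorphic reflective product, since any reflective product on $2U\oplus K$ restricts to one involving $K_0$. Non-existence on $2U\oplus K_0$ is decided by Borcherds's obstruction principle \cite{Bor99}: the holomorphic reflective inputs are the integral points of a polyhedral cone cut out by non-negativity along the finitely many reflective mirrors together with the pairing-to-zero conditions against a basis of cusp forms for $\rho_{K_0(-1)}$, which I would construct following \cite{Wil2018}, and I would certify emptiness of this cone with Normaliz \cite{Normaliz}. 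Combining with the monotonicity lemma, the assumed product on $2U\oplus L$ produces a reflective product on $2U\oplus K$, hence one involving the forbidden $K_0$, a contradiction; therefore no extraneous $\mathfrak{g}$ occurs and only the $81$ blue Lie algebras survive.

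The hard part is the interplay of the two ingredients rather than any single computation. On the conceptual side, one must verify that the trace map genuinely transports both holomorphy and reflectivity: reflectivity needs the $W_\mathfrak{g}$-invariance of $K$, holomorphy needs the simple-zero input, and degenerate cancellations in the averaging must be excluded. On the computational side, the elimination is intrinsically case-by-case over $157$ root systems; for each one must identify an admissible overlattice $K$ possessing a forbidden summand $K_0$ whose obstruction cone provably contains no integral point. The bulk of the labor in Chapters~\ref{sec:exclude-anti-root} and~\ref{sec:exclude-root} is exactly the bookkeeping of the discriminant forms $D_{K_0}$, the cusp-form spaces for $\rho_{K_0(-1)}$, and the Normaliz certificates that make these $157$ exclusions uniform.
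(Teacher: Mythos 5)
Your skeleton is the paper's own (Argument~\ref{Argument}): confine $L$ via $\bQ<L<\bP$, push the input up to an even overlattice, and collide with a forbidden component certified by Borcherds's obstruction criterion \cite{Bor99} and Normaliz \cite{Normaliz}. But your key ``monotonicity'' lemma, as formulated and argued, has a genuine gap in both of its claims. First, reflectivity transport: you argue that the lifted product stays reflective because $K$ is $W_\mathfrak{g}$-invariant, so the reflections $\sigma_\ell$, $\ell\in\mathcal{R}$, still fix $K$. That only accounts for the mirrors coming from the $q^0$-term. The divisor of the lifted product is governed by \emph{all} singular Fourier coefficients $f(n,\ell)$ with $2n<(\ell,\ell)$, i.e.\ by positive-norm vectors $(n,\ell,m)\in U\oplus K'$ with nonzero hyperbolic components, and $W_\mathfrak{g}$-invariance of $K$ says nothing about those. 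The correct argument (Lemma~\ref{lem:sym-overlattice}, following \cite[Lemmas 2.1, 3.3]{Wan23a}) is purely discriminant-form-theoretic: the arrow operator $f\mapsto f|\!\uparrow$ of \cite[Lemma 5.6]{Bru02} retains only coefficients indexed by $\gamma\in (M'\oplus K')/(M\oplus K)$, and since every nonzero principal-part index $x$ of $f$ satisfies $x^2\equiv 2/d$ with $d\,x\in M\oplus L\subset M\oplus K$, reflectivity persists automatically — no invariance hypothesis is needed, and imposing it (together with your restriction $K\subset\bP$) would wreck the case analysis: the eliminations in Chapters~\ref{sec:exclude-anti-root}--\ref{sec:exclude-root} routinely ascend \emph{above} $\bP$ (e.g.\ $L<D_4(18)<A_2(6)\oplus A_2(12)<\cdots<A_1\oplus A_1(3)\oplus A_2(6)$ for $D_{4,36}$) precisely to reach forbidden components of small discriminant, since direct obstruction computations inside $\bP$ are often infeasible (cf.\ the ad hoc lemmas for $4A_2(3)$, $L_4\oplus A_4'(5)$ and $A_2\oplus D_8(3)$, and the symmetric case $A_{6,2}B_{4,2}$, which is excluded by a pullback to $2U\oplus A_6(2)$ rather than by an overlattice at all).

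Second, and more seriously, you flag but do not close the non-vanishing problem: averaging over the glue group $K/L$ could a priori annihilate the entire principal part, in which case the ``lift'' carries no reflective divisor and yields no contradiction. This is exactly the point the paper singles out as ``less obvious'' in the proof of Lemma~\ref{lem:sym-overlattice}. The resolution there is quantitative: by the reflective structure of the input (simple zeros, \cite[Theorem 1.2]{WW23}, and \cite[Lemma 2.1]{Wan23a}) the relevant coefficients satisfy $c(x,-1/d)\geq 0$, respectively $c(2y,-4/d)+c(y,-1/d)\geq 0$, so the lifted coefficients obey
$$
c'(\lambda,-1/d)\;=\sum_{\substack{x\in(\lambda+M\oplus K)/(M\oplus L)\\ \ord(x)=d}} c(x,-1/d)\;\geq\; c(\lambda,-1/d)\;>\;0
$$
(with the analogous two-term estimate when $\ord(\lambda)=d/2$), which guarantees a nonzero reflective principal part, and moreover preserves the coefficient of $q^{-1}e_0$, so symmetric products lift to symmetric products — a fact the symmetric exclusions rely on. Without this positivity argument your lemma is unproved, and without dropping the $W_\mathfrak{g}$-invariance and $K\subset\bP$ constraints the subsequent $157$-case elimination cannot be carried out as the paper does.
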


To prove this, we have to rule out most of the solutions of Equation \eqref{antiC} and Equation \eqref{symC}. There are $238$ root systems in Tables \ref{table:anti1} and \ref{table:sym} altogether. A root system will be called \textit{extraneous} if it is not one of the $81$ root systems described in Theorem \ref{th:non-existence}; there are $152$ extraneous root systems of anti-symmetric type and $5$ extraneous root systems of symmetric type.

Let $\mathfrak{g}$ be a semi-simple Lie algebra associated with a singular-weight reflective Borcherds product on $2U\oplus L$ that is extraneous. By Theorem \ref{th:root-system} (2), the underlying lattice $L$ satisfies the bounds
$$
\bQ < L < \bP. 
$$
We will determine an explicit even overlattice $K$ of $L$ with the property that any singular-weight reflective Borcherds product on $2U\oplus L$ lifts to a reflective Borcherds product satisfying certain conditions on $2U\oplus K$, but for which no such reflective products on $2U\oplus K$ exist. This leads to a contradiction. The complete proof appears in Chapter \ref{sec:exclude-anti-root} for anti-symmetric root systems and in Chapter \ref{sec:exclude-root} for symmetric root systems.

The second main result is the construction of hyperbolizations for the $81$ semi-simple Lie algebras in the theorem above. 

\begin{theorem}\label{th:construction}
For each semi-simple Lie algebra $\mathfrak{g}$ in Theorem \ref{th:root-system}, there exists an even positive definite lattice $L_{\mathfrak{g}}$ with the same rank as $\mathfrak{g}$ that satisfies the following conditions:
\begin{enumerate}
\item There is a singular-weight reflective Borcherds product $\Psi_{\mathfrak{g}}$ on $2U\oplus L_{\mathfrak{g}}$;
\item The leading Fourier--Jacobi coefficient of $\Psi_{\mathfrak{g}}$ at the $1$-dimensional cusp determined by $2U$ is the denominator of the affine Lie algbera $\hat{\mathfrak{g}}$;
\item The Jacobi form input $\phi_\mathfrak{g}$ of $\Psi_{\mathfrak{g}}$ is a $\ZZ$-linear combination of full characters of the affine vertex operator algebra generated by $\hat{\mathfrak{g}}$;
\item The lattice generated by $d\rho$ and by those $\lambda=(n,\ell,m)\in U\oplus L_{\mathfrak{g}}'$ for which $\Psi_{\mathfrak{g}}$ vanishes on $\lambda^\perp$ is $U\oplus L_{\mathfrak{g}}'$; that is, $U\oplus L_{\mathfrak{g}}'$ is the root lattice of the BKM (super)algebra with (super)-denominator $\Psi_{\mathfrak{g}}$. Here, $\rho$ is the Weyl vector of $\Psi_{\mathfrak{g}}$ and $d$ is the denominator of $C$ as defined in Theorem \ref{th:root-system}. 
\item When $\mathfrak{g}$ is of symmetric type, the Borcherds product $\Psi_{\mathfrak{g}}$ coincides with the Gritsenko (additive) lift of the denominator of $\hat{\mathfrak{g}}$. 
\end{enumerate}
The above lattices $L_{\mathfrak{g}}$ and Jacobi forms $\phi_\mathfrak{g}$ are constructed as follows.
\begin{itemize}
\item[(a)] Let $\mathfrak{g}$ be one of the $69$ semi-simple Lie algebras of anti-symmetric type. In this case, $\mathfrak{g}$ appears as the semi-simple $V_1$ structure of a holomorphic vertex operator algebra $V$ of central charge $24$ in Schellekens’s list. Then $L_\mathfrak{g}$ is the orbit lattice in H\"{o}hn's construction of $V$ and $\phi_\mathfrak{g}$ is the unique full character of $V$.
\item[(b)] Let $\mathfrak{g}$ be one of the $8$ semi-simple Lie algebras of symmetric type with $C=1$. In this case, $\mathfrak{g}$ appears as the $\mathcal{N}=1$ structure of a holomorphic vertex operator superalgebra of central charge $12$ composed of $24$ chiral fermions. Then $L_\mathfrak{g}$ is the maximal even sublattice of $\bP$, and $\phi_\mathfrak{g}$ can be expressed as a $\ZZ$-linear combination of the full $\mathrm{NS}$-, $\widetilde{\mathrm{NS}}$- and $\mathrm{R}$-characters of the associated SVOA:
$$
\phi_{\mathfrak{g}}=(\chi_{\mathrm{NS}} - \chi_{\widetilde{\mathrm{NS}}} - \chi_{\mathrm{R}})/2.  
$$
\item[(c)] Let $\mathfrak{g}=A_{1,16}$, $A_{1,8}^2$, $A_{1,4}^4$ or $A_{2,9}$. These semi-simple Lie algebras are of symmetric type with $C<1$. In these cases, the affine Lie algebras $\hat{\mathfrak{g}}$ admit exceptional modular invariants coming from a nontrivial automorphism of the fusion algebra. Then $L_\mathfrak{g}=\bP$, the expressions of $\phi_\mathfrak{g}$ in terms of affine characters are given in Theorem \ref{th:input-character}, and the relationship between $\phi_\mathfrak{g}$ and the exceptional modular invariants is explained in Remark \ref{rem:exceptional-modular-invariants}. 
\end{itemize}
\end{theorem}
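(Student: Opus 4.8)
The plan is to construct $\Psi_\mathfrak{g}$ as the Borcherds lift $\Borch(\phi_\mathfrak{g})$ of an explicit weight-$0$ Jacobi form input and to verify the five properties in turn, treating the three families (a), (b), (c) by different means because their inputs come from different vertex-algebraic sources. In every case the input $\phi_\mathfrak{g}\in J_{0,L_\mathfrak{g}}^!$ is a $\ZZ$-combination of genuine characters, hence has integral Fourier coefficients, and by Theorem \ref{th:product} its lift $\Psi_\mathfrak{g}$ is a meromorphic modular form of weight $f(0,0)/2$ on $\widetilde{\Orth}^+(2U\oplus L_\mathfrak{g})$. The central-charge computation of Theorem \ref{th:root-system} forces $f(0,0)=\rank(L_\mathfrak{g})$, so $\Psi_\mathfrak{g}$ has singular weight; property (3) is built into the choice of input; and property (2) is immediate from the theta-block description of the leading Fourier--Jacobi coefficient in Theorem \ref{th:product}, since the $q^0$-term of $\phi_\mathfrak{g}$ has $f(0,\ell)=1$ exactly on the rescaled root set $\mathcal{R}$ and $f(0,0)=\rank(L_\mathfrak{g})$, which reproduces the theta block $\vartheta_\mathfrak{g}$. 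The substantive content is holomorphicity and, above all, reflectivity (property (1)).

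For the anti-symmetric family (a) I would take $L_\mathfrak{g}$ to be the orbit lattice in H\"{o}hn's construction of the holomorphic VOA $V$ of central charge $24$ with $V_1=\mathfrak{g}$, and $\phi_\mathfrak{g}=\chi_V$ its full character. Here $\chi_V$ has \emph{non-negative} integral Fourier coefficients, so the product $\prod(1-q^n\zeta^{-\ell}\xi^m)^{f(nm,\ell)}$ is manifestly holomorphic and only reflectivity remains. Since H\"{o}hn realizes $V$ as a simple-current extension of $V_{L_\mathfrak{g}}\otimes W_\mathfrak{g}$ with $W_\mathfrak{g}$ of central charge $24-\rank(\mathfrak{g})$ and trivial weight-one space, this decomposition is precisely the theta decomposition of $\chi_V$ as a Jacobi form of index $L_\mathfrak{g}$, which selects the correct overlattice of $\bQ$. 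For the three cases $\mathfrak{g}=B_{12,2}$, $A_{2,2}F_{4,6}$, $C_{4,10}$ with $[L_\mathfrak{g}:\bQ]\le 2$ I would verify reflectivity directly, computing enough Fourier coefficients of $\chi_V$ to exclude any $2$-divisor other than the obvious ones. For the remaining cases such a computation is infeasible, and I would instead use twisted denominators of the fake monster algebra: there is a $\mathrm{Co}_0$-conjugacy class $[g]$ with $W_\mathfrak{g}\cong V_{\Lambda_g}^{\hat g}$, and $\Borch(\chi_V)$ depends only on $[g]$; when $[g]$ has equal order and level $n_g$, the result \cite{WW22} of the last two named authors identifies the $g$-twisted denominator $\Phi_g$ as a reflective singular-weight product on $U(n_g)\oplus U\oplus\Lambda^g$, and matching $\Borch(\chi_V)$ with $\Phi_g$ transports reflectivity.

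For the symmetric families (b) and (c) I would additionally compare $\Psi_\mathfrak{g}$ with the Gritsenko additive lift of $\vartheta_\mathfrak{g}$, which is holomorphic of the correct singular weight by Theorem \ref{th:additive}; identifying the two lifts by matching Fourier--Jacobi expansions yields both holomorphicity of $\Borch(\phi_\mathfrak{g})$ and property (5). In case (b), with $\phi_\mathfrak{g}=(\chi_{\mathrm{NS}}-\chi_{\widetilde{\mathrm{NS}}}-\chi_{\mathrm{R}})/2$ built from the characters of the $\mathcal{N}=1$ SVOA $F_{24}$, I would show by matching Fourier--Jacobi expansions that $\Borch(\phi_\mathfrak{g})$ is the $g$-twisted denominator of the fake monster algebra for some $[g]$ of equal order and level, using the additive-lift description of these twisted denominators due to Dittmann and the second named author \cite{DW21}; reflectivity then again follows from \cite{WW22}. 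In case (c), where $\mathfrak{g}\in\{A_{1,16},A_{1,8}^2,A_{1,4}^4,A_{2,9}\}$ and no vertex-algebra model is available, I would pin down $\phi_\mathfrak{g}$ explicitly as a $\ZZ$-combination of affine characters (for instance $\phi_{A_{1,16}}=\chi^{A_{1,16}}_{2,\frac19}+\chi^{A_{1,16}}_{14,\frac{28}{9}}-\chi^{A_{1,16}}_{8,\frac{10}{9}}$) and, because $L_\mathfrak{g}=\bP$ has small rank, verify reflectivity and holomorphicity by a finite direct Fourier computation.

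Property (4) is the lattice-theoretic statement that the reflective vectors of $\Psi_\mathfrak{g}$ together with $d\rho$ span $U\oplus L_\mathfrak{g}'$, and I would check it case by case from the explicit root set $\mathcal{R}$ and the Weyl vector. The genuine obstacle is reflectivity in the anti-symmetric case: for classes $[g]$ with equal order and level it follows cleanly from the identification with \cite{WW22}, but for classes whose order differs from the level neither that identification nor a feasible direct Fourier computation is available for every $\mathfrak{g}$. There I would exploit that $\Borch(\chi_V)$ depends only on $[g]$, so that the finitely many directly computable cases propagate reflectivity to all VOAs sharing the same class; arranging the computed representatives so that they cover every order-$\neq$-level class on Schellekens' list is where the bulk of the effort will go.
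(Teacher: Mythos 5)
Your proposal matches the paper's proof in all essentials: the same lattices and character inputs; the same split of the anti-symmetric case into the eight equal-order-and-level classes, handled by identifying $\Borch(\chi_V)$ with the twisted fake monster denominators of \cite{WW22}, and the three order-unequal-to-level classes, handled by direct Fourier computation for the representatives $B_{12,2}$, $A_{2,2}F_{4,6}$ and $C_{4,10}$ and propagated within each conjugacy class via the $\Orth(L_\mathfrak{g}'/L_\mathfrak{g})$-dependence of the input; the same reliance on \cite{DW21} in the symmetric $C=1$ case; the same $\delta_L$-bounded direct check for the four exotic cases; and the same additive-lift identity for property (5). The only cosmetic deviations are that singular weight is read off from $f(0,0)=\rank(\mathfrak{g})$ in the $q^0$-term of the character inputs rather than from Theorem \ref{th:root-system}, and that the paper establishes holomorphicity first (Lemma \ref{lem:chi-c} in case (b), resp.\ $\delta_L=2$ in case (c)) and only afterwards proves $\Borch(\phi_\mathfrak{g})=\Grit(\vartheta_\mathfrak{g})$ via Koecher's principle, rather than extracting holomorphicity from that identification.
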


The proof of Theorem \ref{th:construction} is divided into four chapters. In Chapters \ref{sec:construct-anti}-\ref{sec:construct-symmetric-C<1} we prove parts (a), (b) and (c) respectively. In Chapter \ref{sec:additive lifts}, we prove property (5). The connections between our construction and the twists of the fake monster algebra are also explained in Chapters \ref{sec:construct-anti} and \ref{sec:construct-symmetric-C=1}. 

Our last main result is the complete classification of affine Lie algebras with hyperbolizations. This is a direct consequence of Theorem \ref{th:non-existence} and Theorem \ref{th:construction}. 

\begin{theorem}\label{th:classification}
There are exactly $81$ affine Kac--Moody algebras which have a hyperbolization in the sense of Definition \ref{def:hyperbolization}. The $81$ algebras are colored blue in Tables \ref{table:anti1} and \ref{table:sym}. 
\end{theorem}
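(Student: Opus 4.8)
The plan is to obtain Theorem~\ref{th:classification} as a formal consequence of Theorem~\ref{th:non-existence} (the obstruction side) and Theorem~\ref{th:construction} (the construction side), after making explicit the dictionary between hyperbolizations and singular-weight reflective Borcherds products that was set up in Chapter~\ref{sec:hyperbolization}. The one extra ingredient is the bijection between affine Lie algebras and singular-weight theta blocks (point (3) of the Remark in Section~\ref{subsec:theta-blocks}), which guarantees that distinct $\hat{\mathfrak{g}}$ are never counted as the same.

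\textbf{At most $81$.} Suppose $\hat{\mathfrak{g}}$ admits a hyperbolization $\mathcal{G}$. By Definition~\ref{def:hyperbolization} there is an even positive-definite lattice $L$ such that $\mathcal{G}$ has root lattice $U\oplus L'$, no odd real roots, and super-denominator equal to a holomorphic Borcherds product $F$ of singular weight $\rank(L)/2$ on $2U\oplus L$ with leading Fourier--Jacobi coefficient $\vartheta_\mathfrak{g}$. I would first rerun the argument of Chapter~\ref{sec:hyperbolization}: since $\mathcal{G}$ has no odd real roots every divisor of $F$ is a zero and the associated reflections lie in the Weyl group of $\mathcal{G}$, hence fix $M=U_1\oplus U\oplus L$, so $F$ is reflective; and since real roots have multiplicity one while $f(0,0)=\rank(L)$ by singular weight, the $q^0$-term of the Jacobi input is non-negative. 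If $\mathfrak{g}$ is genuinely semi-simple (i.e. the root system $\mathcal{R}$ is nonempty), then Theorem~\ref{th:non-existence} applies directly and forces $\mathfrak{g}$ to be one of the $81$ blue entries of Tables~\ref{table:anti1} and~\ref{table:sym}. The only alternative allowed by Theorem~\ref{th:root-system} is the Leech case (a), where $\mathcal{R}=\emptyset$ and the leading coefficient is $\eta^{24}$; but $\eta^{24}$ is not the denominator of any affine Kac--Moody algebra in the sense of Section~\ref{subsec:theta-blocks}, so this case cannot arise for an honest $\hat{\mathfrak{g}}$ and is excluded from the count.

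\textbf{At least $81$, and distinctness.} Conversely, for each of the $81$ semi-simple Lie algebras $\mathfrak{g}$, Theorem~\ref{th:construction} provides a singular-weight reflective Borcherds product $\Psi_\mathfrak{g}$ on $2U\oplus L_\mathfrak{g}$ whose leading Fourier--Jacobi coefficient is $\vartheta_\mathfrak{g}$ and whose root lattice is the full $U\oplus L_\mathfrak{g}'$ (property (4)). I would then invoke the denominator formalism of Section~\ref{subsec:BKM}: the product expansion of $\Psi_\mathfrak{g}$ is the super-denominator of a BKM superalgebra $\mathcal{G}_\mathfrak{g}$ whose real roots are exactly the primitive $\lambda\in U\oplus L_\mathfrak{g}'$ on which $\Psi_\mathfrak{g}$ vanishes. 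Because $\Psi_\mathfrak{g}$ has singular weight its zeros are simple by~\cite{WW23}, so each real root carries super-multiplicity $+1$; together with the non-negativity of the $q^0$-term this shows every real root is even, i.e. $\mathcal{G}_\mathfrak{g}$ has no odd real roots. Hence $\mathcal{G}_\mathfrak{g}$ is a hyperbolization of $\hat{\mathfrak{g}}$ by Definition~\ref{def:hyperbolization}. Finally, by the one-to-one correspondence between affine Lie algebras and their denominators $\vartheta_\mathfrak{g}$, the $81$ distinct $\mathfrak{g}$ yield $81$ distinct affine Lie algebras, so the count is exactly $81$.

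\textbf{Main obstacle.} With Theorem~\ref{th:non-existence} and Theorem~\ref{th:construction} in hand the argument is essentially bookkeeping, and the only genuinely delicate point is making the equivalence ``hyperbolization $\iff$ data of Theorem~\ref{th:root-system}'' airtight in both directions. The subtle step is the passage from a holomorphic reflective Borcherds product of singular weight back to an actual BKM superalgebra \emph{without odd real roots}: one must check that the signs of the Fourier coefficients attached to the mirrors really force the real roots to be even rather than odd. This is exactly where the simplicity of zeros from~\cite{WW23}, the non-negative $q^0$-term, and the symmetric/anti-symmetric dichotomy are needed, and it is the only place where the ``super'' structure enters in a nontrivial way.
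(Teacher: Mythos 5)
Your proposal is correct and follows exactly the paper's route: the paper proves Theorem \ref{th:classification} as a direct consequence of Theorem \ref{th:non-existence} and Theorem \ref{th:construction}, with the ``hyperbolization $\Rightarrow$ reflective singular-weight product with non-negative $q^0$-term'' dictionary already established in Chapter \ref{sec:hyperbolization} and the converse supplied by property (4) of Theorem \ref{th:construction} together with the sum-side expansions of Chapter \ref{sec:Fourier-expansion}. Your added bookkeeping --- excluding the Leech case via case (a) of Theorem \ref{th:root-system} since $\eta^{24}$ is not an affine denominator, and getting distinctness from the bijection between affine Lie algebras and singular-weight theta blocks --- matches the paper's implicit reasoning, and you correctly identify the passage from a singular-weight product back to a BKM superalgebra without odd real roots (simple zeros via \cite{WW23}, super-multiplicity $+1$ on real roots) as the only genuinely delicate step.
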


Combining Theorem \ref{th:non-existence} and Theorem \ref{th:construction}, we obtain the following constraint on the existence of singular automorphic products:

\begin{corollary}
There are reflective Borcherds products of singular weight on lattices of type $2U\oplus L$ whose input has non-negative principal part if and only if 
$$
\rank(L) \in \{1, 2, 4, 6, 8, 10, 12, 16, 24\}. 
$$
\end{corollary}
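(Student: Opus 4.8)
The plan is to deduce the corollary from the classification already obtained, namely Theorem \ref{th:non-existence} and Theorem \ref{th:construction} (together these constitute Theorem \ref{th:classification}), supplemented by the degenerate Leech case in Theorem \ref{th:root-system}(a). The point is that the existence question collapses to a purely numerical statement about ranks, since in every case the rank of $L$ is forced to equal the rank of the associated semi-simple Lie algebra (or $24$ for the Leech lattice). Accordingly I would first reformulate the assertion as the equality
$$
\{ \rank(\mathfrak{g}) : \mathfrak{g} \text{ is one of the } 81 \text{ algebras} \} \cup \{24\} = \{1,2,4,6,8,10,12,16,24\},
$$
the extra value $24$ accounting for the fake monster case. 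Both inclusions then follow from the two main theorems, and only the computation of the left-hand set remains.

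For necessity I would argue as follows. Let $F$ be such a product with Jacobi input $\phi$ having non-negative principal part. Every term $f(0,\ell)\zeta^\ell$ with $\ell\neq 0$ is a singular Fourier coefficient, hence part of the principal part, and $f(0,0)=\rank(L)\geq 0$; therefore the $q^0$-term of $\phi$ is non-negative and Theorem \ref{th:root-system} applies. In case (a), $L$ is the Leech lattice and $\rank(L)=24$. In case (b), the associated $\mathfrak{g}$ has rank $\rank(L)$, and by Theorem \ref{th:non-existence} it is one of the $81$ Lie algebras recorded in Tables \ref{table:anti1} and \ref{table:sym}. Reading off the ranks tabulated there, one checks that every such rank lies in $\{1,2,4,6,8,10,12,16,24\}$, which gives the forward implication.

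For sufficiency I would exhibit one product for each admissible rank. The value $24$ is realized by the fake monster denominator $\Phi_{12}$ on $2U\oplus\Lambda$. For the remaining values I would select a representative $\mathfrak{g}$ of the required rank among the $81$ algebras and apply Theorem \ref{th:construction}, which furnishes a singular-weight reflective Borcherds product $\Psi_\mathfrak{g}$ on $2U\oplus L_\mathfrak{g}$ with $\rank(L_\mathfrak{g})=\rank(\mathfrak{g})$. The ranks $1,2,4$ are supplied by the symmetric cases with $C<1$, namely $A_{1,16}$, $A_{2,9}$ and $A_{1,4}^4$ (indeed no anti-symmetric $\mathfrak{g}$ can have rank $1$ or $2$, since those would force $C<0$); the symmetric case $C=1$ supplies rank $8$ via $A_{1,2}^8$; and the anti-symmetric cases supply the larger ranks, for instance $A_{6,7}$, $B_{12,2}$ and $E_{8,2}B_{8,1}$ realize ranks $6$, $12$ and $16$, with rank $10$ occurring in Table \ref{table:anti1} as well. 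In each case the input $\phi_\mathfrak{g}$ has non-negative principal part: by the structural description established in the proof of Theorem \ref{th:root-system}, the expansion of any singular-weight reflective product begins with $f(-1,0)q^{-1}+\sum_\ell f(0,\ell)\zeta^\ell+O(q)$ with $f(-1,0)\in\{0,1\}$ and $f(0,\ell)\in\{0,1\}$, so the principal part has coefficients in $\{0,1\}$.

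The only real content, and thus the main obstacle, is the bookkeeping in both directions: verifying by direct inspection of Tables \ref{table:anti1} and \ref{table:sym} that the ranks of the $81$ algebras attain every value in $\{1,2,4,6,8,10,12,16,24\}$ and no other value. The delicate part is the exclusion statement in necessity: one must confirm that no admissible $\mathfrak{g}$ has an odd rank exceeding $1$ or an even rank such as $14$, $18$, $20$ or $22$. Once this is confirmed, combining it with the Leech contribution of $24$ pins the set of achievable ranks down exactly, completing the proof.
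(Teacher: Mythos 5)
Your proposal is correct and matches the paper's proof in substance: the corollary is stated there as an immediate consequence of combining Theorem \ref{th:non-existence} with Theorem \ref{th:construction}, exactly as you do, with the Leech/fake monster case of Theorem \ref{th:root-system}(a) supplying rank $24$ and the ranks of the $81$ blue algebras in Tables \ref{table:anti1} and \ref{table:sym} supplying the set $\{1,2,4,6,8,10,12,16\}$; your bridging observation that a non-negative principal part forces a non-negative $q^0$-term (the terms $f(0,\ell)$ with $\ell\neq 0$ are singular coefficients, and $f(0,0)=\rank(L)$ by singular weight) is the right way to align the corollary's hypothesis with that of Theorem \ref{th:root-system}. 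One inference in your sufficiency argument is not quite sound, although its conclusion is true: from $f(-1,0),f(0,\ell)\in\{0,1\}$ you cannot deduce that the \emph{whole} principal part is non-negative, because singular Fourier coefficients $f(n,\ell)$ with $n\geq 1$ and $(\ell,\ell)>2n$ genuinely occur whenever $\delta_L>2$ (e.g.\ for $\mathfrak{g}=C_{4,10}$, where $\delta_L=10$, singular orbits appear up to the $q^4$-term), and the proof of Theorem \ref{th:root-system} controls only the coefficients with $n\leq 0$. The correct justification lives inside the constructions you are already invoking: in the anti-symmetric case the input $\chi_V$ is a character and has non-negative coefficients throughout; in the symmetric case with $C=1$ non-negativity of the singular coefficients is exactly Lemma \ref{lem:chi-c} as used in the proof of Theorem \ref{th:SVOA}; and in the four exotic cases one has $\delta_L=2$, so all singular coefficients sit in the $q^0$-term (proof of Theorem \ref{th:extra}). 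With that repair your argument coincides with the paper's.
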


\chapter[Antisymmetric case: holomorphic CFTs of central charge 24]{The antisymmetric case: holomorphic CFTs of central charge 24}\label{sec:construct-anti}

The $69$ semi-simple Lie algebras $\mathfrak{g}$ in the anti-symmetric case of Theorem \ref{th:non-existence} coincide with the semi-simple $V_1$ structures of holomorphic vertex operator algebras (VOA) of central charge $24$ in Schellekens' list \cite{Sch93}. In 2017, H\"{o}hn \cite{Hoh17} found a uniform construction of the holomorphic VOAs. For each $\mathfrak{g}$, we will take H\"{o}hn's orbit lattice $L_\mathfrak{g}$ as the underlying lattice $L$ and construct the reflective Borcherds product of singular weight as the Borcherds lift of the full character of the holomorphic VOA. We also explain the connection between these hyperbolizations and the twisted denominators of the fake monster algebra.

\section{Holomorphic CFTs of central charge 24 and Schellekens' list}
Let $V$ be a holomorphic vertex operator algebra of central charge $24$. The weight-one subspace $V_1$ has a natural Lie algebra structure and by \cite{Sch93, DM04}, $V_1$ is either trivial, abelian of dimension $24$, or semi-simple. In the first case, it was conjectured in \cite{FLM88} that $V$ is isomorphic to the monster VOA, which was proved under some conditions in \cite{dong2007uniqueness}. In the second case, $V$ is isomorphic to the Leech lattice VOA. 

We focus on the third case where $V_1$ is a semi-simple Lie algebra $\mathfrak{g}$. Let $(-,-)$ be the unique symmetric, non-degenerate, invariant bilinear form on $V$ normalized such that $(\mathbf{1},\mathbf{1})=-1$, where $\mathbf{1}$ is the vacuum vector. The restriction of $(-,-)$ to a simple ideal $\mathfrak{g}_j$ of $\mathfrak{g}$ satisfies $(-,-)=k_j\latt{-,-}$ for some positive integer $k_j$, where $\latt{-,-}$ is the normalized bilinear form of $\mathfrak{g}_j$ (see \cite{Sch93, DM06}).  We indicate
these integers by writing
$$
\mathfrak{g}=\mathfrak{g}_{1,k_1}\oplus \cdots \oplus \mathfrak{g}_{s,k_s}.
$$
Then the affine vertex operator algebra 
$$
V_\mathfrak{g}\cong L_{\hat{\mathfrak{g}}_1}(k_1,0)\otimes \cdots \otimes L_{\hat{\mathfrak{g}}_s}(k_s,0)
$$
generated by $V_1$ is a full vertex operator subalgebra of $V$. As a $V_\mathfrak{g}$-module, one can decompose $V$ into finitely many irreducible $V_\mathfrak{g}$-modules
\begin{equation}\label{eq:V-decomposition}
   V=\bigoplus_{\lambda_1,...,\lambda_s} m_{\lambda_1,...,\lambda_s} L_{\hat{\mathfrak{g}}_1}(k_1,\lambda_1)\otimes \cdots \otimes L_{\hat{\mathfrak{g}}_s}(k_s,\lambda_s),
\end{equation}
where the sum runs over the dominant integral weights $\lambda_j$ of $\mathfrak{g}_j$ that satisfy the inequality $\latt{\lambda_j, \theta_j^\vee}\leq k_j$, where $\theta_j$ is the highest root of $\mathfrak{g}_j$.

In 1993 Schellekens \cite{Sch93} established the equation (see Notation \ref{notation})
$$
\frac{\dim \mathfrak{g}}{24} -1 = \frac{h_j^\vee}{k_j}, \quad \text{for $1\leq j\leq s$}
$$
and showed that it has exactly $221$ solutions. By excluding $152$ of them, Schellekens proved that $V_1$ is isomorphic to one of the $69$ semi-simple Lie algebras in \cite[Table 1]{Sch93} (named \textit{Schellekens's list}) and further determined the induced decompositions \eqref{eq:V-decomposition}. This result was reproved in \cite{EMS20} using similar arguments and in \cite{ELM21} by means of the ``very strange formula". 

Schellekens also conjectured that there exists a unique holomorphic VOA of central charge $24$ with $V_1=\mathfrak{g}$ for every $\mathfrak{g}$ on his list. By the work of many authors over the past three decades, this celebrated conjecture was finally proved (see \cite{Lam11, LS12, LS16a, LS16b, LL20, SS16, EMS20} for the existence and \cite{DM04b, EMS20b, LL20, KLL18, LS15, LS19, LS20, LS22} for the uniqueness). There are at least three uniform proofs of the conjecture: the Leech lattice method of H\"{o}hn (\cite{Hoh17, Lam19, BLS22}), the generalized deep hole method of M\"oller--Scheithauer \cite{SM19, SM21}, and the Niemeier lattice method of H\"{o}hn--M\"oller \cite{HM20}.

\section{H\"{o}hn's construction of holomorphic CFT of central charge 24} 
Our construction of hyperbolizations relies heavily on H\"{o}hn's argument, which is reviewed below.

Let $\mathbf{Q}_\mathfrak{g}$ be the rescaled coroot lattice (see Notation \ref{notation}). Then $V$ contains the lattice VOA associated with $\mathbf{Q}_\mathfrak{g}$ denoted $V_{\mathbf{Q}_\mathfrak{g}}$. 
If we let $W_\mathfrak{g}=\mathrm{Com}_V(V_{\mathbf{Q}_\mathfrak{g}})$ be the commutant (or centralizer),  then the double commutant 
$$
\mathrm{Com}_V(W_\mathfrak{g})=\mathrm{Com}_V(\mathrm{Com}_V(V_{\mathbf{Q}_\mathfrak{g}}))
$$
is a lattice VOA extending $V_{\mathbf{Q}_\mathfrak{g}}$. Therefore, there exists an even positive definite lattice $L_\mathfrak{g}\supset \mathbf{Q}_\mathfrak{g}$ such that $\mathrm{Com}_V(W_\mathfrak{g})$ is isomorphic to the lattice VOA $V_{L_\mathfrak{g}}$. It is well known that $V_{L_\mathfrak{g}}$ has group-like fusion: all irreducible $V_{L_\mathfrak{g}}$-modules are simple current modules. In this case, the set of all irreducible modules $R(V_{L_\mathfrak{g}})$ forms an abelian group with respect to the fusion product, and it also carries the quadratic form $\mathfrak{q}: R(V_{L_\mathfrak{g}})\cong L_\mathfrak{g}'/L_\mathfrak{g} \to \QQ/\ZZ$ defined via
$$
\mathfrak{q}(V_{\alpha + L_\mathfrak{g}}) = \mathrm{wt}(V_{\alpha + L_\mathfrak{g}}) = (\alpha,\alpha)/2 \mod \ZZ,
$$
where $\mathrm{wt}(-)$ denotes the conformal weight of the module. The full character of the irreducible module labelled by $\alpha+L_\mathfrak{g} \in L_\mathfrak{g}'/L_\mathfrak{g}$ can be expressed in terms of the Jacobi theta function and the $\eta$-function:
\begin{equation}
\chi_{V_{\alpha+L_\mathfrak{g}}}(\tau, \mathfrak{z}) = \frac{\Theta_{L_\mathfrak{g},\alpha}(\tau,\mathfrak{z})}{\eta(\tau)^{\rank(L_\mathfrak{g})}}, \quad (\tau,\mathfrak{z})\in \HH \times (L_\mathfrak{g}\otimes\CC).    
\end{equation}

It is also known that $W_\mathfrak{g}$ is strongly rational and also has group-like fusion (see e.g. \cite{CKM22, Lin17}). The set $R(W_\mathfrak{g})$ of irreducible modules forms a quadratic space isomorphic to $(R(V_{L_\mathfrak{g}}), -\mathfrak{q})$, where the quadratic form is defined by reducing the conformal weight modulo $\ZZ$. Therefore, $V$ is a simple current extension of $W_\mathfrak{g}\otimes V_{L_\mathfrak{g}}$, i.e. there is an isometry $\iota: L_\mathfrak{g}'/L_\mathfrak{g} \to R(W_\mathfrak{g})$ such that
$$
V \cong \bigoplus_{\alpha + L_\mathfrak{g} \in L_\mathfrak{g}'/L_\mathfrak{g}} W_{\iota(\alpha + L_\mathfrak{g})}\otimes V_{\alpha + L_\mathfrak{g}}.
$$
Note that $V_{L_\mathfrak{g}}$ has central charge $\rank(L_\mathfrak{g})=\rank(V_1)=\rank(\mathfrak{g})$, that $W_\mathfrak{g}$ has central charge $24-\rank(\mathfrak{g})$, and that the weight-one subspace of $W_\mathfrak{g}$ is zero. H\"{o}hn computed the lattice $L_\mathfrak{g}$ for each of the $69$ semi-simple $\mathfrak{g}$ and called it the \textit{orbit lattice} of $\mathfrak{g}$. 

Let $\mathfrak{h}$ be the Cartan subalgebra of $\mathfrak{g}$. Recall that the full character of $V$,  defined by
\begin{equation}
\chi_{V}(\tau, \mathfrak{z}) = \mathrm{Tr}_V (e^{2\pi i\mathfrak{z}} q^{L_0-1}), \quad q=e^{2\pi i\tau}, \quad (\tau, \mathfrak{z})\in\HH\times \mathfrak{h},     
\end{equation}
is a weakly holomorphic Jacobi form of weight $0$ and lattice index $\mathbf{Q}_\mathfrak{g}$ (with trivial character on $\SL_2(\ZZ)$) (see e.g. \cite{Zhu96, Miy00, KM12}). The above construction implies that $\chi_V$ also defines a weakly holomorphic Jacobi form of weight $0$ and lattice index $L_\mathfrak{g}$. Moreover, the character decomposition 
\begin{equation}\label{eq:full-theta}
\chi_V(\tau, \mathfrak{z}) = \sum_{\alpha+L_\mathfrak{g} \in L_\mathfrak{g}'/L_\mathfrak{g}} \chi_{V_{\alpha+L_\mathfrak{g}}}(\tau,\mathfrak{z}) \cdot \chi_{W_{\iota(\alpha+L_\mathfrak{g})}}(\tau)    
\end{equation}
determines the theta decomposition of $\chi_V$ as a Jacobi form of lattice index $L_\mathfrak{g}$. In particular, the vector-valued function 
\begin{equation}\label{eq:character-orbifold}
f_\mathfrak{g}(\tau):= \sum_{\alpha \in L_\mathfrak{g}'/L_\mathfrak{g}} \eta(\tau)^{-\rank(\mathfrak{g})}\cdot\chi_{W_{\iota(\alpha+L_\mathfrak{g})}}(\tau) e_\alpha     
\end{equation}
is a weakly holomorphic modular form of weight $-\rank(\mathfrak{g})/2$ for the Weil representation $\rho_{L_\mathfrak{g}}$.

H\"{o}hn described $W_\mathfrak{g}$ in an elegant way. Let $\Lambda$ continue to be the Leech lattice and let
$$
\Lambda_g:=(\Lambda^g)^\perp=\{ x\in \Lambda: (x,y)=0 \quad \text{for all $y\in \Lambda$ satisfying $g(y)=y$} \}
$$
be the coinvariant sublattice for $g\in \mathrm{Co}_0=\Orth(\Lambda)$. 
Let $\hat{g}\in \mathrm{Aut}(V_\Lambda)$ be a standard lift of $g$; that is, $\hat{g}$ is a lift of $g$ to the Leech lattice VOA that acts trivially on $\Lambda^g$. H\"{o}hn \cite{Hoh17} conjectured that $W_\mathfrak{g}$ is isomorphic to an orbifold VOA $V_{\Lambda_g}^{\hat{g}}$ where $[g]$ is one of $11$ particular conjugacy classes $[g]$ of $\mathrm{Co}_0$ (see Tables \ref{table:8cycleshapes} and \ref{table:3cycleshapes}). This would imply that $V$ is isomorphic to a simple-current extension of $V_{\Lambda_g}^{\hat{g}}\otimes V_{L_\mathfrak{g}}$. Lam \cite{Lam19} proved that the orbifold $V_{\Lambda_g}^{\hat{g}}$ with $\Lambda^g\neq \{0\}$ has group-like fusion and described the corresponding quadratic form explicitly, and in this way was able to confirm H\"{o}hn's conjecture. The simple-current extension of $V_{\Lambda_g}^{\hat{g}}\otimes V_{L_\mathfrak{g}}$ depends on the double cosets
\begin{equation}\label{eq:double-coset}
\Orth(L_\mathfrak{g}) \backslash \Orth(R(W_\mathfrak{g}),-\mathfrak{q}) / \mathrm{Aut}(W_\mathfrak{g}).    
\end{equation}
By counting the numbers of the above double cosets denoted $n(L_\mathfrak{g})$, Betsumiya, Lam and Shimakura \cite{BLS22} supplemented H\"{o}hn's proof of the uniqueness of holomorphic VOAs of central charge $24$ with semi-simple $V_1$. 

We will now describe the $69$ simple current extensions explicitly. The construction involves 11 distinct $\mathrm{Co}_0$-conjugacy classes $[g]$, which are characterized by the property that the discriminant form $(R(V_{\Lambda_g}^{\hat{g}}), -\mathfrak{q})$ of signature $\rank(\Lambda^g) \mod 8$ can be realized by an even positive definite lattice of rank $\rank(\Lambda^g)$. 
The orbit lattices $L_\mathfrak{g}$ lie in the $11$ associated genera.
\begin{enumerate}
    \item Let $[g]$ be one of the $8$ conjugacy classes in Table \ref{table:8cycleshapes}. For each orbit lattice of genus $[g]$, the number of double cosets \eqref{eq:double-coset} is always $1$, so the number of inequivalent simple-current extensions equals to the number of classes in the genus. There are in total $58$ classes (including the Leech lattice) in the $8$ genera, so the $8$ conjugacy classes $[g]$ induce $57$ holomorphic VOAs of central charge $24$ with semi-simple $V_1=\mathfrak{g}$. 
    \item Let $[g]$ be the conjugacy class of cycle shape $2^{12}$.  The associated (lattice) genus has $2$ classes $D_{12}(2)$ and $E_8(2)\oplus D_4(2)$ and the corresponding numbers of double cosets are respectively $6$ and $3$. Therefore, this $[g]$ induces $9$ simple-current extensions.
    \item Let $[g]$ be the conjugacy class of cycle shape $2^3 6^3$.  The associated (lattice) genus has a unique class and the corresponding number of double cosets is $2$. Therefore, this $[g]$ induces $2$ simple current extensions.
    \item Let $[g]$ be the conjugacy class of cycle shape $2^2 10^2$. The associated (lattice) genus has a unique class and the corresponding number of double cosets is $1$. Therefore, this $[g]$ induces only $1$ simple current extension.
\end{enumerate}
The relevant data appear in Tables \ref{table:8cycleshapes} and \ref{table:3cycleshapes}.

\section{Constructing hyperbolizations} 

\begin{theorem}\label{th:VOA}
Let $V$ be a holomorphic VOA of central charge $24$ with semi-simple $V_1=\mathfrak{g}$. Let $L_\mathfrak{g}$ be H\"{o}hn's orbit lattice of $\mathfrak{g}$ and let $\chi_V$ be the full character of $V$. Then the theta lift $\Borch(\chi_V)$ is a reflective Borcherds product of singular weight on $2U\oplus L_\mathfrak{g}$. Moreover, the leading Fourier--Jacobi coefficient of $\Borch(\chi_V)$ coincides with the denominator of the affine Kac--Moody algebra $\hat{\mathfrak{g}}$. 
\end{theorem}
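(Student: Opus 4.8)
The plan is to split the statement into a ``formal'' part—that $\Borch(\chi_V)$ is a holomorphic Borcherds product of singular weight whose leading Fourier--Jacobi coefficient is $\vartheta_\mathfrak g$—and the geometric part, reflectivity, which is where the real work lies.

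First I would record the properties of the input. As reviewed above, $\chi_V$ is a weak Jacobi form of weight $0$ and index $L_\mathfrak g$ whose Fourier coefficients $f(n,\ell)$ are non-negative integers, being graded dimensions of $V$. The weight-one space $V_1=\mathfrak g$ is visible in the $q^0$-term: one has $f(0,0)=\dim\mathfrak h=\rank\mathfrak g=\rank L_\mathfrak g$, while for $\ell\neq 0$ the coefficient $f(0,\ell)$ equals $1$ exactly when $\ell\in\mathcal R$ is a rescaled root of $\mathfrak g$ and $0$ otherwise. Feeding this into Theorem~\ref{th:product}, the lift $\Borch(\chi_V)$ is a modular form on $\widetilde\Orth^+(2U\oplus L_\mathfrak g)$ of weight $f(0,0)/2=\rank(L_\mathfrak g)/2$, which is precisely the singular weight $l/2-1$ for the signature $(\rank L_\mathfrak g+2,2)$ lattice $2U\oplus L_\mathfrak g$. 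Since every $f(n,\ell)\ge 0$, all exponents and divisor multiplicities in the product are non-negative, so $\Borch(\chi_V)$ is genuinely holomorphic. Reading off the leading term of the Fourier--Jacobi expansion in Theorem~\ref{th:product} and inserting $f(0,0)=\rank L_\mathfrak g$ and the values of $f(0,\ell)$ gives the generalized theta block $\eta(\tau)^{\rank L_\mathfrak g}\prod_{0<\ell\in\mathcal R}\vartheta(\tau,(\ell,\mathfrak z))/\eta(\tau)$, which is exactly the denominator $\vartheta_\mathfrak g$ of $\hat{\mathfrak g}$ by the same theta-block computation used in the proof of Theorem~\ref{th:root-system}. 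This settles everything except reflectivity.

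For reflectivity I would analyze the full divisor of $\Borch(\chi_V)$, which is governed by the singular Fourier coefficients of $\chi_V$, i.e. the $f(n,\ell)$ with $(\ell,\ell)-2n>0$. Via the theta decomposition \eqref{eq:full-theta}, these polar coefficients are controlled by the principal parts of the components $\eta^{-\rank}\chi_{W_{\iota(\gamma)}}$, and the identity $\rank+(24-\rank)=24$ forces each component to begin at $q^{-1+h_{\iota(\gamma)}}$, where $h_{\iota(\gamma)}$ is the conformal weight. A short weight count shows that a singular coefficient can only arise from a ground state (no lattice oscillators, lowest $W_\mathfrak g$-module space) attached to a module $W_{\iota(\gamma)}$ of conformal weight $h_{\iota(\gamma)}<1$, and that any divisor $\lambda^\perp$ so produced satisfies $(\lambda,\lambda)=2-2h_{\iota(\gamma)}$. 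Thus reflectivity reduces to the arithmetic statement that for each such module one has $2-2h_{\iota(\gamma)}=2/d$ with $d\lambda\in M$, where $d$ is the order of $\lambda$ in $M'/M$; equivalently $h_{\iota(\gamma)}=(d-1)/d$ and $\lambda$ is $d$-reflective in the sense of Section~\ref{subsec:Borcherds}. To verify this I would invoke H\"ohn's realization $W_\mathfrak g\cong V_{\Lambda_g}^{\hat g}$ together with Lam's explicit description of the discriminant form and conformal weights of these orbifolds.

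The route I expect to be decisive bypasses this case-by-case weight computation by identifying $\Borch(\chi_V)$ with a twisted denominator of the fake monster algebra, exploiting that the relevant data of the lift depends only on the conjugacy class $[g]\subset\mathrm{Co}_0$ attached to $W_\mathfrak g$. When the level of $g$ equals its order, \cite{WW22} shows the $g$-twisted denominator is the reflective singular-weight product $\Phi_g$; matching Jacobi-form inputs then identifies $\Borch(\chi_V)$ with $\Phi_g$ and yields reflectivity directly. For the three small cases $\mathfrak g=B_{12,2},A_{2,2}F_{4,6},C_{4,10}$ with $[L_\mathfrak g:\bQ]\le 2$ I would instead compute the principal part of $\chi_V$ explicitly and check the reflective condition by hand, and then propagate reflectivity through the remaining classes (in particular those with level distinct from order) using the fact that $\Borch(\chi_V)$ depends only on $[g]$. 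The main obstacle is exactly this reflectivity step: controlling \emph{all} singular coefficients of $\chi_V$—not merely the $q^{-1}$ and $q^0$ terms producing the leading theta block—which is feasible by direct computation only when $[L_\mathfrak g:\bQ]$ is small and otherwise requires the Leech-orbifold and fake-monster identification.
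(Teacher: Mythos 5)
Your proposal is correct and follows essentially the same route as the paper: the formal part (singular weight, holomorphy, leading Fourier--Jacobi coefficient $\vartheta_\mathfrak{g}$) via Theorem \ref{th:product}, then reflectivity by identifying $\Borch(\chi_V)$ with the twisted denominator $\Phi_g$ of the fake monster algebra when the level of $g$ equals its order, and by direct computation of the singular Fourier coefficients for one representative of each remaining class ($B_{12,2}$, $A_{2,2}F_{4,6}$, $C_{4,10}$), propagating within a class since the lift depends only on $[g]$. Your intermediate conformal-weight reduction ($h_{\iota(\gamma)}<1$, $\lambda^2=2-2h$, so reflectivity means $h=(d-1)/d$) is a sound observation that the paper only gestures at in Remark \ref{rem:Lam-paper}, but as you note it is bypassed by the identification with $\Phi_g$ and the explicit Weyl-orbit calculations.
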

\begin{proof}
Let $\Delta_\mathfrak{g}$ be the set of roots of $\mathfrak{g}$. 
By definition, the Fourier expansion of $\chi_V$ begins 
$$
\chi_V(\tau,\mathfrak{z}) = q^{-1} + \sum_{\alpha\in \Delta_\mathfrak{g}} e^{2\pi i\latt{\alpha, \mathfrak{z}}} + \rank(\mathfrak{g}) + O(q),
$$
and all Fourier coefficients of $\chi_V$ are non-negative integers. It follows from H\"{o}hn's construction that $\chi_V$ is a weakly holomorphic Jacobi form of weight $0$ and lattice index $L_\mathfrak{g}$. We conclude immediately that $\Borch(\chi_V)$ is a holomorphic Borcherds product of singular weight on $2U\oplus L_\mathfrak{g}$. It remains to show that $\Borch(\chi_V)$ is reflective. 

Suppose first that $\mathfrak{g}$ comes from one of the conjugacy classes $[g]$ in Table \ref{table:8cycleshapes}. Then the level and order of $g$ are the same (denoted $n_g$ as in Section \ref{subsec:BKM}). The orbit lattice $L_\mathfrak{g}$ is determined by 
$$
U(n_g) \oplus \Lambda^g \cong U\oplus L_\mathfrak{g}. 
$$
As mentioned at the end of Section \ref{subsec:BKM}, it was proved in \cite[Theorem 6.5, Remark 6.14]{WW22} that the twisted denominator of the fake monster algebra associated with $g$ defines a reflective Borcherds product $\Phi_g$ of singular weight on $U(n_g)\oplus U\oplus \Lambda^g$. M\"{o}ller \cite{Mol21} calculated the characters of the orbifold VOA $V_{\Lambda_g}^{\hat{g}}$, and by comparing M\"{o}ller's result and \cite[Theorem 6.5]{WW22}, it was also proved \cite[Remark 6.13]{WW22} that the input of $\Phi_g$ equals the full character of $V_{\Lambda_g}^{\hat{g}}$ (up to a factor of $\eta^{-\rank(\mathfrak{g})}$) as a vector-valued modular form. By H\"{o}hn's construction, the input of $\Phi_g$ is actually $f_\mathfrak{g}$ as defined in Equation \eqref{eq:character-orbifold}. We can then use Equation \eqref{eq:full-theta} to see that $\Borch(\chi_V)=\Phi_g$; in particular, $\Borch(\chi_V)$ is reflective.

Otherwise, $\mathfrak{g}$ comes from one of the conjugacy classes $[g]$ in Table \ref{table:3cycleshapes}, the level of $g$ is twice its order, and the isomorphism $U(n_g) \oplus \Lambda^g \cong U\oplus L_\mathfrak{g}$ does not hold. In fact, $\Borch(\chi_V)$ is not obviously related to $\Phi_g$ in these cases. We will prove that $\Borch(\chi_V)$ is reflective by directly calculating the input forms $\chi_V$. By H\"{o}hn's construction, it is sufficient to do this for any semi-simple $\mathfrak{g}$ from a fixed $[g]$, because the input for any other semi-simple Lie algebra from $[g]$ can be expressed as $\sigma(f_\mathfrak{g})$ for some $\sigma \in \Orth(L_\mathfrak{g}'/L_\mathfrak{g})$. The proof by cases is given in Lemmas \ref{lem:B12}, \ref{lem:F4A2} and \ref{lem:C4} below. 
\end{proof}

\begin{remark}
All products $\Borch(\chi_V)$ for $V_1=\mathfrak{g}$ within a fixed conjugacy class $[g]$ define the same modular form on the type IV symmetric domain of dimension $2+\rank(\mathfrak{g})$ up to automorphism. We denote this modular form by $\Psi_g$. The different products $\Borch(\chi_V)$ can be viewed as the distinct Fourier--Jacobi expansions of $\Psi_g$ at different $1$-dimensional cusps represented by a splitting of two hyperbolic planes. Letting $D_g$ denote the discriminant group of $L_\mathfrak{g}$, the automorphism group $\mathrm{Aut}(V_{\Lambda_g}^{\hat{g}})$ equals the subgroup of $\Orth(D_g)$ that fixes $f_\mathfrak{g}$. If $g$ is in Table \ref{table:8cycleshapes}, then 
$$
\mathrm{Aut}(V_{\Lambda_g}^{\hat{g}})=\Orth(D_g),
$$
and therefore $\Psi_g$ is modular for the full orthogonal group. This property does not hold if $g$ is in Table \ref{table:3cycleshapes}. 
\end{remark}

\begin{remark}
By the proof of Theorem \ref{th:VOA}, $\Psi_g=\Phi_g$ for $[g]$ in Table \ref{table:8cycleshapes}. The eight $\Phi_g$ were first constructed by Borcherds \cite{Bor95, Bor98} and Scheithauer in \cite{Sch04, Sch09}, but they only considered the Fourier expansions at $0$-dimensional cusps. Gritsenko \cite{Gri12} first calculated the $24$ distinct Fourier--Jacobi expansions of $\Phi_g$ for $g$ of cycle shape $1^{24}$ at the distinct $1$-dimensional cusps, which correspond to the $24$ Niemeier lattices. The Fourier--Jacobi expansions of $\Phi_g$ for $g$ of cycle shapes $1^3 7^3$ and $1^4 5^4$ were first determined in \cite{GW19}. We also remark that Scheithauer \cite{Sch15} calculated $\Psi_g$ at $0$-dimensional cusps for $g$ of cycle shape $2^{12}$.  
\end{remark}

\begin{remark}
Borcherds \cite{Bor90, Bor92} proved that the BRST cohomology related to the Leech lattice VOA naturally defines the fake monster algebra. It is natural to expect that the BRST cohomology related to any holomorphic VOA of central charge $24$ defines a BKM algebra whose denominator function is $\Borch(\chi_V)$. This type of construction has been realized for holomorphic vertex operator algebras with $V_1$ structures $A_{1,2}^{16}$, $A_{2,3}^8$, $A_{4,5}^2$, $A_{6,7}$ and $B_{12,2}$ by 
Creutzig, H\"{o}hn, Klauer and Scheithauer \cite{HS03, CKS07, HS14}. A uniform construction for all vertex operator algebras was given by Driscoll-Spittler in his thesis \cite{DS22} under several technical assumptions. This (conditional) natural construction also implies that $\Borch(\chi_V)$ is reflective on $2U\oplus L_\mathfrak{g}$. 
\end{remark}

\begin{remark}\label{rem:isomorphic}
Let $\mathcal{G}_\mathfrak{g}$ be the BKM algebra that arises as the BRST cohomology of a holomorphic VOA with $V_1=\mathfrak{g}$. Let $[g]$ be the $\mathrm{Co}_0$-conjugacy class corresponding to $\mathfrak{g}$. Clearly $\mathcal{G}_\mathfrak{g}$ depends only on $[g]$. 
\begin{enumerate}
\item When $[g]$ is in Table \ref{table:8cycleshapes}, we have the two isomorphisms  
$$
U(n_g) \oplus \Lambda^g \cong U\oplus L_\mathfrak{g} \quad \text{and} \quad U\oplus \Lambda^g \cong (U \oplus L_\mathfrak{g}')(n_g),
$$
since $(\Lambda^g)'(n_g)\cong \Lambda^g$. 
This induces the identifications
\begin{equation}\label{eq:groups}
\begin{split}
&\Orth\!\big(U_1(n_g)\oplus U\oplus \Lambda^g\big) \cong  \Orth\!\big(U_1(n_g)\oplus (U\oplus L_\mathfrak{g}')(n_g)\big) \\
=& \Orth\!\big(U_1\oplus U\oplus L_\mathfrak{g}'\big) = \Orth\!\big(U_1\oplus U\oplus L_\mathfrak{g}\big).
\end{split}
\end{equation}
It follows that the Fourier expansion of $\Borch(\chi_V)$ at $U_1$ equals the Fourier expansion of $\Phi_g$ at $U_1(n_g)$. Recall that the latter is also identical to the denominator of the twist $\mathbb{G}_g$ of the fake monster algebra by $g$ (see Section \ref{subsec:BKM}). Therefore, the denominator of $\mathcal{G}_\mathfrak{g}$ is the same as the denominator of $\mathbb{G}_g$. The root lattice of 
$\mathbb{G}_g$ is always $U\oplus \Lambda^g$. From \eqref{eq:groups} 
we conclude that the root lattice of $\mathcal{G}_\mathfrak{g}$ is $U\oplus L_\mathfrak{g}'$, and moreover that the two BKM algebras $\mathcal{G}_\mathfrak{g}$ and $\mathbb{G}_g$ are isomorphic.  
\item When $[g]$ is in Table \ref{table:3cycleshapes}, $\mathcal{G}_\mathfrak{g}$ and $\mathbb{G}_g$ are not isomorphic because their root lattices are not even isomorphic up to scaling. In addition, the Weyl vector of $\mathbb{G}_g$ lies in the root lattice, but the Weyl vector of $\mathcal{G}_\mathfrak{g}$ does not lie in the root lattice (recall that $C$ is half-integral and the Weyl vector is of type $(-C-1,*,-C)$ in this case). For instance, when $g$ has cycle shape $2^{12}$, the root lattices of $\mathcal{G}_\mathfrak{g}$ and $\mathbb{G}_g$ are respectively
$$
U\oplus D_{12}'(1/2) \quad \text{and} \quad  U\oplus D_{12}^+(2).
$$
In particular, we disagree with \cite[Remark 3.11]{HS14}: the denominator there is not a twisted denominator function of the fake monster algebra. We conjecture that the denominator $\Phi_g$ of $\mathbb{G}_g$ is given by the Fourier expansion of $\Psi_\mathfrak{g}=\Borch(\chi_V)$ at some other $0$-dimensional cusp.
\end{enumerate}
\end{remark}

\begin{remark}
The $\ZZ$-lattice generated by $d\rho$ and the $\lambda=(n,\ell,m)\in U\oplus L_\mathfrak{g}'$ for which $\Borch(\chi_V)$ vanishes on $\lambda^\perp$ is exactly the dual lattice $U\oplus L_\mathfrak{g}'$. This follows from Remark \ref{rem:isomorphic} and a direct calculation of zero divisors (see Lemmas \ref{lem:B12}, \ref{lem:F4A2} and \ref{lem:C4} below). Here, $\rho$ is the Weyl vector of $\Borch(\chi_V)$ and $d=1$ or $2$ if the corresponding $[g]$ lies in Table \ref{table:8cycleshapes} or Table \ref{table:3cycleshapes}, respectively.    
\end{remark}

\begin{remark}\label{rem:Lam-paper}
Let $[g]$ be a $\mathrm{Co}_0$-conjugacy class of level $N_g$ and order $n_g$. Let $M_g$ be an even lattice of signature $(\rank(\Lambda^g)+2,2)$ whose discriminant form is isomorphic to $(R(V_{\Lambda_g}^{\hat{g}}),-\mathfrak{q})$, as determined by Lam \cite{Lam19}. Let $\Phi_g$ be the $g$-twisted denominator of the fake monster algebra as before, and recall that $\Phi_g$ is a holomorphic Borcherds product of singular weight on $U(N_g)\oplus U\oplus \Lambda^g$ (see \cite[Theorem 6.5]{WW22}). We have the following:
\begin{enumerate}
\item If $N_g=n_g$, then $M_g\cong U(N_g)\oplus U\oplus \Lambda^g$ (see \cite[Theorem 5.3]{Lam19}) and $\Phi_g$ is reflective on $M_g$ (see \cite[Remark 6.14]{WW22}).
\item If $N_g\neq n_g$, then $M_g$ is not isomorphic to $U(N_g)\oplus U\oplus \Lambda^g$ and $\Phi_g$ is not reflective on the lattice $U(N_g)\oplus U\oplus \Lambda^g$. Motivated by H\"{o}hn's construction and the discussions above,  we conjecture that the vector-valued characters of $V_{\Lambda_g}^{\hat{g}}$ (divided by $\eta^{\rank(\Lambda^g)}$) can be lifted to a reflective Borcherds product of singular weight on $M_g$. Obviously, the theta lift defines a holomorphic Borcherds product of singular weight on $M_g$. However, we have to compute the conformal weights to confirm that it is reflective.  We further conjecture that the Fourier expansion of this product at a certain $0$-dimensional cusp recovers the $g$-twisted denominator of the fake monster algebra. In other words, $\Phi_g$ defines a reflective Borcherds product on $M_g$, i.e. $M_g$ plays the role of the lattice $\mathbb{M}$ in \cite[Theorem 1.4]{WW23}.
\end{enumerate}
\end{remark}

We can now complete the proof of Theorem \ref{th:VOA}. By \eqref{eq:V-decomposition}, we can express $\chi_V$ as an $\NN$-linear combination of characters of the affine VOA generated by $V_1=\mathfrak{g}$ which have integral conformal weight. Expressions of this type have been determined by Schellekens \cite{Sch93}. We will compute the characters $\chi_V$ for $V_1=B_{12,2}$, $F_{4,6}A_{2,2}$ and $C_{4,10}$. In these cases, the index $[L_\mathfrak{g}:\mathbf{Q}_\mathfrak{g}]$ is $1$ or $2$.  To determine the divisor of $\Borch(\chi_V)$, it suffices to compute the singular Fourier coefficients of $\chi_V$, and by Remark \ref{rem:singular} it is enough to consider Fourier coefficients of the form
\begin{equation}\label{eq:check-list}
f(n,\ell)q^n\zeta^\ell, \quad n\leq \hat{\delta}_{L_\mathfrak{g}}, \; \ell \in \mathbf{Q}_\mathfrak{g}', \; 2n<(\ell,\ell).    
\end{equation}
Note that the above $f(n,\ell)$ are $0$ or $1$ because $f(n,\ell)\geq 0$ and $\Borch(\chi_V)$ has only simple zeros, hence $\Borch(\chi_V)$ is reflective on $2U\oplus L_\mathfrak{g}$ as soon as we can show that for every nonzero Fourier coefficient of form \eqref{eq:check-list} there exists a positive integer $t$ such that $(\ell,\ell)-2n=2/t$ and $t\ell \in L_\mathfrak{g}$. 

In the three lemmas below, we will express the Fourier expansion of $\chi_V$ in terms of Weyl orbits. Suppose $\mathfrak{g}=\mathfrak{g}_{1,k_1}\oplus \mathfrak{g}_{2,k_2}$, where $\mathfrak{g}_{2,k_2}$ may be zero, and let $W_{j}$ be the Weyl group of $\mathfrak{g}_j$. For a dominant integral weight $\lambda_j = \sum_{i} x_i w_i$ of $\mathfrak{g}_j$, we define 
$$
O_{\lambda_j, n_j} = \sum_{\ell \in W_{j}\cdot\lambda_j} e^{2\pi i\latt{\ell, \mathfrak{z}}}, \quad \mathfrak{z} \in L_\mathfrak{g}\otimes\CC,
$$
where $n_j=\latt{\lambda_j,\lambda_j}/k_j$. Recall (from Notation \ref{notation}) that
$$
\mathbf{Q}_\mathfrak{g} = Q_1^\vee(k_1)\oplus Q_2^\vee(k_2), \quad  \mathbf{Q}_\mathfrak{g}' = P_1(1/k_1)\oplus P_2(1/k_2)
$$ 
and 
$$
\mathbf{Q}_\mathfrak{g}< L_\mathfrak{g} < L_\mathfrak{g}' < \mathbf{Q}_\mathfrak{g}'.
$$
When we view $O_{\lambda_j, n_j}$ as part of the Fourier series of $\chi_V$, the vector $\ell \in W_j\cdot \lambda_j \subsetneq \bQ'$ is identified with $\ell / k_j$ and its norm is $(\ell,\ell)=k_j\latt{\ell/k_j,\ell/k_j}=n_j$. Therefore, $q^n \cdot O_{\lambda_1, n_1}\otimes O_{\lambda_2, n_2}$ induces reflective zeros if and only if there exists a positive integer $t$ such that 
$$
n_1+n_2-2n=2/t \quad \text{and} \quad t(\lambda_1/k_1 + \lambda_2/k_2)\in L_\mathfrak{g}.
$$
For simplicity, we set 
$$
O_{\lambda_1,\lambda_2, n_1+n_2}=O_{\lambda_1,n_1}\otimes O_{\lambda_2,n_2}.
$$
 
\begin{lemma}\label{lem:B12}
When $V_1=\mathfrak{g}$ is of type $B_{12,2}$, the product $\Borch(\chi_V)$ is reflective on $2U\oplus D_{12}(2)$. 
\end{lemma}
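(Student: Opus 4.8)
The plan is to verify reflectivity through a finite computation of the singular Fourier coefficients of the input $\chi_V$, following the criterion set up just before the lemma. Theorem~\ref{th:VOA} already guarantees that $\Borch(\chi_V)$ is a holomorphic Borcherds product of singular weight on $2U\oplus L_\mathfrak{g}$ with $L_\mathfrak{g}=\bQ=D_{12}(2)$ (here the index $[L_\mathfrak{g}:\bQ]$ equals $1$), so its divisor is entirely controlled by the terms $f(n,\ell)q^n\zeta^\ell$ with $2n<(\ell,\ell)$. What remains is to show that each such term cuts out a reflective rational quadratic divisor, i.e. that there is a positive integer $t$ with $(\ell,\ell)-2n=2/t$ and $t\ell\in L_\mathfrak{g}$.

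First I would reduce to a finite check. By Remark~\ref{rem:singular}, every singular Fourier coefficient already occurs among the terms with $\ell\in\bQ'$ and $n\le\hat{\delta}_{L_\mathfrak{g}}$, so I would compute the deep-hole quantity $\delta_{D_{12}(2)}$ to pin down $\hat{\delta}_{L_\mathfrak{g}}$ and bound $n$ by a small constant. A module $L_{\hat{\mathfrak{g}}}(2,\lambda)$ contributes to $\chi_V$ starting in degree $q^{h_\lambda-1}$, so only the vacuum module together with the finitely many non-vacuum summands in \eqref{eq:V-decomposition} of small integral conformal weight can reach the range of $n$ that must be examined; all other summands are irrelevant to the divisor.

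Next I would write $\chi_V$ explicitly to the required order in $q$. Using Schellekens' decomposition of $V$ into affine $B_{12,2}$-characters and expanding each character into Weyl orbits $O_{\lambda,n_\lambda}$ with $n_\lambda=\latt{\lambda,\lambda}/2$, I would collect all terms $q^n O_{\lambda,n_\lambda}$ satisfying $2n<n_\lambda$ and test, for each, whether $n_\lambda-2n=2/t$ and $t(\lambda/2)\in D_{12}(2)$ for a suitable positive integer $t$. The vacuum module accounts for the $q^{-1}$ term (a $2$-root, $t=1$) and for the $q^0$ root orbits: the long roots give $n_\lambda=1$ with $t=2$, and the short roots give $n_\lambda=1/2$ with $t=4$, and in both cases the required vector lies in $D_{12}(2)$ because its integral coordinates have even sum. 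The low-weight non-vacuum modules (for instance $L_{\hat{\mathfrak{g}}}(2,w_1+w_{12})$, whose leading orbit has $h=2$ and $n_\lambda=5/2$) supply the remaining singular orbits, which I would dispatch one at a time by the same membership computation.

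The main obstacle will be the bookkeeping in this last step. One must identify every level-$2$ dominant weight $\lambda$ of $B_{12}$ with integral conformal weight in the admissible range, extract the sub-leading Weyl orbits of the corresponding characters up to degree $q^{\hat{\delta}_{L_\mathfrak{g}}}$, and confirm both that no singular orbit of large norm has been overlooked and that $t(\lambda/2)\in D_{12}(2)$ in every case. This is a finite but delicate calculation; once it is carried out, every zero of $\Borch(\chi_V)$ is reflective, and the lemma follows.
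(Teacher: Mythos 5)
Your plan is essentially the paper's own proof: the paper likewise reduces to a finite check via Remark \ref{rem:singular} (with $\delta_{L}=6$ for $L_\mathfrak{g}=\bQ=D_{12}(2)$, hence expansion up to the $q^2$-term), writes $\chi_V=\chi^{B_{12,2}}_{0,0}+\chi^{B_{12,2}}_{w_1+w_{12},2}+\chi^{B_{12,2}}_{w_{10},3}+\chi^{B_{12,2}}_{w_5,2}$, decomposes the characters into Weyl orbits $O_{\lambda,n_\lambda}$ (computed in SageMath), and verifies orbit by orbit that $n_\lambda-2n=2/t$ with $t(\lambda/2)\in D_{12}(2)$, exactly your membership test. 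The only caution is to invoke for holomorphicity and singular weight the lemma-independent first part of the proof of Theorem \ref{th:VOA} (non-negativity of the coefficients of $\chi_V$ plus H\"ohn's construction) rather than the theorem as a whole, since the theorem's reflectivity claim in this case rests on this very lemma.
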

\begin{proof}
$\chi_V$ can be expressed in terms of characters of the affine VOA generated by $\mathfrak{g}$ (using the notation of \eqref{eq:symbol-char}) as
$$
\chi_V=\chi^{B_{12,2}}_{0,0}+\chi^{B_{12,2}}_{w_1+w_{12},2}+\chi^{B_{12,2}}_{w_{10},3}+\chi^{B_{12,2}}_{w_5,2}.
$$
In this case, $\mathbf{Q}_\mathfrak{g}=L_\mathfrak{g}=D_{12}(2)$. Note that $\delta_{L}=6$ (see \cite[Lemma 2.3]{HS14}), so we only need to calculate  $\chi_V$ up to its $q^2$-term.  We first use SageMath to compute the Fourier coefficients of $\chi^{B_{12,2}}$ as representations of the simple Lie algebra of type $B_{12}$, and then we decompose those representations into Weyl orbits as defined above. We find 
$$
\chi_V=q^{-1}+(O_{w_2,1}+O_{w_1,\frac12}+12)+\sum_{i=1}^\infty c_iq^i, 
$$ 
where
\begin{align*}
c_1=&\,O_{2w_2,4}+O_{w_1+w_3,3}+O_{w_5,\frac52}+O_{w_1+w_{12},\frac52}+O_{w_1+w_2,\frac52}+4O_{w_4,2}\\
&+12O_{2w_1,2}+12O_{w_3,\frac32}+12O_{w_{12},\frac32}+44O_{w_2,1}+90O_{w_1,\frac12}+300,\\
c_2=&\,O_{w_{10},5}+O_{2w_1+w_2,5}+O_{w_2+w_4,5}+O_{w_9,\frac92}+O_{3w_1,\frac92}+O_{w_2+w_3,\frac92}+O_{2w_1+w_{12},\frac92}\\
&+O_{w_3+w_{12},\frac92}+O_{w_1+w_{6},\frac92}+4 O_{w_8,4}+12 O_{2w_2 ,4}+4 O_{w_1 +w_5,4}+12 O_{w_7,\frac{7}{2}}\\
&+12 O_{w_2+w_{12},\frac{7}{2}}+12 O_{w_1+w_4,\frac{7}{2}}+32 O_{w_6,3}+44 O_{w_1+w_3,3}+90 O_{w_5,\frac{5}{2}}\\
&+90 O_{w_1 +w_{12},\frac{5}{2}}+90 O_{w_1 +w_{2},\frac{5}{2}}+224 O_{w_4,2}+288 O_{2w_1,2}\\
&+520 O_{w_{12},\frac{3}{2}}+520 O_{w_3,\frac{3}{2}}
+1242 O_{w_2,1}
+2535 O_{w_1,\frac{1}{2}}
+5792.
\end{align*}
The proof follows by verifying that every singular Weyl orbit is reflective. As an example, consider the orbit $q^2\cdot O_{w_9,9/2}$. We write $w_9=(1,1,1,1,1,1,1,1,1,0,0,0)$ as coordinates in the simple roots as in \cite{Bou82}. Then $9/2 - 2\times 2=1/2=2/4$ and $4\cdot(w_9/2)=2w_9 \in D_{12}(2)$, so $q^2\cdot O_{w_9,9/2}$ determines reflective divisors. 
\end{proof}

\begin{lemma}\label{lem:F4A2}
When $V_1=\mathfrak{g}$ is of type $A_{2,2}F_{4,6}$, the product $\Borch(\chi_V)$ is reflective on $2U\oplus D_4(6)\oplus A_2(2)$. 
\end{lemma}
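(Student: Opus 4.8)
The plan is to follow the template of Lemma~\ref{lem:B12}. Here $\mathfrak{g}=A_{2,2}F_{4,6}$, so the levels are $k_1=2$ and $k_2=6$; since $Q^\vee_{A_2}=A_2$ and $Q^\vee_{F_4}=P^\vee_{F_4}=D_4$, the rescaled coroot lattice is $\bQ=A_2(2)\oplus D_4(6)$, and the orbit lattice coincides with it, $L_\mathfrak{g}=\bQ$ (index one), matching the lattice in the statement. The first step is to determine how far the Fourier expansion of $\chi_V$ must be computed. The invariant $\delta_L$ is additive over orthogonal direct sums, and the deepest cosets of $A_2$ and $D_4$ have minimal squared norms $2/3$ and $1$, so $\delta_{A_2(2)}=4/3$, $\delta_{D_4(6)}=6$ and hence $\delta_{L_\mathfrak{g}}=22/3$. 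By Remark~\ref{rem:singular} the relevant bound is $\hat\delta_{L_\mathfrak{g}}=3$, so it suffices to know $\chi_V$ through its $q^3$-term (one power further than in Lemma~\ref{lem:B12}).

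Next I would write $\chi_V$ as an $\NN$-linear combination of characters $\chi^{A_{2,2}}\otimes\chi^{F_{4,6}}$ of the affine VOA generated by $V_1$, using the decomposition \eqref{eq:V-decomposition} recorded by Schellekens~\cite{Sch93}. With SageMath I would expand each affine character through $q^3$, realize each graded piece as a representation of $A_2\oplus F_4$, and break it into products of Weyl orbits $O_{\lambda_1,\lambda_2,n_1+n_2}=O_{\lambda_1,n_1}\otimes O_{\lambda_2,n_2}$, exactly as in the discussion preceding Lemma~\ref{lem:B12}. As a consistency check the leading terms must reproduce $\chi_V=q^{-1}+\bigl(\text{the roots of }A_2\oplus F_4\bigr)+6+O(q)$, since $\rank(\mathfrak{g})=6$ and $\Borch(\chi_V)$ has singular weight $3$; here the fundamental-weight coordinates are taken from~\cite{Bou82}.

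Finally, for every singular orbit $q^n\cdot O_{\lambda_1,\lambda_2,n_1+n_2}$ that occurs with $n\le 3$ and $2n<n_1+n_2$, I would verify reflectivity by exhibiting a positive integer $t$ with $n_1+n_2-2n=2/t$ and $t(\lambda_1/2+\lambda_2/6)\in L_\mathfrak{g}$. Because $L_\mathfrak{g}=A_2(2)\oplus D_4(6)$ is an orthogonal direct sum, this membership splits into the two componentwise conditions $t\lambda_1/2\in A_2$ and $t\lambda_2/6\in D_4$, each a finite check against the discriminant groups $A_2'/A_2\cong\ZZ/3$ and $D_4'/D_4\cong(\ZZ/2)^2$. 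As in Lemma~\ref{lem:B12}, the nonzero coefficients $f(n,\ell)$ are forced to be $0$ or $1$ because $\chi_V$ has non-negative coefficients and $\Borch(\chi_V)$ has only simple zeros, so reflectivity reduces to confirming the displayed divisibility for each orbit on the (finite) list. The main obstacle is purely the bookkeeping: compared with Lemma~\ref{lem:B12} one must expand one $q$-power further and track the orbits of the larger Weyl group $W_{A_2}\times W_{F_4}$. The subtlest point will be the $A_2(2)$ factor, whose order-$3$ glue produces singular orbits whose representatives $\lambda_1/2$ are genuinely fractional; verifying that these still satisfy $t\lambda_1/2\in A_2$ for the predicted $t$ (and not merely lie in $\bP$) is where an arithmetic slip would spuriously destroy reflectivity.
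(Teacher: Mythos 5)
Your plan reproduces the paper's proof of this lemma essentially verbatim: the paper likewise records $\bQ=L_\mathfrak{g}=A_2(2)\oplus D_4(6)$, uses $\delta_L=22/3$ (hence $\hat\delta_L=3$) to justify expanding $\chi_V$ only through its $q^3$-term, decomposes Schellekens' character into Weyl orbits $O_{\lambda_1,\lambda_2,n_1+n_2}$ (listing $17$, $26$ and $41$ singular orbits in $c_1$, $c_2$, $c_3$), and verifies orbit by orbit that $(\ell,\ell)-2n=2/t$ with $t\ell\in L_\mathfrak{g}$. Only a cosmetic slip: since $L_\mathfrak{g}=\bQ$ there is no glue to worry about, and the componentwise tests $t\lambda_1/2\in A_2$, $t\lambda_2/6\in D_4$ live in the discriminant groups of the rescaled lattices $A_2(2)$ and $D_4(6)$ rather than in $A_2'/A_2$ and $D_4'/D_4$ --- which does not affect the finite check you describe.
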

\begin{proof}
We express $\chi_V$ as the linear combination
\begin{align*}
&\Big(\chi^{F_{4,10}}_{0000,0} + \chi^{F_{4,10}}_{0004,2} + \chi^{F_{4,10}}_{0030,3} + \chi^{F_{4,10}}_{1100,2}\Big)\otimes\chi^{A_{2,2}}_{00,0}\\
+&\Big(\chi^{F_{4,10}}_{0101,\frac{26}{15}} + \chi^{F_{4,10}}_{1012,\frac{41}{15}}\Big)\otimes\Big( \chi^{A_{2,2}}_{10,\frac{4}{15} }+ \chi^{A_{2,2}}_{01,\frac{4}{15}}\Big) \\
+&\Big(\chi^{F_{4,10}}_{0003,\frac75} + \chi^{F_{4,10}}_{0006,\frac{17}{5}} + \chi^{F_{4,10}}_{0021,\frac{12}{5}} + \chi^{F_{4,10}}_{2010,\frac{12}{5}}\Big)\otimes\chi^{A_{2,2}}_{11,\frac35}\\
+& \Big(\chi^{F_{4,10}}_{0102,\frac73} + \chi^{F_{4,10}}_{2000,\frac43}\Big)\otimes\Big(\chi^{A_{2,2}}_{02,\frac23} + \chi^{A_{2,2}}_{20,\frac23}\Big).
\end{align*}
Note that
$$
\mathbf{Q}_\mathfrak{g}=L_\mathfrak{g}=A_2(2)\oplus D_4(6).
$$
Since $\delta_L = 22/3$, we only need to calculate  $\chi_V$ up to its $q^3$-term. We write 
$$
\chi_V=q^{-1}+ \big(O_{11,0000,1}+O_{00,1000,1}+O_{10,0000,\frac{1}{3}}+O_{01,0000,\frac{1}{3}}+O_{00,0001,\frac{1}{2}}+6\big)+ \sum_{i=1}^\infty c_iq^i
$$
and list the singular Weyl orbits in $c_i$ for $i\leq 3$. There are $17$ orbits with $\mathrm{norm}>2$ in $c_1$:
\begin{align*}
& O_{22,0000,4},O_{03,0000,3},O_{30,0000,3},O_{00,0004,\frac{8}{3}},O_{02,2000,\frac{8}{3}},O_{20,2000,\frac{8}{3}}, \\
& O_{11,0003,\frac{5}{2}},O_{00,1100,\frac{7}{3}},O_{02,0100,\frac{7}{3}}, O_{20,0100,\frac{7}{3}},O_{00,0012,\frac{13}{6}},O_{00,2001,\frac{13}{6}},\\
& O_{01,0101,\frac{13}{6}},O_{02,1001,\frac{13}{6}},O_{10,0101,\frac{13}{6}},O_{11,0011,\frac{13}{6}},O_{20,1001,\frac{13}{6}} .
\end{align*}
There are $26$ orbits with $\mathrm{norm}>4$ in $c_2$:
\begin{align*}
&O_{00,0030,\frac{9}{2}},O_{03,0003,\frac{9}{2}},O_{30,0003,\frac{9}{2}},O_{00,1004,\frac{13}{3}},O_{00,2100,\frac{13}{3}},O_{02,0102,\frac{13}{3}},O_{02,3000,\frac{13}{3}},\\
&O_{20,0102,\frac{13}{3}},O_{20,3000,\frac{13}{3}},O_{22,1000,\frac{13}{3}},O_{00,0005,\frac{25}{6}},O_{00,0111,\frac{25}{6}},O_{00,3001,\frac{25}{6}},O_{01,1012,\frac{25}{6}},\\
&O_{02,0110,\frac{25}{6}},O_{02,1003,\frac{25}{6}},O_{03,0011,\frac{25}{6}},O_{10,1012,\frac{25}{6}},O_{11,0021,\frac{25}{6}},O_{11,2010,\frac{25}{6}},O_{12,0101,\frac{25}{6}},\\
&O_{20,0110,\frac{25}{6}},O_{20,1003,\frac{25}{6}},O_{21,0101,\frac{25}{6}},O_{22,0001,\frac{25}{6}},O_{30,0011,\frac{25}{6}} .
\end{align*}
There are $41$ orbits with $\textrm{norm}>6$ in $c_3$:
\begin{align*}
&O_{11,0006,7},O_{14,0000,7},O_{41,0000,7},O_{00,2004,\frac{20}{3}},O_{02,4000,\frac{20}{3}},O_{04,2000,\frac{20}{3}},O_{20,4000,\frac{20}{3}},\\
&O_{22,0004,\frac{20}{3}},O_{40,2000,\frac{20}{3}},O_{00,0104,\frac{19}{3}},O_{00,1200,\frac{19}{3}},O_{02,1102,\frac{19}{3}},O_{04,0100,\frac{19}{3}},
O_{20,1102,\frac{19}{3}},\\
&O_{22,1100,\frac{19}{3}},O_{40,0100,\frac{19}{3}},O_{00,0031,\frac{37}{6}},O_{00,1005,\frac{37}{6}},O_{00,2012,\frac{37}{6}},O_{01,0112,\frac{37}{6}},O_{01,2101,\frac{37}{6}},\\
&O_{02,1110,\frac{37}{6}},O_{02,2003,\frac{37}{6}},O_{03,0021,\frac{37}{6}},O_{03,2010,\frac{37}{6}},O_{04,1001,\frac{37}{6}},O_{10,0112,\frac{37}{6}},O_{10,2101,\frac{37}{6}},\\
&O_{11,0014,\frac{37}{6}},O_{11,1021,\frac{37}{6}},O_{12,1012,\frac{37}{6}},O_{13,0101,\frac{37}{6}},O_{20,1110,\frac{37}{6}},O_{20,2003,\frac{37}{6}},O_{21,1012,\frac{37}{6}},\\
&O_{22,0012,\frac{37}{6}},O_{22,2001,\frac{37}{6}},O_{30,0021,\frac{37}{6}},O_{30,2010,\frac{37}{6}},O_{31,0101,\frac{37}{6}},O_{40,1001,\frac{37}{6}}.
\end{align*}
We verify that each of these orbits is reflective by direct computation. 
\end{proof}

\begin{lemma}\label{lem:C4}
When $V_1=\mathfrak{g}$ is of type $C_{4,10}$, the product $\Borch(\chi_V)$ is reflective on $2U\oplus D_{4}(10)$. 
\end{lemma}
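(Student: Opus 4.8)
The plan is to repeat, now in the case $[L_\mathfrak{g}:\mathbf{Q}_\mathfrak{g}]=2$, the computation carried out in Lemmas \ref{lem:B12} and \ref{lem:F4A2}. First I would write the full character $\chi_V$ of the holomorphic VOA $V$ with $V_1=C_{4,10}$ as an $\NN$-linear combination of affine characters $\chi^{C_{4,10}}_{\lambda_1\lambda_2\lambda_3\lambda_4,\,h_\lambda}$ of integral conformal weight, reading this combination off from Schellekens' decomposition \cite{Sch93}. Here $\mathbf{Q}_\mathfrak{g}=Q^\vee_{C_4}(10)=4A_1(10)$, while H\"{o}hn's orbit lattice is $L_\mathfrak{g}=D_4(10)$; from $\det(4A_1)=16$ and $\det(D_4)=4$ we get $[L_\mathfrak{g}:\mathbf{Q}_\mathfrak{g}]^2=16/4=4$, so the index is $2$, as anticipated before Lemma \ref{lem:B12}. (This is the conjugacy class of cycle shape $2^2 10^2$, for which $C=1/2$ is half-integral.) By H\"{o}hn's construction $\chi_V$ is a weakly holomorphic Jacobi form of weight $0$ and index $L_\mathfrak{g}$, hence supported on $L_\mathfrak{g}'\subset\mathbf{Q}_\mathfrak{g}'$, so $\Borch(\chi_V)$ is already a holomorphic Borcherds product of singular weight on $2U\oplus D_4(10)$; only reflectivity remains.

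Next I would bound the expansion. The nontrivial cosets of $D_4'/D_4$ each have minimal norm $1$, so $\delta_{D_4}=1$ and therefore $\delta_{L_\mathfrak{g}}=\delta_{D_4(10)}=10$, giving $\hat{\delta}_{L_\mathfrak{g}}=4$. By Remark \ref{rem:singular} every singular Fourier coefficient of $\chi_V$ already occurs among the terms $f(n,\ell)q^n\zeta^\ell$ with $n\leq 4$ and $2n<(\ell,\ell)$ listed in \eqref{eq:check-list}, so it suffices to compute $\chi_V$ up to its $q^4$-coefficient. I would carry this out in SageMath exactly as in the previous two lemmas: compute each coefficient of $\chi^{C_{4,10}}$ as a representation of the simple Lie algebra $C_4$, decompose it into Weyl orbits $O_{\lambda,n}$, and extract from $q^{-1}$ through $q^4$ the singular orbits $q^n O_{\lambda,n_\lambda}$ with $n_\lambda>2n$.

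To conclude, I would verify reflectivity orbit by orbit: for each singular Weyl orbit $q^n O_{\lambda,n_\lambda}$ I exhibit a positive integer $t$ with $n_\lambda-2n=2/t$ and $t(\lambda/10)\in D_4(10)$, which is precisely the criterion recorded in the text preceding Lemma \ref{lem:B12}. Since the coefficients of $\chi_V$ are non-negative and $\Borch(\chi_V)$ has only simple zeros, the relevant $f(n,\ell)$ are forced to be $0$ or $1$; hence once every singular orbit is shown to be reflective, $\Borch(\chi_V)$ is reflective on $2U\oplus D_4(10)$ and the proof is complete.

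The only genuine difference from the two previous lemmas, and the main bookkeeping cost, is the index $[L_\mathfrak{g}:\mathbf{Q}_\mathfrak{g}]=2$: the membership test $t\ell\in L_\mathfrak{g}$ must be performed in the orbit lattice $D_4(10)$ rather than in $\mathbf{Q}_\mathfrak{g}=4A_1(10)$, so one must track which coset of $D_4(10)/4A_1(10)$ each weight $\lambda/10$ represents. Combined with the longer range of the expansion (through $q^4$, which produces more singular orbits to enumerate than in the $q^2$ and $q^3$ cases), this makes the verification heavier, but each individual check is the same elementary lattice-membership computation as before.
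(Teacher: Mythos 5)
Your proposal is correct and follows essentially the same route as the paper: Schellekens' character decomposition, the bound $\delta_{L_\mathfrak{g}}=10$ forcing computation through the $q^4$-term, Weyl-orbit enumeration of the singular coefficients, and the orbit-by-orbit reflectivity check $n_\lambda-2n=2/t$ with $t\ell\in L_\mathfrak{g}$. Your emphasis on the index-two subtlety is exactly the point of the paper's sample verification of $q^1\cdot O_{4002,12/5}$, where the order of $\lambda/20$ is $10$ modulo $\mathbf{Q}_\mathfrak{g}=\ZZ^4(20)$ but $5$ modulo $L_\mathfrak{g}=D_4'(20)\cong D_4(10)$ (your $\lambda/10$ versus the paper's $\lambda/20$ is only a choice of coordinates).
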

\begin{proof}
We have the expression
\begin{align*}
\chi_V
=\sum_{1}&\Big(\chi^{C_{4,10}}_{0000,0} + \chi^{C_{4,10}}_{0024,4} + \chi^{C_{4,10}}_{0040,2} + \chi^{C_{4,10}}_{0044,6} + \chi^{C_{4,10}}_{00,10,0,8} + \chi^{C_{4,10}}_{0260,5} \\[-2mm]
 &+ \chi^{C_{4,10}}_{0321,3}
+\chi^{C_{4,10}}_{0323,5} + \chi^{C_{4,10}}_{0500,2} + \chi^{C_{4,10}}_{0800,4}+ \chi^{C_{4,10}}_{1051,4} + \chi^{C_{4,10}}_{1430,4} \\
 &+ \chi^{C_{4,10}}_{1431 ,5}
+\chi^{C_{4,10}}_{2242,6} + \chi^{C_{4,10}}_{3031,3} + \chi^{C_{4,10}}_{4140,4}\Big)+2 \chi^{C_{4,10}}_{ 2222,4},
\end{align*}
where the glue vector $1$ exchanges the affine Dynkin labels $\hat{w}_0,\hat{w}_1,\hat{w}_2,\hat{w}_3,\hat{w}_4$ and $\hat{w}_4,\hat{w}_3,\hat{w}_2,\hat{w}_1,\hat{w}_0$ (such that e.g. $\chi^{C_{4,10}}_{0000,0}$ becomes $\chi^{C_{4,10}}_{000,10,10}$ and $\chi^{C_{4,10}}_{0024,4}$ becomes $\chi^{C_{4,10}}_{2004,3}$; see \eqref{eq:symbol-char} and the explanations there). In this case, we have 
$$
\mathbf{Q}_\mathfrak{g}=\ZZ^4(20) \quad  \text{and}  \quad L_\mathfrak{g}=D_4'(20)\cong D_4(10).
$$
Since $\delta_L = 10$, we only need to calculate  $\chi_V$ up to its $q^4$-term. We write 
$$
\chi_V=q^{-1}+\big(O_{2 0 0 0,\frac{1}{5}}+ O_{0 1 0 0,\frac{1}{10}}+4\big)+\sum_{i=1}^\infty c_i q^i 
$$
and find that $c_1$ equals 
\begin{align*}
&O_{0 5 0 0,\frac{5}{2}}+O_{0 0 4 0,\frac{12}{5}}+O_{4 0 0 2,\frac{12}{5}}+O_{4 0 2 0,\frac{11}{5}}+O_{0 1 2 1,\frac{21}{10}}+O_{1 3 1 0,\frac{21}{10}}+O_{2 1 0 2,\frac{21}{10}}\\
+&O_{4 1 0 1,\frac{21}{10}}+2 O_{0 2 0 2,2}+2 O_{4 2 0 0,2}+2 O_{1 0 1 2,\frac{19}{10}}+2 O_{2 1 2 0,\frac{19}{10}}+2 O_{5 0 1 0,\frac{19}{10}}+2 O_{0 0 0 3,\frac{9}{5}}\\
+&4 O_{0 2 2 0,\frac{9}{5}}+2 O_{2 2 0 1,\frac{9}{5}}+4 O_{6 0 0 0,\frac{9}{5}}+5 O_{0 3 0 1,\frac{17}{10}}+5 O_{1 0 3 0,\frac{17}{10}}+5 O_{2 3 0 0,\frac{17}{10}}+5 O_{3 0 1 1,\frac{17}{10}}\\
+&10 O_{0 4 0 0,\frac{8}{5}}+10 O_{1 1 1 1,\frac{3}{2}}+10 O_{3 1 1 0,\frac{3}{2}}+12 O_{0 0 2 1,\frac{7}{5}}+17 O_{2 0 0 2,\frac{7}{5}}+12 O_{4 0 0 1,\frac{7}{5}}\\
+&20 O_{0 1 0 2,\frac{13}{10}}+20 O_{1 2 1 0,\frac{13}{10}}+20 O_{4 1 0 0,\frac{13}{10}}+32 O_{2 0 2 0,\frac{6}{5}}+38 O_{0 1 2 0,\frac{11}{10}}+38 O_{2 1 0 1,\frac{11}{10}}\\
+&46 O_{0 2 0 11}+56 O_{2 2 0 01}+69 O_{0 3 0 0,\frac{9}{10}}+69 O_{1 0 1 1,\frac{9}{10}}+69 O_{3 0 1 0,\frac{9}{10}}+101 O_{0 0 0 2,\frac{4}{5}}\\
+&101 O_{4 0 0 0,\frac{4}{5}}+120 O_{1 1 1 0,\frac{7}{10}}+168 O_{0 0 2 0,\frac{3}{5}}+148 O_{2 0 0 1,\frac{3}{5}}+205 O_{0 1 0 1,\frac{1}{2}}+205 O_{2 1 0 0,\frac{1}{2}}\\
+&280 O_{0 2 0 0,\frac{2}{5}}+340 O_{1 0 1 0,\frac{3}{10}}+418 O_{0 0 0 1,\frac{1}{5}}+456 O_{2 0 0 0,\frac{1}{5}}+558 O_{0 1 0 0,\frac{1}{10}}+748 .
\end{align*}
There are $10$ orbits with $\textrm{norm}>4$ in $c_2$:
\begin{align*}
&O_{10, 0 0 0,5},O_{2 0 0 4,\frac{21}{5}},O_{4 2 2 0,\frac{21}{5}},O_{0 1 0 4,\frac{41}{10}},O_{0 3 2 1,\frac{41}{10}},\\
&O_{2 3 0 2,\frac{41}{10}},O_{3 0 3 1,\frac{41}{10}},O_{4 3 0 1,\frac{41}{10}},O_{5 0 3 0,\frac{41}{10}},O_{8 1 0 0,\frac{41}{10}}.
\end{align*}
There are $11$ orbits with $\textrm{norm}>6$ in $c_3$:
\begin{align*}
&O_{0 8 0 0,\frac{32}{5}},O_{0 0 2 4,\frac{31}{5}},O_{6 2 0 2,\frac{31}{5}},O_{0 1 0 5,\frac{61}{10}},O_{10, 1 0 0,\frac{61}{10}},O_{1 0 5 1,\frac{61}{10}},\\
&O_{1 4 3 0,\frac{61}{10}},O_{3 4 1 1,\frac{61}{10}},O_{4 1 4 0,\frac{61}{10}},O_{5 0 1 3,\frac{61}{10}},O_{7 0 1 2,\frac{61}{10}}.
\end{align*}
There are $16$ orbits with $\textrm{norm}>8$ in $c_4$:
\begin{align*}
&O_{0 2 6 0,\frac{41}{5}},O_{2 8 0 0,\frac{41}{5}},O_{8 2 2 0,\frac{41}{5}},O_{0 3 2 3,\frac{81}{10}},O_{0 9 0 0,\frac{81}{10}},O_{1 0 7 0,\frac{81}{10}},O_{1 4 3 1,\frac{81}{10}},O_{2 3 0 4,\frac{81}{10}},\\
&O_{3 0 3 3,\frac{81}{10}},O_{3 4 1 2,\frac{81}{10}},O_{4 1 4 1,\frac{81}{10}},O_{5 4 1 1,\frac{81}{10}},O_{6 1 4 0,\frac{81}{10}},O_{7 0 1 3,\frac{81}{10}},O_{8 3 0 1,\frac{81}{10}},O_{9 0 3 0,\frac{81}{10}}.
\end{align*} 
To finish the proof, we verify that the required condition on the orders of vectors in the singular Weyl orbits is satisfied. Here we work out $q^1 \cdot O_{4002,12/5}$ as an example. The associated dominant weight is 
$$
\lambda=4w_1+2w_4=(6,2,2,2)
$$
in coordinates as in \cite{Bou82}. The corresponding vector in $L_\mathfrak{g}'=D_4(1/20)$ is $\lambda / 20$, which has norm $20\times (12/100)=12/5$, and we have $12/5 - 2\times 1 =2/5$. The order of $\lambda/20$ modulo $\mathbf{Q}_\mathfrak{g}=\ZZ^4(20)$ is $10$, but its order modulo $L_\mathfrak{g}=D_4'(20)$ is $5$. Hence the induced zero divisor is reflective on $2U\oplus L_\mathfrak{g}$ although it is not reflective on $2U\oplus \bQ$.
\end{proof}

\chapter[Symmetric case: holomorphic SCFTs of central charge 12]{The symmetric case: holomorphic SCFTs of central charge 12}\label{sec:construct-symmetric-C=1}

In this chapter, we construct hyperbolizations of affine Kac--Moody algebras $\hat{\mathfrak{g}}$ for the eight $\mathfrak{g}$ of symmetric type with $C=1$ in Theorem \ref{th:non-existence}. These semi-simple $\mathfrak{g}$ first appeared in \cite{DW21}, where Dittmann and the second named author proved that the additive lifts of the denominators of these $\hat{\mathfrak{g}}$ are reflective Borcherds products of singular weight on the maximal even sublattices of the (integral) lattices $\mathbf{P}_\mathfrak{g}$. Shortly after \cite{DW21} appeared on arXiv, Harrison, Paquette, Persson and Volpato \cite{F24} described these $\mathfrak{g}$ in terms of the $\mathcal{N}=1$ structures of holomorphic SCFT $F_{24}$ of central charge $12$, and gave a natural construction of a certain BKM superalgebra (denoted $\mathcal{G}_\mathfrak{g}$) as the BRST cohomology  related to $F_{24}$ with a fixed $\mathcal{N}=1$ structure under some technical assumptions. They also determined the super-denominators of these BKM superalgebras. 

We will prove that these super-denominators are actually the singular-weight reflective Borcherds products constructed in \cite{DW21}. We will also describe the connection between these BKM superalgebras and the twists of the fake monster algebra, which resolves some open questions proposed in \cite{F24}. Furthermore, we express the inputs of these Borcherds products as $\ZZ$-linear combinations of the full characters of the affine VOAs generated by $\mathfrak{g}$, and construct some exceptional modular invariants of $\hat{\mathfrak{g}}$ as an application.

\section{Holomorphic SCFTs of central charge 12 composed of 24 chiral fermions} 
Holomorphic vertex operator superalgebras (SVOA) of central charge $12$ were classified by Creutzig, Duncan and Riedler \cite{CDR18}, and they fall into three types: the Conway SCFT $V^{f\natural}$ with trivial weight-$1/2$ subspace, the theory $V^{fE_8}$ of $8$ chiral bosons and $8$ chiral fermions, and the theory $F_{24}$ of $24$ chiral fermions. The hyperbolizations of affine Kac--Moody algebras are related to $F_{24}$. We review the theory of $F_{24}$ following \cite{F24}. 

The vertex algebra $F_{24}$ is constructed from the lattice VOA associated with the $D_{12}$ lattice
$$
D_{12}=\left\{ (x_1,x_2,...,x_{12})\in\ZZ^{12} : \quad x_1+x_2+...+x_{12} \in 2\ZZ \right\}.
$$
The lattice VOA $V_{D_{12}}$ has $4$ irreducible modules labeled by the $4$ cosets of $D_{12}'/D_{12}$, 
$$
0,\quad  v=(1,0,..,0),\quad  s=\Big(\frac{1}{2},...,\frac{1}{2},\frac{1}{2}\Big) \quad \text{and} \quad c=\Big(\frac{1}{2},...,\frac{1}{2},-\frac{1}{2}\Big).
$$
The four modules $V_0$, $V_v$, $V_s$ and $V_c$ have conformal weights $0$, $1/2$, $3/2$ and $3/2$, respectively. Their full characters can be expressed as the quotients of Jacobi theta functions by $\eta^{12}$:
$$
\chi_*(\tau,z) = \frac{\Theta_{D_{12},*}(\tau, z)}{\eta(\tau)^{12}}, \quad *= 0, v, s, c, \quad z\in D_{12}\otimes\CC.
$$
The sum $V_0\oplus V_v$ is the $\ZZ_2$-graded superspace of $F_{24}$ (i.e. the $\rm NS$ sector in 2d SCFTs). Let $(-1)^F$ be the canonical involution, such that 
$$
(-1)^F|_{V_0}=I \quad  \text{and} \quad (-1)^F|_{V_v}=-I.
$$
The SVOA $F_{24}$ has a unique irreducible $(-1)^F$-stable canonically twisted module which is given by $V_s\oplus V_c$ (i.e. the $\rm R$ sector in 2d SCFTs). Let $F_{24}=\bigoplus_{n=0}^\infty W_{n/2}$. The weight-$1$ space $W_1$ has a natural Lie algebraic structure $\widehat{so}(24)_1$ of dimension $276$.  An $\mathcal{N}=1$ superconformal structure of $F_{24}$ is determined by a suitable choice of the weight-$3/2$ vectors, which is constructed from a linear combination of cubic terms of fermions whose coefficients are the structure constants of a certain semi-simple Lie algebra $\mathfrak{g}$ (see \cite[Section 2.1]{F24}). This $\mathfrak{g}$ is a $24$-dimensional subalgebra of $\widehat{so}(24)_1$. In a sense, $\mathfrak{g}$ can be regarded as a (virtual) Lie algebraic structure of the weight-$1/2$ subspace $W_{1/2}$. Since $\mathcal{N}=1$ superconformal structures are one-to-one corresponding to the Lie algebras $\mathfrak{g}$, we do not distinguish between them in this paper. For $\mathfrak{z}\in \mathfrak{h}$, the full characters of $F_{24}$ with an $\mathcal{N}=1$ structure are defined by
\begin{align*}
   \chi_{\mathrm{NS}}\big(\tau, \mathfrak{z}\big) &= \mathrm{Tr}_{\mathrm{NS}}\big(q^{L_0-\frac{1}{2}}e^{2\pi i\mathfrak{z}}\big),& \chi_{\widetilde{\mathrm{NS}}}\big(\tau, \mathfrak{z}\big) &= \mathrm{Tr}_{\mathrm{NS}}\big(q^{L_0-\frac{1}{2}}e^{2\pi i\mathfrak{z}}(-1)^F\big),& \\
   \chi_{\mathrm{R}}(\tau, \mathfrak{z}) &= \mathrm{Tr}_{\mathrm{R}}\big(q^{L_0-\frac{1}{2}}e^{2\pi i\mathfrak{z}}\big),& \chi_{\widetilde{\mathrm{R}}}(\tau, \mathfrak{z}) &= \mathrm{Tr}_{\mathrm{R}}\big(q^{L_0-\frac{1}{2}}e^{2\pi i\mathfrak{z}}(-1)^F\big).&
\end{align*}

The restriction of the unique normalized invariant bilinear form of $F_{24}$ to a simple ideal $\mathfrak{g}_j$ of $\mathfrak{g}$ satisfies $(-,-)=k_j\latt{-,-}$, where $\latt{-,-}$ is the standard bilinear form of $\mathfrak{g}_j$ as before (see \cite{F24}). We indicate these integers by writing 
$$
\mathfrak{g}=\mathfrak{g}_{1,k_1}\oplus \cdots \oplus \mathfrak{g}_{s,k_s}. 
$$
The semi-simple Lie algebra $\mathfrak{g}$ is of dimension $24$. There are exactly eight distinct possibilities for $\mathfrak{g}$ and they are characterized by the identity (see Notation \ref{notation})
$$
1=\frac{\dim \mathfrak{g}}{24} = \frac{h_j^\vee}{k_j}, \quad 1\leq j\leq s.
$$
The affine VOA $V_\mathfrak{g}$ generated by $\mathfrak{g}$ is a full sub-VOA of $F_{24}$.  Let $\Delta_\mathfrak{g}^+$ denote the set of positive roots of $\mathfrak{g}$. We have the following embedding of integral lattices
$$
\iota_\mathfrak{g}:\quad  \mathbf{P}_\mathfrak{g}:=\bigoplus_{j=1}^s P_{j}^\vee(h_j^\vee) \hookrightarrow \ZZ^{12}, \quad
\mathfrak{z} \mapsto \iota_\mathfrak{g}(\mathfrak{z})=\{\latt{\mathfrak{z},\alpha} \}_{\alpha \in \Delta_\mathfrak{g}^+},
$$
which induces an embedding of the maximal even sublattice $L_\mathfrak{g}$ of $\mathbf{P}_\mathfrak{g}$ into $D_{12}$. From the conformal embedding $V_\mathfrak{g} \hookrightarrow F_{24}$ we deduce that the full characters of $F_{24}$ with the $\mathcal{N}=1$ structure $\mathfrak{g}$ can be expressed as
\begin{align*}
\chi_{\mathrm{NS}}(\tau, \mathfrak{z}) &= \chi_0 (\tau, \iota_\mathfrak{g}(\mathfrak{z})) + \chi_v (\tau, \iota_\mathfrak{g}(\mathfrak{z})),\\
\chi_{\widetilde{\mathrm{NS}}}(\tau, \mathfrak{z}) &= \chi_0 (\tau, \iota_\mathfrak{g}(\mathfrak{z})) - \chi_v (\tau, \iota_\mathfrak{g}(\mathfrak{z})),\\
\chi_{\mathrm{R}}(\tau, \mathfrak{z}) &= \chi_s (\tau, \iota_\mathfrak{g}(\mathfrak{z})) + \chi_c (\tau, \iota_\mathfrak{g}(\mathfrak{z})),\\
\chi_{\widetilde{\mathrm{R}}}(\tau, \mathfrak{z}) &=\chi_s (\tau, \iota_\mathfrak{g}(\mathfrak{z})) - \chi_c (\tau, \iota_\mathfrak{g}(\mathfrak{z})) \equiv 0,
\end{align*}
where the last equality follows from the fact that $\iota(L_\mathfrak{g}) \hookrightarrow D_{10}$. 
It is easy to check that, up to scaling,
\begin{equation}\label{eq:phig}
\begin{split}
\phi_\mathfrak{g}(\tau, \mathfrak{z}) &= \frac{1}{2}\Big( \chi_{\mathrm{NS}}(\tau, \mathfrak{z}) - \chi_{\widetilde{\mathrm{NS}}}(\tau, \mathfrak{z}) - \chi_{\mathrm{R}}(\tau, \mathfrak{z}) \Big) \\
&= \chi_v (\tau, \iota_\mathfrak{g}(\mathfrak{z})) - \chi_c (\tau, \iota_\mathfrak{g}(\mathfrak{z}))   \\
&= \rank(\mathfrak{g}) + \sum_{\alpha \in \Delta_\mathfrak{g}^+}\Big( e^{2\pi i\latt{\alpha, \mathfrak{z}}} + e^{-2\pi i\latt{\alpha, \mathfrak{z}}} \Big) + O(q)
\end{split}    
\end{equation}
is the unique non-trivial $\CC$-linear combination of the full characters of $F_{24}$ that is modular under $\SL_2(\ZZ)$. In particular, $\phi_\mathfrak{g}(\tau,\mathfrak{z})$ is a weak Jacobi form of weight $0$ and lattice index $L_\mathfrak{g}$ (with trivial character on $\SL_2(\ZZ)$). It was proved in \cite{F24} under some assumptions that Fourier coefficients of $\phi_{\mathfrak{g}}$ determine the roots and root multiplicities of the BKM superalgebra $\mathcal{G}_\mathfrak{g}$, realized naturally as the BRST cohomology related to $F_{24}$ with the $\mathcal{N}=1$ structure $\mathfrak{g}$. More precisely, the root $(n,\ell,m)\in U\oplus L_\mathfrak{g}'$ has multiplicity $a_\mathfrak{g}(nm,\ell)$ as an even root and has multiplicity $b_\mathfrak{g}(nm,\ell)$ as an odd root, where 
\begin{align*}
\frac{1}{2}\big( \chi_{\mathrm{NS}}(\tau, \mathfrak{z}) - \chi_{\widetilde{\mathrm{NS}}}(\tau, \mathfrak{z}) \big) &= \chi_v (\tau, \iota_\mathfrak{g}(\mathfrak{z})) = \sum_{n,\ell} a_\mathfrak{g}(n,\ell)q^n\zeta^\ell,    \\
\frac{1}{2} \chi_{\mathrm{R}}(\tau, \mathfrak{z}) &=\chi_c (\tau, \iota_\mathfrak{g}(\mathfrak{z})) = \sum_{n,\ell} b_\mathfrak{g}(n,\ell)q^n\zeta^\ell.
\end{align*}

\section{Construction of hyperbolizations}
In this section, we prove the following theorem. 

\begin{theorem}\label{th:SVOA}
Let $\mathfrak{g}$ correspond to one of the $\mathcal{N}=1$ structures of $F_{24}$. Then the theta lift $\Borch(\phi_{\mathfrak{g}})$ is a reflective Borcherds product of singular weight on $2U\oplus L_\mathfrak{g}$. Moreover, the leading Fourier--Jacobi coefficient of $\Borch(\phi_{\mathfrak{g}})$ coincides with the denominator of the affine Kac--Moody algebra $\hat{\mathfrak{g}}$. 
\end{theorem}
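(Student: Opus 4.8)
The plan is to deduce everything from an identification of $\Borch(\phi_{\mathfrak{g}})$ with a $g$-twisted (super-)denominator of the fake monster algebra, so that holomorphy, singular weight and reflectivity all follow at once. First I would extract the immediate consequences of Theorem~\ref{th:product}. By \eqref{eq:phig}, $\phi_{\mathfrak{g}}\in J^{\w}_{0,L_{\mathfrak{g}}}$ has $q^0$-term $\rank(\mathfrak{g})+\sum_{\alpha\in\Delta_{\mathfrak{g}}^+}(\zeta^{\alpha}+\zeta^{-\alpha})$, so $f(0,0)=\rank(\mathfrak{g})=\rank(L_{\mathfrak{g}})$ and $f(0,\ell)=1$ exactly when $\ell$ is a rescaled root of $\mathfrak{g}$. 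Hence $\Borch(\phi_{\mathfrak{g}})$ has weight $\rank(L_{\mathfrak{g}})/2$, the singular weight, and its leading Fourier--Jacobi coefficient is the generalized theta block
$$
\Theta_{f(0,\ast)}(\tau,\mathfrak{z})=\eta(\tau)^{\rank(\mathfrak{g})}\prod_{0<\ell\in\mathcal{R}}\frac{\vartheta(\tau,(\ell,\mathfrak{z}))}{\eta(\tau)}=\vartheta_{\mathfrak{g}}(\tau,\mathfrak{z}),
$$
the denominator of $\hat{\mathfrak{g}}$. This already settles the ``moreover'' assertion, and it remains only to prove that $\Borch(\phi_{\mathfrak{g}})$ is holomorphic and reflective.

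For that I would locate, among the finitely many possibilities, a $\mathrm{Co}_0$-conjugacy class $[g]$ whose level equals its order $n_g$ and whose coinvariant data match $\mathfrak{g}$, so that $U(n_g)\oplus\Lambda^g\cong U\oplus L_{\mathfrak{g}}$ and therefore $U(n_g)\oplus U\oplus\Lambda^g\cong 2U\oplus L_{\mathfrak{g}}$. By \cite[Theorem~6.5, Remark~6.14]{WW22} the $g$-twisted (super-)denominator $\Phi_g$ is then a reflective Borcherds product of singular weight on this lattice, and it is the super-denominator of the BKM superalgebra $\mathbb{G}_g$. The entire problem thus reduces to the identity $\Borch(\phi_{\mathfrak{g}})=\Phi_g$, which I would prove by matching Borcherds inputs: if the weight-$0$ index-$L_{\mathfrak{g}}$ Jacobi form input of $\Phi_g$ equals $\phi_{\mathfrak{g}}$, then the two products agree formally and $\Borch(\phi_{\mathfrak{g}})$ inherits holomorphy and reflectivity from $\Phi_g$. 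Here is where \cite{DW21} enters: those $g$-twisted denominators are realized as Gritsenko additive lifts $\Grit(\vartheta_{\mathfrak{g}})$, whose leading Fourier--Jacobi coefficient is again $\vartheta_{\mathfrak{g}}$; combined with the conformal embedding $V_{\mathfrak{g}}\hookrightarrow F_{24}$ and the theta decomposition of $\phi_{\mathfrak{g}}=\chi_v-\chi_c$ along $L_{\mathfrak{g}}\hookrightarrow D_{12}$, this gives me two explicit descriptions of the same space of inputs to compare.

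The main obstacle is precisely this input identification, i.e.\ the character identity equating $\phi_{\mathfrak{g}}=\chi_v-\chi_c$ coming from the $\mathcal{N}=1$ structure of $F_{24}$ with the input of $\Phi_g$ coming from the Conway twist of the fake monster algebra. This is genuinely the hard step because $\phi_{\mathfrak{g}}$ can have negative Fourier coefficients, so effectivity of the divisor $\sum_d f(d^2n,d\ell)$ is not visible directly; the safe route is to avoid any direct divisor computation and instead pin down $\Borch(\phi_{\mathfrak{g}})$ through the matched input, using that a singular-weight form is rigidly determined by its leading Fourier--Jacobi coefficient $\vartheta_{\mathfrak{g}}$ (a difference sharing this coefficient would vanish at the $1$-dimensional cusp to higher order in $\xi$, yet a singular-weight form has Fourier support only on isotropic vectors, forcing it to be zero). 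I expect the clean, uniform statement to ultimately require running the eight $\mathcal{N}=1$ structures against the explicit classes $[g]$ of level equal to order; as a by-product this also identifies the BKM superalgebra $\mathcal{G}_{\mathfrak{g}}$ of \cite{F24} with the $g$-twist $\mathbb{G}_g$ of the fake monster algebra.
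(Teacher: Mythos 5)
Your proposal runs in the opposite direction from the paper (derive holomorphy and reflectivity from an identification with the twisted fake monster denominator, rather than prove them directly and deduce the identification afterwards), and it breaks at two load-bearing points. First, the lattice isomorphism you assert is false in this symmetric setting: for the eight classes $[g]$ attached to the $\mathcal{N}=1$ structures one has $U\oplus L_\mathfrak{g}\cong U(n_g)\oplus(\Lambda^g)'(n_g)$, \emph{not} $U(n_g)\oplus\Lambda^g\cong U\oplus L_\mathfrak{g}$. Concretely, for $\mathfrak{g}=A_{1,2}^8$ the class $g$ has cycle shape $1^{-8}2^{16}$ with $\Lambda^g\cong E_8(2)$, so $2U\oplus L_\mathfrak{g}=2U\oplus D_8$ has discriminant $2^2$ while $U(2)\oplus U\oplus\Lambda^g$ has discriminant $2^{10}$; unlike the anti-symmetric case of Table \ref{table:8cycleshapes}, here $(\Lambda^g)'(n_g)\not\cong\Lambda^g$. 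Consequently $\phi_\mathfrak{g}$ and the input of $\Phi_g$ are vector-valued forms for \emph{different} Weil representations and cannot be "matched" as inputs; the comparison only makes sense between Fourier expansions at different cusps under a rescaling identification of orthogonal groups, which is exactly what the additive-lift construction of \cite{DW21} supplies (via Lemma \ref{lem:equality} and the identity $\Borch(\phi_\mathfrak{g})=\Grit(\vartheta_\mathfrak{g})$ of Theorem \ref{th:G=B}, itself proved by Koecher's principle and divisor containment — which requires the divisor control you are trying to avoid).

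Second, your fallback rigidity claim — that a singular-weight form is determined by its leading Fourier--Jacobi coefficient — is a non sequitur as argued and is not available. Each Fourier--Jacobi coefficient of a singular-weight form of weight $\rank(L)/2$ and index $mL$ is itself a singular-weight Jacobi form, i.e.\ a linear combination of the theta series $\Theta_{mL,\gamma}$, and such nonzero combinations exist for every $m\geq 1$; support on isotropic vectors therefore does not force a difference with vanishing leading coefficient to vanish. (Indeed the paper leaves uniqueness of these singular products as a conjecture in Chapter \ref{sec:problems}, which your lemma would trivialize.) What cannot be skipped is precisely the step you defer: showing that the negative part of $\phi_\mathfrak{g}=\chi_v-\chi_c$ contributes no singular Fourier coefficients. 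The paper does this in Lemma \ref{lem:chi-c}, proving $\chi_c(\tau,\iota_\mathfrak{g}(\mathfrak{z}))=O(q)$ is holomorphic at infinity — either by a finite computation up to the $q^{\hat\delta_L}$-term, or via the theta-block lemma generalizing \cite[Lemma 6.3]{GPY15} applied to $\chi_{\mathrm{R}}=2^{\rank(\mathfrak{g})/2}\,\vartheta_\mathfrak{g}(2\tau,2\mathfrak{z})/\vartheta_\mathfrak{g}(\tau,\mathfrak{z})$ from Lemma \ref{lem:equality}. With that, $\Borch(\phi_\mathfrak{g})$ is a holomorphic product of singular weight with the stated leading coefficient, and reflectivity follows from a finite check of the singular Fourier coefficients (or from Lemma \ref{lem:equality} together with \cite[Theorems 5.1 and 6.1]{DW21}); the identification with $\Phi_g$ and with the superalgebra of \cite{F24} is then a corollary, not the engine of the proof. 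Your first paragraph (singular weight and the leading coefficient $\vartheta_\mathfrak{g}$ read off the $q^0$-term via Theorem \ref{th:product}) is correct and agrees with the paper.
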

\begin{proof}
We will use \eqref{eq:phig} and Lemma \ref{lem:chi-c} below to conclude that $\phi_{\mathfrak{g}}$ has integral Fourier coefficients and non-negative singular Fourier coefficients. Therefore, the theta lift of $\phi_{\mathfrak{g}}$ is a holomorphic Borcherds product of singular weight on $2U\oplus L_\mathfrak{g}$. We see from the $q^0$-term of $\phi_{\mathfrak{g}}$ that the leading Fourier--Jacobi coefficient is given by the denominator $\vartheta_\mathfrak{g}$ of $\hat{\mathfrak{g}}$. It remains to prove that $\Borch(\phi_{\mathfrak{g}})$ is reflective. This can be verified by calculating the Fourier expansions of $\phi_\mathfrak{g}$ up to the $q^{\hat{\delta}_L}$-term for $L=L_\mathfrak{g}$, where both $\delta_L$ and $\hat{\delta}_L$ are defined in Remark \ref{rem:singular}. We refer to Table \ref{tab:sym-cycle} for values of these $\delta_L$. As we mentioned at the beginning of this chapter, this also follows from Lemma \ref{lem:equality} and \cite[Theorems 5.1 and 6.1]{DW21}. 
\end{proof}

\begin{lemma}\label{lem:D12}
Let $z=(z_1,z_2,...,z_{12})\in \CC^{12}$. We set 
$$
\phi_{*}(\tau,z)=\eta(\tau)^{-12}\prod_{j=1}^{12} \vartheta_{*}(\tau,z_j), \quad *=00, 01, 10, 11,
$$
where $\vartheta_{11}(\tau,u):=\vartheta(\tau,u)$ and
\begin{align*}
\vartheta_{00}(\tau,u) &= \prod_{n=1}^\infty (1-q^n)(1+q^{n-1/2}\zeta)(1+q^{n-1/2}\zeta^{-1}),\\
\vartheta_{01}(\tau,u) &= \prod_{n=1}^\infty (1-q^n)(1-q^{n-1/2}\zeta)(1-q^{n-1/2}\zeta^{-1}),\\
\vartheta_{10}(\tau,u) &= q^{1/8}(\zeta^{1/2}+\zeta^{-1/2})\prod_{n=1}^\infty (1-q^n)(1+q^{n}\zeta)(1+q^{n}\zeta^{-1}),
\end{align*}
here $u\in \CC$ and $\zeta=e^{2\pi iu}$. 
Then the following identities hold:
$$
\chi_0=(\phi_{00}+\phi_{01})/2, \; \chi_v=(\phi_{00}-\phi_{01})/2, \; \chi_s=(\phi_{10}+\phi_{11})/2, \; \chi_c=(\phi_{10}-\phi_{11})/2,
$$
$$
\frac{\phi_{11}(2\tau,2z)}{\phi_{11}(\tau,z)} = \phi_{10}(\tau,z), \; \frac{\phi_{11}(\tau/2,z)}{\phi_{11}(\tau,z)} = \phi_{01}(\tau,z), \; 
\frac{\phi_{11}(\tau/2+1/2,z)}{\phi_{11}(\tau,z)} = -\phi_{00}(\tau,z).
$$
In particular, we have (see Proposition \ref{prop:Hecke})
$$
\phi_{D_{12}}(\tau,z):=\frac{1}{2}\Big( \phi_{00}(\tau,z) - \phi_{01}(\tau,z) - \phi_{10}(\tau,z) \Big) = -\frac{\phi_{11}|_{0}T_{-}^{(1)}(2)(\tau,z)}{\phi_{11}(\tau,z)}.
$$
\end{lemma}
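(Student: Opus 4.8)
The plan is to push everything down to the single-variable level. Since all four functions factor as $\phi_{*}(\tau,z)=\eta(\tau)^{-12}\prod_{j=1}^{12}\vartheta_{*}(\tau,z_j)$, and since the operations in the statement (the involution on the characteristics $*$, the substitutions $\tau\mapsto 2\tau,\ \tau/2,\ \tau/2+1/2$, and the uniform scaling $z\mapsto 2z$) all act one coordinate at a time, the genuine content is a short list of identities among the classical one-variable theta functions $\vartheta_{00},\vartheta_{01},\vartheta_{10},\vartheta_{11}=\vartheta$, together with careful tracking of the $\eta$-power sitting in front of each $\phi_{*}$.

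First I would record, via the Jacobi triple product, that the infinite products defining $\vartheta_{00},\vartheta_{01},\vartheta_{10},\vartheta_{11}$ are exactly the four theta functions with characteristics, and then prove the first block of identities ($\chi_0=(\phi_{00}+\phi_{01})/2$, etc.) by the standard coset/parity decomposition of the $D_{12}$ theta series. Concretely, $\Theta_{D_{12},0}=\tfrac12\big(\prod_j\vartheta_{00}(z_j)+\prod_j\vartheta_{01}(z_j)\big)$, because the operator $\tfrac12\big(1+(-1)^{\sum n_j}\big)$ selects the sublattice of even coordinate sum; the cosets $v,s,c$ are handled identically, using $\vartheta_{10},\vartheta_{11}$ for the spinor cosets. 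Dividing by $\eta^{12}$ yields the four character identities.

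The heart of the argument is the second block. I would first prove the single-variable transformation law
\[
\frac{\vartheta(2\tau,2u)}{\vartheta(\tau,u)}=\frac{\eta(2\tau)}{\eta(\tau)^2}\,\vartheta_{10}(\tau,u)
\]
by substituting $q\mapsto q^2,\ \zeta\mapsto\zeta^2$ in the product for $\vartheta$ and using the factorisation $1-q^{2k}\zeta^{2}=(1-q^{k}\zeta)(1+q^{k}\zeta)$ to cancel the $\vartheta(\tau,u)$-factors and regroup the survivors into the $\vartheta_{10}$-product; the leftover pure-$q$ product and the $q^{1/8}$ prefactor then assemble into the stated $\eta$-ratio via $\prod(1-q^n)=q^{-1/24}\eta(\tau)$. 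The other two laws, producing $\vartheta_{01}(\tau,u)$ (resp. $-\vartheta_{00}(\tau,u)$) with an analogous $\eta$-ratio built from $\eta(\tau/2)$, follow in the same way after $q\mapsto q^{1/2}$ (resp. $q\mapsto e^{\pi i}q^{1/2}$), using the bisection $\prod(1-q^n)=\prod(1-q^{2n})\prod(1-q^{2n-1})$ to split the $\zeta$-dependent factors. Taking the twelfth-power product over the coordinates, the normalising factor $(\eta(\tau)/\eta(\ast))^{12}$ cancels the twelfth power of each $\eta$-ratio and leaves exactly $\phi_{10}$, $\phi_{01}$, $-\phi_{00}$, which is the second block of the lemma.

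Finally, the last identity follows by feeding the three transformation laws into Proposition~\ref{prop:Hecke}. Since $\vartheta$ carries the multiplier $\upsilon_\eta^3$ and $\eta^{-12}$ carries $\upsilon_\eta^{-12}$, the form $\phi_{11}$ has weight $0$ and trivial multiplier $\upsilon_\eta^{24}$, so $Q=24/D=1$; for $m=2$ the pairs $(a,d)\in\{(1,2),(2,1)\}$ give precisely the three arguments $(\tau/2,z),(\tau/2+1/2,z),(2\tau,2z)$, each with $a^{k}=1$ and trivial character, whence $\phi_{11}\lvert_0 T_{-}^{(1)}(2)=\tfrac12\big(\phi_{11}(\tau/2,z)+\phi_{11}(\tau/2+1/2,z)+\phi_{11}(2\tau,2z)\big)$. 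Dividing by $\phi_{11}$ and substituting the second block gives $-\tfrac12(\phi_{01}-\phi_{00}+\phi_{10})=\tfrac12(\phi_{00}-\phi_{01}-\phi_{10})=\phi_{D_{12}}$. I expect the main obstacle to be purely computational: keeping the fractional $q$-powers, the $\zeta^{\pm1/2}$ prefactors, and the three different $\eta$-arguments mutually consistent so that the twelfth powers cancel exactly — a single stray sign or $q^{1/16}$ would destroy the final cancellation.
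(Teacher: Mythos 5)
Your proposal is correct, and apart from one phase you should tidy up it is a fully fleshed-out version of the paper's own two-sentence proof: the paper establishes the character identities by viewing $(\chi_*)$ as a vector-valued Jacobi form for the dual Weil representation $\overline{\rho}_{D_{12}}$, and dismisses the remaining formulas as following from the definition — which is precisely the product bookkeeping you carry out. The one genuinely different step is your treatment of the first block: instead of the Weil-representation argument you use the Jacobi triple product and the parity projector $\tfrac12\bigl(1\pm(-1)^{\sum_j n_j}\bigr)$ to split $\ZZ^{12}$ and $(\ZZ+\tfrac12)^{12}$ into the four cosets $0,v,s,c$ of $D_{12}'/D_{12}$; this is more elementary and self-contained. (Note that with the paper's convention $\vartheta=-q^{1/8}\zeta^{-1/2}\prod\cdots$ the twelve-fold product carries the sign $(-1)^{\sum_j(x_j-1/2)}=(-1)^{\sum_j x_j}$, which is exactly what separates $\Theta_{D_{12},s}$ from $\Theta_{D_{12},c}$, while the overall $(-1)^{12}=1$ is harmless.) Your Hecke-operator step also matches the paper's implicit use of Proposition \ref{prop:Hecke}: $\phi_{11}$ has weight $0$ and trivial multiplier ($D=24$, $Q=1$), so $\phi_{11}|_0T_-^{(1)}(2)=\tfrac12\bigl(\phi_{11}(\tau/2,z)+\phi_{11}(\tau/2+1/2,z)+\phi_{11}(2\tau,2z)\bigr)$, and the three transformation laws then yield $\phi_{D_{12}}$ with the correct signs.

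One correction for the write-up: at the single-variable level the third law does \emph{not} read $\vartheta(\tau/2+1/2,u)/\vartheta(\tau,u)=-\bigl(\eta(\tau/2+1/2)/\eta(\tau)^2\bigr)\vartheta_{00}(\tau,u)$. The correct constant is the phase $e^{\pi i/12}$, coming from $e^{\pi i/8}$ in $(q')^{1/8}=e^{\pi i/8}q^{1/16}$ set against $e^{\pi i/24}$ in $\eta(\tau/2+1/2)$; it is only its twelfth power $e^{\pi i}=-1$ that produces the minus sign in $\phi_{11}(\tau/2+1/2,z)/\phi_{11}(\tau,z)=-\phi_{00}(\tau,z)$. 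As written, a literal single-variable sign $-1$ would be killed by the twelfth power and you would land on $+\phi_{00}$, contradicting your (correct) final identity; also the relevant eta-argument there is $\eta(\tau/2+1/2)$, not $\eta(\tau/2)$. Neither point changes your method — it is exactly the bookkeeping of $24$th roots of unity that you yourself flagged as the danger.
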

\begin{proof}
The formulas for $\chi_*$ can be established by considering $(\chi_*)$ as a vector-valued Jacobi form for the dual Weil representation $\overline{\rho}_{D_{12}}$. The other formulas follow from the definition. 
\end{proof}

By restricting the above identities along the embedding $\iota_\mathfrak{g}: L_\mathfrak{g} \hookrightarrow D_{12}$, we obtain the following lemma:
\begin{lemma}\label{lem:equality} 
The image $\iota_\mathfrak{g}(L_\mathfrak{g})$ is a sublattice of $D_{12-\rank(\mathfrak{g})/2} \subsetneq D_{12}$. By considering the quasi-pullback we can express the denominator of $\hat{\mathfrak{g}}$ as
$$
\vartheta_\mathfrak{g}(\tau,\mathfrak{z}) = \frac{1}{(2\pi i)^{\rank(\mathfrak{g})/2}}\Bigg(\Big(\prod_{j=1}^{\rank(\mathfrak{g})/2} \partial_{z_{12-\rank(\mathfrak{g})/2+j}}\Big) \phi_{11}(\tau,z)\Bigg)(\tau,\iota_\mathfrak{g}(\mathfrak{z})).
$$
The full characters of $F_{24}$ can be expressed as
\begin{align*}
\chi_{\mathrm{NS}}(\tau, \mathfrak{z})&=\phi_{00}(\tau,\iota_\mathfrak{g}(\mathfrak{z})) = -\frac{\vartheta_\mathfrak{g}(\tau/2+1/2,\mathfrak{z})}{\vartheta_\mathfrak{g}(\tau,\mathfrak{z})},\\  
\chi_{\widetilde{\mathrm{NS}}}(\tau, \mathfrak{z})&=\phi_{01}(\tau,\iota_\mathfrak{g}(\mathfrak{z}))=\frac{\vartheta_\mathfrak{g}(\tau/2,\mathfrak{z})}{\vartheta_\mathfrak{g}(\tau,\mathfrak{z})},\\
\chi_{\mathrm{R}}(\tau, \mathfrak{z})&=\phi_{10}(\tau,\iota_\mathfrak{g}(\mathfrak{z})) = 2^{\rank(\mathfrak{g})/2} \frac{\vartheta_\mathfrak{g}(2\tau,2\mathfrak{z})}{\vartheta_\mathfrak{g}(\tau,\mathfrak{z})}.
\end{align*}
The input form has the representation
$$
\phi_\mathfrak{g}(\tau,\mathfrak{z}) = \phi_{D_{12}}(\tau,\iota_\mathfrak{g}(\mathfrak{z}))  = -\frac{\vartheta_\mathfrak{g}|_{\rank(\mathfrak{g})/2}T_{-}^{(1)}(2)(\tau,\mathfrak{z})}{\vartheta_\mathfrak{g}(\tau,\mathfrak{z})}.
$$
\end{lemma}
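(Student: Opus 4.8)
The plan is to deduce all four assertions from the product structure of $\phi_{11}$ together with the quotient identities in Lemma \ref{lem:D12}, the single analytic input being the Taylor expansion $\vartheta(\tau,z)=2\pi i\,\eta(\tau)^3 z+O(z^3)$ of the odd Jacobi theta function at $z=0$. Throughout write $r=\rank(\mathfrak{g})$.

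First I would settle the lattice claim. Since $\dim\mathfrak{g}=24$, there are $|\Delta_\mathfrak{g}^+|=(24-r)/2=12-r/2$ positive roots, so $\iota_\mathfrak{g}(\mathfrak{z})=\{\latt{\alpha,\mathfrak{z}}\}_{\alpha\in\Delta_\mathfrak{g}^+}$ occupies only the first $12-r/2$ coordinates of $\ZZ^{12}$, the remaining $r/2$ being identically zero. Applying the identity $\sum_{\alpha\in\Delta_j^+}\latt{\alpha,\mathfrak{z}}^2=h_j^\vee\latt{\mathfrak{z},\mathfrak{z}}$ factor by factor gives $(\iota_\mathfrak{g}(\mathfrak{z}),\iota_\mathfrak{g}(\mathfrak{z}))_{\ZZ^{12}}=(\mathfrak{z},\mathfrak{z})_{\mathbf{P}_\mathfrak{g}}$, so $\iota_\mathfrak{g}$ is an isometric embedding of $\mathbf{P}_\mathfrak{g}$; integrality of the pairing of $P_j^\vee$ with the roots shows the image lies in $\ZZ^{12}$. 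Because $x^2\equiv x\pmod 2$, the coordinate sum $\sum_{\alpha}\latt{\alpha,\mathfrak{z}}$ is congruent modulo $2$ to $(\mathfrak{z},\mathfrak{z})_{\mathbf{P}_\mathfrak{g}}$, which is even precisely on the maximal even sublattice $L_\mathfrak{g}$; this is exactly the condition for $\iota_\mathfrak{g}(\mathfrak{z})$ to lie in $D_{12-r/2}$.

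Next I would prove the quasi-pullback formula. The factored form $\phi_{11}(\tau,z)=\eta^{-12}\prod_{j=1}^{12}\vartheta(\tau,z_j)$ has a simple zero in each $z_j$, hence vanishes to order exactly $r/2$ along $z_{13-r/2}=\dots=z_{12}=0$; differentiating once in each of these $r/2$ variables and using $\partial_z\vartheta(\tau,z)|_{z=0}=2\pi i\,\eta^3$ extracts a factor $(2\pi i\,\eta^3)^{r/2}$, while the surviving factors $\prod_{\alpha>0}\vartheta(\tau,\latt{\alpha,\mathfrak{z}})$ combine with $\eta^{-12}$ into $\eta^{3r/2-12}\prod_{\alpha>0}\vartheta=\vartheta_\mathfrak{g}$. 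This gives the stated formula with the constant $(2\pi i)^{r/2}$, and the same computation at any modular argument $\tau'$ shows that the iterated $z$-derivative of $\phi_{11}(\tau',z)$, restricted to $\iota_\mathfrak{g}(\mathfrak{z})$, equals $(2\pi i)^{r/2}\vartheta_\mathfrak{g}(\tau',\mathfrak{z})$; an argument $2z$ contributes an extra $2^{r/2}$ by the chain rule. For the character formulas and the input-form representation I would then restrict the three identities of Lemma \ref{lem:D12} to $z=\iota_\mathfrak{g}(\mathfrak{z})$. Each quotient there is a genuine holomorphic function, since $\vartheta_{00},\vartheta_{01},\vartheta_{10}$ do not vanish at $z=0$; both its numerator and denominator vanish to the common order $r/2$, so the holomorphic extension across the zero locus is the ratio of the leading $z$-derivatives, i.e. the ratio of the quasi-pullbacks above. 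This yields $\chi_{\mathrm{NS}}=\phi_{00}\circ\iota_\mathfrak{g}=-\vartheta_\mathfrak{g}(\tau/2+1/2,\mathfrak{z})/\vartheta_\mathfrak{g}(\tau,\mathfrak{z})$, $\chi_{\widetilde{\mathrm{NS}}}=\phi_{01}\circ\iota_\mathfrak{g}=\vartheta_\mathfrak{g}(\tau/2,\mathfrak{z})/\vartheta_\mathfrak{g}(\tau,\mathfrak{z})$ and $\chi_{\mathrm{R}}=\phi_{10}\circ\iota_\mathfrak{g}=2^{r/2}\vartheta_\mathfrak{g}(2\tau,2\mathfrak{z})/\vartheta_\mathfrak{g}(\tau,\mathfrak{z})$. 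Substituting these into $\phi_\mathfrak{g}=\tfrac12(\chi_{\mathrm{NS}}-\chi_{\widetilde{\mathrm{NS}}}-\chi_{\mathrm{R}})=\phi_{D_{12}}\circ\iota_\mathfrak{g}$ and comparing with the expansion of $\vartheta_\mathfrak{g}|_{r/2}T_-^{(1)}(2)$ from Proposition \ref{prop:Hecke}—whose three summands $\vartheta_\mathfrak{g}(\tau/2,\mathfrak{z})$, $\vartheta_\mathfrak{g}((\tau+1)/2,\mathfrak{z})$ and $2^{r/2}\vartheta_\mathfrak{g}(2\tau,2\mathfrak{z})$ are exactly these three quasi-pullbacks—identifies $\phi_\mathfrak{g}$ with $-\vartheta_\mathfrak{g}|_{r/2}T_-^{(1)}(2)/\vartheta_\mathfrak{g}$.

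The step requiring the most care is the passage to the $0/0$ limit. The crucial points are that $\phi_{11}$ vanishes to order exactly $r/2$ (no more) along the coordinate subspace and that each quotient limit is independent of the direction of approach; both follow from the explicit factorization, whose leading term near the subspace is $\big(\prod_{j\le 12-r/2}\vartheta(\tau,z_j)\big)\prod_{j>12-r/2}(2\pi i\,\eta^3 z_j)$, so that the common monomial $\prod_{j>12-r/2}z_j$ cancels in every quotient. One should also record that $\vartheta_\mathfrak{g}\not\equiv 0$, guaranteeing the quotients are well defined.
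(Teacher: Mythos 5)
Your proposal is correct and follows essentially the same route as the paper, which obtains the lemma by restricting the identities of Lemma \ref{lem:D12} along $\iota_\mathfrak{g}$ and invoking the quasi-pullback; your computation $\partial_z\vartheta(\tau,z)|_{z=0}=2\pi i\,\eta(\tau)^3$, the resulting factor $\eta^{3\rank(\mathfrak{g})/2-12}\prod_{\alpha>0}\vartheta=\vartheta_\mathfrak{g}$, the chain-rule factor $2^{\rank(\mathfrak{g})/2}$ for the $\mathrm{R}$-sector, and the evenness criterion $\sum_\alpha\latt{\alpha,\mathfrak{z}}\equiv(\mathfrak{z},\mathfrak{z})\bmod 2$ supply exactly the details the paper leaves implicit. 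Your care about the order of vanishing and the direction-independence of the $0/0$ limits, together with tracking the factor $\tfrac12$ from $m^{-1}$ in $T_{-}^{(1)}(2)$ against the $\tfrac12$ in $\phi_\mathfrak{g}=\tfrac12(\chi_{\mathrm{NS}}-\chi_{\widetilde{\mathrm{NS}}}-\chi_{\mathrm{R}})$, is accurate and complete.
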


We can now prove directly that the BKM superalgebra $\mathcal{G}_\mathfrak{g}$ has no odd real roots.
\begin{lemma}\label{lem:chi-c}
The function $\chi_c (\tau, \iota_\mathfrak{g}(\mathfrak{z}))=O(q)$ is holomorphic at infinity, that is, its nonzero Fourier coefficients $f(n,\ell)q^n\zeta^\ell$ satisfy that $2n\geq (\ell,\ell)$. 
\end{lemma}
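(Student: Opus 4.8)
The plan is to read off the Fourier support of $\chi_c(\tau,\iota_\mathfrak{g}(\mathfrak z))$ from the theta expansion $\chi_c=\Theta_{D_{12},c}/\eta^{12}$. Writing $\eta(\tau)^{-12}=q^{-1/2}\sum_{k\ge 0}p(k)q^k$ with $p(k)\ge 0$ the number of partitions into twelve colours, the coefficient of $q^N\zeta^\ell$ in $\chi_c(\tau,\iota_\mathfrak{g}(\mathfrak z))$ is the non-negative sum
$$
\sum_{\substack{\lambda\in D_{12}+c\\ \iota_\mathfrak{g}^{-1}(\lambda_W)=\ell}} p\!\left(N-\tfrac12(\lambda,\lambda)+\tfrac12\right),
$$
where $\lambda_W$ is the orthogonal projection of $\lambda$ onto $W:=\iota_\mathfrak{g}(L_\mathfrak{g})\otimes\RR\subset\RR^{12}$ and $\ell\in L_\mathfrak{g}'$. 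Since $p(m)=0$ for $m<0$, this vanishes unless some contributing $\lambda$ satisfies $N\ge\tfrac12(\lambda,\lambda)-\tfrac12$. Because $\iota_\mathfrak{g}$ is an isometric embedding of the lattice $L_\mathfrak{g}$, we have $(\lambda_W,\lambda_W)=(\ell,\ell)$ and hence $(\lambda,\lambda)=(\ell,\ell)+\operatorname{dist}(\lambda,W)^2$, so the whole statement reduces to the purely geometric claim that $\operatorname{dist}(\lambda,W)^2\ge 1$ for every $\lambda\in D_{12}+c$.

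To make this explicit I would compute the distance using that $\iota_\mathfrak{g}$ does \emph{not} preserve the normalised form $\langle\cdot,\cdot\rangle$ but rescales it: the identity $\sum_{\alpha\in\Delta_\mathfrak{g}^+}\langle\alpha,\mathfrak z\rangle^2=h_\mathfrak{g}^\vee\langle\mathfrak z,\mathfrak z\rangle$ gives $(\iota_\mathfrak{g}(\mathfrak z),\iota_\mathfrak{g}(\mathfrak z))=h_\mathfrak{g}^\vee\langle\mathfrak z,\mathfrak z\rangle$, which is exactly the scaling built into $\mathbf{P}_\mathfrak{g}=\oplus P_j^\vee(h_j^\vee)$. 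Writing $\Sigma=\sum_{\alpha\in\Delta_\mathfrak{g}^+}\lambda_\alpha\alpha$ for the root-indexed coordinates $\lambda_\alpha$ of $\lambda$, the projection is $\lambda_W=\iota_\mathfrak{g}(\Sigma/h_\mathfrak{g}^\vee)$ and
$$
\operatorname{dist}(\lambda,W)^2=(\lambda,\lambda)-\tfrac{1}{h_\mathfrak{g}^\vee}\langle\Sigma,\Sigma\rangle .
$$
As a sanity check, the all-$\tfrac12$ vector has $\Sigma=\rho_\mathfrak{g}$ and, by the strange formula $\langle\rho_\mathfrak{g},\rho_\mathfrak{g}\rangle=2h_\mathfrak{g}^\vee$ (using $\dim\mathfrak{g}=24$), distance $3-2=1$; thus the bound is sharp, its extremal configuration is the Weyl vector, and it accounts for the leading $\ell=\rho_\mathfrak{g}/h_\mathfrak{g}^\vee$ coefficient of the character.

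For the lower bound itself I would first peel off the unused coordinates. Since $\iota_\mathfrak{g}$ maps $L_\mathfrak{g}$ into the $|\Delta_\mathfrak{g}^+|=12-\rank(\mathfrak{g})/2$ root coordinates, the remaining $\rank(\mathfrak{g})/2$ coordinates of $\lambda$ lie in $W^\perp$ and already contribute $\ge \rank(\mathfrak{g})/8$ to $\operatorname{dist}(\lambda,W)^2$. When $\rank(\mathfrak{g})\ge 8$ this gives $\ge 1$ and the lemma follows at once.

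The hard part is the remaining low-rank $\mathfrak{g}$, where the free coordinates alone give only $\tfrac12$ or $\tfrac34$ and one must control how close the half-integer vector $\lambda$ can come to the rational subspace $W$. I expect this to be the main obstacle, since there is no uniform lattice-theoretic reason for a nontrivial coset of the even lattice $D_{12}\cap W^\perp$ to have minimal norm $\ge 1$; the inequality genuinely uses the arithmetic of how $\iota_\mathfrak{g}(L_\mathfrak{g})$ glues to its orthogonal complement inside $D_{12}$. By Remark \ref{rem:singular} only finitely many $\lambda$ are relevant, namely those with $(\lambda,\lambda)\le 2\hat\delta_{L_\mathfrak{g}}+1$, so it suffices to enumerate the minimal vectors of $D_{12}+c$ in each relevant coset, equivalently to compute the Fourier expansion of $\chi_c(\tau,\iota_\mathfrak{g}(\mathfrak z))$ up to its $q^{\hat\delta_{L_\mathfrak{g}}}$-term, and to verify $\operatorname{dist}(\lambda,W)^2\ge 1$ directly. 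This is the same finite computation invoked in the proof of Theorem \ref{th:SVOA}, and via Lemma \ref{lem:equality} it matches the values produced in \cite{DW21}.
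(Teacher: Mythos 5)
Your proposal is correct, and its computational core is the same as the paper's own proof: the paper disposes of the lemma by computing the Fourier expansion of $\chi_c(\tau,\iota_\mathfrak{g}(\mathfrak{z}))$ up to the $q^{\hat{\delta}_{L_\mathfrak{g}}}$-term (the values of $\delta_{L_\mathfrak{g}}$ appear in Table \ref{tab:sym-cycle}) and invoking the analogue of Remark \ref{rem:singular} --- exactly the finite check you fall back on for the seven algebras of rank $<8$. What you add is genuinely worthwhile: the reduction of the lemma to the inequality $\operatorname{dist}(\lambda,W)^2\geq 1$ for all $\lambda\in D_{12}+c$ is a correct and illuminating repackaging (your projection formula $\lambda_W=\iota_\mathfrak{g}(\Sigma/h_\mathfrak{g}^\vee)$ is right, and the sharpness check via the strange formula with $\dim\mathfrak{g}=24$ correctly identifies the Weyl vector as the extremal configuration), and the uniform bound $\operatorname{dist}(\lambda,W)^2\geq\rank(\mathfrak{g})/8$ coming from the $\rank(\mathfrak{g})/2$ unused half-integral coordinates settles $A_{1,2}^8$ with no enumeration at all; note also that since every $\lambda\in D_{12}+c$ has $(\lambda,\lambda)\geq 3$, your framework yields the $O(q)$ assertion for free. (One small imprecision in a side remark: the relevant discrete set controlling the distances is the projection of $D_{12}+c$ to $W^\perp$, not cosets of $D_{12}\cap W^\perp$; this does not affect the argument.) What you miss is the paper's alternative, fully structural proof, which avoids case analysis entirely: by Lemma \ref{lem:equality} one has $\chi_c(\tau,\iota_\mathfrak{g}(\mathfrak{z}))=2^{\rank(\mathfrak{g})/2-1}\,\vartheta_\mathfrak{g}(2\tau,2\mathfrak{z})/\vartheta_\mathfrak{g}(\tau,\mathfrak{z})$, and a generalization of \cite[Lemma 6.3]{GPY15} asserts that $\phi(2\tau,2\mathfrak{z})/\phi(\tau,\mathfrak{z})$ is holomorphic at infinity whenever the holomorphic Jacobi form $\phi$ is a theta block --- which $\vartheta_\mathfrak{g}$ is. That route confirms your suspicion that the inequality ``genuinely uses the arithmetic'' of the situation: the uniform mechanism is the theta-block structure of the denominator $\vartheta_\mathfrak{g}$, rather than the gluing data of $\iota_\mathfrak{g}(L_\mathfrak{g})\subset D_{12}$ alone, and adopting it would have spared you the seven residual enumerations.
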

\begin{proof}
This can be read off of the Fourier expansion of $\chi_c (\tau, \iota_\mathfrak{g}(\mathfrak{z}))$ up to their $q^{\hat{\delta}_L}$-terms, since the analogue of Remark \ref{rem:singular} holds. The numbers $\delta_L$ for $L=L_\mathfrak{g}$ are listed in Table \ref{tab:sym-cycle}. This claim can also be proved by a generalization of \cite[Lemma 6.3]{GPY15}: if $\phi(\tau,\mathfrak{z})$ is a holomorphic Jacobi form of lattice index $L$ which can be constructed as a theta block then $\phi(2\tau,2\mathfrak{z})/\phi(\tau,\mathfrak{z})$ is holomorphic at infinity. The claim then follows from Lemma \ref{lem:equality}. 
\end{proof}

Let us now explain the connection between $\Borch(\phi_{\mathfrak{g}})$ and the twisted denominators of the fake monster algebra. This is similar to H\"{o}hn's observation in the anti-symmetric case. 

\begin{theorem}
Let $\mathfrak{g}$ correspond to one of the $\mathcal{N}=1$ superconformal structures of $F_{24}$. Then there exists a unique $\mathrm{Co}_0$-conjugacy class $[g]$ of the same order and level $n_g$ (see Table \ref{tab:sym-cycle}) satisfying the following properties:
\begin{enumerate}
    \item The lattice $L_\mathfrak{g}$ is characterized by the isomorphism
    $$
    U\oplus L_\mathfrak{g} \cong U(n_g)\oplus (\Lambda^g)'(n_g). 
    $$
    \item Recall from Section \ref{subsec:BKM} that the $g$-twisted denominator of the fake monster algebra is the Fourier expansion of a reflective Borcherds product $\Phi_g$ of singular weight on $U_1(n_g)\oplus U\oplus \Lambda^g$ at the $0$-dimensional cusp related to $U_1$, and that the input form that lifts to $\Phi_g$ is given by the vector-valued characters of the orbifold VOA $V_{\Lambda_g}^{\hat{g}}$. The Fourier expansion of $\Borch(\phi_\mathfrak{g})$ at $U_1$ equals the Fourier expansion of $\Phi_g$ at $U_1(n_g)$ after making the identifications
    \begin{align*}
     \Orth\!\big(U_1(n_g)\oplus U\oplus \Lambda^g\big)&=\Orth\!\big(U_1(1/n_g)\oplus U\oplus (\Lambda^g)'\big)\\
     &=\Orth\!\big(U_1\oplus U(n_g)\oplus (\Lambda^g)'(n_g)\big)
     \cong \Orth\!\big(U_1\oplus U\oplus L_\mathfrak{g}\big).   
    \end{align*}
    \item The BKM superalgebra $\mathcal{G}_\mathfrak{g}$, i.e. the BRST cohomology related to $F_{24}$ with $\mathcal{N}=1$ structure $\mathfrak{g}$, is isomorphic to the BKM superalgebra $\mathbb{G}_g$ obtained as the twist of the fake monster algebra by $g$. 
    \item The $\ZZ$-lattice generated by the Weyl vector $\rho$ and the vectors of type $\lambda=(n,\ell,m)\in U\oplus L_\mathfrak{g}'$ for which $\Borch(\phi_\mathfrak{g})$ vanishes on $\lambda^\perp$, i.e. the root lattice of $\mathcal{G}_\mathfrak{g}$, is exactly $U\oplus L_\mathfrak{g}'$. Recall that the root lattice of $\mathbb{G}_g$ is $U\oplus \Lambda^g$. The two lattices are related by
    $$
    (U\oplus L_\mathfrak{g}')(n_g) \cong U\oplus \Lambda^g. 
    $$
\end{enumerate}
\end{theorem}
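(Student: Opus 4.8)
The plan is to prove all four assertions simultaneously by identifying $\Borch(\phi_\mathfrak{g})$ with the $g$-twisted denominator $\Phi_g$ after a rescaling of lattices, in the same spirit as the anti-symmetric case (Remark \ref{rem:isomorphic}), but now using the Gritsenko (additive) lift as the common description of \emph{both} forms. First I would pin down the candidate class $[g]$: by Lemma \ref{lem:equality} the full $F_{24}$-characters are explicit ratios of the theta block $\vartheta_\mathfrak{g}$ evaluated at $\tau$, $\tau/2$ and $2\tau$, so the cycle shape $\prod_k k^{b_k}$ that governs the imaginary-root multiplicities of the would-be $\mathbb{G}_g$ is determined by $\vartheta_\mathfrak{g}$ through the associated eta quotient $\eta_g$; the resulting class is recorded in Table \ref{tab:sym-cycle}. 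Because the eight relevant classes have pairwise distinct cycle shapes and fixed lattices $\Lambda^g$, the class with level equal to its order $n_g$ is unique, which gives the uniqueness asserted in the theorem.

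Next I would establish property (1). The lattice $L_\mathfrak{g}$ is the maximal even sublattice of $\bP$ and can be written down explicitly. To prove $U\oplus L_\mathfrak{g}\cong U(n_g)\oplus(\Lambda^g)'(n_g)$ I note that both sides are even Lorentzian lattices of signature $(1+\rank(\mathfrak{g}),1)$ and rank $\geq 3$, so by the standard genus theory of indefinite lattices (Eichler, Nikulin) it suffices to match their discriminant forms. The discriminant form of the right-hand side is computed from Lam's determination \cite{Lam19} of $(R(V_{\Lambda_g}^{\hat g}),-\mathfrak{q})$; the distinctive feature of the symmetric case is precisely that $L_\mathfrak{g}$ is paired with the dual-rescaled lattice $(\Lambda^g)'(n_g)$ rather than with $\Lambda^g$ itself (contrast Remark \ref{rem:Lam-paper}). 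I would verify the genus identity case by case over the eight classes, recording the outcome in Table \ref{tab:sym-cycle}.

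The heart of the proof is property (2), namely $\Borch(\phi_\mathfrak{g})=\Phi_g$ on the identified orthogonal group, which I would obtain by exhibiting both forms as the \emph{same} additive lift. On one side, Theorem \ref{th:SVOA} shows $\Borch(\phi_\mathfrak{g})$ has singular weight with leading Fourier--Jacobi coefficient $\vartheta_\mathfrak{g}$, and by property (5) of Theorem \ref{th:construction} (proved in Chapter \ref{sec:additive lifts} via \cite{DW21}) it coincides with $\Grit(\vartheta_\mathfrak{g})$. On the other side, Dittmann and the second named author \cite{DW21} construct $\Phi_g$ as a Gritsenko lift whose seed is, under the isometry of property (1), exactly the theta block $\vartheta_\mathfrak{g}$. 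Since the additive lift of Theorem \ref{th:additive} is linear in and determined by its seed, the two lifts agree, giving $\Borch(\phi_\mathfrak{g})=\Phi_g$ and the chain of identifications of orthogonal groups displayed in (2). This step is the main obstacle, since it requires tracking the rescaling by $n_g$ and the passage to the dual lattice simultaneously through the Hecke-operator formula of Lemma \ref{lem:equality} and through the \cite{WW22}--\cite{DW21} description of $\Phi_g$, so that the two seeds really are carried to one another.

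Finally I would deduce (3) and (4). Given (2), the super-denominators of $\mathcal{G}_\mathfrak{g}$ and $\mathbb{G}_g$ have the same Fourier expansion at corresponding cusps; by Lemma \ref{lem:chi-c} neither superalgebra has odd real roots, so the super-denominator identity of Section \ref{subsec:BKM} recovers from this common expansion the same real roots (the reflective divisors), the same even and odd root multiplicities $a_\mathfrak{g},b_\mathfrak{g}$, the same Weyl group, and the same imaginary simple roots $m\rho$ with super-multiplicity $\sum_{k\mid(m,n_g)}b_k$. Hence $\mathcal{G}_\mathfrak{g}\cong\mathbb{G}_g$, which is (3). For (4), the $\ZZ$-span of $\rho$ and the reflective vectors is $U\oplus L_\mathfrak{g}'$ by the divisor computation underlying Theorem \ref{th:SVOA}, and the identity $(U\oplus L_\mathfrak{g}')(n_g)\cong U\oplus\Lambda^g$ follows from property (1) by dualizing (using $U'=U$) and rescaling by $n_g$, since $\bigl((\Lambda^g)'(n_g)\bigr)'\cong\Lambda^g(1/n_g)$.
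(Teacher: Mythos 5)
Your proposal is correct and follows essentially the same route as the paper: the paper's own proof is precisely the one-line citation of Theorem \ref{th:SVOA} together with \cite[Theorem 6.1, Proposition 6.3]{DW21}, i.e. the identification of $\Borch(\phi_\mathfrak{g})$ and the twisted denominator $\Phi_g$ through the common Gritsenko lift $\Grit(\vartheta_\mathfrak{g})$ (cf. Theorem \ref{th:G=B}), which is exactly the mechanism you reconstruct. Your expansions of the individual steps --- pinning down $[g]$ via the eta quotient attached to $\vartheta_\mathfrak{g}$, matching discriminant forms and using uniqueness in the genus of a lattice splitting $U$ for property (1), and deducing (3)--(4) from (2) by dualizing and rescaling by $n_g$ --- are faithful to what the cited results of \cite{DW21} provide.
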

\begin{proof}
This follows from Theorem \ref{th:SVOA} and its proof and from \cite[Theorem 6.1, Proposition 6.3]{DW21}.      
\end{proof}

\begin{remark}
For each $\mathfrak{g}$, the lattice $L_\mathfrak{g}$ is the unique class in the corresponding genus.  The Borcherds product $\Borch(\phi_\mathfrak{g})=\Phi_g$ is modular under the full orthogonal group $\Orth^+(2U\oplus L_\mathfrak{g})$.   
\end{remark}

\begin{remark}\label{rem:fE8}
In 2000 Scheithauer \cite{Sch00} found a natural realization of the fake monster superalgebra (introduced by Borcherds \cite{Bor92}) as the BRST cohomology related to the holomorphic SCFT $V^{fE_8}$. This BKM superalgbera has no real roots and its denominator (not its super-denominator!) is given by the Fourier expansion of $\Borch(\phi_\mathfrak{g})=\Phi_g$ at $U_1(2)$ for $\mathfrak{g}=A_{1,2}^8$ and $g$ with cycle shape $1^{-8}2^{16}$, where we view $\Borch(\phi_\mathfrak{g})$ as a modular form on $U_1(2)\oplus U\oplus E_8 \cong U_1\oplus U\oplus D_8$. By \cite[Theorem 5.9]{WW22} we can express the input as a pair of Jacobi forms $(\phi_1,\phi_2)$, where $\phi_j$ is a weakly holomorphic Jacobi form of weight $0$ and lattice index $E_8$ with trivial character on $\Gamma_0(2/j)$ for $j=1,2$. Let $\mathfrak{z}\in E_8\otimes\CC$. Note that $\chi_{\widetilde{\mathrm{R}}}=0$. The other full characters of $V^{fE_8}$ have the expressions 
\begin{align*}
\chi_{\mathrm{NS}}(\tau,\mathfrak{z})&=\frac{\Theta_{E_8,0}(\tau,\mathfrak{z})}{\eta(\tau)^8}\prod_{j=1}^4\frac{\vartheta_{00}(\tau,0)}{\eta(\tau)},\\
\chi_{\widetilde{\mathrm{NS}}}(\tau,\mathfrak{z})&=\frac{\Theta_{E_8,0}(\tau,\mathfrak{z})}{\eta(\tau)^8}\prod_{j=1}^4\frac{\vartheta_{01}(\tau,0)}{\eta(\tau)},\\
\chi_{\mathrm{R}}(\tau,\mathfrak{z})&=\frac{\Theta_{E_8,0}(\tau,\mathfrak{z})}{\eta(\tau)^8}\prod_{j=1}^4\frac{\vartheta_{10}(\tau,0)}{\eta(\tau)}.
\end{align*}
The input forms can be expressed as linear combinations of the characters:
\begin{align*}
\phi_1(\tau,\mathfrak{z})&=\big( \chi_{\mathrm{NS}} - \chi_{\widetilde{\mathrm{NS}}} \big)(\tau,\mathfrak{z})=\sum_{n\in \NN,\ell\in E_8}c_1(n,\ell)q^n\zeta^\ell,\\
\phi_2(\tau,\mathfrak{z})&=\big( \chi_{\mathrm{NS}} - \chi_{\widetilde{\mathrm{NS}}} - \chi_{\mathrm{R}} \big)(\tau,\mathfrak{z}) = 0,
\end{align*}
The denominator of the fake monster superalgebra can be written in the form
$$
\prod_{0<\alpha=(n,\ell,m)\in U\oplus E_8}\left(\frac{1-e^{-\alpha}}{1+e^{-\alpha}}\right)^{c_1(nm,\ell)/2}.
$$
\end{remark}

\begin{remark}\label{rem:Conway-SCFT}
The Conway SCFT $V^{f\natural}$ was constructed by Duncan \cite{Dun07} in 2007. Harrison, Paquette and Volpato \cite{HPV19} proved in 2019 that the BRST cohomology related to $V^{f\natural}$ defines a BKM superalgebra with no real roots.   
Note that the denominator of this superalgebra is the Fourier expansion of a Borcherds product of weight $0$ on $U_1(2)\oplus U$ at the $0$-dimensional cusp determined by $U_1(2)$, whose input can be expressed as a pair $(\phi_1,\phi_2)$ with 
\begin{align*}
\phi_1(\tau)&=(\chi_{\mathrm{NS}}-\chi_{\widetilde{\mathrm{NS}}})(\tau),\\
\phi_2(\tau)&=(\chi_{\mathrm{NS}}-\chi_{\widetilde{\mathrm{NS}}}-\chi_{\mathrm{R}}+\chi_{\widetilde{\mathrm{R}}})(\tau) = -48,
\end{align*}
and where the characters of $V^{f\natural}$ are given by
\begin{align*}
\chi_{\mathrm{NS}}(\tau)&=\frac{\eta(\tau)^{48}}{\eta(\tau/2)^{24}\eta(2\tau)^{24}}-24,& \chi_{\widetilde{\mathrm{NS}}}(\tau)&=\frac{\eta(\tau/2)^{24}}{\eta(\tau)^{24}}+24, \\
\chi_{\mathrm{R}}(\tau)&=2^{12}\frac{\eta(2\tau)^{24}}{\eta(\tau)^{24}}+24,& \chi_{\widetilde{\mathrm{R}}}(\tau)&=-24.
\end{align*}
Let $[g]$ be the $\mathrm{Co}_0$-conjugacy class of cycle shape $1^{-24}2^{24}$. The $g$-twisted denominator of the fake monster algebra is identical to the Fourier expansion of the above Borcherds product at the other $1$-dimensional cusp determined by $U$ through the identification 
$$
\Orth(U\oplus U_1(2)) = \Orth(U \oplus U_1(1/2)) =\Orth(U(2)\oplus U_1). 
$$
\end{remark}

Inspired by the two remarks above, we hope to systematically study the connection between vertex algebras and reflective Borcherds products of singular weight on lattices of type $U(N)\oplus U\oplus L$ in the near future. 

\section{Exceptional modular invariants}
The 2D rational CFT consists of both holomorphic and anti-holomorphic parts, which together can yield $\mathrm{SL}_2(\mathbb{Z})$ modular invariants. The classification of modular invariants of affine Kac--Moody algebras was an interesting and vast topic particularly in the 1980s and 90s. We will reveal some new connections to the theory of Borcherds products here and in the next chapter. 

Let us first recall some basics about modular invariants. Clearly, the CFT torus partition function $\sum|\chi_i|^2$, i.e. the summation of the norm-squares of the characters of all primaries, is a diagonal modular invariant. However, there can exist other non-diagonal modular invariants. In general, a $\mathrm{SL}_2(\mathbb{Z})$ modular invariant of a 2D rational CFT can be written as a sesquilinear combination of characters
$$
Z=\sum M_{ab} \chi_a\overline{\chi}_b,\qquad M_{ab} \in\mathbb{N}.
$$
Two major approaches to constructing modular invariants are the \emph{conformal embedding} and \emph{simple currents extension}, see e.g. the textbook \cite[Chapter 17]{CFT}. Modular invariants that cannot be obtained from simple currents are often called \emph{exceptional}. For example, $A_{1,k}$ is known to have three exceptional modular invariants at levels $k=10,16,28$, respectively called $E_6,E_7,E_8$ modular invariants. Exceptional modular invariants are rare and can be constructed by either special conformal embeddings or via nontrivial automorphisms of fusion algebras. The full classification of modular invariants for simple affine Lie algebras of all levels has been achieved for $A_1$ \cite{Cappelli,Kato} and $A_2$ \cite{Gannon}, see some recent progress in \cite{gannon2023exotic}.

We can construct many exceptional modular invariants using the conformal embedding $$
V_\mathfrak{g} \hookrightarrow F_{24}
$$
and the automorphism $\chi_v \leftrightarrow \chi_c$ of the $D_{12,1}$ fusion algebra. First we have the diagonal modular invariant of $D_{12}$, i.e.
$$
Z_{D_{12}} = |\chi_0|^2 + |\chi_v|^2 + |\chi_s|^2 + |\chi_c|^2.
$$
The diagonal modular invariant of $F_{24}$ with the $\mathcal{N}=1$ structure $\mathfrak{g}$ is given by
\begin{equation}\label{eq:modular-F24}
\begin{split}
Z_{F_{24}} &= \frac{1}{2}\Big( |\chi_{\mathrm{NS}}|^2 + |\chi_{\widetilde{\mathrm{NS}}}|^2 + |\chi_{\mathrm{R}}|^2 + |\chi_{\widetilde{\mathrm{R}}}|^2 \Big) \\
&= |\chi_0(\tau, \iota_\mathfrak{g}(\mathfrak{z}))|^2 + |\chi_v(\tau, \iota_\mathfrak{g}(\mathfrak{z}))|^2 + |\chi_s(\tau, \iota_\mathfrak{g}(\mathfrak{z}))|^2 + |\chi_c(\tau, \iota_\mathfrak{g}(\mathfrak{z}))|^2.
\end{split}
\end{equation}
By letting the automorphism $\chi_v \leftrightarrow \chi_c$ act on the holomorphic part, we obtain an exceptional modular invariant of $V_\mathfrak{g}$,
\begin{equation}\label{eq:exc}
\begin{split}
Z^{\mathrm{exc}}_{F_{24}} =& |\chi_0(\tau, \iota_\mathfrak{g}(\mathfrak{z}))|^2 + |\chi_s(\tau, \iota_\mathfrak{g}(\mathfrak{z}))|^2 + \chi_v(\tau, \iota_\mathfrak{g}(\mathfrak{z})) \overline{\chi_c(\tau, \iota_\mathfrak{g}(\mathfrak{z}))} \\
& \quad + \chi_c(\tau, \iota_\mathfrak{g}(\mathfrak{z})) \overline{\chi_v(\tau, \iota_\mathfrak{g}(\mathfrak{z}))}.
\end{split}
\end{equation}
The following relation is immediate:
\begin{equation}\label{eq:relation-Z}
Z_{F_{24}} - Z^{\mathrm{exc}}_{F_{24}} = |\phi_\mathfrak{g}|^2.     
\end{equation}

In order to write out the exceptional modular invariants \eqref{eq:exc} explicitly, it is enough to express the full characters of $F_{24}$ as $\ZZ$-linear combinations of full characters of the affine VOA $V_\mathfrak{g}$. We have calculated these expressions by cases and have found that the full character $\chi_{\mathrm{R}}$ is always given as follows (see \eqref{eq:symbol-char} for the notation):
\begin{equation}
   \chi_{\mathrm{R}} = 2^{\rank(\mathfrak{g})/2}\bigotimes_{j=1}^s \chi^{\mathfrak{g}_{j,k_j}}_{\rho_j,3/2},
\end{equation}
where $\rho_j$ is the Weyl vector of $\mathfrak{g}_j$. 
Note that if we write $\phi_\mathfrak{g}$ as a $\ZZ$-linear combination of characters of  $V_\mathfrak{g}$ then there is a unique negative term, and it is given by
$$
\phi_{\mathfrak{g},-} = s \bigotimes_{j=1}^s \chi^{\mathfrak{g}_{j,k_j}}_{\rho_j,3/2}.  
$$
We will see that 
$$
\chi_{\rm R} \neq 2\phi_{\mathfrak{g},-}
$$
if and only if $\mathfrak{g}$ is of type $A_{4,5}$ or $A_{2,3}^3$, in which cases there are certain imaginary roots that are simultaneously even and odd. Note that the $q$-order of $\chi_\mathrm{R}$ is always $1$. 

The expressions of $\chi_{\mathrm{NS}}$ and $\chi_{\widetilde{\mathrm{NS}}}$ for the eight $\mathcal{N}=1$ structures are given below.

\vspace{3mm}

\textbf{Case $A_{1,2}^8$}: The affine VOA of type $A_{1,2}$ has nonzero conformal weights $\frac{3}{16}$ and $\frac12$ associated with the weights $w_1$ and $2w_1$, respectively. The (formal) fermionization of $A_{1,2}$ is well-known to be
$$
\chi_{\rm NS}^{A_{1,2}}=\chi_{0,0}^{A_{1,2}}+\chi_{2,\frac12}^{A_{1,2}},\quad \chi_{\rm \widetilde{NS}}^{A_{1,2}}=\chi_{0,0}^{A_{1,2}}-\chi_{2,\frac12}^{A_{1,2}},\quad \chi_{\rm R}^{A_{1,2}}=\sqrt{2}\chi_{1,\frac{3}{16}}^{A_{1,2}},
$$
This theory describes three 2D chiral fermions.
For the holomorphic SVOA of type $A_{1,2}^8 $, the fermionic characters are naturally given by
$$
\chi_{\rm NS}=\bigotimes\chi_{\rm NS}^{A_{1,2}},\quad \chi_{\rm \widetilde{NS}}=\bigotimes\chi_{\rm \widetilde{NS}}^{A_{1,2}},\quad \chi_{\rm R}=\bigotimes\chi_{\rm R}^{A_{1,2}}.
$$
where the tensor products take over all eight copies of $A_{1,2}$. We remark that \eqref{eq:exc} for $A_{1,2}^8$ gives exactly the exceptional modular invariant found in \cite[Equation (5.12b)]{Gannon:1994sp}.

\vspace{3mm}

\textbf{Case $A_{2,3}^3$ }: We use the well-known conformal embedding $A_{2,3}\subset D_{4,1}$. Denote
$$
\phi_0=\chi_{00,0}^{A_{2,3}}+\chi_{03,1}^{A_{2,3}}+\chi_{30,1}^{A_{2,3}},\qquad 
\phi_1=\chi_{11,\frac12}^{A_{2,3}}.
$$
As an SVOA, the fermionization of $A_{2,3}$ has the fermionic characters
$$
\chi_{\rm NS}^{A_{2,3}}=\phi_0+\phi_1 ,\quad \chi_{\rm \widetilde{NS}}^{A_{2,3}}=\phi_0-\phi_1,\quad \chi_{\rm R}^{A_{2,3}}=2\phi_1 .
$$
Therefore, for the holomorphic SVOA $A_{2,3}^3$, we have the fermionic characters
$$
\chi_{\rm NS}=\bigotimes\chi_{\rm NS}^{A_{2,3}},\quad \chi_{\rm \widetilde{NS}}=\bigotimes\chi_{\rm \widetilde{NS}}^{A_{2,3}},\quad \chi_{\rm R}=\bigotimes\chi_{\rm R}^{A_{2,3}}.
$$
where the tensor products run over the three copies of $A_{2,3}$.

\vspace{3mm}

\textbf{Case  $A_{4,5}$}: Several modular invariants of $A_{4,5}$ were found in  \cite[Appendix B]{Schellekens:1989uf}. The one related to the conformal embedding $A_{4,5}\subset D_{12,1}$ is actually the fermionic modular invariant. Let us denote
\begin{align*}
&\phi_0=\sum_{1}\chi^{A_{4,5}}_{0000,0},\quad \phi_1=\sum_{1}\chi^{A_{4,5}}_{0102,1},\quad 
\phi_2=\sum_{1}\chi^{A_{4,5}}_{1001,\frac12},\\
& \phi_3=\sum_{1}\chi^{A_{4,5}}_{0021,\frac65},\quad 
\phi_4=\sum_{1}\chi^{A_{4,5}}_{0110,\frac45},\quad \phi_5=\chi^{A_{4,5}}_{1111,\frac32}.
\end{align*}
Here the glue vector 1 describes $\mathbb{Z}_5$ permutation on the affine Dynkin labels, i.e., $\hat{w}_i\to \hat{w}_{i-1}$ for $1\leq i\leq 4$ and $\hat{w}_0\to \hat{w}_4$. For the holomorphic SVOA, we find that the fermionic characters can be expressed as 
$$
\chi_{\rm NS}=\phi_0+\phi_1+\phi_2+\phi_5 ,\quad \chi_{\rm \widetilde{NS}}=\phi_0+\phi_1-\phi_2-\phi_5 ,\quad \chi_{\rm R}=4\phi_5.
$$
The modular invariant \eqref{eq:modular-F24} is exactly the (B.6) modular invariant in \cite{Schellekens:1989uf}. The exceptional modular invariant \eqref{eq:exc} coincides with the (B.5) modular invariant in \cite{Schellekens:1989uf}. 
Moreover, the summation 
$$
Z_{F_{24}}+|\phi_{\mathfrak{g}}|^2 =|\phi_0+\phi_1|^2+2|\phi_2|^2+10|\phi_5|^2
$$
gives the (B.3) modular invariant in \cite{Schellekens:1989uf} (compare \eqref{eq:relation-Z}). Let us comment on the extra modular invariants. The simple current extended modular invariant was given  in \cite[Equation (B.2)]{Schellekens:1989uf} as
$$
Z_{\rm (B.2)}=\sum_{i=0}^4|\phi_i|^2+5|\phi_5|^2.
$$
In addition, a different exceptional modular invariant was given in  \cite[Equation (B.4)]{Schellekens:1989uf}  as
$$
Z_{\rm (B.4)}=|\phi_0|^2+|\phi_1|^2+|\phi_3|^2+|\phi_4|^2+4|\phi_5|^2+(\phi_2\bar\phi_5+\bar{\phi}_2\phi_5).
$$
These two modular invariants are related by 
$$
Z_{\rm (B.2)}-Z_{\rm(B.4)}=|\phi_{\mathfrak{g}}|^2.
$$
(Compare \eqref{eq:relation-Z}.)

\vspace{3mm}

\textbf{Case $A_{3,4}A_{1,2}^3$}: In this case we use the conformal embedding $A_{3,4}\subset B_{7,1}$. 
We find that the fermionic characters can be expressed as 
\begin{align*}
\chi_{\rm NS}=&\,\Big(\sum_{1}\big(\chi_{000,*}^{A_{3,4}}+\chi_{101,*}^{A_{3,4}}\big)\Big)\otimes\bigotimes\big(\chi^{A_{1,2}}_{0,0}+\chi^{A_{1,2}}_{2,\frac12}\big),\\
\chi_{\rm \widetilde{NS}}= &\, \Big(\sum_{1,even}\big(\chi_{000,*}^{A_{3,4}}+\chi_{101,*}^{A_{3,4}}\big)-\sum_{1,odd}\big(\chi_{000,*}^{A_{3,4}}+\chi_{101,*}^{A_{3,4}}\big)\Big)\otimes\bigotimes\big(\chi^{A_{1,2}}_{0,0}-\chi^{A_{1,2}}_{2,\frac12}\big).
\end{align*}
The tensor products run over the $3$ copies of $A_{1,2}$.
The glue vector 1 describes the $\mathbb{Z}_4$ permutation on the affine Dynkin labels, i.e., $\hat{w}_i\to \hat{w}_{i-1}$ and $\hat{w}_0\to \hat{w}_3$. The subscript \textit{even} means the summation is taken over all primaries generated by the simple current $1$ with integral conformal weights, while \textit{odd} means all those generated by the simple current $1$ with half-integral conformal weights.

\vspace{3mm}

\textbf{Case $B_{2,3}G_{2,4}$}: Consider the decomposition $D_{5,1}\times D_{7,1}\subset D_{12,1}$ and the conformal embeddings $B_{2,3}\subset D_{5,1}$ and $G_{2,4}\subset D_{7,1}$.  
We find the following formulas for the fermionic characters:
\begin{align*}
\chi_{\rm NS}&=\Big(\sum_{even}\chi^{B_{2,3}}+\sum_{odd}\chi^{B_{2,3}}\Big)\otimes \Big(\sum_{even}\chi^{G_{2,4}}+\sum_{odd}\chi^{G_{2,4}}\Big),\\
\chi_{\rm \widetilde{NS}}&=\Big(\sum_{even}\chi^{B_{2,3}}
-\sum_{odd}\chi^{B_{2,3}}\Big)\otimes  \Big(\sum_{even}\chi^{G_{2,4}}-\sum_{odd}\chi^{G_{2,4}}\Big).
\end{align*}
Here \textit{even} indicates the summation is taken over all primaries with integral conformal weights, while \textit{odd} means all those with half-integral conformal weights. For both ${B_{2,3}}$ and ${G_{2,4}}$, the even part contains two primaries with conformal weights $0$ and $1$, while the odd part contains two primaries with conformal weights $1/2$ and $3/2$.

\vspace{3mm}

\textbf{Case $B_{2,3}A_{2,3}A_{1,2}^2$}: We use the conformal embeddings from the previous cases and find
\begin{small}
\begin{align*}
\chi_{\rm NS}&=\Big(\sum_{even}\chi^{B_{2,3}}+\sum_{odd}\chi^{B_{2,3}}\Big)\otimes \big(\chi_{00,0}^{A_{2,3}}+\chi_{03,1}^{A_{2,3}}+\chi_{30,1}^{A_{2,3}}+\chi_{11,\frac12}^{A_{2,3}}\big)\otimes\bigotimes\big(\chi_{0,0}^{A_{1,2}}+\chi_{2,\frac12}^{A_{1,2}}\big),\\
\chi_{\rm \widetilde{NS}}&=\Big(\sum_{even}\chi^{B_{2,3}}
-\sum_{odd}\chi^{B_{2,3}}\Big)\otimes \big(\chi_{00,0}^{A_{2,3}}+\chi_{03,1}^{A_{2,3}}+\chi_{30,1}^{A_{2,3}}-\chi_{11,\frac12}^{A_{2,3}}\big)\otimes\bigotimes\big(\chi_{0,0}^{A_{1,2}}-\chi_{2,\frac12}^{A_{1,2}}\big).
\end{align*}
\end{small}
Here \textit{even} and \textit{odd} are defined as in the case of $B_{2,3}G_{2,4}$.

\vspace{3mm}

\textbf{Case $B_{3,5}A_{1,2}$}: We use the conformal embedding $B_{3,5}\subset B_{10,1}$ and find
\begin{align*}
\chi_{\rm NS}&=\Big(\sum_{even}\chi^{B_{3,5}}+\sum_{odd}\chi^{B_{3,5}}\Big)\otimes\big(\chi_{0,0}^{A_{1,2}}+\chi_{2,\frac12}^{A_{1,2}}\big) ,\\
\chi_{\rm \widetilde{NS}}&= \Big(\sum_{even}\chi^{B_{3,5}}-\sum_{odd}\chi^{B_{3,5}}\Big)\otimes\big(\chi_{0,0}^{A_{1,2}}-\chi_{2,\frac12}^{A_{1,2}}\big).
\end{align*}
Here the even part of $B_{3,5}$ contains four primaries with conformal weights $0,1,2,2$, while the odd part contains four primaries with conformal weights $1/2,3/2,3/2,5/2$.

\vspace{3mm}

\textbf{Case $C_{3,4}A_{1,2}$}: This case is very similar to $B_{3,5}A_{1,2}$. We find
\begin{align*}
\chi_{\rm NS}&=\Big(\sum_{even}\chi^{C_{3,4}}+\sum_{odd}\chi^{C_{3,4}}\Big)\otimes\big(\chi_{0,0}^{A_{1,2}}+\chi_{2,\frac12}^{A_{1,2}}\big) ,\\
\chi_{\rm \widetilde{NS}}&= \Big(\sum_{even}\chi^{C_{3,4}}-\sum_{odd}\chi^{C_{3,4}}\Big)\otimes\big(\chi_{0,0}^{A_{1,2}}-\chi_{2,\frac12}^{A_{1,2}}\big).
\end{align*}
Here the even part of $C_{3,4}$ contains four primaries with conformal weights $0,1,2,3$, while the odd part contains four primaries with conformal weights $1/2,3/2,3/2,5/2$.

\chapter[Symmetric case: four exotic CFTs]{The symmetric case: four exotic CFTs}\label{sec:construct-symmetric-C<1}

In this chapter, we construct hyperbolizations of affine Kac--Moody algebras $\hat{\mathfrak{g}}$ for the remaining $4$ semi-simple Lie algebras $\mathfrak{g}$ of symmetric type with $C<1$ in Theorem \ref{th:non-existence}.  The data for these $4$ cases are given in Table \ref{tab:extra}. Note that in these cases the lattices $\mathbf{P}_\mathfrak{g}$ (see Notation \ref{notation}) are even.

\begin{table}[h]
\caption{The $4$ exotic cases related to exceptional modular invariants.}
\label{tab:extra}
\renewcommand\arraystretch{1.5}
\[
\begin{array}{|c|c|c|c|c|}
\hline 
\mathfrak{g} & A_{1,16} & A_{1,8}^2 & A_{1,4}^4 & A_{2,9} \\ \hline
\mathbf{P}_\mathfrak{g} & A_1(4) & 2A_1(2) & 4A_1 & A_2(3) \\
\hline
C & \frac{1}{8} & \frac{1}{4} & \frac{1}{2} & \frac{1}{3} \\
\hline
c_\mathfrak{g} & \frac{8}{3} & \frac{24}{5} & 8 & 6 \\
\hline
\end{array} 
\]
\end{table}

The $4$ exceptional $\mathfrak{g}$ correspond to $4$ exotic 2D CFTs that already made appearances in the physics literature around three decades ago. The common feature of these $4$ exotic 2D CFTs is that they possess certain exceptional modular invariants that come from the nontrivial automorphisms of the fusion algebra of the simple current modular invariants. 
This accidental phenomenon was first found by Moore and Seiberg for $A_{1,16}$ and $A_{2,9}$ \cite{Moore:1988ss}, later by Verstegen for $A_{1,8}^2$ \cite{Verstegen:1990my} and finally by Gannon for $A_{1,4}^4$ \cite{Gannon:1994sp}. Our new observation here is that the two stories are actually deeply connected; that is, both the existence of reflective Borcherds products of singular weight and the existence of exceptional modular invariants come from the same identities among the characters of the affine Kac--Moody algebras. 

Let $\Delta_\mathfrak{g}^+$ be the set of positive roots of $\mathfrak{g}$. Note that $\dim\mathfrak{g}/C=24$.  
Similarly to the previous chapter, we work with the lattice embedding
$$
\iota_\mathfrak{g}:\quad  \mathbf{P}_\mathfrak{g} \hookrightarrow D_{12}, \quad
\mathfrak{z} \mapsto \iota_\mathfrak{g}(\mathfrak{z})=\{ \text{$1/C$ copies of $\latt{\mathfrak{z},\alpha}$} \}_{\alpha \in \Delta_\mathfrak{g}^+},
$$
such that the following identity holds:
$$
(\mathfrak{z},\mathfrak{z}) = (\iota_\mathfrak{g}(\mathfrak{z}), \iota_\mathfrak{g}(\mathfrak{z})) = \frac{1}{C} \sum_{\alpha \in \Delta_\mathfrak{g}^+}\latt{\alpha, \mathfrak{z}}^2. 
$$
\begin{theorem}\label{th:extra}
For each $\mathfrak{g}$ in Table \ref{tab:extra}, we define the pullback 
$$
\phi_\mathfrak{g}(\tau,\mathfrak{z}):=C\cdot\phi_{D_{12}}(\tau, \iota_\mathfrak{g}(\mathfrak{z})),
$$
where $\phi_{D_{12}}$ is defined in Lemma \ref{lem:D12}. Then the theta lift $\Borch(\phi_\mathfrak{g})$ is a reflective Borcherds product of singular weight on $2U\oplus \mathbf{P}_\mathfrak{g}$. Morover, the leading Fourier--Jacobi coefficient of $\Borch(\phi_\mathfrak{g})$ coincides with the denominator of $\hat{\mathfrak{g}}$.
\end{theorem}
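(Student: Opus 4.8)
The plan is to follow the template set by the proofs of Theorems~\ref{th:VOA} and~\ref{th:SVOA}: first exhibit $\phi_\mathfrak{g}$ as a weight-$0$ Jacobi form of the correct index whose $q^0$-term forces singular weight and produces $\vartheta_\mathfrak{g}$ as the leading Fourier--Jacobi coefficient, and then verify reflectivity by a finite inspection of the singular Fourier coefficients. I begin with the automorphic properties of $\phi_\mathfrak{g}$. The map $\iota_\mathfrak{g}$ is a norm-preserving lattice embedding $\mathbf{P}_\mathfrak{g}\hookrightarrow D_{12}$, which is exactly the content of the identity $(\mathfrak{z},\mathfrak{z})=\tfrac1C\sum_{\alpha\in\Delta_\mathfrak{g}^+}\latt{\alpha,\mathfrak{z}}^2$ together with the dual Coxeter number relation of Section~\ref{subsec:theta-blocks} and $C=h_j^\vee/k_j$. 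Hence the restriction of $\phi_{D_{12}}$ from Lemma~\ref{lem:D12} along $\iota_\mathfrak{g}$ is a weakly holomorphic Jacobi form of weight $0$ and lattice index $\mathbf{P}_\mathfrak{g}$ with trivial character on $\SL_2(\ZZ)$, and multiplying by the scalar $C$ preserves all of this. Here one uses that $\mathbf{P}_\mathfrak{g}$ is even (noted before Table~\ref{tab:extra}) and that $1/C\in\ZZ$ in all four cases. Integrality of the Fourier coefficients, required for the Borcherds lift, follows from the expression of $\phi_\mathfrak{g}$ as a $\ZZ$-linear combination of affine characters in Theorem~\ref{th:input-character}.

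Next I would compute the $q^0$-term. Only the vector character contributes: the $q^0$-coefficient of $\phi_{D_{12}}(\tau,z)$ equals $\sum_{j=1}^{12}(\zeta_j+\zeta_j^{-1})$, since $\phi_{10}$ and the $c$-part of the $R$-sector begin at $q^1$. The embedding uses $\tfrac1C$ copies of each $\latt{\alpha,\mathfrak{z}}$ and leaves $\rank(\mathfrak{g})/(2C)$ coordinates of $D_{12}$ equal to $0$; a short count then gives
\[
\phi_\mathfrak{g}(\tau,\mathfrak{z})=\rank(\mathfrak{g})+\sum_{\alpha\in\Delta_\mathfrak{g}^+}\bigl(e^{2\pi i\latt{\alpha,\mathfrak{z}}}+e^{-2\pi i\latt{\alpha,\mathfrak{z}}}\bigr)+O(q),
\]
so the normalizing factor $C$ is precisely what makes $f(0,0)=\rank(\mathbf{P}_\mathfrak{g})$. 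Consequently the theta lift has singular weight $\rank(\mathbf{P}_\mathfrak{g})/2$, and the theta-block formula for the leading Fourier--Jacobi coefficient in Theorem~\ref{th:product} identifies it with $\eta^{\rank(\mathfrak{g})}\prod_{\alpha\in\Delta_\mathfrak{g}^+}\vartheta(\tau,\latt{\alpha,\mathfrak{z}})/\eta=\vartheta_\mathfrak{g}$. To know that the lift is a \emph{holomorphic} product I would further check that the singular Fourier coefficients of $\phi_\mathfrak{g}$ are non-negative, i.e.\ that the pullback of the $c$-character along $\iota_\mathfrak{g}$ stays holomorphic at infinity; this is the exact analogue of Lemma~\ref{lem:chi-c} and is established in the same way (it also follows from the theta-block criterion mentioned there).

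The crux is reflectivity. By \cite{WW23} a singular-weight holomorphic product has only simple zeros, so every nonzero singular coefficient $f(n,\ell)$ is $1$, and by Remark~\ref{rem:singular} it suffices to examine the finitely many coefficients with $n\le\hat{\delta}_{\mathbf{P}_\mathfrak{g}}$ and $2n<(\ell,\ell)$. For each of the four small lattices $\mathbf{P}_\mathfrak{g}$ of Table~\ref{tab:extra} I would expand $\phi_\mathfrak{g}$ to order $q^{\hat{\delta}_{\mathbf{P}_\mathfrak{g}}}$, organize the singular part into Weyl orbits as in Lemmas~\ref{lem:B12}, \ref{lem:F4A2} and~\ref{lem:C4}, and verify for every such orbit that the associated divisor is reflective, i.e.\ that there is a positive integer $t$ with $(\ell,\ell)-2n=2/t$ and $t\ell\in\mathbf{P}_\mathfrak{g}$. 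Because the ranks here are at most $4$, these expansions are short; the character identities of Theorem~\ref{th:input-character} (the same identities underlying the exceptional modular invariants of Remark~\ref{rem:exceptional-modular-invariants}) give the Fourier coefficients in closed form and make the check tractable.

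The main obstacle is this last, case-by-case reflectivity verification: unlike the $C=1$ situation of Chapter~\ref{sec:construct-symmetric-C=1}, there is no identification with a twisted denominator $\Phi_g$ of the fake monster algebra available to deduce reflectivity abstractly, so each of the four lattices must be handled by a direct and somewhat delicate computation of singular Weyl orbits and their orders modulo $\mathbf{P}_\mathfrak{g}$. Everything else (modularity, singular weight, the leading coefficient, holomorphy) is a formal consequence of Lemma~\ref{lem:D12}, the pullback construction, and Theorem~\ref{th:product}.
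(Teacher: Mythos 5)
Your proposal is correct and follows essentially the same route as the paper's proof: the pullback along $\iota_\mathfrak{g}$ is a \emph{weak} Jacobi form of weight $0$ and index $\mathbf{P}_\mathfrak{g}$ whose $q^0$-term is $\rank(\mathfrak{g}) + \sum_{\alpha\in\Delta_\mathfrak{g}^+}\big(e^{2\pi i\latt{\alpha,\mathfrak{z}}}+e^{-2\pi i\latt{\alpha,\mathfrak{z}}}\big)$, giving singular weight and $\vartheta_\mathfrak{g}$ as the leading Fourier--Jacobi coefficient. The one point where you overestimate the work is the reflectivity check, which you flag as the delicate crux: in all four cases one computes $\delta_{\mathbf{P}_\mathfrak{g}}=2$, hence $\hat{\delta}_{\mathbf{P}_\mathfrak{g}}=0$, so by Remark \ref{rem:singular} (and weakness, which rules out $n<0$) every singular Fourier coefficient already sits in the displayed $q^0$-term and is manifestly reflective --- no expansion beyond $q^0$, no Weyl-orbit analysis in the style of Lemmas \ref{lem:B12}--\ref{lem:C4}, and no analogue of Lemma \ref{lem:chi-c} is needed.
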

\begin{proof}
By definition, we find that
$$
\phi_\mathfrak{g}(\tau,\mathfrak{z}) = \rank(\mathfrak{g}) + \sum_{\alpha\in\Delta_\mathfrak{g}^+} \Big( e^{2\pi i\latt{\alpha,\mathfrak{z}}} + e^{-2\pi i\latt{\alpha,\mathfrak{z}}} \Big) +O(q)
$$
and this is a weak Jacobi form of weight $0$ and lattice index $\mathbf{P}_\mathfrak{g}$. It is easy to check that $\delta_L=2$ for these four cases. From Remark \ref{rem:singular}, it follows that all singular Fourier coefficients of  $\phi_\mathfrak{g}(\tau,\mathfrak{z})$ appear in the $q^0$-term given above. Therefore, the theta lift is reflective and of singular weight, and its leading Fourier--Jacobi coefficient has the desired form. 
\end{proof}

The next theorem expresses the inputs of the Borcherds products as $\ZZ$-linear combinations of the full characters of the affine VOA generated by $\hat{\mathfrak{g}}$.
\begin{theorem}\label{th:input-character}
The following identities hold:
\begin{align*}
\phi_{A_{1,16}}&=\chi^{A_{1,16}}_{2,\frac19}+\chi^{A_{1,16}}_{14,\frac{28}{9}}-\chi^{A_{1,16}}_{8,\frac{10}{9}}, \\  
\phi_{A_{1,8}^2}&=\big(\chi^{A_{1,8}}_{0,0}+\chi^{A_{1,8}}_{8,2}\big)\otimes\big(\chi^{A_{1,8}}_{2,\frac15}+\chi^{A_{1,8}}_{6,\frac65}\big)+\big(\chi^{A_{1,8}}_{2,\frac15}+\chi^{A_{1,8}}_{6,\frac65}\big)\otimes \big(\chi^{A_{1,8}}_{0,0}+\chi^{A_{1,8}}_{8,2}\big)\\
& \quad \quad  -2\chi^{A_{1,8}}_{4,\frac35}\otimes\chi^{A_{1,8}}_{4,\frac35},\\
\phi_{A_{2,9}}&=\chi^{A_{2,9}}_{11,\frac14}+\chi^{A_{2,9}}_{17,\frac{9}{4}}+\chi^{A_{2,9}}_{71,\frac{9}{4}}-\chi^{A_{2,9}}_{33,\frac{5}{4}},\\
\phi_{A_{1,4}^4}&= \sum_{cyclic}\big(\chi^{A_{1,4}}_{0,0}+\chi^{A_{1,4}}_{4,1}\big)\otimes \big(\chi^{A_{1,4}}_{0,0}+\chi^{A_{1,4}}_{4,1}\big)\otimes \big(\chi^{A_{1,4}}_{0,0}+\chi^{A_{1,4}}_{4,1}\big)\otimes\chi^{A_{1,4}}_{2,\frac13}\\
&\quad \quad -4\bigotimes\chi^{A_{1,4}}_{2,\frac13},
\end{align*}
where the sum contains $4$ terms by permutation. By taking $\mathfrak{z}=0$ we obtain the identities among unflavored characters given by
$$
\phi_\mathfrak{g}(\tau,0) = \dim \mathfrak{g}. 
$$
\end{theorem}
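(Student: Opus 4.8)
The plan is to establish each identity by proving that its two sides are weak Jacobi forms of weight $0$ and index $\bP$ with trivial character for $\SL_2(\ZZ)$ and that they have the same $q^0$-term; the identity then follows from the vanishing of holomorphic Jacobi forms below the singular weight. By Theorem \ref{th:extra} and its proof, the left-hand side $\phi_\mathfrak{g}$ is already such a form, with $q^0$-term equal to $\rank(\mathfrak{g})+\sum_{\alpha\in\Delta_\mathfrak{g}^+}(e^{2\pi i\latt{\alpha,\mathfrak{z}}}+e^{-2\pi i\latt{\alpha,\mathfrak{z}}})$.

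First I would check that the right-hand side is modular of index $\bP$. Each full character $\chi^{\mathfrak{g}_{j,k_j}}_{\lambda,h_\lambda}$ is a holomorphic Jacobi form and a component of a vector-valued modular form for the Kac--Peterson representation at level $k_j$, with transformation laws as in Section \ref{subsec:theta-blocks} (see \cite[Theorem 13.8]{Kac90}). Modularity of the listed combination amounts to three invariances: (i) under $T$, which holds because every character, or tensor product of characters, appearing has total conformal weight congruent to $c_\mathfrak{g}/24$ modulo $\ZZ$ (a direct check from Table \ref{tab:extra}); (ii) under $S$, computed from the explicit Kac--Peterson $S$-matrix; and (iii) under the extra translations by $\bP$ modulo $\bQ$ (the simple currents), which is what raises the index from $\bQ$ to $\bP$. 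Invariances (ii) and (iii) together are the manifestation, at the level of the affine characters, of the exceptional modular invariant of $\hat{\mathfrak{g}}$ arising from a nontrivial automorphism of the fusion algebra, found by Moore--Seiberg \cite{Moore:1988ss}, Verstegen \cite{Verstegen:1990my} and Gannon \cite{Gannon:1994sp}. This verification is the main obstacle; alternatively, one may bypass the independent modularity check by decomposing the (already modular) $\phi_\mathfrak{g}$ through its theta expansion and matching it against the string functions of the affine characters, which forces the stated coefficients.

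Granting modularity, the right-hand side is a \emph{holomorphic} Jacobi form of weight $0$ and index $\bP$, since the characters are. The difference $D$ of the two sides is then a weak Jacobi form of weight $0$ and index $\bP$; since $\delta_{\bP}=2$ (Theorem \ref{th:extra}), Remark \ref{rem:singular} shows that every singular Fourier coefficient of $D$ already appears in its $q^0$-term. Hence, once the $q^0$-terms agree, $D$ is genuinely holomorphic, and as $0$ lies strictly below the singular weight $\tfrac12\rank(\bP)\ge\tfrac12$, its theta components are holomorphic modular forms of negative weight and therefore vanish; thus $D=0$. To match the $q^0$-terms I would use the Weyl--Kac formula: the leading term of $\chi^{\mathfrak{g}_{j,k_j}}_{\lambda,h_\lambda}$ is $\mathrm{ch}_{\lambda}(\mathfrak{z})\,q^{h_\lambda-c_\mathfrak{g}/24}$, so only contributions of total conformal weight $c_\mathfrak{g}/24$ survive at $q^0$. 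Tracking this bookkeeping, the surviving term is always the adjoint character $\rank(\mathfrak{g})+\sum_{\alpha\in\Delta_\mathfrak{g}^+}(e^{2\pi i\latt{\alpha,\mathfrak{z}}}+e^{-2\pi i\latt{\alpha,\mathfrak{z}}})$; in the product cases $A_{1,8}^2$ and $A_{1,4}^4$ precisely those terms pairing one level-$k$ adjoint factor against vacuum factors contribute, and the negative terms only begin at $q^1$. This yields the four identities.

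Finally, for the specialization $\mathfrak{z}=0$: since $\phi_\mathfrak{g}$ is a Jacobi form of weight $0$ and index $\bP$, the elliptic exponential factor is trivial at $\mathfrak{z}=0$, so $\phi_\mathfrak{g}(\tau,0)$ is a weakly holomorphic modular form of weight $0$ for $\SL_2(\ZZ)$. As the $q$-expansion of a weak Jacobi form has no negative powers of $q$, it is holomorphic at the cusp, hence constant, and its value equals its $q^0\zeta^0$-coefficient $\rank(\mathfrak{g})+2|\Delta_\mathfrak{g}^+|=\dim\mathfrak{g}$.
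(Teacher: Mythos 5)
Your proposal is correct and follows the paper's overall strategy --- first verify that the stated character combinations transform correctly under the generators of $\SL_2(\ZZ)$ (the paper, like you, leaves this as a direct check), then reduce each identity to matching $q^0$-terms --- but you close the argument with a different rigidity lemma. The paper notes that once the $q^0$-terms agree, the difference of the two sides, divided by $\eta^{24}$, is a weak Jacobi form of weight $-12$ and lattice index $\bP$, and such a form vanishes by the structure results on rings of weak Jacobi forms cited as \cite{Wan21}. You instead use $\delta_{\bP}=2$ together with Remark \ref{rem:singular} to conclude that the difference, having trivial $q^0$-term, has no singular Fourier coefficients at all, hence is a holomorphic Jacobi form of weight $0$; since $0$ lies strictly below the singular weight $\tfrac12\rank(\bP)$, its theta components are holomorphic vector-valued modular forms of negative weight and vanish. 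Your version is more self-contained: it needs only the elementary singular-weight bound plus the computation $\delta_{\bP}=2$, which the paper already performs in the proof of Theorem \ref{th:extra}, whereas the paper's version invokes the nonexistence of weight $-12$ weak Jacobi forms for these particular lattices. One small inaccuracy in your write-up: the individual affine characters are weakly holomorphic, not holomorphic, Jacobi forms --- a vacuum character such as $\chi^{A_{1,8}}_{0,0}$ begins at $q^{-c_\mathfrak{g}/24}\zeta^0$, which is a singular term --- so the right-hand sides are not ``holomorphic since the characters are.'' This is harmless, because your argument only needs the difference to be a \emph{weak} Jacobi form, and that holds since $c_\mathfrak{g}<24$ forces the integral $q$-orders of each $T$-invariant combination to be $\geq 0$. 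Finally, your proof of the specialization $\phi_\mathfrak{g}(\tau,0)=\rank(\mathfrak{g})+2|\Delta_\mathfrak{g}^+|=\dim\mathfrak{g}$ (a weight-$0$ form holomorphic at the cusp is constant) correctly supplies a step the paper merely asserts.
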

\begin{proof}
First one proves that the $\ZZ$-linear combinations of affine characters above define Jacobi forms by checking their transformations under the action of the generators of $\SL_2(\ZZ)$.
To prove the identities, it is enough to check that their $q^0$-terms match, since a weak Jacobi form of weight $-12$ and lattice index $\mathbf{P}_\mathfrak{g}$ has to be zero (see e.g. \cite{Wan21}). 
\end{proof}

The embedding $\iota_\mathfrak{g}$ for $\mathfrak{g}=A_{2,9}$ induces an embedding $A_2(3) \hookrightarrow 3A_2$. By considering the corresponding pullback we obtain an identity between affine characters of type $A_2$ at levels $3$ and $9$:
$$
\big(\chi_{00,0}^{A_{2,3}}+\chi_{03,1}^{A_{2,3}}+\chi_{30,1}^{A_{2,3}}\big)^2\chi_{11,\frac12}^{A_{2,3}}-\big(\chi_{11,\frac12}^{A_{2,3}}\big)^3=\chi^{A_{2,9}}_{11,\frac14}+\chi^{A_{2,9}}_{17,\frac94}+\chi^{A_{2,9}}_{71,\frac94}-\chi^{A_{2,9}}_{33,\frac54}.
$$
Both sides of this identity reduce to the constant $8$ if we set $\mathfrak{z}\in A_2\otimes\CC$ equal to $0$. 
Similarly, by considering the embedding 
\begin{align*}
&A_{1,16} \to A_{1,8}^2 \to A_{1,4}^4 \to A_{1,2}^8, \\  
&A_1(4) \hookrightarrow 2A_1(2) \hookrightarrow 4A_1 \hookrightarrow D_8,
\end{align*}
we obtain identities between affine characters of type $A_1$ at levels $2$, $4$, $8$ and $16$:
\begin{align*}
&\chi^{A_{1,16}}_{2,\frac19}+\chi^{A_{1,16}}_{14,\frac{28}{9}}-\chi^{A_{1,16}}_{8,\frac{10}{9}}\\
=\,&\big(\chi^{A_{1,8}}_{0,0}+\chi^{A_{1,8}}_{8,2}\big)\big(\chi^{A_{1,8}}_{2,\frac15}+\chi^{A_{1,8}}_{6,\frac65}\big)-\big(\chi^{A_{1,8}}_{4,\frac35}\big)^2\\
=\,&\big(\chi^{A_{1,4}}_{0,0}+\chi^{A_{1,4}}_{4,1}\big)^3\chi^{A_{1,4}}_{2,\frac13}-\big(\chi^{A_{1,4}}_{2,\frac13}\big)^4  \\
=\,&\big(\chi_{0,0}^{A_{1,2}}\big)^7\chi_{2,\frac12}^{A_{1,2}}+7\big(\chi_{0,0}^{A_{1,2}}\big)^5\big(\chi_{2,\frac12}^{A_{1,2}}\big)^3\!+7\big(\chi_{0,0}^{A_{1,2}}\big)^3\big(\chi_{2,\frac12}^{A_{1,2}}\big)^5\\
& \quad +\chi_{0,0}^{A_{1,2}}\big(\chi_{2,\frac12}^{A_{1,2}}\big)^7\!-\big(\chi_{1,\frac{3}{16}}^{A_{1,2}}\big)^8. 
\end{align*}
All four expressions reduce to the constant $3$ if we set $\mathfrak{z}\in A_1\otimes\CC$ equal to $0$. 

\begin{remark}
The $\ZZ$-lattice generated by those $\lambda=(n,\ell,m)\in U\oplus \mathbf{P}_\mathfrak{g}'$ for which $\Borch(\phi_\mathfrak{g})$ vanishes on $\lambda^\perp$ is exactly $U\oplus \mathbf{P}_\mathfrak{g}'$.
\end{remark}

\begin{remark}
The singular Borcherds product $\Borch(\phi_\mathfrak{g})$ for $\mathfrak{g}=A_{1,16}$ was first constructed by Gritsenko and Nikulin \cite[\S 1.4]{GN98} in 1998. They showed that this product equals a certain even theta constant when viewed as a Siegel paramodular form of genus $2$ and level $4$. They also studied the corresponding BKM superalgebra \cite[Section 5.1]{GN98}.      
\end{remark}

\begin{remark}\label{rem:exceptional-modular-invariants}
As we mentioned at the beginning of this chapter, the inputs $\phi_\mathfrak{g}$ are related to exceptional modular invariants. 
\begin{enumerate}
\item For $A_{1,16}$, there exist the $D_8$ modular invariant \cite[Equation (5.8)]{Moore:1988ss} by simple current extension and the $E_7$ exceptional modular invariant \cite[Equation (5.11)]{Moore:1988ss}. They satisfy the identity
$$
Z_{D_{10}}-Z_{E_7}=|\phi_{A_{1,16}}|^2
$$
\item For $A_{2,9}$, there exist the $\mathcal{D}_9$ modular invariant \cite[Equation (5.12)]{Moore:1988ss} by simple current extension and the $\mathcal{E}'_9$ exceptional modular invariant \cite[Equation (5.14)]{Moore:1988ss}. They are related by the identity 
$$
Z_{\mathcal{D}_9}-Z_{\mathcal{E}'_9}=|\phi_{A_{2,9}}|^2. 
$$
\item For $A_{1,8}^2$, there exist the $2D_6$ modular invariant  by simple current extension for each $A_{1,8}$ \cite[Equation (5.8)]{Moore:1988ss} and the exceptional modular invariant \cite[Equation (4.1a)]{Verstegen:1990my}. 
\item For $A_{1,4}^4$, there exist the $4D_4$ modular invariant  by simple current extension for each $A_{1,4}$ \cite[Equation (5.8)]{Moore:1988ss} and the exceptional modular invariant \cite[Equation (5.12a)]{Gannon:1994sp}. 
\item In cases (3)-(4), the difference of the simple current modular invariant and the exceptional modular invariant is given by $\sum_{j}|\phi_{\mathfrak{g},j}|^2$ for a suitable finite decomposition $\phi_\mathfrak{g}=\sum_j \phi_{\mathfrak{g},j}$. 
\end{enumerate}
\end{remark}

\begin{remark}
Theorem \ref{th:input-character} relates the Borcherds products of Theorem \ref{th:extra} to vertex algebras. However, we do not know whether they are super-denominators of BKM superalgebras that can be constructed as BRST cohomology related to vertex algebras. 
\end{remark}

\chapter[Symmetric cases: construction as additive lifts]{The 12 symmetric cases: the construction as additive lifts}\label{sec:additive lifts}
Some Borcherds products are known to have constructions as additive lifts. Since additive lifts are always invariant under the involution $(\omega,\mathfrak{z},\tau)\to (\tau,\mathfrak{z},\omega)$, we cannot hope to express anti-symmetric Borcherds products as additive lifts. In this chapter, we will give a uniform construction of the 12 symmetric Borcherds products as additive lifts. The inputs into the additive lift turn out to be the denominators of the associated affine Kac--Moody algebras $\hat{\mathfrak{g}}$. 

Recall that for each $\mathfrak{g}$ in Table \ref{tab:sym-cycle} or Table \ref{tab:extra} we have constructed a reflective Borcherds product $\Borch(\phi_\mathfrak{g})$ of singular weight on $2U\oplus L_\mathfrak{g}$ in the previous two chapters, where $L_\mathfrak{g}$ is the maximal even sublattice of $\mathbf{P}_\mathfrak{g}$. The product $\Borch(\phi_\mathfrak{g})$ has a Fourier--Jacobi expansion of the form
$$
\Borch(\phi_\mathfrak{g})(\omega,\mathfrak{z},\tau)=\vartheta_{\mathfrak{g}}(\tau,\mathfrak{z})\cdot e^{2\pi iC\omega} \cdot \exp\left( -\sum_{m=1}^\infty \Big(\phi_\mathfrak{g}|_{0} T_{-}^{(1)}(m)\Big)(\tau,\mathfrak{z})\cdot e^{2\pi im\omega} \right).
$$
When $\mathfrak{g}$ is not of type $A_{1,16}$, the weight of $\vartheta_\mathfrak{g}$ is integral, so we can define the additive lift of $\vartheta_\mathfrak{g}$ following Theorem \ref{th:additive}:
$$
\Grit(\vartheta_\mathfrak{g})(\omega,\mathfrak{z},\tau)=\sum_{0<m\in 1+\frac{1}{C}\ZZ} \Big( \vartheta_\mathfrak{g}\lvert_{\frac{1}{2}\rank(\mathfrak{g})}T_{-}^{(\frac{1}{C})}(m)\Big)(\tau,\mathfrak{z})\cdot e^{2\pi imC\omega}
$$
When $\mathfrak{g}=A_{1,16}$, we have $\vartheta_\mathfrak{g}(\tau,\mathfrak{z})=\vartheta(\tau,z)$. In this case, we use Gritsenko's ``trivial lifting'' of $\vartheta$, defined in \cite[Theorem 1.11]{GN98} to be
$$
\Grit(\vartheta)(\omega,z,\tau) = \sum_{m=1}^\infty  \left( \frac{-4}{m} \right) \vartheta(\tau,mz)\cdot e^{2\pi i m^2\omega/8}.
$$

\begin{theorem}\label{th:G=B}
Let $\mathfrak{g}$ be one of the semi-simple Lie algebras in Table \ref{tab:sym-cycle} or Table \ref{tab:extra}. Then we have
$$
\Borch(\phi_\mathfrak{g}) = \Grit(\vartheta_\mathfrak{g}).
$$
\end{theorem}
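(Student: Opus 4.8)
The plan is to prove the identity by showing that both sides are holomorphic modular forms of singular weight on the same orthogonal group with the same leading Fourier--Jacobi coefficient, and then invoking the rigidity of singular-weight forms. This replaces a direct comparison of the (highly nonlinear) exponential Fourier--Jacobi expansion of $\Borch(\phi_\mathfrak{g})$ with the (linear) additive expansion of $\Grit(\vartheta_\mathfrak{g})$ by a uniqueness argument.

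First I would check that $\Grit(\vartheta_\mathfrak{g})$ is well-defined and lands in the correct space. For the twelve symmetric $\mathfrak{g}$ one has $\dim\mathfrak{g}=24C$, so $\vartheta_\mathfrak{g}$ is a holomorphic Jacobi form of weight $\rank(\mathfrak{g})/2$ and character $\upsilon_\eta^{\dim\mathfrak{g}}$. Except for $\mathfrak{g}=A_{1,16}$ (where $\dim\mathfrak{g}=3$ is odd), $D=\dim\mathfrak{g}$ is an even divisor of $24$ and $Q=24/D=1/C$, so the hypotheses of Theorem \ref{th:additive} are met; the required integrality of the index is verified case by case from the data in Tables \ref{tab:sym-cycle} and \ref{tab:extra}. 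Theorem \ref{th:additive} then yields a holomorphic modular form of weight $\rank(\mathfrak{g})/2$ on $\widetilde{\Orth}^+(2U\oplus L_\mathfrak{g})$, after identifying the index lattice $L(Qt)$ with $L_\mathfrak{g}$, and this form is automatically invariant under the involution $(\omega,\mathfrak{z},\tau)\mapsto(\tau,\mathfrak{z},\omega)$. For $\mathfrak{g}=A_{1,16}$ I would instead use Gritsenko's trivial lifting of $\vartheta$ recalled just before the theorem. Since the singular weight for $2U\oplus L_\mathfrak{g}$ is $(\rank(L_\mathfrak{g})+2)/2-1=\rank(\mathfrak{g})/2$, both $\Grit(\vartheta_\mathfrak{g})$ and $\Borch(\phi_\mathfrak{g})$ have singular weight.

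Next I would match leading Fourier--Jacobi coefficients. By Theorem \ref{th:product} the Fourier--Jacobi expansion of $\Borch(\phi_\mathfrak{g})$ begins with $\vartheta_\mathfrak{g}(\tau,\mathfrak{z})\,e^{2\pi i C\omega}$, exactly as displayed before the statement. On the additive side the term of lowest order in $e^{2\pi i\omega}$ corresponds to $m=1$; since $T_-(1)$ is the identity (Proposition \ref{prop:Hecke}) and the Eisenstein term vanishes because $\vartheta_\mathfrak{g}$ has no $q^0$-part, this term is $\vartheta_\mathfrak{g}\cdot e^{2\pi i C\omega}$ as well (and likewise for the trivial lifting when $\mathfrak{g}=A_{1,16}$). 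Thus the two forms share weight, character and leading Fourier--Jacobi coefficient, so their difference $G:=\Borch(\phi_\mathfrak{g})-\Grit(\vartheta_\mathfrak{g})$ is a holomorphic singular-weight form on $\widetilde{\Orth}^+(2U\oplus L_\mathfrak{g})$ whose leading Fourier--Jacobi coefficient vanishes. To finish I would invoke the rigidity of singular weight: such a form is determined by its leading Fourier--Jacobi coefficient, so $G=0$. The mechanism is that singular weight forces every nonzero Fourier coefficient $c(\lambda)$ to be supported on isotropic $\lambda$; these coefficients are constant on $\widetilde{\Orth}^+$-orbits; and, because $2U\oplus L_\mathfrak{g}$ contains two orthogonal hyperbolic planes, the Eichler criterion (\cite[Proposition 3.3]{GHS09}) shows that the orbit of a primitive isotropic vector depends only on its class in the discriminant group. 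If $G\neq0$, choosing $\lambda$ of minimal $f$-component $m_1>0$ with $c_G(\lambda)\neq0$ and moving $\lambda$ inside its orbit to a representative of strictly smaller $f$-component contradicts minimality.

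The hard part will be exactly this final uniqueness step: making rigorous that within each isotropic discriminant class one can always lower the $f$-component below any prescribed positive value, so that the single leading Fourier--Jacobi coefficient genuinely controls the whole form. (The symmetry $\tau\leftrightarrow\omega$ available in the present case should help, since it forces the expansions in $q$ and in $e^{2\pi i\omega}$ to mirror each other.) A secondary, purely bookkeeping obstacle lies in the first paragraph: the precise matching of index lattices, weights and characters, and in particular the exceptional status of $A_{1,16}$, whose odd $\eta$-character places it outside the scope of Theorem \ref{th:additive} and forces the separate use of the trivial lifting.
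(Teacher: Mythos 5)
There is a genuine gap, and it sits exactly where you flag it: the ``rigidity of singular weight'' step. The mechanism you sketch is not valid. Fourier coefficients of a form at the $0$-dimensional cusp are permuted only by the \emph{stabilizer} of that cusp, whose linear part lies in $\Orth^+(U\oplus L_\mathfrak{g})$ --- a Lorentzian lattice containing a \emph{single} hyperbolic plane --- so the Eichler criterion for $2U\oplus L_\mathfrak{g}$ is not applicable to the orbits you need: elements of $\widetilde{\Orth}^+(2U\oplus L_\mathfrak{g})$ outside the parabolic stabilizer act on the tube domain by transformations that scramble the Fourier expansion rather than permuting its coefficients. Concretely, the available symmetries are the Eichler transvections $E(e,v)$, $E(f,v)$ with $v\in L_\mathfrak{g}$ (together with $\Orth(L_\mathfrak{g})$ and, in the symmetric case, the swap $\tau\leftrightarrow\omega$). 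A computation shows $E(f,v)$ sends an isotropic vector $(n,\ell,m)$ to $(n,\ell+nv,m')$ with $m'=(\ell+nv)^2/(2n)$, so the smallest reachable $\xi$-exponent is $\min\{\ell'^2:\ell'\in \ell+nL_\mathfrak{g}\}/(2n)$, which for large $n=m$ and $\ell$ near a deep hole of $L_\mathfrak{g}$ need not drop below $C+1$; and the isotropic rays $(0,0,m)$ along the cusp are fixed by \emph{every} such transvection, so no orbit argument can move them at all. Indeed, Chapter \ref{sec:Fourier-expansion} shows the Fourier expansion of these forms is supported on $W$-orbits of multiples of the Weyl vector, and the coefficients of distinct multiples $k\rho$ are independent under the symmetry group: they are pinned down there only because $F$ is a Borcherds product with known product expansion, not by symmetry plus the leading Fourier--Jacobi coefficient. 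So the claimed uniqueness lemma is not established, and a difference $G$ of two singular-weight forms with equal leading coefficient is not obviously zero; note also that $G$ does not inherit anti-invariance under the reflection group $W$, since anti-invariance of $\Grit(\vartheta_\mathfrak{g})$ under the reflections attached to the divisor of $\Borch(\phi_\mathfrak{g})$ is essentially equivalent to the divisor statement one is trying to avoid.

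For contrast, the paper's proof takes a different and more robust route: it shows the divisor of the additive lift \emph{contains} the divisor of the Borcherds product, so that $\Grit(\vartheta_\mathfrak{g})/\Borch(\phi_\mathfrak{g})$ is holomorphic of weight $0$ and hence constant by Koecher's principle, the constant being $1$ by the leading Fourier--Jacobi coefficients. The real content is the divisor containment, established case by case: the eight $C=1$ cases via \cite[Theorem 5.1]{DW21} (or \cite[Corollary 4.5]{WW21}), $A_{1,16}$ via \cite[Example 2.3]{GN98}, $A_{1,4}^4$ via \cite[Theorem 5.1]{Gri10}, $A_{2,9}$ via \cite[Theorem 13.5 (2)]{GSZ19}, and $A_{1,8}^2$ by a direct argument using $\delta_L=2$. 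Your proposal contains no substitute for this analytic input --- matching one Fourier--Jacobi coefficient is strictly weaker information --- so the gap is not a technicality but the missing core of the proof. (Your first two paragraphs, including the bookkeeping of weights and characters and the separate treatment of $A_{1,16}$ via the trivial lifting, are fine and agree with the paper's setup.)
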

\begin{proof}
The identity follows from Koecher's principle if we can show that the divisor of the additive lift contains the divisor of the Borcherds product. The $8$ cases with $C=1$ were proved in \cite[Theorem 5.1]{DW21}, and a much simpler proof can be found in \cite[Corollary 4.5]{WW21}. In particular, the identities for $\mathfrak{g}=A_{1,2}^8$ and $A_{2,3}^3$ were first established by Gritsenko in \cite[Theorems 3.2, 4.2]{Gri10}. 

The case $\mathfrak{g}=A_{1,16}$ was proved in \cite[Example 2.3]{GN98}, the case $\mathfrak{g}=A_{1,4}^4$ was proved in \cite[Theorem 5.1]{Gri10}, and the case $\mathfrak{g}=A_{2,9}$ was proved in \cite[Theorem 13.5 (2)]{GSZ19}.  The remaining case $\mathfrak{g}=A_{1,8}^2$ can be proved in a similar way; we will give a sketch. Here we have $L_\mathfrak{g}=2A_1(2)$ and the number $\delta_L$ defined in Remark \ref{rem:singular} is $2$. Therefore, all of the singular Fourier coefficients of $\phi_\mathfrak{g}$ already appear in its $q^0$-term. By an argument analogous to \cite[Remark 3.4, Lemma 3.5]{GW20}, $\Grit(\vartheta_{\mathfrak{g}})/\Borch(\phi_\mathfrak{g})$ is holomorphic, and therefore constant. 
\end{proof}

Compare the first two Fourier--Jacobi coefficients of the Borcherds products and additive lifts,
\begin{align*}
\Borch(\phi_\mathfrak{g}) &= \vartheta_\mathfrak{g}(\tau,\mathfrak{z})\cdot \zeta^C - \vartheta_\mathfrak{g}(\tau,\mathfrak{z})\phi_\mathfrak{g}(\tau,\mathfrak{z})\cdot\zeta^{C+1} + O(\zeta^{C+2}),\\
\Grit(\vartheta_\mathfrak{g}) &= \vartheta_\mathfrak{g}(\tau,\mathfrak{z})\cdot \zeta^C + \Big( \vartheta_\mathfrak{g}\lvert_{\frac{1}{2}\rank(\mathfrak{g})}T_{-}^{(\frac{1}{C})}(1/C+1)\Big)(\tau,\mathfrak{z})\cdot \zeta^{C+1} +  O(\zeta^{C+2}), 
\end{align*}
where for $\mathfrak{g}=A_{1,16}$ we define
$$
\Big(\vartheta|_{\frac12} T_{-}^{(8)}(9)\Big)(\tau,z) = -\vartheta(\tau,3z). 
$$
This implies the following uniform expression.
\begin{corollary}
Let $\mathfrak{g}$ be one of the semi-simple Lie algebras in Table \ref{tab:sym-cycle} or Table \ref{tab:extra}. Then the input can be expressed as
$$
\phi_\mathfrak{g}(\tau,\mathfrak{z}) = - \frac{\Big(\vartheta_\mathfrak{g}\lvert_{\frac{1}{2}\rank(\mathfrak{g})}T_{-}^{(\frac{1}{C})}(1/C+1)\Big)(\tau,\mathfrak{z})}{\vartheta_\mathfrak{g}(\tau,\mathfrak{z})}.
$$
\end{corollary}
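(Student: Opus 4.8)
The plan is to obtain this formula as a purely formal consequence of the identity $\Borch(\phi_\mathfrak{g})=\Grit(\vartheta_\mathfrak{g})$ established in Theorem~\ref{th:G=B}, by comparing the first two Fourier--Jacobi coefficients of the two sides with respect to $\omega$. All of the analytic substance already resides in Theorem~\ref{th:G=B}; what remains is coefficient bookkeeping, which I would arrange as follows.

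First I would expand the Borcherds product. Writing $\zeta=e^{2\pi i\omega}$ and invoking the product formula of Theorem~\ref{th:product},
$$
\Borch(\phi_\mathfrak{g})=\vartheta_\mathfrak{g}(\tau,\mathfrak{z})\,\zeta^{C}\exp\!\left(-\sum_{m\geq 1}\bigl(\phi_\mathfrak{g}|_{0}T_{-}^{(1)}(m)\bigr)(\tau,\mathfrak{z})\,\zeta^{m}\right).
$$
Since $T_{-}^{(1)}(1)$ is the identity operator (in the $m=1$ case of Proposition~\ref{prop:Hecke} one has $\sigma_1=I$, so $\upsilon_\eta^{D}(\sigma_1)=1$), expanding the exponential to first order shows that the coefficients of $\zeta^{C}$ and $\zeta^{C+1}$ are $\vartheta_\mathfrak{g}$ and $-\vartheta_\mathfrak{g}\,\phi_\mathfrak{g}$, respectively.

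Next I would expand the additive lift, treating first the cases where $\vartheta_\mathfrak{g}$ has integral weight (all $\mathfrak{g}$ except $A_{1,16}$). From the definition in Theorem~\ref{th:additive} the summands are indexed by $m\in 1+\tfrac1C\ZZ$ with $m>0$, each contributing the power $\zeta^{mC}$; since $1/C$ is a positive integer, the expansion runs through integral powers $\zeta^{C},\zeta^{C+1},\dots$, and the coefficient of each is supplied by a single index. The leading index $m=1$ again gives the identity Hecke operator and hence $\vartheta_\mathfrak{g}\,\zeta^{C}$, while the coefficient of $\zeta^{C+1}$ comes uniquely from $m=1+\tfrac1C$ (a positive integer coprime to $Q=1/C$, so the Hecke operator is defined) and equals $\vartheta_\mathfrak{g}|_{\frac12\rank(\mathfrak{g})}T_{-}^{(1/C)}(1/C+1)$. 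Equating the two $\zeta^{C+1}$-coefficients via Theorem~\ref{th:G=B} gives $-\vartheta_\mathfrak{g}\,\phi_\mathfrak{g}=\vartheta_\mathfrak{g}|_{\frac12\rank(\mathfrak{g})}T_{-}^{(1/C)}(1/C+1)$, and dividing by the nonzero theta block $\vartheta_\mathfrak{g}$ yields the asserted identity.

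Finally I would dispose of $\mathfrak{g}=A_{1,16}$ separately, where $C=\tfrac18$ and, $\vartheta_\mathfrak{g}$ having half-integral weight, $\Grit$ is Gritsenko's trivial lifting $\sum_{m\geq 1}\left(\tfrac{-4}{m}\right)\vartheta(\tau,mz)\,e^{2\pi i m^2\omega/8}$. Here the power $\zeta^{C+1}=\zeta^{9/8}$ is picked out by the unique index $m=3$, with coefficient $\left(\tfrac{-4}{3}\right)\vartheta(\tau,3z)=-\vartheta(\tau,3z)$; this matches the convention $\bigl(\vartheta|_{\frac12}T_{-}^{(8)}(9)\bigr)(\tau,z)=-\vartheta(\tau,3z)$ fixed just above, so the computation of the previous paragraph applies verbatim. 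I expect no genuine obstacle: the only points requiring care are that each power $\zeta^{C+1}$ receives exactly one contribution and that the $m=1$ Hecke operator acts as the identity, both immediate from Proposition~\ref{prop:Hecke}.
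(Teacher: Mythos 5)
Your proposal is correct and is essentially the paper's own argument: Theorem \ref{th:G=B} plus a comparison of the $\zeta^{C}$- and $\zeta^{C+1}$-Fourier--Jacobi coefficients of $\Borch(\phi_\mathfrak{g})$ and $\Grit(\vartheta_\mathfrak{g})$, with the convention $\bigl(\vartheta|_{\frac12}T_{-}^{(8)}(9)\bigr)(\tau,z)=-\vartheta(\tau,3z)$ handling $A_{1,16}$ exactly as in the text. The only detail you glossed over is the Eisenstein term $f(0,0)G_k(\tau)$ in Theorem \ref{th:additive}, which vanishes here since $\vartheta_\mathfrak{g}$ has $f(0,0)=0$ (it is a theta block of singular weight, with nontrivial character unless $C=1$), and in any case it contributes only to the $\zeta^{0}$-coefficient and so does not affect the comparison.
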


\begin{remark}
This corollary shows that in the symmetric cases the input, which was originally constructed as a $\ZZ$-linear combination of characters of some vertex algebra, can be read off the denominator of $\hat{\mathfrak{g}}$. There is no expression of this kind in the anti-symmetric cases. 
\end{remark}

\begin{remark}
Recall from Corollary \ref{cor:solutions} that Equation \eqref{symC} has $17$ solutions. The $12$ solutions of Equation \eqref{symC} that correspond to symmetric cases with hyperbolizations are characterized by $1/C$ being integral. This is related to the expression of the input $\phi_\mathfrak{g}$ as a $\ZZ$-linear combination of characters of the affine VOA generated by $\hat{\mathfrak{g}}$. This expression contains a unique term with negative coefficients, namely
$$
s\cdot \bigoplus_{j=1}^s \chi_{k_j\rho_j/h_j^\vee}^{\mathfrak{g}_{j,k_j}}.
$$
The condition that $1/C$ is integral guarantees that the dominant integral weight $k_j\rho_j/h_j^\vee$ is well defined. The equality $\dim \mathfrak{g} / 24 = C = h_j^\vee/k_j$ implies that the first nonzero Fourier coefficient in the above character is $q^1$.  We do not have a conceptual explanation for this. A similar phenomenon for exceptional modular invariants has been observed by Schellekens and Yankielowicz \cite[p.100]{Schellekens:1989uf}. 
\end{remark}

\chapter[Fourier expansions of singular reflective Borcherds products]{Fourier expansions of singular-weight reflective Borcherds products}\label{sec:Fourier-expansion}

In this chapter, we will work out the Fourier expansions of singular-weight reflective Borcherds products at the standard $0$-dimensional cusp as in \cite[Example 13.7]{Bor98}, \cite[Section 9]{Sch04} and \cite[Section 5]{DHS15}. 

Let $F$ be a reflective Borcherds product of singular weight on $U_1\oplus U\oplus L$ and set $K=U\oplus L$. Assume that the input as a Jacobi form,
$$
\phi(\tau,\mathfrak{z}) = \sum_{n\in\ZZ} \sum_{\ell \in L'} f(n,\ell) q^n \zeta^\ell,
$$
has only non-negative singular Fourier coefficients. We will consider the Fourier expansion of $F$ at the $0$-dimensional cusp related to $U_1$. Let $W$ be the Weyl group, i.e. the subgroup of $\Orth(U\oplus L)$ generated by reflections associated with vectors $(n,\ell,m)\in U\oplus L'$ for which $2n<\ell^2$ and $f(nm,\ell)>0$. We fix a Weyl chamber $\mathcal{C}$ of $F$ and denote its closure by $\overline{\mathcal{C}}$. Let $\rho$ be the corresponding Weyl vector. 

We will view $F$ as a function on the associated tube domain. This is anti-invariant under the Weyl group $W$, i.e.
$$
F(\sigma(Z)) = \det(\sigma) F(Z), \quad \text{for all $\sigma\in W$}. 
$$
It is known that $W$ acts simply transitively on the Weyl chambers of the negative cone of $K\otimes\RR$. Therefore, the Fourier expansion of $F$ has the form
$$
F(Z)=\sum_{\sigma\in W} \det(\sigma) \sum_{\substack{\lambda \in K', \; \lambda + \rho \in \overline{\mathcal{C}} \\ (\lambda, \mathcal{C})<0 }} c(\lambda) \mathbf{e}\Big(\sigma(\lambda+\rho), Z\Big),
$$
where $\mathbf{e}(t)=e^{2\pi it}$ as before. Since $F$ has singular weight, $c(\lambda)=0$ whenever $(\lambda+\rho, \lambda+\rho)\neq 0$.
Note that $(\lambda, \mathcal{C})<0$ and $\rho \in \overline{\mathcal{C}}$ together imply that $(\lambda, \rho) \leq 0$. But on the other hand, if $(\lambda+\rho, \lambda+\rho)= 0$, then we have
$$
2(\lambda, \rho) = (\lambda+\rho, \lambda+\rho) - (\lambda, \lambda) - (\rho,\rho) = -\lambda^2
$$
and 
$$
(\lambda, \rho) = (\lambda, \lambda+\rho) - (\lambda, \lambda) \leq -\lambda^2,
$$
since $(\lambda, \lambda+\rho)\leq 0$. Since $-\lambda^2/2\leq -\lambda^2$, $\lambda^2\leq 0$ and therefore $(\lambda, \rho)\geq 0$, so in this case $(\lambda,\rho)=0$.

Write $\lambda$ and $\rho$ in coordinates as $(x_1,x_2,...,x_l,x_0) \in \RR^{l,1}$ and $(y_1,y_2, ...,y_r,y_0)\in \RR^{l,1}$ respectively, such that 
\begin{align*}
x_1^2+x_2^2+...+x_r^2-x_0^2&=(\lambda, \lambda)\leq 0,\\
y_1^2+y_2^2+...+y_r^2-y_0^2&=(\rho, \rho)=0,\\
x_1y_1+x_2y_2+...+x_ry_r-x_0y_0&=(\lambda,\rho)=0.
\end{align*}
By the Cauchy--Schwarz inequality, we have 
$$
x_0^2y_0^2=\Big(\sum_{j=1}^r x_jy_j\Big)^2 \leq \sum_{j=1}^r x_j^2 \sum_{j=1}^r y_j^2 \leq x_0^2y_0^2,
$$
from which it follows that $\lambda=c\rho \in K'$ for some $c\in \QQ$. Note that $(v, \mathcal{C})<0$ for $v\in \mathcal{C}$.   Since $(\lambda, \mathcal{C})<0$ and $\rho\in \overline{\mathcal{C}}$, we find $c\geq 0$.

Write $\lambda=c\rho=\sum \lambda_j$ with $\lambda_j\in K'$ and $(\lambda_j, \mathcal{C})<0$, and suppose that these $\lambda_j$ contribute to $c(\lambda)$ in the product expansion of $F$ at $U_1$. From $(\lambda_j, \rho)\leq 0$ and $\sum (\lambda_j, \rho)=(c\rho, \rho)=0$ we obtain $(\lambda_j, \rho)=0$. 

If $(\lambda_j,\lambda_j)\leq 0$, then a similar argument as before shows that $\lambda_j$ is a positive rational multiple of $\rho$. Otherwise, suppose $(\lambda_j, \lambda_j)>0$ for some $j$. Then $F$ vanishes on $\lambda_j^\perp$ and the reflection $\sigma_{\lambda_j}$ lies in the Weyl group $W$.  Since $\sigma_{\lambda_j}$ maps $\mathcal{C}$ to a new Weyl chamber, it maps $\rho$ to the corresponding new Weyl vector, i.e. $\sigma_{\lambda_j}(\rho)\neq \rho$. It follows that $(\lambda_j, \rho)\neq 0$, leading to a contradiction. Therefore, every $\lambda_j$ is a positive rational multiple of $\rho$ and $\lambda_j\in K'$. If we choose a positive $c\in \QQ$ such that $c\rho$ is a primitive vector in $K'$, then every $\lambda_j$ is a positive integral multiple of $c\rho$. 

This gives us the following expression:
\begin{equation}\label{eq:Singular-Fourier}
\begin{split}
F(Z)&=\sum_{\sigma\in W} \det(\sigma) \; \mathbf{e}\big((\sigma(\rho), Z)\big)\prod_{n=1}^\infty \Big(1-\mathbf{e}\big((\sigma(nc\rho), Z)\big)\Big)^{f(n^2c^2\rho^2/2,\, nc\rho)}\\
&=\sum_{\sigma\in W} \det(\sigma)\, \sigma\left( \mathbf{e}\big((\rho, Z)\big)\prod_{n=1}^\infty \Big(1-\mathbf{e}\big(nc(\rho, Z)\big)\Big)^{f(n^2c^2\rho^2/2,\, nc\rho)} \right)\\
&=: \sum_{\sigma\in W} \det(\sigma)\, \sigma\Big( \Delta_F\big( (\rho, Z) \big) \Big).
\end{split}   
\end{equation}
The value of $f(n^2c^2\rho^2/2,\, nc\rho)$ depends only on the coset of $nc\rho$ in $K'/K$, and 
$$
f(*, nc\rho)=f(*, -nc\rho)=f(*, mc\rho)
$$
if $(n+m)c\rho \in K$. Note that the function $\Delta_F$ above is often an eta quotient. 

We will now work out the expressions of type \eqref{eq:Singular-Fourier} for the singular-weight reflective Borcherds products constructed in the previous chapters. Let $\mathfrak{g}=\oplus_j \mathfrak{g}_{j,k_j}$ be one of the $81$ semi-simple Lie algebras with a hyperbolization. Let $\Psi_\mathfrak{g}$ be the corresponding singular-weight reflective Borcherds product on $2U\oplus L_\mathfrak{g}$. 
Using the notation of Theorem \ref{th:product} and Theorem \ref{th:root-system}, the Weyl vector of $\Psi_\mathfrak{g}$ is a norm zero vector given by
$$
(-C-1,\; \rho_\mathfrak{g},\; -C)
$$
in the anti-symmetric cases, and by
$$
(-C,\; \rho_\mathfrak{g},\; -C)
$$
in the symmetric cases, where $\rho_\mathfrak{g}$ is the normalized Weyl vector of $\mathfrak{g}$ defined by
$$
\rho_\mathfrak{g}=\sum_j \rho_j / k_j,
$$
where $\rho_j$ is the Weyl vector of the simple Lie algebra $\mathfrak{g}_j$. The function $\Delta_F$ that appears in \eqref{eq:Singular-Fourier} for $\Psi_\mathfrak{g}$ can be described as follows. 
\begin{enumerate}
\item  Let $\mathfrak{g}$ be of anti-symmetric type. Then $\mathfrak{g}$ corresponds to a $\mathrm{Co}_0$-conjugacy class denoted $[g]$. Let $\prod_{b_k\neq 0}k^{b_k}$ be the cycle shape of $g$ and define the associated eta quotient by
$$
\eta_g(\tau) := \prod_{k\geq 1} \eta(k\tau)^{b_k}, \quad \eta(\tau)=q^{\frac{1}{24}}\prod_{n=1}^\infty(1-q^n). 
$$
Then
$$
\Delta_{\Psi_\mathfrak{g}}\big( (\rho, Z) \big) = \eta_g\big( (\rho, Z) \big). 
$$
Here the number $c$ in \eqref{eq:Singular-Fourier} is $1$ if $g$ has order equal to its level, and it is $2$ otherwise. 

\item Let $\mathfrak{g}$ be of symmetric type with $C=1$. Then $\mathfrak{g}$ also corresponds to a $\mathrm{Co}_0$-conjugacy class denoted $[g]$. The number $c$ in \eqref{eq:Singular-Fourier} is always $1$, and
$$
\Delta_{\Psi_\mathfrak{g}}\big( (\rho, Z) \big) = \eta_g\big( (\rho, Z) \big). 
$$

\item Let $\mathfrak{g}$ be of symmetric type with $C<1$. In this case, $\mathfrak{g}$ does not correspond to any $\mathrm{Co}_0$-conjugacy class. However, we obtain a similar expression for $\Delta_F$ as an eta quotient associated to a (fake) cycle shape. More precisely,
$$
\Delta_{\Psi_\mathfrak{g}}\big( (\rho, Z) \big) = \eta_g\big( (\rho, Z) \big),
$$
where the (fake) cycle shapes of $g$ are given in Table \ref{tab:fake-cycle-shape}. The number $c$ in \eqref{eq:Singular-Fourier} equals $1/C$, which also appears in Table \ref{tab:fake-cycle-shape}. 

\begin{table}[ht]
\caption{The (fake) cycle shapes of the 4 exceptional semi-simple Lie algebras}
\label{tab:fake-cycle-shape}
\renewcommand\arraystretch{1.5}
\[
\begin{array}{|c|c|c|c|c|}
\hline 
\mathfrak{g} & A_{1,16} & A_{1,8}^2 & A_{1,4}^4 & A_{2,9}  \\ 
\hline 
c & 8 & 4 & 2 & 3 \\
\hline 
g & 8^{-1}16^2 & 4^{-2}8^4 & 2^{-4}4^8 & 3^{-1} 9^3 \\ 
\hline
\end{array} 
\]
\end{table}
\end{enumerate}

\chapter{The classification of anti-symmetric root systems}\label{sec:exclude-anti-root}
In this chapter, we prove Theorem \ref{th:non-existence} in the anti-symmetric case.

\begin{theorem}\label{th:non-existence-anti-symmetric}
If $2U\oplus L$ has an anti-symmetric reflective Borcherds product of singular weight whose Jacobi form input has non-negative $q^0$-term, then the associated semi-simple Lie algebra $\mathfrak{g}$ defined in Theorem \ref{th:root-system} lies in Schellekens’ list of $69$ semi-simple $V_1$ structures of holomorphic vertex operator algebras of central charge $24$. 
\end{theorem}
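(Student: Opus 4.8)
The plan is to knock out, one solution at a time, the $152$ candidates of Equation \eqref{antiC} that do not appear on Schellekens' list, working entirely on the automorphic side. By Theorem \ref{th:root-system}, any anti-symmetric singular-weight reflective Borcherds product on $2U\oplus L$ forces the associated $\mathfrak{g}$ to be one of the $221$ solutions of \eqref{antiC} recorded in Table \ref{table:anti1} (Proposition \ref{cor:solutions}), and simultaneously constrains the lattice by $\bQ<L<\bP$. It therefore suffices to prove that for each of the $152$ extraneous $\mathfrak{g}$, no even lattice $L$ in the range $\bQ<L<\bP$ can support such a product. Since we do not have a holomorphic conformal field theory available, the elimination must be carried out by the arithmetic of lattices and vector-valued modular forms rather than by Schellekens' original representation-theoretic bounds.

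The argument rests on two complementary lemmas. First I would prove a \emph{lifting lemma}: if $K\supset L$ is an even overlattice, so that $H:=K/L$ is an isotropic subgroup of the discriminant form $D_L$ with $H^\perp=K'/L$ and $H^\perp/H\cong D_K$, then a singular-weight reflective Borcherds product on $2U\oplus L$ produces a holomorphic reflective Borcherds product on $2U\oplus K$. Concretely, the Jacobi-form input $\phi\in J_{0,L}^!$, viewed as a weakly holomorphic modular form for $\rho_L$, restricts to the components indexed by $H^\perp$ and descends along $H$ to an input for $\rho_K$. One then checks that this projection preserves non-negativity of the $q^0$-coefficients and sends singular Fourier coefficients corresponding to reflective divisors on $2U\oplus L$ to singular coefficients corresponding to reflective divisors on $2U\oplus K$, so that by Theorem \ref{th:product} the descended input still lifts to a holomorphic product whose divisor is a sum of reflective rational quadratic divisors. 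I expect the only delicate point to be tracking how the reflectivity conditions $(\ell,\ell)=2/d$, $d\ell\in L$ transform under passage to $K'$; this is fiddly but essentially formal.

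Second, for each extraneous $\mathfrak{g}$ I would exhibit a particular even overlattice $K$ for which $2U\oplus K$ carries no holomorphic reflective product of any weight, contradicting the lifting lemma. To do this uniformly I would use the notion of a \emph{forbidden component}: by the quasi-pullback, if $2U\oplus K$ carried such a product and $K$ split off an orthogonal summand $L_0$, then $2U\oplus L_0$ would carry a holomorphic reflective product as well; hence it suffices to identify, for each extraneous case, a summand $L_0$ (of a suitably chosen $K$) that admits no holomorphic reflective product at all. Whether a given $L_0$ is forbidden is decided by Borcherds' obstruction criterion \cite{Bor99}: one constructs a basis of vector-valued cusp forms for $\rho_{L_0(-1)}$ following \cite{Wil2018}, encodes the holomorphy and reflectivity requirements as finitely many linear equalities and inequalities, and verifies with Normaliz \cite{Normaliz} that the resulting rational polyhedral cone contains no integral point. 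Combining the two lemmas yields a contradiction in every extraneous case, leaving exactly Schellekens' $69$ Lie algebras.

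The main obstacle here is organizational rather than conceptual. One must choose, across all $152$ extraneous Lie algebras, overlattices $K$ whose genera reduce to a short catalogue of forbidden lattices — ideally a manageable list of rescaled root lattices such as copies of $D_n$, $A_n$ and their duals at the relevant scales — and then confirm case by case that each extraneous $\mathfrak{g}$ really does acquire a forbidden summand after passing to the chosen $K$. Packaging the many cases into a small number of Normaliz computations, and checking in each instance that the lifting lemma applies with the correct reflective type preserved, is where the genuine labour lies; I do not anticipate a new conceptual difficulty beyond the careful verification of these two steps.
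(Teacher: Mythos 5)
Your proposal is correct and follows essentially the same route as the paper: the lifting step is exactly the overlattice lemma the paper invokes from \cite[Lemma 3.3]{Wan23a} (whose symmetric analogue, with the non-vanishing of the arrow-operator image as the one delicate point you flag, is Lemma \ref{lem:sym-overlattice}), and the elimination via forbidden components, quasi-pullback to orthogonal summands, Borcherds' obstruction criterion \cite{Bor99} with cusp-form bases from \cite{Wil2018}, and Normaliz \cite{Normaliz} is precisely Argument \ref{Argument} carried out case by case in Chapter \ref{sec:exclude-anti-root}. The only detail you do not anticipate is that a few forbidden components (e.g.\ $4A_2(3)$, $L_4\oplus A_4'(5)$, $A_2\oplus D_8(3)$) have discriminants too large for the direct computation and require ad hoc arguments via isometries and the bounds of \cite{Dit19}.
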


\section{The main argument}
To explain the proof of Theorem \ref{th:non-existence-anti-symmetric}, we begin by introducing a useful concept.

\begin{definition}
An even positive definite lattice $K$ is called a \emph{forbidden component} if there is no reflective Borcherds product of any weight on $2U\oplus K$.  By considering its symmetrization, we find that $K$ is forbidden if and only if there is no reflective Borcherds product of any weight that is modular for the full orthogonal group $\Orth^+(2U\oplus K)$. 
\end{definition}

Let $F=\Borch(f)$ be the potential Borcherds product in Theorem \ref{th:non-existence-anti-symmetric}. By \cite[Lemma 3.3]{Wan23a}, for any even overlattice $L_1$ of $L$, there exists an anti-symmetric reflective Borcherds product of some weight on $2U\oplus L_1$. Moreover, if there is a decomposition
$$
2U\oplus L_1 \cong 2U\oplus L_2\oplus L_3,
$$
then the quasi-pullback will yield an anti-symmetric reflective Borcherds product on $2U\oplus L_2$. 

\vspace{3mm}

To rule out the existence of a singular, reflective, anti-symmetric Borcherds product on $2U \oplus L$, it is therefore sufficient to find an overlattice $L_1$ of $L$ that admits a direct sum decomposition $L_1 \cong K \oplus L_0$ with a forbidden component $K$. 

\begin{Argument}\label{Argument}
To exclude the $152$ extraneous root systems, we use the bounds 
$$
\bQ < L < \bP
$$
and the property that $L(C)$ is integral to determine an even overlattice $K=K_1\oplus K_2$ of $L$ that contains a forbidden component $K_1$.
\begin{itemize}
\item[(a)] In many cases, the upper bound $\bP$ is integral, and we can take $K$ to be the unique maximal even sublattice $\bP^{\mathrm{ev}}$ of $\bP$, or take $K$ to be a certain even overlattice of $\bP^{\mathrm{ev}}$.
\item[(b)] When $\bP$ is not integral, $L$ is contained in a certain maximal even overlattice $Q$ of the lower bound $\bQ$, and we usually find $K$ as a suitable sublattice of $Q$. 
\end{itemize}
\end{Argument}

We will use the following forbidden components:

\vspace{3mm}

\textbf{Rank 2:}
\begin{enumerate}
\item $2A_1(16)$;
\inlineitem $2A_1(18)$;
\inlineitem $2A_1(20)$;
\inlineitem $A_2(16)$;
\inlineitem $A_2(24)$;
\end{enumerate}

\textbf{Rank 3:}
\begin{enumerate}[resume]
\item $A_1 \oplus 2A_1(8)$;
\inlineitem $2A_1(2) \oplus A_1(5)$;
\inlineitem $A_1(3) \oplus 2A_1(9)$;
\item $A_1(3) \oplus A_2(5)$;
\inlineitem $A_1 \oplus A_2(6)$;
\inlineitem $A_1 \oplus A_2(8)$;
\item $A_1(2) \oplus A_2(8)$;
\inlineitem $A_1 \oplus A_2(12)$;
\item $A_1(2) \oplus A_2(12)$;
\inlineitem $A_3'(24)$;
\end{enumerate}

\textbf{Rank 4:}
\begin{enumerate}[resume]
\item $2A_1(6) \oplus A_2$;
\inlineitem $A_1(2) \oplus A_3(3)$;
\inlineitem $A_1 \oplus A_3'(16)$;
\item $A_1(7) \oplus L_1$, where $L_1$ is the rank three lattice with Gram matrix $\begin{psmallmatrix} 4 & 4 & 6 \\ 4 & 14 & 0 \\ 6 & 0 & 14 \end{psmallmatrix}$. The lattice $L_1$ is a maximal even overlattice of $3A_1(7)$ with discriminant $56$.
\end{enumerate}

\textbf{Rank 5:}
\begin{enumerate}[resume]
\item $3A_1 \oplus 2A_1(5)$;
\inlineitem $A_1 \oplus 2A_1(2) \oplus A_2(4)$;
\inlineitem $A_1(3) \oplus A_2 \oplus A_2(4)$;
\item $A_1(3) \oplus A_4$;
\inlineitem $A_1(4) \oplus A_4(2)$;
\end{enumerate}

\textbf{Rank 6:}
\begin{enumerate}[resume]
\item $2A_1(3) \oplus A_2 \oplus A_2(3)$;
\inlineitem $A_2 \oplus A_2(2) \oplus A_2(4)$;
\inlineitem $A_1 \oplus A_2 \oplus A_3'(8)$;
\item $A_3 \oplus A_3'(16)$;
\inlineitem $2A_1(2) \oplus A_4$;
\inlineitem $2A_1 \oplus A_4(2)$;
\item $A_2(6) \oplus D_4$;
\item $A_2 \oplus L_2$, where $L_2$ is a maximal even overlattice of $A_4(3)$ (such that $\mathrm{discr}(L_2)=45$);
\item $2L_3 \oplus L_3(2)$, where $L_3$ is the rank two lattice with Gram matrix $\begin{psmallmatrix} 2 & 1 \\ 1 & 4 \end{psmallmatrix}$;
\end{enumerate}

\textbf{Rank 7:}
\begin{enumerate}[resume]
\item $2A_2 \oplus A_3'(8)$;
\end{enumerate}

\textbf{Rank 8:}
\begin{enumerate}[resume]
\item $4A_2(3)$;
\inlineitem $2A_2 \oplus A_4$;
\inlineitem $A_1 \oplus A_3'(8) \oplus D_4$;
\item $3A_1 \oplus A_1(4) \oplus D_4$;
\inlineitem $2A_2(2) \oplus D_4$;
\inlineitem $A_4 \oplus D_4$;
\item $2D_4(3)$;
\inlineitem $A_1 \oplus A_2(2) \oplus D_5$;
\inlineitem $A_2(4) \oplus D_6$;
\item $A_4'(5) \oplus L_4$, where the lattice $L_4$ is the maximal even sublattice of $\mathbb{Z} \oplus \mathbb{Z}^3(5)$ of genus $2_{\mathrm{II}}^2 5^3$;
\item $D_4 \oplus L_5$, where $L_5$ has rank four and Gram matrix $\begin{psmallmatrix} 2 & 0 & 1 & -1 \\ 0 & 2 & -1 & -1 \\ 1 & -1 & 6 & 0 \\ -1 & -1 & 0 & 6 \end{psmallmatrix}$, and is of genus $2_{\mathrm{II}}^{-2}5^2$;
\end{enumerate}

\textbf{Rank 9:}
\begin{enumerate}[resume]
\item $3A_1 \oplus 3A_2$;
\inlineitem $2A_1 \oplus A_1(3) \oplus A_2 \oplus D_4$;
\inlineitem $A_1 \oplus 3A_1(2) \oplus D_5$;
\end{enumerate}

\textbf{Rank 10:}
\begin{enumerate}[resume]
\item $2A_1 \oplus 2A_2 \oplus D_4$;
\inlineitem $2A_2 \oplus A_2(2) \oplus D_4$;
\inlineitem $2A_2(2) \oplus D_6$;
\item $A_4 \oplus D_6$;
\inlineitem $3A_1(2) \oplus D_7$;
\inlineitem $A_3 \oplus D_7$;
\inlineitem $A_2 \oplus D_8(3)$;
\item $3A_2 \oplus L_7$, where $L_7$ is the maximal even sublattice of $\mathbb{Z}^2 \oplus \mathbb{Z}^2(3)$ in the genus $2_{\mathrm{II}}^2 3^2$;
\end{enumerate}

\textbf{Rank 11:}
\begin{enumerate}[resume]
\item $3A_2 \oplus D_5$;
\inlineitem $A_1 \oplus A_2(2) \oplus E_8$;
\item $D_4 \oplus L_6$, where $L_6$ has rank seven and Gram matrix $\begin{psmallmatrix} 2 & 1 & 1 & 1 & 1 & 1 & 1 \\ 1 & 2 & 1 & 1 & 0 & 0 & 0 \\ 1 & 1 & 2 & 1 & 0 & 1 & 1 \\ 1 & 1 & 1 & 2 & 0 & 0 & 0 \\ 1 & 0 & 0 & 0 & 2 & 1 & 1 \\ 1 & 0 & 1 & 0 & 1 & 4 & 0 \\ 1 & 0 & 1 & 0 & 1 & 0 & 4\end{psmallmatrix}$, and is of genus $2_7^1 4_{\mathrm{II}}^2$;
\end{enumerate}

\textbf{Rank 12:}
\begin{enumerate}[resume]
\item $D_6 \oplus E_6$;
\inlineitem $2A_2 \oplus D_8$;
\inlineitem $A_4 \oplus D_8$;
\item $A_1 \oplus A_3 \oplus E_8$;
\inlineitem $3A_1 \oplus D_9$;
\item The maximum even sublattice of $\mathbb{Z}^6 \oplus 2A_3'(4)$ in the genus $2_{\mathrm{II}}^{-2} 4_{\mathrm{II}}^4$;
\end{enumerate}

\textbf{Rank 13:}
\begin{enumerate}[resume]
\item $3A_1 \oplus A_2 \oplus E_8$;
\end{enumerate}

\textbf{Rank 14:}
\begin{enumerate}[resume]
\item $2A_2 \oplus A_2(2) \oplus E_8$;
\inlineitem $A_2(2) \oplus D_4 \oplus E_8$;
\item $A_1 \oplus D_5 \oplus E_8$;
\inlineitem $E_6 \oplus E_8$;
\item $E_8 \oplus L$, where $L$ has rank six and Gram matrix $\begin{psmallmatrix} 2 & -1 & 0 & 0 & 0 & 0 \\ -1 & 2 & -1 & -1 & -1 & -1 \\ 0 & -1 & 2 & 0 & 0 & 0 \\ 0 & -1 & 0 & 4 & 2 & 2 \\ 0 & -1 & 0 & 2 & 4 & 0 \\ 0 & -1 & 0 & 2 & 0 & 4 \end{psmallmatrix}$, and is of genus $2_6^2 
4_{\mathrm{II}}^2$;
\item The maximal even sublattice of $\mathbb{Z}^8 \oplus 2A_3'(4)$;
\end{enumerate}

\textbf{Rank 16:}
\begin{enumerate}[resume]
\item $A_2(2) \oplus D_6 \oplus E_8$;
\inlineitem $A_1 \oplus D_7 \oplus E_8$;
\end{enumerate}

\textbf{Rank 18:}
\begin{enumerate}[resume]
\item $A_2 \oplus 2D_4 \oplus E_8$;
\inlineitem $2A_1 \oplus D_8 \oplus E_8$;
\item $A_1 \oplus A_1(2) \oplus 2E_8$;
\inlineitem $2A_1(2) \oplus 2E_8$.
\end{enumerate}

\vspace{3mm}

The forbidden components above can be verified using the algorithm described at the end of Section \ref{subsec:Borcherds} except for the following three cases, which would have been prohibitively time-consuming to check directly due to the large lattice discriminant.

\begin{lemma}
The lattice $4A_2(3)$ is a forbidden component.
\end{lemma}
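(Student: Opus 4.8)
The plan is to rule out the existence of any non-constant reflective Borcherds product on $2U\oplus 4A_2(3)$ by Borcherds's obstruction method, organized so as never to work directly with the full discriminant form $D_M$ of order $27^4$. First I would invoke the symmetrization reformulation built into the definition of a forbidden component: it suffices to show that there is no reflective product of any weight that is modular for the full orthogonal group $\Orth^+(2U\oplus 4A_2(3))$. Such a product is the lift $\Borch(\phi)$ of a weakly holomorphic Jacobi form $\phi\in J^!_{0,4A_2(3)}$ whose singular (principal) part is $\Orth(D_M)$-invariant, has non-negative integral Fourier coefficients, and is supported only on reflective vectors. By the obstruction criterion recalled at the end of Section~\ref{subsec:Borcherds}, such a $\phi$ exists if and only if its singular part pairs to zero against every cusp form of weight $6$ for the dual Weil representation $\rho_{(4A_2(3))(-1)}$.

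I would then carry out three steps. (i) Enumerate the reflective vectors, i.e. the primitive $\lambda\in (4A_2(3))'$ (with the accompanying $U$-data) for which $\lambda^2=2/d$ and $d\lambda\in 4A_2(3)$; since $4A_2(3)\cong (4A_2)(3)$ is an Eisenstein ($3$-local) lattice, these fall into only finitely many $\Orth(D_M)$-orbits of small norm, which index the finitely many free parameters of an admissible singular part. (ii) Instead of the full space $S_6(\rho_{(4A_2(3))(-1)})$, restrict the obstruction pairing to its $\Orth(D_M)$-invariant subspace; this invariant subspace is isomorphic to a space of scalar cusp forms for $\Gamma_0(N)$ with a quadratic character, where $N$ is a power of $3$ (the level of $4A_2(3)$), and is therefore small and directly computable. (iii) Evaluate the resulting finite linear system on the reflective parameters and check that, together with the non-negativity and integrality constraints, it admits only the trivial solution, so that the singular part of $\phi$ vanishes. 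A vanishing singular part forces $\Borch(\phi)$ to be constant, contradicting reflectivity, and hence $4A_2(3)$ is a forbidden component.

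The main obstacle is precisely the size of $D_M$: a naive computation in $\mathbb{C}[D_M]$ of dimension $27^4$ is infeasible, which is exactly why the direct Normaliz certification used for the other forbidden components breaks down here. The two features that rescue the argument are the reduction to the $\Orth(D_M)$-invariant sector (legitimized by symmetrization) and the purely $3$-local Eisenstein structure of $A_2(3)$, which keeps the level a power of $3$ and collapses the invariant obstruction space to a handful of classical cusp forms. I expect the genuinely delicate bookkeeping to be step (i): correctly pinning down all reflective $\Orth(D_M)$-orbits and their $d$-values inside the large discriminant group, since any omission there would weaken the obstruction system. Once the reflective orbits and the invariant cusp forms are in hand, step (iii) is a short finite linear-algebra verification.
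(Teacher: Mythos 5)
Your skeleton --- symmetrize, then run Borcherds' obstruction criterion inside the $\Orth(D_M)$-invariant sector --- is legitimate as far as it goes: for an $\Orth(D_M)$-invariant principal part, pairing against all cusp forms of weight $6$ for $\rho_{M(-1)}$ is indeed equivalent to pairing against the invariant subspace, and every Scheithauer lift of a scalar cusp form on $\Gamma_0(9)$ (this is the level of $2U\oplus 4A_2(3)$; note the character is actually trivial here, since $|D_M|=3^{12}$) is an honest invariant obstruction. The genuine gap is your step (ii). The assertion that the invariant subspace \emph{is isomorphic to} a space of scalar cusp forms on $\Gamma_0(N)$ is not a theorem in the generality you need: the lifting construction of \cite{Sch06} maps scalar forms \emph{into} the invariants, but surjectivity onto the invariant subspace is only available under hypotheses such as square-free level (this is precisely the setting of \cite{Dit19}), and $N=9$ is not square-free. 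Using only the lifts is safe in the non-existence direction --- fewer obstruction equations can only enlarge the solution cone --- but then nothing guarantees that your step (iii) terminates with the trivial solution: if the lift-generated equations span a proper subspace of the true invariant obstruction space, your finite system may admit spurious nontrivial solutions even though no reflective product exists, and your method becomes inconclusive rather than wrong. Since the computation is never executed, the proposal is contingent on an outcome you cannot certify in advance; contrary to your closing remark, the delicate point is not the orbit bookkeeping of step (i) (which is routine in $(\ZZ/3\times\ZZ/9)^4$) but the completeness of the obstruction space. Two smaller issues: your blanket assumption that the principal part has non-negative coefficients needs justification (it does hold for reflective products by \cite[Section 2.1]{Wan23a}, as used in the proof of Lemma \ref{lem:sym-overlattice}, but you should invoke it), and your linear system must also impose vanishing of the divisor multiplicity along all non-reflective divisors, not only non-negativity along reflective ones.

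For contrast, the paper's proof bypasses the obstruction computation on $D_M$ entirely by splitting according to symmetry. For the anti-symmetric case it uses the isometry $2U\oplus 4A_2(3)\cong 2U(9)\oplus 4A_2$, which sits inside $U\oplus U(9)\oplus E_8$; a quick (feasible) computation shows the latter lattice carries no reflective products at all, and the overlattice lemma of \cite{Wan23a} applies because the $q^{-1}e_0$ term of the input survives the arrow operator. For the symmetric case the only possible divisors come from vectors of norm $2/3$ or $2/9$, so the input $f$ (of weight $-4$) has principal part of $q$-order at least $-1/3$; hence $\eta^8 f$ is a holomorphic vector-valued form of weight $0$, so constant, while its coefficient of $q^{1/3}e_0$ equals $c(0,0)$, which is twice the (positive) weight of the product --- a contradiction in two lines. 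If you want to salvage your route, you would need either a proof of surjectivity of the lifts for this particular discriminant form (possibly by enlarging the lift family to include the $F_{\Gamma_0(N),f,\beta}$ with isotropic $\beta\neq 0$ and comparing against a trace computation of the invariant dimension), or you should simply adopt the paper's structural reductions, which eliminate the need for the invariant-sector computation altogether.
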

\begin{proof}
We use the overlattice
$$
2U\oplus 4A_2(3) \cong 2U(9)\oplus 4A_2 < U\oplus U(9)\oplus E_8
$$
and can verify quickly that there are no reflective Borcherds products on $U\oplus U(9)\oplus E_8$. By \cite[Lemma 3.3]{Wan23a}, $2U\oplus 4A_2(3)$ has no anti-symmetric reflective Borcherds product of any weight. Suppose that $2U\oplus 4A_2(3)$ has a symmetric reflective Borcherds product, and denote its input as a vector-valued modular form by $f$. Since the components of its divisor must be of the form $v^\perp$ for primitive vectors $v\in 2U\oplus 4A_2'(1/3)$ with $v^2=2/3$ or $2/9$, the function $\eta^8 f$ defines a holomorphic vector-valued modular form of weight $0$, so it is constant, i.e. an invariant of the Weil representation $\rho_{2U\oplus 4A_2(3)}$. This is impossible because the constant coefficient $c(0, 0)$ of $f$ and therefore the coefficient of $q^{1/3} e_0$ of $\eta^8 f$ is nonzero, which proves the lemma.
\end{proof}

\begin{lemma}
The lattice $L_4\oplus A_4'(5)$ is a forbidden component.  
\end{lemma}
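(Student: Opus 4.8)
The plan is to follow the template of the preceding lemma on $4A_2(3)$: I rule out anti-symmetric and symmetric reflective products separately, using an overlattice reduction for the anti-symmetric part and multiplication by a power of $\eta$ for the symmetric part. Throughout set $M=2U\oplus L_4\oplus A_4'(5)$ and $r=\rank(L_4\oplus A_4'(5))=8$. The discriminant form of $M$ is $2_{\mathrm{II}}^2\oplus(\ZZ/5)^3\oplus(\ZZ/5)^3$, where the $2$-part comes from $L_4$ and the two $5$-parts come from $L_4$ and $A_4'(5)$ respectively; its order $4\cdot 5^6$ is what makes the direct Normaliz computation of Section \ref{subsec:Borcherds} prohibitively expensive, so a reduction is needed.

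For the anti-symmetric case I would glue the two $(\ZZ/5)^3$ $5$-parts inside $L_4\oplus A_4'(5)$. A direct check that these forms are anti-isometric produces a maximal isotropic subgroup of order $5^3$ in the $5$-part; the corresponding even overlattice $L_1\supset L_4\oplus A_4'(5)$ then has trivial $5$-part, and the surviving discriminant is the $2_{\mathrm{II}}^2$ of $L_4$. Since a positive-definite rank-$8$ lattice with discriminant form $2_{\mathrm{II}}^2$ lies in the one-class genus of $D_8$, we get $L_1\cong D_8$. By the persistence statement \cite[Lemma 3.3]{Wan23a}, an anti-symmetric reflective product on $M$ would descend to an anti-symmetric reflective product on $2U\oplus D_8$. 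As $2U\oplus D_8$ has discriminant $4$, its reflective products can be enumerated directly; the crucial point is that every reflective product on $2U\oplus D_8$ (for instance $\Psi_{A_{1,2}^8}$) is symmetric, i.e. does not vanish on the norm-$2$ mirrors coming from a hyperbolic plane. Hence $2U\oplus D_8$ admits no anti-symmetric reflective product, a contradiction.

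For the symmetric case I would imitate the $\eta$-quotient argument. Let $f$ be the vector-valued input of a hypothetical symmetric reflective product. All primitive norm-$2$ vectors of $M$ lie in $M$, hence form a single $\widetilde{\Orth}^+(M)$-orbit by the Eichler criterion, represented inside a hyperbolic plane, where a symmetric product does not vanish; so no $d=1$ divisor occurs. Every reflective vector supported on the $5$-part has norm $2/5\le r/12=2/3$, so multiplication by $\eta^8\sim q^{1/3}$ removes its pole. The difficulty is the $2$-part: the odd generator $e_0$ of $\ZZ\oplus\ZZ^3(5)$ lies in $L_4'$, satisfies $e_0^2=1$ and $2e_0\in L_4$, so $e_0^\perp$ is a genuine norm-$1$, order-$2$ reflective mirror on which a symmetric reflective product may a priori vanish. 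For such a divisor $\eta^8 f$ retains a $q^{-1/6}$ pole in the components indexed by the $2$-adic classes of square $1/2$, so one cannot conclude directly that $\eta^8 f$ is holomorphic and hence constant.

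The main obstacle is therefore to exclude symmetric reflective products that vanish on the norm-$1$ mirror $e_0^\perp$. I expect to resolve this by a targeted use of the obstruction principle: the principal part of $f$ is now supported only on norm-$1$ and norm-$2/5$ reflective vectors, and it must pair to zero with all weight-$2$ modular forms for $\rho_{M(-1)}$. Rather than computing this large space directly, I would exploit the inclusion $L_4\oplus A_4'(5)\subset D_8$ of index $5^3$: modular forms for $\rho_{(2U\oplus D_8)(-1)}$ lift to forms for $\rho_{M(-1)}$ supported on, and invariant under, the glue group, which lets the relevant obstructions be read off on $2U\oplus D_8$ and forces the constant term $c(0,0)$ of $f$ to vanish. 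As in the $4A_2(3)$ lemma, this contradicts the fact that a nonzero reflective product has $c(0,0)\neq 0$. Carrying out this reduction to $D_8$, together with the bookkeeping of exactly which norm-$1$ cosets can occur, is the technically delicate step of the proof.
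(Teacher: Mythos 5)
Your proposal takes a genuinely different route from the paper, and in its present form it has a real gap. For comparison: the paper disposes of this lemma in two lines, by observing that $2U\oplus L_4\oplus A_4'(5)\cong U\oplus U(10)\oplus K\oplus A_4'(5)$ for an even positive-definite rank-four lattice $K$, and then invoking the level bound of \cite[Lemma 4.5 and Table 2]{Dit19} with $N=10$, which excludes reflective Borcherds products of \emph{every} weight at once --- no symmetric/anti-symmetric case split, no gluing, no new obstruction computation. Your anti-symmetric branch is sound in outline: both $5$-parts are indeed of type $5^{+3}$, and since $-1$ is a square modulo $5$ one has $-5^{+3}\cong 5^{+3}$, so the graph of an anti-isometry is a maximal isotropic subgroup, the glued overlattice lies in the one-class genus $\II_{8,0}(2_{\II}^{+2})$ of $D_8$, and \cite[Lemma 3.3]{Wan23a} transfers an anti-symmetric reflective product to $2U\oplus D_8$. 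But your key assertion there --- that $2U\oplus D_8$ admits no anti-symmetric reflective product of any weight --- is stated, not proved: pointing at $\Psi_{A_{1,2}^8}$ only exhibits one symmetric product, and you would still owe the Hilbert-basis/obstruction computation of Section \ref{subsec:Borcherds} on $2U\oplus D_8$ (feasible at discriminant $4$, but it is the substance of that branch).

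The genuine gap is the symmetric case, and the repair you sketch provably cannot close it. (A small slip first: the obstruction space here has weight $2-(1-l/2)=6$, not $2$.) Cusp forms for $\rho_{(2U\oplus D_8)(-1)}$ lifted along the glue group $H$ only constrain the pushforward $f|\!\uparrow$ of your input, i.e.\ the sums of its coefficients over $H$-cosets. Now $2U\oplus D_8$ \emph{does} carry a symmetric reflective Borcherds product vanishing exactly on the norm-$1$ mirrors, namely $\Psi_{A_{1,2}^8}=\Borch(\phi_{A_{1,2}^8})$ of weight $4$ (Theorem \ref{th:SVOA}); its input is a weight-$0$ Jacobi form with non-negative reflective principal part supported on the $q(x)=1$ class and $c(0,0)=8\neq 0$, hence it satisfies every obstruction for $\rho_{(2U\oplus D_8)(-1)}$. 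A hypothetical symmetric input on $M=2U\oplus L_4\oplus A_4'(5)$ vanishing on the $e_0$-type norm-$1$ mirror (whose discriminant class lies in the $2$-part and therefore survives the $5$-glue) pushes forward to a form of exactly this shape, so all glue-invariant obstructions are satisfiable and can never force $c(0,0)=0$. In other words, the reduction to $D_8$ discards precisely the information needed in the symmetric case; ruling it out requires either the full obstruction space for $\rho_{M(-1)}$ --- the discriminant-$4\cdot 5^6$ computation you set out to avoid --- or a different mechanism altogether, such as the paper's splitting of a rescaled hyperbolic plane $U(10)$ combined with Dittmann's bound.
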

\begin{proof}
There is a positive definite even lattice $K$ of rank $4$ such that
$$
2U\oplus L_4 \oplus A_4'(5) \cong U\oplus U(10)\oplus K \oplus A_4'(5). 
$$
It follows from the bound of \cite[Lemma 4.5 and Table 2]{Dit19} with $N=10$ that there is no reflective Borcherds product on  $2U\oplus L_4\oplus A_4'(5)$. 
\end{proof} 

\begin{lemma}
The lattice $A_2\oplus D_8(3)$ is a forbidden component.  
\end{lemma}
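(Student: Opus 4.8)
The plan is to mirror the treatment of $L_4\oplus A_4'(5)$ in the preceding lemma: replace $M:=2U\oplus A_2\oplus D_8(3)$ by an isomorphic lattice that splits off a hyperbolic plane scaled by a large level, and then invoke Dittmann's non-existence bound. A direct run of the obstruction algorithm of Section~\ref{subsec:Borcherds} is infeasible here, since $M$ has signature $(12,2)$ and discriminant $2^2\cdot 3^9$. So first I would analyze the discriminant form of $M$. As $2U$ is unimodular, $q_M=q_{A_2}\oplus q_{D_8(3)}$. The lattice $A_2$ contributes the rank-one form on $\ZZ/3$; for $D_8(3)$ one computes, via the Smith normal form of $3$ times the $D_8$ Gram matrix, that its discriminant group is $(\ZZ/3)^6\oplus(\ZZ/6)^2$. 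Splitting $\ZZ/6=\ZZ/2\oplus\ZZ/3$, the $3$-part of $q_M$ is the elementary abelian group $(\ZZ/3)^9$ with a nondegenerate form, while the $2$-part is $(\ZZ/2)^2$ carrying the form $3\cdot q_{D_8}|_2=q_{D_8}|_2$; the latter is the even hyperbolic plane, since the spinor classes $s,c$ are isotropic and pair to $\tfrac12$.

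Next I would extract a copy of $U(6)$. Because the $2$-part of $q_M$ equals the $2$-part of $q_{U(6)}$ and the $3$-part has rank $9\geq 2$, it contains a hyperbolic $3$-adic plane, so $q_M$ splits off the discriminant form $q_{U(6)}$, leaving a form $q_K$ supported on $(\ZZ/3)^7$. By Nikulin's existence theorem there is an even positive-definite lattice $K$ of rank $10$ with discriminant form $q_K$ (the signature condition $\mathrm{sign}(q_K)\equiv 2\pmod 8$ and the length bound $\ell(q_K)=7\leq 10$ both hold), and
$$
2U\oplus A_2\oplus D_8(3)\;\cong\; U\oplus U(6)\oplus K.
$$
Indeed both sides are indefinite of rank $14\geq \ell(q_M)+2=11$ with the same signature and discriminant form, so they lie in a single genus and are isomorphic by the Eichler--Nikulin uniqueness theorem.

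Finally I would apply the bound of \cite[Lemma 4.5 and Table 2]{Dit19} with $N=6$ to $U\oplus U(6)\oplus K$: a lattice of this signature carrying an orthogonal summand $U\oplus U(6)$ admits no reflective Borcherds product of any weight, which shows that $A_2\oplus D_8(3)$ is a forbidden component. The main obstacle is the local discriminant-form bookkeeping in the second step, namely pinning down the $2$-adic and $3$-adic symbols precisely enough to guarantee the hyperbolic splitting that produces $U(6)$, together with confirming that $N=6$ genuinely falls in the range where Dittmann's table forbids reflective products at this rank. Should $N=6$ prove too small, the fallback is the two-step method used for $4A_2(3)$: rule out anti-symmetric products by realizing $M$ as a finite-index overlattice of a lattice of the form $U\oplus U(6)\oplus(\text{unimodular})$ that carries no reflective products and applying \cite[Lemma 3.3]{Wan23a}, and rule out symmetric products by showing that the vector-valued input $f$ times a suitable power of $\eta$ would be a holomorphic weight-zero (hence constant) form whose nonzero constant term is incompatible with invariance under the Weil representation.
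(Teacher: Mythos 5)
Your proposal is correct and takes essentially the same route as the paper, which simply exhibits the explicit isometry $2U\oplus A_2\oplus D_8(3)\cong U\oplus U(6)\oplus 2A_2\oplus E_6'(3)$ (so your abstract $K$ may be taken to be $2A_2\oplus E_6'(3)$, making the Nikulin existence/uniqueness detour unnecessary) and then cites \cite[Lemma 4.5 and Table 2]{Dit19} with $N=6$, exactly your main line. One small repair: splitting off a hyperbolic $3$-adic plane requires the $3$-part of the discriminant form to be isotropic, which needs rank $\geq 3$ (anisotropic binary forms over $\F_3$ exist), not rank $\geq 2$ as you wrote; since your $3$-part has rank $9$, the conclusion is unaffected.
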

\begin{proof}
Use the isometry
$$2U\oplus A_2\oplus D_8(3)\cong U\oplus U(6)\oplus 2A_2\oplus E_6'(3).$$
By \cite[Lemma 4.5 and Table 2]{Dit19} with $N=6$, there is no reflective Borcherds product of any weight on  $2U\oplus A_2\oplus D_8(3)$. 
\end{proof}

\section{Excluding the extraneous root systems}
In this section we rule out the 152 extraneous anti-symmetric root systems using Argument \ref{Argument} and the $78$ forbidden components that were listed above.  We refer to \cite{CS99} for lattice genera. 
\subsection{Rank 4} There are \textbf{3} extraneous root systems of rank $4$:
\begin{enumerate}
    \item $D_{4,36}$: $C=1/6$, $\bP=D_4(18)$. Here, 
    $$
    L < D_4(18) < A_2(6) \oplus A_2(12) < A_1(3) \oplus A_1(9) \oplus A_2(6) < A_1 \oplus A_1(3) \oplus A_2(6)
    $$ 
    using the overlattices $D_4(3) < A_2 \oplus A_2(2)$, $A_2(4) < A_1 \oplus A_1(3)$ and $A_1(9) < A_1$. This overlattice of $L$ contains the forbidden component $A_1 \oplus A_2(6)$ labelled (10).
    \item $G_{2,24}^2$:  $C=1/6$, $L=2A_2(24)$. This lattice contains the forbidden component $A_2(24)$ labelled (5).
    \item $B_{4,14}$: $C=1/2$, $\bP=4A_1(7)$. Then 
    $$
    L < 4A_1(7) < A_1(7) \oplus L_1,
    $$
    where $L_1$ is a certain maximal even overlattice of $3A_1(7)$.
    This is the forbidden component of rank 4 labelled (19).
\end{enumerate}

\subsection{Rank 5} There are \textbf{6} extraneous root systems of rank $5$.
\begin{enumerate}[resume]
\item $A_{1,48}A_{2,72}G_{2,96}$: $C=1/24$, $\bP=A_1(12)\oplus A_2(24)\oplus A_2(96)$. This lattice contains the forbidden component $A_2(24)$ labelled (5). 
\item $A_{3,96}B_{2,72}$: $C=1/24$, $\bP=A_3'(96)\oplus 2A_1(36)$. Then 
$$
L < A_3'(96) \oplus 2A_1(36) < A_1(12) \oplus 2A_1(24) \oplus 2A_1(36) < A_1(3) \oplus 2A_1(9) \oplus 2A_1(24)
$$ 
using the rules $A_3'(8) < A_1 \oplus 2A_1(2)$ and $A_1(4) < A_1$. This overlattice contains the forbidden component $A_1(3) \oplus 2A_1(9)$ that we labelled (8). 
\item $A_{1,16}B_{2,24}G_{2,32}$: $C=1/8$, $\bP=A_1(4)\oplus 2A_1(12)\oplus A_2(32)$. Then 
$$
L < A_1(4) \oplus 2A_1(12) \oplus A_2(32) < A_1 \oplus 2 A_1(12) \oplus A_2(8)
$$ 
using the rules $A_1(4) < A_1$ and $A_2(4) < A_2$. This overlattice contains forbidden component (11), i.e. $A_1 \oplus A_2(8)$.
\item $A_{1,16}^2 C_{3,32}$: $C=1/8$, $\bP=2A_1(4)\oplus A_3'(64)$. Then $$
L < 2A_1(4) \oplus A_3'(64) < 2A_1(4) \oplus A_1(8) \oplus 2A_1(16)
$$
using the overlattice $A_3'(8) < A_1 \oplus 2A_1(2)$. This contains forbidden component (1), i.e. $2A_1(16)$.
\item $A_{1,16}^2 B_{3,40}$: $C=1/8$, $\bP=2A_1(4)\oplus 3A_1(20)$. Then 
$$
L < 2A_1(4) \oplus 3A_1(20).
$$ 
This overlattice contains forbidden component (3), i.e. $2A_1(20)$.
\item $A_{1,16} A_{4,40}$: $C=1/8$, $\bP=A_1(4)\oplus A_4'(40)$. Then 
$$
L < A_1(4) \oplus A_4'(40) < A_1(4) \oplus A_4(8) < A_1(4) \oplus A_4(2)
$$ 
using the overlattice $A_4'(5) < A_4$ and $A_4(4) < A_4$. This case is ruled out because the overlattice contains forbidden component (24), that is, $A_1(4) \oplus A_4(2)$. 
\end{enumerate}

\subsection{Rank 6} There are \textbf{22} extraneous root systems of rank $6$.
\begin{enumerate}[resume]
\item $A_{1,24}^4 G_{2,48}$: $C=1/12$, $\bP=4A_1(6)\oplus A_2(48)$. Then 
$$
L < 4 A_1(6) \oplus A_2(48) < 4 A_1(6) \oplus A_2
$$ 
using the overlattices $A_2(4) < A_2$ and $A_2(3) < A_2$. This case can be excluded because the overlattice contains forbidden component (16), i.e. $2A_1(6) \oplus A_2$.
\item $A_{1,24}^2 B_{2,36}^2$: $C=1/12$, $\bP=2A_1(6)\oplus 4A_1(18)$. Then $$
L<2A_1(6)\oplus 4A_1(18).
$$
This overlattice contains forbidden component (2), i.e. $2A_1(18)$. 
\item $A_{2,36}^2 B_{2,36}$: $C=1/12$, $\bP=2A_2(12)\oplus 2A_1(18)$. Then $$
L<2A_2(12)\oplus 2A_1(18).
$$
This overlattice contains forbidden component (2), i.e. $2A_1(18)$. 
\item $A_{1,24} A_{2,36} A_{3,48}$: $C=1/12$, $\bP=A_1(6)\oplus A_2(12)\oplus A_3'(48)$. Then 
$$
L < A_1(6) \oplus A_2(12) \oplus A_3'(48) < 4A_1(6) \oplus A_2(12) < 4A_1(6) \oplus A_2,
$$ 
using the overlattices $A_3'(8) < 3A_1$, $A_2(4) < A_2$ and $A_2(3) < A_2$. This overlattice contains forbidden component (16), i.e. $2A_1(6) \oplus A_2$.
\item $A_{1,12}^2 A_{2,18} G_{2,24}$: $C=1/6$, $\bP=2A_1(3)\oplus A_2(6)\oplus A_2(24)$. Then 
$$
L < 2A_1(3) \oplus A_2(6) \oplus A_2(24).
$$ 
This overlattice contains forbidden component (5), i.e. $A_2(24)$.
\item $A_{1,12} A_{3,24} B_{2,18}$: $C=1/6$, $\bP=A_1(3)\oplus A_3'(24)\oplus 2A_1(9)$. Then 
$$
L < A_1(3) \oplus A_3'(24) \oplus 2A_1(9).
$$ 
This overlattice contains the forbidden component (8), i.e. $A_1(3) \oplus 2A_1(9)$. 
\item $A_{2,18} B_{2,18}^2$: $C=1/6$, $\bP=A_2(6)\oplus 4A_1(9)$. Then 
$$
L < A_2(6) \oplus 4A_1(9) < A_1 \oplus 3A_1(9) \oplus A_2(6).
$$ 
This overlattice contains forbidden component (10), i.e. $A_1 \oplus A_2(6)$.
\item $A_{1,8}^3 C_{3,16}$: $C=1/4$, $\bP=3A_1(2)\oplus A_3'(32)$. Then 
$$
L < 3A_1(2) \oplus A_3'(32) < 3A_1(2) \oplus A_1(4) \oplus 2A_1(8) < A_1 \oplus 3A_1(2) \oplus 2A_1(8)
$$ 
using $A_3'(8) < A_1 \oplus 2A_1(2)$. This overlattice contains forbidden component (6), i.e. $A_1 \oplus 2A_1(8)$.
\item $A_{1,8}^2 B_{2,12} G_{2,16}$: $C=1/4$, $\bP=2A_1(2)\oplus 2A_1(6)\oplus A_2(16)$. Then 
$$
L < 2A_1(2) \oplus 2A_1(6) \oplus A_2(16).
$$ 
This overlattice contains forbidden component (4), i.e. $A_2(16)$.
\item $A_{1,8}^3 B_{3,20}$: $C=1/4$, $\bP=3A_1(2)\oplus 3A_1(10)$. Then 
$$
L < 3A_1(2) \oplus 3A_1(10) < 3A_1(2) \oplus 2A_1(5) \oplus A_1(10)
$$ 
using $2A_1(2) < 2A_1$. This overlattice contains forbidden component (7), i.e. $2A_1(2) \oplus A_1(5)$.
\item $A_{3,16}^2$: $C=1/4$, $\bP=2A_3'(16)$. Then 
$$
L < 2A_3'(16) < A_3 \oplus A_3'(16)
$$ 
using the rule $A_3'(16) < A_3$. The overlattice $A_3 \oplus A_3'(16)$ is forbidden component (28). 
\item $A_{1,8}^2 A_{4,20}$: $C=1/4$, $\bP=2A_1(2)\oplus A_4'(20)$. Then 
$$
L < 2A_1(2) \oplus A_4'(20) < 2A_1(2) \oplus A_4(4) < 2A_1(2)\oplus A_4
$$
using the rule $A_4'(5) < A_4$. The overlattice $2A_1(2)\oplus A_4$ is forbidden component (29).
\item $A_{2,12}^2 G_{2,16}$: $C=1/4$, $\bP=2A_2(4)\oplus A_2(16)$. Then 
$$
L < 2A_2(4) \oplus A_2(16).
$$ 
This overlattice contains forbidden component (4), i.e. $A_2(16)$.
\item $B_{2,12}^3$: $C=1/4$, $\bP=6A_1(6)$. Then 
$$
L<6A_1(6)<3A_1(6)\oplus A_1(2)\oplus A_2
$$
using the rule $3A_1(6)<A_1(2)\oplus A_2$. This overlattice contains forbidden component (16), i.e. $2A_1(6)\oplus A_2$. 
\item $A_{2,9} B_{2,9} G_{2,12}$: $C=1/3$, $\bP=A_2(3)\oplus \ZZ^2(9)\oplus A_2(12)$. Since $L$ is an even sublattice of $\bP$, 
$$
L < A_2(3) \oplus 2 A_1(9) \oplus A_2(12) < A_1(3) \oplus 3A_1(9) \oplus A_2(3)
$$ 
using $A_2(4) < A_1 \oplus A_1(3)$. This overlattice contains  forbidden component (8), i.e. $A_1(3) \oplus 2A_1(9)$.
\item $A_{1,6} A_{2,9} C_{3,12}$: $C=1/3$, $\bP=\ZZ(3)\oplus A_2(3)\oplus A_3'(24)$. Since $L$ is an even sublattice of $\bP$, 
$$
L < A_1(6) \oplus A_2(3) \oplus A_3'(24).
$$ 
This overlattice contains forbidden component (15), i.e. $A_3'(24)$.
\item $A_{2,9} A_{4,15}$: $C=1/3$, $\bP=A_2(3)\oplus A_4'(15)$. Then 
$$
L < A_2(3) \oplus A_4'(15) < A_2\oplus A_4(3) < A_2 \oplus L_2,
$$ 
where $L_2$ is a maximal overlattice of $A_4(3)$, using $A_2(3) < A_2$ and $A_4'(5) < A_4$. This overlattice contains forbidden component (32), i.e. $A_2 \oplus L_2$.
\item $A_{1,6} A_{2,9} B_{3,15}$: $C=1/3$, $\bP=\ZZ(3)\oplus A_2(3)\oplus \ZZ^3(15)$. The maximal even sublattice of $\ZZ(3)\oplus \ZZ^3(15)$ is an index two sublattice in $A_4'(15)$, and it is contained in $A_4(3)$. Since $L$ is even, we have 
$$
L<A_2(3)\oplus A_4(3) < A_2\oplus L_2
$$
as in the previous case. This overlattice is forbidden component (32). 
\item $A_{1,6} A_{3,12} G_{2,12}$: $C=1/3$, $\bP=\ZZ(3)\oplus A_3'(12)\oplus A_2(12)$. Using the rule $A_3'(4)<\ZZ\oplus 2A_1$, 
$$
\bP < A_2(12)\oplus \ZZ^2(3)\oplus 2A_1(3).
$$
Since $L$ is an even sublattice of $\bP$, we have
$$
L<4A_1(3)\oplus A_2(12) < A_1(3) \oplus A_1\oplus A_2(2) \oplus A_2(12)
$$
using $3A_1(3)<A_1\oplus A_2(2)$. This overlattice contains forbidden component (13), i.e. $A_1\oplus A_2(12)$. 
\item $A_{2,6} G_{2,8}^2$: $C=1/2$, $\bP=A_2(2)\oplus 2A_2(8)$. Then 
$$
L < A_2(2) \oplus 2A_2(8) < A_1(2) \oplus A_1(6) \oplus A_2(2) \oplus A_2(8)
$$ 
using $A_2(4) <A_1 \oplus A_1(3)$. This overlattice contains  forbidden component (12), i.e. $A_1(2) \oplus A_2(8)$.
\item $A_{3,8} B_{3,10}$: $C=1/2$, $\bP=A_3'(8)\oplus 3A_1(5)$. Then 
$$
L < A_3'(8) \oplus 3A_1(5) < A_1 \oplus 2A_1(2) \oplus 3A_1(5)
$$
using $A_3'(8) < A_1 \oplus 2A_1(2)$. This overlattice contains forbidden component (7), i.e. $2A_1(2) \oplus A_1(5)$.
\item $A_{3,8} C_{3,8}$: $C=1/2$, $\bP=A_3'(8)\oplus A_3'(16)$. Then 
$$
L < A_3'(8) \oplus A_3'(16) < A_1 \oplus 2A_1(2) \oplus A_3'(16)
$$ 
using the rule $A_3'(8) < A_1 \oplus 2A_1(2)$. This overlattice contains forbidden component (18), i.e. $A_1 \oplus A_3'(16)$.
\end{enumerate}

\subsection{Rank 7} There are \textbf{5} extraneous root systems of rank $7$.
\begin{enumerate}[resume]
\item $A_{1,48}^3 A_{2,72}^2$: $C=1/24$, $\bP=3A_1(12)\oplus 2A_2(24)$. Then $$
L < 3 A_1(12) \oplus 2A_2(24).
$$ 
This overlattice contains forbidden component (5), i.e. $A_2(24)$.
\item $A_{1,48}^5 B_{2,72}$: $C=1/24$, $\bP=5A_1(12)\oplus 2A_1(36)$. Then $$
L < 5A_1(12) \oplus 2A_1(36) < 5A_1(3) \oplus 2A_1(9),
$$ 
using $A_1(4N) < A_1(N)$. This overlattice contains forbidden component (8), i.e. $A_1(3) \oplus 2A_1(9)$.
\item $A_{1,16} A_{2,24}^3$: $C=1/8$, $\bP=A_1(4)\oplus 3A_2(8)$. Then 
$$
L < A_1(4) \oplus 3A_2(8) < A_1 \oplus 3A_2(8).
$$ 
This overlattice contains forbidden component (11), i.e. $A_1 \oplus A_2(8)$.
\item $A_{1,16}^3 A_{2,24} B_{2,24}$: $C=1/8$, $\bP=3A_1(4)\oplus A_2(8)\oplus 2A_1(12)$. Then 
$$
L < 3A_1(4) \oplus A_2(8) \oplus 2A_1(12) < 3A_1 \oplus 2A_1(12) \oplus A_2(8).
$$ 
This overlattice contains forbidden component (11), i.e. $A_1 \oplus A_2(8)$.
\item $A_{1,16}^4 A_{3,32}$: $C=1/8$, $\bP=4A_1(4)\oplus A_3'(32)$. Then 
$$
L < 4A_1(4) \oplus A_3'(32) < 5A_1(4) \oplus 2A_1(8) < 5A_1 \oplus 2A_1(8)
$$ 
using $A_3'(8) < A_1 \oplus 2A_1(2)$ and $A_1(4) < A_1$. This overlattice contains forbidden component (6), i.e. $A_1 \oplus 2A_1(8)$.
\end{enumerate}

\subsection{Rank 8} There are \textbf{38} extraneous root systems of rank $8$.
\begin{enumerate}[resume]
\item $A_{1,24}^6 A_{2,36}$:    $C=1/12$, $\bP=6A_1(6)\oplus A_2(12)$. Then $$
L < 6A_1(6) \oplus A_2(12) < 6 A_1(6) \oplus A_2
$$ 
using $A_2(4) < A_2$ and $A_2(3) < A_2$. This overlattice contains forbidden component (16), i.e. $2A_1(6) \oplus A_2$.
\item $A_{1,12}^6 B_{2,18}$:     $C=1/6$, $\bP=6A_1(3)\oplus 2A_1(9)$. Then $$
L < 6A_1(3) \oplus 2A_1(9).
$$ 
This overlattice contains forbidden component (8), i.e. $A_1(3) \oplus 2A_1(9)$.
\item $A_{1,12}^4 A_{2,18}^2$:  $C=1/6$, $\bP=4A_1(3)\oplus 2A_2(6)$. By the rule $4A_1(3) < 4A_1$ we obtain
$$
L < 4A_1(3) \oplus 2A_2(6) < 4A_1 \oplus 2A_2(6).
$$ 
This overlattice contains forbidden component (10), i.e. $A_1 \oplus A_2(6)$.
\item $A_{1,8}^2 A_{2,12}^3$:   $C=1/4$, $\bP=2A_1(2)\oplus 3A_2(4)$. Then
$$
L<2A_1(2)\oplus 3A_2(4) < 2A_1(2)\oplus A_2(4)\oplus A_2\oplus A_1\oplus A_1(3)
$$
using $A_2(4)<A_1\oplus A_1(3)<A_2$. This overlattice contains forbidden component (22), i.e. $A_1(3)\oplus A_2\oplus A_2(4)$. 
\item $A_{1,8}^5 A_{3,16}$:     $C=1/4$, $\bP=5A_1(2)\oplus A_3'(16)$. Then 
$$
L<5A_1(2)\oplus A_3'(16) < 3A_1(2)\oplus 2A_1 \oplus A_3'(16)
$$
using $2A_1(2)<2A_1$. This overlattice contains forbidden component (18), i.e. $A_1\oplus A_3'(16)$. 
\item $A_{1,8}^4 A_{2,12} B_{2,12}$:   $C=1/4$, $\bP=4A_1(2)\oplus A_2(4)\oplus 2A_1(6)$. Then 
$$
L < 4A_1(2) \oplus A_2(4) \oplus 2A_1(6) < 2A_1\oplus 2A_1(2)\oplus A_2(4)\oplus 2A_1(6)
$$ 
using $2A_1(2) < 2A_1$. This overlatice contains forbidden component (21), i.e. $A_1\oplus 2A_1(2)\oplus A_2(4)$.
\item $A_{1,6}^6 G_{2,12}$:            $C=1/3$, $\bP=\ZZ^6(3)\oplus A_2(12)$. Since $L$ is an even sublattice of $\bP$,  we have
$$
L < D_6(3) \oplus A_2(12) < 2A_1 \oplus 2A_2 \oplus A_2(12)
$$ 
using $D_6(3) < 2A_1 \oplus 2A_2.$ This overlattice contains forbidden component (13), i.e. $A_1 \oplus  A_2(12)$.
\item $A_{1,6}^2 A_{2,9}^2 B_{2,9}$:    $C=1/3$, $\bP=\ZZ^2(3)\oplus 2A_2(3)\oplus \ZZ^2(9)$. The maximal even sublattice of $\ZZ^2(3)\oplus  \ZZ^2(9)$ is the rescaling by $3$ of a lattice in the genus $2_{\II}^2 3^2$, and it is contained in $2A_2(3)$. Therefore, $L<4A_2(3)$. This case is excluded because it is forbidden component (35).
\item $A_{1,6}^4 B_{2,9}^2$:           $C=1/3$, $\bP=\ZZ^4(3)\oplus \ZZ^4(9)$. The maximal even sublattice of $\bP(1/3)$ is 
$$
2A_2(3)\oplus M_4(3),
$$
where $M_4$ is a maximal even overlattice of $\ZZ^2\oplus\ZZ^2(3)$   (such that $M_4$ has discriminant $36$) with $M_4 < 2A_2$. Therefore we have 
$$
L<2A_2(3)\oplus M_4(3) < 4A_2(3).
$$
This overlattice is forbidden component (35).
\item $A_{1,6}^3 A_{2,9} A_{3,12}$:     $C=1/3$, $\bP=\ZZ^3(3)\oplus A_2(3)\oplus A_3'(12)$. Using the rule $A_3'(4)<\ZZ\oplus 2A_1$, 
$$
\bP < A_2(3)\oplus 2A_1(3)\oplus \ZZ^4(3).
$$
Since $L$ is an even sublattice of $\bP$, we have
$$
L<A_2(3)\oplus 2A_1(3)\oplus D_4(3) < A_2(3)\oplus 2A_1(3) \oplus A_2\oplus A_2(2)
$$
using $D_4(3)<A_2\oplus A_2(2)$. This overlattice contains forbidden component (25), i.e. $2A_1(3) \oplus A_2 \oplus A_2(3)$.
\item $A_{2,9}^4$:         $C=1/3$, $\bP=4A_2(3)$. Then
$$
L<4A_2(3).
$$
This overlattice is the forbidden component (35).
\item $A_{1,4}^2 A_{3,8}^2$:             $C=1/2$, $\bP=2A_1\oplus 2A_3'(8)$. Then 
$$
L<2A_1\oplus 2A_3'(8) < A_3'(8)\oplus 5A_1 < A_3'(8)\oplus D_4\oplus A_1
$$
using $A_3'(8)<3A_1$ and $4A_1<D_4$. This overlattice is the forbidden component (37).
\item $A_{1,4}^4 A_{4,10}$:   $C=1/2$, $\bP=4A_1\oplus A_4'(10)$. Using $A_4'(5) < A_4$ we have
$$
L < 4A_1 \oplus A_4'(10) < 4A_1 \oplus A_4(2)
$$ 
This overlattice contains forbidden component (30), i.e. $2A_1 \oplus A_4(2)$.
\item $A_{1,4} A_{2,6} A_{3,8} B_{2,6}$:   $C=1/2$, $\bP=A_1\oplus A_2(2)\oplus A_3'(8)\oplus 2A_1(3)$. Then 
$$
L<A_1\oplus A_2(2)\oplus A_3'(8)\oplus 2A_1(3) < A_3'(8) \oplus A_2\oplus 3A_1
$$
using $A_1(3)\oplus A_2(2) < 3A_1$ and $A_1\oplus A_1(3)<A_2$. This overlattice  contains forbidden component (27), i.e. $A_1 \oplus A_2 \oplus A_3'(8)$. 
\item $A_{1,4}^5 B_{3,10}$:                $C=1/2$, $\bP=5A_1\oplus 3A_1(5)$. Then 
$$
L < 5A_1 \oplus 3A_1(5).
$$ 
This overlattice contains forbidden component (20), i.e. $3A_1 \oplus 2A_1(5)$.
\item $A_{2,6}^2 B_{2,6}^2$:              $C=1/2$, $\bP=2A_2(2)\oplus 4A_1(3)$. Then 
$$
L < 2A_2(2) \oplus 4A_1(3) < 2A_2(2) \oplus D_4
$$ 
using the rule $4A_1(3) < D_4$. This overlattice is forbidden component (39).
\item $A_{1,4}^2 A_{2,6}^2 G_{2,8}$:        $C=1/2$, $\bP=2A_1\oplus 2A_2(2)\oplus A_2(8)$. Then 
$$
L < 2A_1 \oplus 2A_2(2) \oplus A_2(8).
$$ 
This overlattice contains forbidden component (11), i.e. $A_1 \oplus A_2(8)$.
\item $A_{1,4}^4 B_{2,6} G_{2,8}$:            $C=1/2$, $\bP=4A_1\oplus 2A_1(3)\oplus A_2(8)$. Then 
$$
L < 4A_1 \oplus 2A_1(3) \oplus A_2(8).
$$ 
This overlattice contains forbidden component (11), i.e. $A_1 \oplus A_2(8)$.
\item $A_{1,4}^5 C_{3,8}$:                  $C=1/2$, $\bP=5A_1\oplus A_3'(16)$. Then 
$$
L<5A_1\oplus A_3'(16).
$$
This overlattice contains forbidden component (28), i.e. $A_1\oplus A_3'(16)$. 
\item $A_{1,4}^2 B_{2,6}^3$:                $C=1/2$, $\bP=2A_1\oplus 6A_1(3)$. Then
$$
L<2A_1\oplus 6A_1(3) < 4A_1\oplus 2A_2(2) < D_4\oplus 2A_2(2)
$$
using $3A_1(3)<A_1\oplus A_2(2)$ and $4A_1<D_4$. This overlattice is forbidden component (39).
\item $A_{1,3}^4 G_{2,6}^2$:                $C=2/3$, $L=K\oplus 2A_2(6)$ with $4A_1(3)<K<\ZZ^4(3/2)$. Then 
$$
\ZZ^4(4)<K(2/3)<\ZZ^4.
$$
Since $K(2/3)$ is integral and $K=K(2/3)(3/2)$ is even, $K(2/3)$ is the $2$-scaling of an even lattice. Therefore, $K(2/3)<D_4(2)$. Using the overlattice $D_4(3)<D_4$ we then have
$$
L<2A_2(6)\oplus D_4(3) < 2A_2(6)\oplus D_4
$$
This overlattice contains forbidden component (31), i.e. $A_2(6) \oplus D_4$. 
\item $A_{1,3}^4 D_{4,9}$:                   $C=2/3$, $\bP=\ZZ^4(3/2)\oplus D_4(9/2)$ and $\bQ=4A_1(3)\oplus D_4(9)$. Then $L(2/3)$ is integral and
$$
4A_1(2)\oplus D_4(6) < L(2/3) < \ZZ^4 \oplus D_4(3). 
$$
We see from $L=L(2/3)(3/2)$ that $L(2/3)$ is the $2$-scaling of an even lattice. Therefore, $L(1/3)$ is an even lattice bounded by 
$$
4A_1\oplus D_4(3) < L(1/3) < D_4(1/2) \oplus D_4(3/2) \cong D_4'\oplus D_4'(3). 
$$
It follows that $L(1/3)<D_4\oplus D_4(3)$. Then
$$
L<D_4(3)\oplus D_4(9) < A_2(3)\oplus A_2(6)\oplus D_4,
$$
using $D_4(3)<A_2\oplus A_2(2)<D_4$. This overlattice contains forbidden component (31), i.e. $A_2(6) \oplus D_4$.
\item $A_{1,2} A_{4,5} C_{3,4}$:            $C=1$, $\bP=\ZZ\oplus A_4'(5)\oplus A_3'(8)$. Since $L$ is an even sublattice of $\bP$,  
$$
L < A_1(2) \oplus A_4'(5) \oplus A_3'(8) < A_1 \oplus 3A_1(2) \oplus A_4,
$$
where used $A_4'(5) < A_4$ and $A_3'(8) < A_1 \oplus 2A_1(2)$. This overlattice contains forbidden component (29), i.e. $2A_1(2) \oplus A_4$.
\item $A_{1,2}^4 B_{4,7}$:                 $C=1$, $\bP=\ZZ^4\oplus \ZZ^4(7)$. Since $L$ is even, it is contained in the maximal even sublattice of $\bP$: 
$$
L < 3 L_3 \oplus L_3(2)
$$ 
where $L_3$ is $\mathbb{Z}^2$ with Gram matrix $\begin{psmallmatrix} 2 & 1 \\ 1 & 4 \end{psmallmatrix}$. This overlattice contains forbidden component (33), i.e. $2L_3 \oplus L_3(2)$.
\item $B_{2,3}^2 D_{4,6}$:                   $C=1$, $\bP=\ZZ^4(3)\oplus D_4(3)$. Since $L$ is an even sublattice of $\bP$, we have $L<2D_4(3)$. This overlattice is forbidden component (41).
\item $A_{1,2} B_{2,3} B_{3,5} G_{2,4}$:      $C=1$, $\bP=\ZZ\oplus \ZZ^2(3)\oplus \ZZ^3(5)\oplus A_2(4)$. Since $L$ is even, it is contained in the maximal even sublattice $$Q \oplus A_2(4) \oplus A_2(5)$$ of $\bP$,  where $Q$ has genus $[2^2]_6 3^1 5^{-1}$ and discriminant $60$. Therefore, $$L < Q \oplus A_2(4) \oplus A_2(5) < Q \oplus A_1 \oplus A_1(3) \oplus A_2(5)$$ using $A_2(4) < A_1 \oplus A_1(3)$. This overlattice contains forbidden component (9), i.e. $A_1(3) \oplus A_2(5)$.
\item $A_{1,2} A_{4,5} B_{3,5}$:               $C=1$, $\bP=\ZZ\oplus A_4'(5)\oplus \ZZ^3(5)$. Let $L_4$ denote the maximal even sublattice of $\ZZ\oplus \ZZ^3(5)$. The overlattice $L_4\oplus A_4'(5)$ of $L$ is forbidden component (44).  
\item $A_{1,2}^2 B_{3,5}^2$:                    $C=1$, $\bP=\ZZ^2\oplus \ZZ^6(5)$. The maximal even sublattice of $\bP$ is $L_4\oplus A_4'(5)$, which is forbidden component (44) and is an even overlattice of $L$.
\item $A_{1,2} B_{2,3} C_{3,4} G_{2,4}$:        $C=1$, $\bP=\ZZ\oplus \ZZ^2(3)\oplus A_3'(8)\oplus A_2(4)$. Since $L$ is even, it is contained in the maximal even sublattice $$L < Q \oplus A_2(4) \oplus A_3'(8)$$ of $\bP$, where $Q$ has genus $[4^1]_7 3^2$. Then $$L < A_1 \oplus 2A_1(2) \oplus A_2(4) \oplus Q$$ using the rule $A_3'(8) < A_1 \oplus 2A_1(2).$ This overlattice contains forbidden component (21), i.e. $A_1 \oplus 2A_1(2) \oplus A_2(4)$.
\item $A_{1,2}^2 G_{2,4}^3$:                     $C=1$, $\bP=\ZZ^2\oplus 3A_2(4)$. Since $L$ is an even sublattice of $\bP$, we have
$$
L<2A_1\oplus 3A_2(4) < 3A_1\oplus A_1(3)\oplus A_2\oplus A_2(4)
$$
using $A_2(4)<A_1\oplus A_1(3)<A_2$. This overlattice contains forbidden component (22), i.e. $A_1(3)\oplus A_2\oplus A_2(4)$. 
\item $A_{1,2}^2 C_{3,4}^2$:                    $C=1$, $\bP=\ZZ^2\oplus 2A_3'(8)$. The maximal even sublattice of $\bP$ is $2A_1\oplus 2A_3'(8)$. Then
$$
L<2A_1\oplus 2A_3'(8) < A_3'(8) \oplus 5A_1 < A_3'(8)\oplus D_4\oplus A_1
$$
using $A_3'(8)<3A_1$ and $4A_1<D_4$. The overlattice $A_1 \oplus A_3'(8)\oplus D_4$ is forbidden component (37). 
\item $A_{1,2}^2 B_{3,5} C_{3,4}$:             $C=1$, $\bP=\ZZ^2\oplus \ZZ^3(5)\oplus A_3'(8)$. Since $L$ is even, it is contained in the maximal even sublattice $A_3'(8) \oplus Q$ of $\bP$, where $Q$ has genus $[4^1]_1 5^3$. Then 
$$
L < A_1(2) \oplus A_3'(8) \oplus A_4 < A_1 \oplus 3A_1(2) \oplus A_4
$$ 
using $Q < A_1(2) \oplus A_4$ and $A_3'(8) < A_1 \oplus 2A_1(2)$. This overlattice contains forbidden component (29), i.e. $2A_1(2) \oplus A_4$.
\item $B_{2,3}^2 G_{2,4}^2$:    $C=1$, $\bP=\ZZ^4(3)\oplus 2A_2(4)$. Since $L$ is an even sublattice of $\bP$, we have
$$
L<D_4(3)\oplus 2A_2(4) < D_4\oplus 2A_2(2)
$$
using $2A_2(2)<2A_2$ and $D_4(3)<D_4$. The overlattice $D_4\oplus 2A_2(2)$ is forbidden component (39).
\item $A_{4,5} B_{2,3} G_{2,4}$:      $C=1$, $\bP=A_4'(5)\oplus \ZZ^2(3)\oplus A_2(4)$. Since $L$ is an even sublattice of $\bP$ and $A_4'(5)<A_4$ we have
$$
L<A_4'(5)\oplus 2A_1(3)\oplus A_2(4) < A_4\oplus 2A_1(3)\oplus A_2(4).
$$
This overlattice contains forbidden component (23), i.e. $A_1(3)\oplus A_4$. 
\item $A_{1,2}^4 C_{4,5}$:                  $C=1$, $\bP=\ZZ^4\oplus D_4(5)$. Since $L$ is even,  
$$
L<D_4\oplus D_4(5) < D_4 \oplus L_5,
$$
where $L_5$ is an even overlattice of $D_4(5)$ of genus $2_{\II}^{-2} 5^2$. This overlattice is forbidden component (45).
\item $A_{1,2}^2 D_{4,6} G_{2,4}$:            $C=1$, $\bP=\ZZ^2\oplus D_4(3)\oplus A_2(4)$. Since $L$ is even,
$$
L<2A_1\oplus D_4(3)\oplus A_2(4) < 2A_1\oplus A_2\oplus A_2(2)\oplus A_2(4)
$$
using $D_4(3)<A_2\oplus A_2(2)$. This overlattice contains forbidden component (26), i.e. $A_2\oplus A_2(2)\oplus A_2(4)$. 
\item $C_{3,3}^2 G_{2,3}$:                  $C=4/3$, $L=A_2(3)\oplus K$ with $6A_1(3)<K<2A_3'(6)$. Then 
$$
6A_1 < K(1/3) < 2A_3'(2). 
$$
Since $K$ is even and $K(4/3)$ is integral, $K(1/3)$ is even. It follows that $K(1/3)=6A_1$, and thus
$$
L< A_2(3)\oplus 6A_1(3) < A_2(3)\oplus A_1\oplus A_2(2) \oplus 3A_1(3) < A_2(2)\oplus 2A_1(3)\oplus A_2 \oplus A_2(3)
$$
using $3A_1(3)<A_1\oplus A_2(2)$ and $A_1\oplus A_1(3)<A_2$. This overlattice contains forbidden component (25), i.e. $2A_1(3) \oplus A_2 \oplus A_2(3)$. 
\item $G_{2,3}^4$:   $C=4/3$, $L=4A_2(3)$ is forbidden component (35). 
\end{enumerate}

\subsection{Rank 9} There is only \textbf{1} extraneous root system of rank $9$.
\begin{enumerate}[resume]
\item $A_{1,16}^9$: $C=1/8$, $\bP=9A_1(4)$. Then
$$
L<9A_1(4)<4A_1\oplus A_1(4)\oplus D_4.
$$
This overlattice contains forbidden component (38), i.e. $3A_1 \oplus A_1(4) \oplus D_4.$
\end{enumerate}

\subsection{Rank 10} There are \textbf{24} extraneous root systems of rank $10$.
\begin{enumerate}[resume]
\item $A_{1,8}^{10}$:  $C=1/4$, $\bP=10A_1(2)$. Then 
$$
L < 10A_1(2) < 3A_1(2) \oplus D_7.
$$ 
This overlattice is forbidden component (53).
\item $A_{1,6}^8 A_{2,9}$: $C=1/3$, $\bP=\mathbb{Z}^8(3)\oplus A_2(3)$.  Since $L$ is even, we have
$$
L<D_8(3)\oplus A_2(3) < D_8(3)\oplus A_2.
$$ 
The overlattice $D_8(3)\oplus A_2$ is forbidden component (55).
\item $A_{1,4}^4 A_{2,6}^3$:  $C=1/2$, $\bP=4A_1\oplus 3A_2(2)$. Using $4A_1 < D_4$ we obtain 
$$
L < 4A_1 \oplus 3A_2(2) < 3A_2(2) \oplus D_4
$$ 
This overlattice contains forbidden component (39), i.e. $2A_2(2) \oplus D_4$. 
\item $A_{1,4}^7 A_{3,8}$:  $C=1/2$, $\bP=7A_1\oplus A_3'(8)$. Then
$$
L<7A_1\oplus A_3'(8) < 3A_1\oplus D_4 \oplus A_3'(8).
$$
This overlattice contains forbidden component (37), i.e. $A_1 \oplus A_3'(8)\oplus D_4$. 
\item $A_{1,4}^6 A_{2,6} B_{2,6}$: $C=1/2$, $\bP=6A_1\oplus A_2(2)\oplus 2A_1(3)$. Then
$$
L<6A_1\oplus A_2(2)\oplus 2A_1(3) < D_4\oplus 2A_2\oplus A_2(2)
$$
using the rules $4A_1<D_4$ and $A_1\oplus A_1(3) < A_2$. The overlattice $D_4\oplus 2A_2\oplus A_2(2)$ is forbidden component (50). 
\item $A_{2,4}^4 B_{2,4}$:  $C=3/4$, $\bP=4A_2'(4)\oplus 2A_1(2)$, $\bQ=4A_2(4)\oplus 2A_1(4)$. Then
$$
2A_1\oplus 4A_2 < L(1/4) < \ZZ^2 \oplus 4A_2'.
$$
Since $L(3/4)$ is integral and $L$ is even, $L(1/4)$ is integral. It follows that $L(1/4)<\ZZ^2 \oplus E_8$, and therefore that
$
L<2A_1(2)\oplus E_8(4).
$
Note that
$$
2U\oplus 2A_1(2)\oplus E_8(4) \cong 2U\oplus 10A_1(2)
$$
and
$$
10A_1(2) < 4A_1(2)\oplus 6A_1 <  4A_1(2)\oplus D_4\oplus 2A_1 < 3A_1(2)\oplus D_5\oplus 2A_1 
$$
using $2A_1(2)<2A_1$ and $4A_1<D_4$ and $D_4\oplus A_1(2)<D_5$. This overlattice contains forbidden component (48), i.e. $A_1\oplus 3A_1(2)\oplus D_5$.
\item $A_{1,2} A_{2,3}^3 B_{3,5}$:  $C=1$, $\bP=\ZZ\oplus 3A_2\oplus \ZZ^3(5)$. Since $L$ is even, it is contained in the maximal even sublattice $Q$ of $\bP$, where $Q$ is an index two sublattice of $A_4$ in the genus $2^2_{\II}5^3$. Therefore we have
$$
L<Q\oplus 3A_2 < A_4\oplus 3A_2. 
$$
This overlattice contains forbidden component (36), i.e. $2A_2 \oplus  A_4$.
\item $A_{1,2}^3 A_{2,3} B_{2,3} C_{3,4}$: $C=1$, $\bP=\ZZ^3\oplus A_2\oplus \ZZ^2(3)\oplus A_3'(8)$. The maximal even sublattice of $\ZZ^3\oplus \ZZ^2(3)$ is $A_1(2)\oplus 2A_2$. Since $L$ is an even sublattice of $\bP$, we have
$$
L<A_1(2)\oplus 3 A_2\oplus A_3'(8).
$$
This overlattice contains forbidden component (34), i.e. $2A_2\oplus A_3'(8)$. 
\item $A_{1,2} A_{3,4} B_{2,3}^3$: $C=1$, we have
$$
\bP=\mathbb{Z}\oplus A_3'(4)\oplus \mathbb{Z}^6(3)<\mathbb{Z}^4\oplus 2A_1\oplus 2A_2
$$
using the rules $A_3'(4)<\mathbb{Z}\oplus 2A_1$ and $\mathbb{Z}^3(3)<\mathbb{Z}\oplus A_2$. 
Therefore, $L<D_4\oplus 2A_1\oplus 2A_2$. The overlattice $D_4\oplus 2A_1\oplus 2A_2$ is forbidden component (49). 
\item $A_{1,2}^4 A_{2,3} G_{2,4}^2$: $C=1$, $\bP=\ZZ^4\oplus A_2\oplus 2A_2(4)$. Since $L$ is even, we have
$$
L < D_4 \oplus A_2 \oplus 2A_2(4) < A_2 \oplus 2A_2(2) \oplus D_4
$$ 
using $2A_2(2) < 2A_2$. This overlattice contains forbidden component (39), i.e. $2A_2(2) \oplus D_4$. 
\item $A_{1,2}^2 A_{2,3} B_{2,3}^2 G_{2,4}$:  $C=1$, $\bP=\ZZ^2\oplus A_2\oplus \ZZ^4(3)\oplus A_2(4)$. The maximal even sublattice of $\bP$ is
$
2A_1(3) \oplus 3A_2 \oplus A_2(4),
$
and it is an even overlattice of $L$. 
This overlattice contains forbidden component (22), that is, $A_1(3) \oplus  A_2 \oplus A_2(4)$.
\item $A_{2,3}^3 A_{4,5}$: $C=1$, $\bP=3A_2\oplus A_4'(5)$. Then 
$$
L < 3A_2 \oplus A_4'(5) < 3A_2 \oplus A_4.
$$ 
This overlattice contains forbidden component (36), i.e. $2A_2 \oplus A_4$. 
\item $A_{1,2}^2 A_{2,3} A_{4,5} B_{2,3}$:  $C=1$, $\bP=\ZZ^2\oplus A_2\oplus A_4'(5)\oplus \ZZ^2(3)$. The maximal even sublattice of $\bP$ decomposes as $Q \oplus A_2 \oplus A_4'(5)$, where $Q$ has genus $2_{\II}^2 3^2$. Since $L$ is even, we have
$$
L < Q \oplus A_2 \oplus A_4'(5) < 3A_2 \oplus A_4,
$$ 
using $Q < 2A_2$ and $A_4'(5) < A_4$. 
This overlattice contains forbidden component (36), i.e. $2A_2 \oplus A_4$. 
\item $A_{1,2} A_{2,3}^3 C_{3,4}$: $C=1$, $\bP=\ZZ\oplus 3A_2\oplus A_3'(8)$. Since $L$ is even, 
$$
L < A_1(2) \oplus 3A_2 \oplus A_3'(8).
$$ 
This overlattice contains forbidden component (34), i.e. $A_3'(8)\oplus 2A_2$.
\item $A_{2,3}^3 B_{2,3} G_{2,4}$:  $C=1$, $\bP=3A_2\oplus \ZZ^2(3)\oplus A_2(4)$. Since $L$ is even, we have
$$
L < 2A_1(3) \oplus 3A_2 \oplus A_2(4).
$$ 
This overlattice contains forbidden component (22), that is, $A_1(3)\oplus A_2\oplus A_2(4)$. 
\item $A_{1,2}^4 A_{2,3} D_{4,6}$: $C=1$, $\bP=\ZZ^4\oplus A_2\oplus D_3$. Since $L$ is even and $D_4(3) < A_2 \oplus A_2(2)$ we have
$$
L < A_2 \oplus D_4 \oplus D_4(3) < 2A_2 \oplus A_2(2) \oplus D_4
$$ 
The overlattice  $2A_2 \oplus A_2(2) \oplus D_4$ is forbidden component (50). 
\item $A_{1,2}^4 A_{3,4} C_{3,4}$:  $C=1$, $\bP=\ZZ^4\oplus A_3'(4)\oplus A_3'(8)$. The maximal even sublattice of $\bP$ is $2A_1(2) \oplus D_5 \oplus A_3'(8) $. Since $L$ is even,  
$$
L<2A_1(2) \oplus D_5 \oplus A_3'(8) < A_1 \oplus 4A_1(2) \oplus D_5
$$
using the rule $A_3'(8) < A_1 \oplus 2A_1(2)$. This overlattice contains  forbidden component (48), i.e. $A_1 \oplus 3A_1(2) \oplus D_5$.
\item $A_{1,2}^3 A_{3,4} A_{4,5}$:  $C=1$, we have
$$
\bP=\ZZ^3\oplus A_3'(4)\oplus A_4'(5) < \ZZ^4 \oplus 2A_1\oplus A_4'(5)
$$
using $A_3'(4)<\ZZ\oplus 2A_1$. Since $L$ is even, 
$$
L<D_4\oplus 2A_1\oplus A_4'(5) < D_4\oplus 2A_1\oplus A_4.
$$
This overlattice contains forbidden component (40), i.e. $A_4\oplus D_4$. 
\item $A_{2,3} A_{3,4}^2 B_{2,3}$:  $C=1$, $\bP=A_2\oplus 2A_3'(4)\oplus \ZZ^2(3)$. The maximal even sublattice of $\bP$ decomposes as $A_1(2) \oplus A_3(3) \oplus Q$, where $Q$ has genus $[4^3]_5$.  The overlattice $A_1(2) \oplus A_3(3) \oplus Q$ of $L$ contains forbidden component (17), i.e. $A_1(2) \oplus A_3(3)$.
\item $A_{2,3} B_{2,3}^4$:  $C=1$, $\bP=A_2\oplus \ZZ^8(3)$. Since $L$ is even, we have 
$$
L<A_2\oplus D_8(3).
$$
This overlattice is forbidden component (55). 
\item $A_{1,2}^4 A_{3,4} B_{3,5}$:  $C=1$, $\bP=\ZZ^4\oplus A_3'(4)\oplus \ZZ^3(5)$.  The maximal even sublattice of $\bP$ splits as $A_4 \oplus Q$, where $Q$ is a sublattice of $D_6$ with genus $[2^2]_6 4^2 5^2$ and discriminant $1600$. Then $$
L < A_4 \oplus Q < A_4 \oplus D_6.
$$ 
The overlattice $A_4 \oplus D_6$ is forbidden component (52). 
\item $A_{1,2} A_{2,3}^2 A_{3,4} G_{2,4}$: $C=1$, $\bP=\ZZ\oplus 2A_2\oplus A_3'(4)\oplus A_2(4)$. The maximal even sublattice of $\bP$ is $2A_2 \oplus A_2(4) \oplus D_4(2)$. Therefore,
$$
L<2A_2 \oplus A_2(4) \oplus D_4(2) < 4A_1 \oplus 3A_2,
$$
using $A_2(4) < A_2$ and $D_4(2) < 4A_1$. This overlattice contains forbidden component (46), i.e. $3A_1 \oplus 3A_2$.
\item $A_{1,2}^3 A_{2,3} B_{2,3} B_{3,5}$: $C=1$, $\bP=\ZZ^3\oplus A_2\oplus \ZZ^2(3)\oplus \ZZ^3(5)$. The maximal even sublattice of $\bP$ splits as $2A_2 \oplus Q$ where $Q$ has genus $2_{\II}^2 3^{-1} 5^3$. Therefore, 
$$
L < 2A_2 \oplus Q < 3A_2 \oplus A_4
$$ 
since $Q < A_2 \oplus A_4$. This overlattice contains forbidden component (36), i.e. $2A_2 \oplus A_4$.
\item $A_{1,2}^3 A_{3,4} B_{2,3} G_{2,4}$: $C=1$, $\bP=\ZZ^3\oplus A_3'(4)\oplus \ZZ^2(3)\oplus A_2(4)$. The maximal even sublattice of $\bP$ is $A_2 \oplus 2A_2(4) \oplus D_4$. Therefore,
$$
L<A_2 \oplus 2A_2(4) \oplus D_4 < 2A_1 \oplus 2A_1(3) \oplus A_2 \oplus D_4
$$
using $A_2(4) < A_1 \oplus A_1(3)$. This overlattice contains  forbidden component (47), i.e. $2A_1 \oplus A_1(3) \oplus A_2 \oplus D_4$.
\end{enumerate}

\subsection{Rank 12} There are \textbf{21} extraneous root systems of rank $12$.
\begin{enumerate}[resume]
\item $A_{1,2}^6 A_{2,3}^2 G_{2,4}$:    $C=1$, $\bP=\ZZ^6\oplus 2A_2\oplus A_2(4)$. Since $L$ is even, 
$$
L < 2A_2 \oplus A_2(4) \oplus D_6.
$$ 
This overlattice contains forbidden component (43), i.e. $A_2(4) \oplus D_6$.
\item $A_{1,2}^9 B_{3,5}$:             $C=1$, $\bP=\ZZ^9\oplus \ZZ^3(5)$.  The maximal even sublattice of $\bP$ is $A_4'(5) \oplus D_8$. Therefore,
$$L < A_4'(5) \oplus D_8 < A_4 \oplus D_8.$$
The overlattice $A_4 \oplus D_8$ is forbidden component (62).
\item $A_{1,2}^3 A_{2,3}^3 A_{3,4}$:      $C=1$, $\bP=\ZZ^3\oplus 3A_2\oplus A_3'(4)$. The maximal even sublattice of $\bP$ splits as  $Q \oplus 3A_2$, where $Q$ is an index two sublattice of $2A_1 \oplus D_4$ in the genus $2^2_6 4^2$. Therefore,
$$
L < Q \oplus 3A_2 < 2A_1 \oplus 3A_2 \oplus D_4.
$$ 
This overlattice contains forbidden component (49), i.e. $2A_1 \oplus 2A_2 \oplus D_4$.
\item $A_{1,2}^5 A_{2,3} A_{3,4} B_{2,3}$:     $C=1$, $\bP=\ZZ^5\oplus A_2\oplus A_3'(4)\oplus \ZZ^2(3)$. The maximal even sublattice of $\bP$ is $2A_2\oplus A_2(4)\oplus D_6$. This is an even overlattice of $L$ containing forbidden component (43), i.e. $A_2(4)\oplus D_6$.
\item $A_{1,2}^8 B_{2,3} G_{2,4}$:        $C=1$, $\bP=\ZZ^8\oplus \ZZ^2(3)\oplus A_2(4)$. The maximal even sublattice of $\bP$ is $2A_2\oplus A_2(4)\oplus D_6$. This is an even overlattice of $L$ containing forbidden component (43), i.e. $A_2(4)\oplus D_6$.
\item $A_{1,2}^6 A_{3,4}^2$:             $C=1$, $\bP=\ZZ^6\oplus 2A_3'(4)$. The maximal even sublattice of $\bP$ is a sublattice of $A_1 \oplus D_4 \oplus L_6$, where $L_6$ is a rank seven lattice in the genus $2^1_7 4_{\II}^2$. Therefore,
$$
L< A_1 \oplus D_4 \oplus L_6.
$$
This overlattice contains forbidden component (59), i.e. $L_6 \oplus D_4$.
\item $A_{1,2}^2 A_{2,3}^4 B_{2,3}$:        $C=1$, $\bP=\ZZ^2\oplus 4A_2\oplus \ZZ^2(3)$. The maximal even sublattice of $\bP$ is $4A_2 \oplus L_7$, where $L_7$ has genus $2_{\II}^2 3^2$. The overlattice $4A_2 \oplus L_7$ of $L$ contains forbidden component (56), i.e. $3A_2 \oplus L_7$.
\item $A_{1,2}^8 A_{4,5}$:                   $C=1$, $\bP=\ZZ^8\oplus A_4'(5)$. The maximal even sublattice of $\bP$ is $A_4'(5) \oplus D_8 $. Therefore, 
$$
L<A_4'(5) \oplus D_8 < A_4 \oplus D_8.
$$
The overlattice $A_4 \oplus D_8$ is forbidden component (62). 
\item $A_{1,2}^4 A_{2,3}^2 B_{2,3}^2$:        $C=1$, $\bP=\ZZ^4\oplus 2A_2\oplus \ZZ^4(3)$. The maximal even sublattice of $\bP$ is $4A_2 \oplus L_7$, where $L_7$ has genus $2_{\II}^2 3^2$ as before. The overlattice $4A_2 \oplus L_7$ of $L$ contains forbidden component (56), i.e. $3A_2 \oplus L_7$.
\item $A_{1,2}^9 C_{3,4}$:                  $C=1$, $\bP=\ZZ^9\oplus A_3'(8)$. Since $L$ is even, 
$$
L < A_3'(8) \oplus D_9 < 3A_1 \oplus D_9,
$$ 
using the rule $A_3'(8) < 3A_1$. The overlattice $D_9\oplus 3A_1$ is forbidden component (64).
\item $A_{1,2}^6 B_{2,3}^3$:                $C=1$, $\bP=\ZZ^6\oplus \ZZ^6(3)$. The maximal even sublattice of $\bP$ is $4A_2 \oplus L_7$, where $L_7$ is the maximal even sublattice of $\ZZ^2 \oplus \ZZ^2(3)$ as before. The overlattice $4A_2 \oplus L_7$ of $L$ contains forbidden component (56), i.e. $3A_2 \oplus L_7$.
\item $A_{3,2}^2 G_{2,2}^3$:                $C=2$, $L=3A_2(2)\oplus K$ with $2A_3(2)<K<2A_3'(2)$. Then $K<\mathbb{Z}^2(1/2)\oplus \mathbb{Z}^4$. Thus $K<\mathbb{Z}^6$ and further $K<D_6$. Therefore, $L<3A_2(2)\oplus D_6$. This overlattice contains forbidden component (51), i.e. $2A_2(2)\oplus D_6$. 

\item $A_{1,1}^3 C_{3,2} D_{4,3} G_{2,2}$:       $C=2$, $L=A_2(2)\oplus K$ with
$$
3A_1\oplus 3A_1(2)\oplus D_4(3) < K < \mathbb{Z}^3(1/2)\oplus A_3'(4)\oplus D_4'(3).
$$ 
Then $K$ is contained in a maximal even overlattice of $3A_1\oplus 3A_1(2)\oplus D_4(3)$ in the genus of $A_1\oplus A_1(2)\oplus E_8$. Therefore, $L$ is contained in a lattice in the genus of $A_1\oplus A_1(2)\oplus A_2(2)\oplus E_8$ which contains forbidden component (58), i.e. $A_1\oplus A_2(2)\oplus E_8$. 

\item $A_{1,1}^3 C_{3,2} G_{2,2}^3$:     $C=2$,  $L=3A_2(2)\oplus K$ with 
$$
3A_1\oplus 3A_1(2)<K<\mathbb{Z}^3(1/2)\oplus A_3'(4).
$$
Then $K$ is contained in a maximal even overlattice of $3A_1\oplus 3A_1(2)$ in the genus of $A_1\oplus D_5$. Therefore, $L$ is contained in a lattice in the genus of $A_1\oplus D_5\oplus 3A_2(2)$ which contains forbidden component (42), i.e. $A_1\oplus A_2(2)\oplus D_5$. 
\item $A_{1,1}^3 C_{3,2}^3$:    $C=2$, $\bQ=3A_1\oplus 9A_1(2)$. Then $L$ is contained in a maximal even overlattice of $3A_1\oplus 9A_1(2)$ in the genus of $A_1\oplus A_3\oplus E_8$ which is forbidden component (63). 
\item $A_{3,2}^2 D_{4,3} G_{2,2}$:         $C=2$, $L=A_2(2)\oplus K$ with $2A_3(2)\oplus D_4(3)<K$. Then $K$ is contained in a maximal even overlattice of $2A_3(2)\oplus D_4(3)$ in the genus of $2A_1\oplus E_8$. Therefore, $L$ is contained in a lattice in the genus of $2A_1\oplus A_2(2)\oplus E_8$ which contains forbidden component (58), i.e. $A_1\oplus A_2(2)\oplus E_8$. 
\item $A_{3,2}^2 C_{3,2}^2$:          $C=2$, 
$$
2A_3(2)\oplus 6A_1(2) < L < 2A_3'(2)\oplus 2A_3'(4).
$$
We see from the upper bound that $L<\ZZ^{2}(1/2)\oplus \ZZ^4\oplus 2A_3'(4)$. Therefore, $L<\ZZ^6\oplus 2A_3'(4)$. The maximal even sublattice of $\ZZ^6\oplus 2A_3'(4)$ is forbidden component (65). 
\item $A_{1,1}^3 A_{5,3} G_{2,2}^2$:       $C=2$, $L=2A_2(2)\oplus K$ with
$$
3A_1\oplus A_5(3)<K.
$$
Then $K$ is contained in a maximal even overlattice of $3A_1\oplus A_5(3)$ in the genus of $2A_2\oplus D_4$. Therefore, $L$ is contained in a lattice in the genus of $2A_2\oplus D_4\oplus 2A_2(2)$ which contains forbidden component (39), i.e. $2A_2(2)\oplus D_4$. 
\item $A_{1,1}^4 A_{3,2} D_{5,4}$:         $C=2$, $\bQ=4A_1\oplus A_3(2)\oplus D_5(4)<L$.  Then $L$ is contained in a maximal even overlattice of $4A_1\oplus A_3(2)\oplus D_5(4)$ in the genus of $A_1\oplus A_3\oplus E_8$ which is forbidden component (63). 
\item $A_{2,1}^2 D_{4,2} F_{4,3}$:  $C=3$, $L=D_4(3)\oplus K$ with
$$
2A_2\oplus D_4(2) < K < 2A_2'\oplus D_4.
$$
Thus $K<2A_2\oplus D_4$. Using the rule $D_4(3)<A_2\oplus A_2(2)$, we have
$$
L<2A_2\oplus D_4\oplus D_4(3) < 3A_2\oplus D_4\oplus A_2(2). 
$$
This overlattice contains forbidden component (50), i.e. $D_4\oplus 2A_2\oplus A_2(2)$. 
\item $C_{3,1}^2 E_{6,3}$:     $C=4$, $L$ is bounded by
$$
6A_1\oplus E_6(3) < L<2A_3'(2)\oplus E_6'(3).
$$
Therefore, $L$ is contained in a maximal even overlattice of $6A_1\oplus E_6(3)$ in the genus of $D_6\oplus E_6$, which is forbidden component (60). 
\end{enumerate}

\subsection{Rank 14} There are \textbf{10} extraneous root systems of rank $14$.
\begin{enumerate}[resume]
\item $A_{1,2}^8 A_{2,3}^3$: $C=1$, $\bP=\ZZ^8\oplus 3A_2$. Since $L$ is even, we have
$$
L < 3A_2 \oplus D_8.
$$ 
This overlattice contains forbidden component (61), i.e. $2A_2 \oplus D_8$. 
\item $A_{1,2}^{11} A_{3,4}$: $C=1$, $\bP=\ZZ^{11}\oplus A_3'(4)$.  The maximal even sublattice of $\bP$ is $L_8 \oplus E_8$, where $L_8$ has genus $2^2_6 4_{\II}^2$. The overlattice $L_8 \oplus E_8$ of $L$ is forbidden component (71). 
\item $A_{1,2}^{10} A_{2,3} B_{2,3}$: $C=1$, $\bP=\ZZ^{10}\oplus A_2\oplus \ZZ^2(3)$. The maximal even sublattice of $\bP$ is $A_2 \oplus E_8 \oplus L_7$, where $L_7$ has genus $2_{\II}^2 3^2$ as before. Then 
$$
L < A_2 \oplus E_8 \oplus L_7 < 3A_2 \oplus E_8 < E_6\oplus E_8
$$ 
using $L_7 < 2A_2$ and $3A_2<E_6$. The overlattice $E_6 \oplus E_8$ of $L$ is forbidden component (70). 
\item $A_{2,2}^5 B_{2,2}^2$: $C=3/2$, $L$ is bounded by
$$
5A_2(2)\oplus 4A_1(2) < L < 5A_2'(2) \oplus 4A_1.
$$
The projection $K$ of $L$ to the first component $5A_2'(2)$ is even and integral, because $L$ itself is even and $4A_1$ is even. Therefore, $L<K\oplus 4A_1$, where $K$ is an even sublattice of $5A_2'(2)$ containing $5A_2(2)$. Then $K$ is contained in the maximal even overlattice $A_2\oplus E_8$ of $5A_2(2)$. Therefore,
$$
L<A_2\oplus 4A_1\oplus E_8. 
$$
This overlattice contains forbidden component (66), i.e. $3A_1\oplus A_2\oplus E_8$. 
\item $A_{1,1}^5 A_{3,2} C_{3,2}^2$: $C=2$, $L$ is bounded by
$$
5A_1\oplus A_3(2)\oplus 6A_1(2) < L < \ZZ^5(1/2)\oplus A_3'(2)\oplus 2A_3'(4).
$$
We see from the upper bound that $L<\ZZ^6(1/2)\oplus \ZZ^2\oplus 2A_3'(4)$. Then $L$ is contained in a maximal integral lattice of $\ZZ^6(1/2)\oplus \ZZ^2\oplus 2A_3'(4)$ which is in the genus of $\ZZ^8\oplus 2A_3'(4)$. Since $L$ is even, it is contained in the maximal even sublattice of that, which is forbidden component (72). 
\item $A_{1,1}^9 D_{5,4}$: $C=2$, $\bQ=9A_1\oplus D_5(4)<L$. Then $L$ is contained in a maximal even overlattice of $9A_1\oplus D_5(4)$, in the genus of $A_1\oplus D_5\oplus E_8$. This is forbidden component (69). 
\item $A_{1,1}^5 A_{3,2} G_{2,2}^3$: $C=2$, $L=3A_2(2)\oplus K$ with
$$
5A_1\oplus A_3(2) < K < \mathbb{Z}^5(1/2)\oplus A_3'(2).
$$
The maximal even overlattice of $5A_1\oplus A_3(2)$ is $E_8$. Using $2A_2(2)<2A_2$, we have
$$
L<E_8\oplus 3A_2(2) < E_8\oplus 2A_2\oplus A_2(2)
$$
This overlattice is forbidden component (67). 
\item $A_{1,1}^5 A_{3,2} D_{4,3} G_{2,2}$: $C=2$, $L=A_2(2)\oplus K$ with $5A_1\oplus A_3(2)\oplus D_4(3)<K$. The maximal even overlattice of $5A_1\oplus A_3(2)\oplus D_4(3)$ is in the genus of $E_8\oplus D_4$. Therefore, 
$$
L<A_2(2)\oplus E_8 \oplus D_4.
$$
This overlattice contains forbidden component (68), i.e. $E_8\oplus D_4\oplus A_2(2)$. 
\item $A_{1,1}^2 A_{3,2}^3 C_{3,2}$: $C=2$, $\bQ=2A_1\oplus 3A_3(2)\oplus 3A_1(2)<L$. Therefore, $L$ is contained in a maximal even overlattice of $2A_1\oplus 3A_3(2)\oplus 3A_1(2)$ in the genus $A_1\oplus D_5\oplus E_8$. This overlattice is forbidden component (69). 
\item $A_{2,1}^3 B_{2,1}^2 F_{4,3}$: $C=3$, $L=D_4(3)\oplus K$ with $3A_2\oplus 4A_1 < K$. The maximal even overlattice of $3A_2\oplus 4A_1$ is $E_6\oplus D_4$. Therefore, 
$$
L<E_6\oplus D_4\oplus D_4(3)<E_6\oplus 2D_4<E_6\oplus E_8
$$
using the rules $D_4(3)<D_4$ and $2D_4<E_8$. The overlattice $E_8\oplus E_6$ of $L$ is forbidden component (70). 
\end{enumerate}

\subsection{Rank 16} There are \textbf{5} extraneous root systems of rank $16$.
\begin{enumerate}[resume]
\item $A_{1,1}^{10} C_{3,2}^2$: $C=2$, $L$ is bounded by
$$
10A_1\oplus 6A_1(2) < L < \ZZ^{10}(1/2)\oplus 2A_3'(4).
$$
Then $L$ is contained in a maximal integral sublattice of $\ZZ^{10}(1/2)\oplus 2A_3'(4)$ which is of genus $\ZZ^{10}\oplus 2A_3'(4)$. Therefore, $L$ is contained in the maximal even sublattice of that, which is in the genus of $2A_3\oplus D_7$ and contains forbidden component (54), i.e $A_3\oplus D_7$. 
\item $A_{1,1}^{10} G_{2,2}^3$: $C=2$, $L=3A_2(2)\oplus K$ with $10A_1<K$. Any maximal even overlattice of $10A_1$ is in the genus of $E_8\oplus 2A_1$. Therefore,
$$
L<E_8\oplus 2A_1 \oplus 3A_2(2). 
$$
This overlattice contains forbidden component (58), i.e. $A_1\oplus A_2(2)\oplus E_8$. 
\item $A_{1,1}^{7} A_{3,2}^2 C_{3,2}$: $C=2$, $\bQ=7A_1\oplus 2A_3(2)\oplus 3A_1(2)$. Then $L$ is contained in a maximal even overlattice of $\bQ$ which lies in the genus of $A_1\oplus D_7\oplus E_8$. This overlattice is forbidden component (74). 
\item $A_{1,1}^{10} D_{4,3} G_{2,2}$: $C=2$, $L=A_2(2)\oplus K$ with $10A_1\oplus D_4(3) < K$.  The maximal even overlattices of $10A_1\oplus D_4(3)$ are all in the genus of $E_8\oplus D_6$. Therefore,
$$
L<E_8\oplus D_6 \oplus A_2(2). 
$$
This is forbidden component (73). 
\item $A_{3,1} A_{7,2} G_{2,1}^3$: $C=4$, $L=3A_2\oplus K$ with $A_3\oplus A_7(2) < K$.  The maximal even overlattices of $A_3\oplus A_7(2)$ are all in the genus of $2A_1\oplus E_8$.  Therefore,
$$
L<2A_1\oplus 3A_2\oplus E_8<2A_1\oplus E_6\oplus E_8. 
$$
This overlattice contains forbidden component (70), i.e. $E_8\oplus E_6$. 
\end{enumerate}

\subsection{Rank 18} There are \textbf{7} extraneous root systems of rank $18$.
\begin{enumerate}[resume]
\item $A_{1,1}^9 A_{3,2}^3$: $C=2$, $\bQ=9A_1\oplus 3A_3(2)$, and $\bQ<L$. The maximal even overlattice of $\bP$ is in the genus $2A_1\oplus 2E_8$. Then $L$ is contained in a certain lattice $K$ in the genus $2A_1\oplus 2E_8$. Since there is a unique (up to powers) reflective Borcherds product on 
$$
2U\oplus K \cong 2U\oplus 2A_1\oplus 2E_8
$$
and this product has weight $42$, we have $L\neq K$. Note that $\mathrm{discr}(\bQ)=2^{24}$, and therefore $\mathrm{discr}(L)=2^a$ with an integer $a$. Therefore, $L$ is contained in an index-two sublattice of $K$, which is either in the genus of $2E_8\oplus 2A_1(2)$ or in the genus of $E_8\oplus D_8\oplus 2A_1$. These two lattices are respectively forbidden components (78) and (76), so neither case can occur.
\item $A_{1,1}^{12} A_{3,2} C_{3,2}$: $C=2$, $\bQ=12A_1\oplus A_3(2)\oplus 3A_1(2)$. Since $\bQ<L$, the lattice $L$ is contained in a maximal even overlattice of $\bQ$ which lies in the genus of $A_1\oplus A_1(2)\oplus 2E_8$. This overlattice is forbidden component (77). 
\item $A_{2,1}^5 D_{4,2}^2$: $C=3$, $\bQ=5A_2\oplus 2D_4(2)$, $\bP=5A_2'\oplus 2D_4$, and $\bQ<L<\bP$. The maximal even overlattices of $\bQ$ lie in the genus of $A_2\oplus 2E_8$. Since $L$ is contained in a certain lattice $K$ of this genus and also in $\bP$, it follows that $L$ is contained in an index $4$ sublattice of $K$ in the genus of $A_2\oplus 2D_4\oplus E_8$. This overlattice is forbidden component (75). 
\item $A_{2,1} B_{2,1}^6 D_{4,2}$: $C=3$, $\bP=A_2'\oplus \ZZ^{12}\oplus D_4$. Since $L$ is an even sublattice of $\bP$, we obtain
$$
L<A_2\oplus D_{12}\oplus D_4.
$$
Note that 
$$
2U\oplus A_2\oplus D_{12}\oplus D_4\cong 2U\oplus A_2\oplus 2D_4\oplus E_8.
$$
This case can then be excluded using forbidden component (75), i.e. $A_2 \oplus 2D_4 \oplus E_8$. 
\item $A_{3,1} C_{3,1}^5$: $C=4$, $\bQ=A_3\oplus 15A_1$, and $\bQ<L$. Then $L$ is contained in a maximal even overlattice of $\bQ$ which lies in the genus of $A_1\oplus A_1(2)\oplus 2E_8$. This overlattice is forbidden component (77). 
\item $A_{3,1} C_{3,1} G_{2,1}^6$: $C=4$, $L=6A_2\oplus K$ with $A_3\oplus 3A_1 < K$. The maximal even overlattice of $A_3\oplus 3A_1$ is $D_5\oplus A_1$. Therefore, 
$$
L<D_5\oplus A_1\oplus 6A_2. 
$$
This overlattice contains forbidden component (57), i.e. $D_5\oplus 3A_2$.
\item $A_{3,1} C_{3,1}^3 G_{2,1}^3$: $C=4$, $L=3A_2\oplus K$ with $A_3\oplus 9A_1 < K$. The maximal even overlattices of $A_3\oplus 9A_1$ are all in the genus of $E_8\oplus A_3\oplus A_1$. Therefore, 
$$
L<E_8\oplus A_3\oplus A_1 \oplus 3A_2 < E_8\oplus E_6\oplus A_3\oplus A_1.
$$
This overlattice contains forbidden component (70), i.e. $E_8\oplus E_6$. 
\end{enumerate}

\subsection{Rank larger than 18} There are \textbf{10} extraneous root systems of rank greater than 18, but they cannot admit reflective Borcherds products of singular weight by \cite[Theorem 1.5]{WW23}.

\chapter{The classification of symmetric root systems}\label{sec:exclude-root}
In this chapter, we prove the symmetric case of Theorem \ref{th:non-existence}:

\begin{theorem}\label{th:non-existence-symmetric}
If $2U\oplus L$ has a symmetric reflective Borcherds product of singular weight whose Jacobi form input has non-negative $q^0$-term, then the associated semi-simple Lie algebra $\mathfrak{g}$ defined in Theorem \ref{th:root-system} satisfies $1/C \in \mathbb{Z}$. 
\end{theorem}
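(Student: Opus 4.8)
The plan is to mirror the anti-symmetric argument of Chapter~\ref{sec:exclude-anti-root}. By Proposition~\ref{cor:solutions} there are exactly $17$ semi-simple Lie algebras solving Equation~\eqref{symC}, and scanning the list shows that precisely $12$ of them satisfy $1/C\in\ZZ$: the eight algebras with $C=1$ together with the four exotic algebras $A_{1,16},A_{1,8}^2,A_{1,4}^4,A_{2,9}$. It therefore suffices to exclude the five remaining solutions
$$
A_{2,4}B_{2,4},\quad A_{2,2}D_{4,4},\quad A_{2,2}^2B_{2,2}^2,\quad A_{4,2}C_{4,2},\quad A_{6,2}B_{4,2},
$$
whose values $C=\tfrac34,\tfrac32,\tfrac32,\tfrac52,\tfrac72$ give $1/C=\tfrac43,\tfrac23,\tfrac23,\tfrac25,\tfrac27\notin\ZZ$.

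First I would record the symmetric analogue of the reduction underlying Argument~\ref{Argument}. I claim that if $2U\oplus L$ admits a symmetric reflective Borcherds product of singular weight, then $2U\oplus L_1$ admits a reflective Borcherds product of some weight for every even overlattice $L_1$ of $L$, and that quasi-pullback along a splitting $2U\oplus L_1\cong 2U\oplus L_2\oplus L_3$ yields a reflective product on $2U\oplus L_2$. This should follow by the same mechanism as \cite[Lemma~3.3]{Wan23a}: passing to an even overlattice corresponds to summing the vector-valued input over an isotropic subgroup of the discriminant form, which commutes with the Borcherds lift and carries reflective divisors to reflective divisors. The symmetric/anti-symmetric distinction is irrelevant to the final conclusion, since a forbidden component admits no reflective product of \emph{any} weight. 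Hence each extraneous $\mathfrak{g}$ is excluded as soon as one exhibits an even overlattice $K$ of the underlying lattice $L$ that splits off a forbidden component in the sense of Chapter~\ref{sec:exclude-anti-root}.

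The geometric input is Theorem~\ref{th:root-system}: $L$ is constrained by $\bQ<L<\bP$ and by the integrality of $L(C)$. In each of the five cases the hypothesis $1/C\notin\ZZ$ means $h_j^\vee\nmid k_j$ for the type-$A$ factor, which makes $P_j^\vee(k_j)=A_{n}'(k_j)$—and hence $\bP$—non-integral; following Argument~\ref{Argument}(b), $L$ is then forced into a maximal even overlattice of $\bQ=\oplus_jQ_j^\vee(k_j)$, among which I would locate a forbidden summand. For example $A_{6,2}B_{4,2}$ has $\bQ=A_6(2)\oplus D_4(2)$, and the admissible even overlattices are arranged to contain a low-rank rescaled root lattice already shown forbidden in Chapter~\ref{sec:exclude-anti-root}; the other four are handled the same way, invoking the forbidden-component list there and verifying with Normaliz~\cite{Normaliz} any component not already on it. Throughout, it is the integrality of $L(C)$ rather than of $L$ that pins $L$ down, so the search is carried out after passing between $L$, its rescaling $L(C)$ lying between $\oplus_jQ_j^\vee(h_j^\vee)$ and $\oplus_jP_j^\vee(h_j^\vee)$, and the final overlattice $K$.

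The argument is unavoidably a finite case check: as noted in the remark following Theorem~\ref{th:G=B}, there is no known conceptual reason for the divisibility $h_j^\vee\mid k_j$ (equivalently $1/C\in\ZZ$), so I do not expect a uniform structural proof. The genuine difficulty is therefore the bookkeeping forced by $C\notin\ZZ$: one must simultaneously keep $K$ an even overlattice of $L$ and guarantee that it splits off a summand admitting \emph{no} reflective product of any weight. This is exactly the step that succeeds for the five extraneous algebras while failing for the twelve admissible ones, and getting the rescalings and genus computations right for each of the five is where the real work lies.
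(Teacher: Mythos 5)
Your reduction to excluding the five solutions of Equation \eqref{symC} with $1/C\notin\ZZ$ is the right frame, and your reading of Proposition \ref{cor:solutions} and of the bounds $\bQ<L<\bP$ matches the paper. But the load-bearing claim in your proposal --- ``the symmetric/anti-symmetric distinction is irrelevant to the final conclusion, since a forbidden component admits no reflective product of \emph{any} weight'' --- is a genuine gap, and it is precisely the point where the paper has to work differently from Chapter \ref{sec:exclude-anti-root}. For three of the five extraneous cases the paper does \emph{not} produce a forbidden component: for $A_{2,4}B_{2,4}$ it only verifies that $2U\oplus A_2(4)\oplus 2A_1(2)$ carries no \emph{symmetric} reflective product, for $A_{2,2}D_{4,4}$ that $2U\oplus A_2(2)\oplus K(4)$ carries no symmetric reflective product, and for $A_{6,2}B_{4,2}$ that $2U\oplus A_6(2)$ carries no symmetric reflective product. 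If the lattices arising in these cases admitted no reflective products at all, the paper would have said so (as it does for $A_{4,2}C_{4,2}$, where $2U\oplus A_4(2)\oplus D_4$ has none of any weight, and for $A_{2,2}^2B_{2,2}^2$, where the forbidden component (39) applies); the restriction to symmetric products indicates that anti-symmetric reflective products on these lattices either exist or could not be excluded, so your Normaliz test for ``forbidden'' status would simply fail there. Your search ``among maximal even overlattices of $\bQ$'' is also coarser than what is needed: the paper pins $L$ down to two or three explicit possibilities in each case, sometimes via the integrality of $L(1/2)$ extracted from $L$ even and $L(C)$ integral with $C$ half-integral, before running any computation.

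Once the computations only exclude \emph{symmetric} products, your overlattice step must preserve symmetry, and this is not covered by the mechanism of \cite[Lemma 3.3]{Wan23a} that you invoke. That lemma leans on $f(-1,0)=1$ to see that the arrow-operator image $f|\!\uparrow$ is nonzero; in the symmetric case $f(-1,0)=0$, so nonvanishing is exactly the delicate point. The paper's Lemma \ref{lem:sym-overlattice} handles it by fixing a reflective divisor $\lambda^\perp$ with $\lambda\in M'=(U\oplus K)'$ --- i.e.\ supported in the part of the lattice unchanged by the overlattice extension --- and using non-negativity of the principal-part coefficients together with a two-case analysis on whether $\ord(\lambda)=d$ or $d/2$ to show $c'(\lambda,-1/d)>0$ (or $c'(2\lambda,-4/d)+c'(\lambda,-1/d)>0$); it then observes that the coefficient of $q^{-1}e_0$ is unchanged, so the lifted product stays symmetric. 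Your ``summing over an isotropic subgroup commutes with the lift'' sketch omits both the nonvanishing argument and the symmetry preservation, and without them the case $A_{2,4}B_{2,4}$ (exclude $L=A_1(4)$-type sublattices via the overlattice $A_2(4)\oplus 2A_1(2)$), the case $A_{2,2}D_{4,4}$ (the overlattice $A_2(2)\oplus K(4)$ built from a half $2$-root of $D_4$), and the pullback step for $A_{6,2}B_{4,2}$ all break down. So the proposal needs two repairs: a symmetric-preserving version of the overlattice lemma, and computational checks targeted at symmetric reflective products rather than at forbidden components.
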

The Lie algebras $\mathfrak{g}$ as above are listed in Tables \ref{tab:sym-cycle} and \ref{tab:extra}. 

To prove the theorem we need the following generalization of \cite[Lemma 3.3]{Wan23a}. 

\begin{lemma}\label{lem:sym-overlattice}
Let $U\oplus K\oplus L$ be an even lattice of signature $(l,2)$ with $l\geq 3$ and set $M=U\oplus K$. Suppose that $M\oplus L$ has a reflective Borcherds product which vanishes on some $\lambda^\perp$ with $\lambda \in M'$. Let $L_1$ be an even overlattice of $L$. Then $M\oplus L_1$ also has a reflective Borcherds product vanishing on $\lambda^\perp$.  
\end{lemma}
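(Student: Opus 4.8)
The plan is to transport the given product to the finer‑index lattice by pushing its input form forward along the inclusion $L\hookrightarrow L_1$. Write $N_0=M\oplus L$ and $N_1=M\oplus L_1$; since $L_1$ is even, $H:=L_1/L$ is an isotropic subgroup of the discriminant form $D_{N_0}$, and $D_{N_1}\cong H^\perp/H$. Let $f=\sum_{\gamma}f_\gamma e_\gamma$ be the weakly holomorphic vector‑valued modular form of weight $1-l/2$ for $\rho_{N_0}$ whose Borcherds lift is the given reflective product $F$. I would apply the arrow‑down (averaging) map $f\mapsto g:=f{\downarrow}$, defined by $g_{\bar\gamma}(m)=\sum_{\delta\in H}f_{\gamma+\delta}(m)$ for $\gamma\in H^\perp$, which produces a weakly holomorphic modular form for $\rho_{N_1}$, and then form $\Borch(g)$ on $N_1$ by Theorem \ref{th:product}. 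This is the natural generalization of \cite[Lemma 3.3]{Wan23a} (the case $M=2U$); the new feature is the tracking of the distinguished divisor $\lambda^\perp$.

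The key step is that reflectivity is inherited. The multiplicity of a mirror $\nu^\perp$ (with $\nu\in N_1'$ primitive and $\nu^2>0$) in the divisor of $\Borch(g)$ is $\sum_{k\ge 1}\sum_{\delta\in H}f_{k\nu+\delta}(-k^2\nu^2/2)$. If one of these terms is nonzero, then $F$ vanishes on $\mu'^\perp$ for some $\mu'\in N_0'$ with $\mu'+N_0=k\nu+\delta$ in $D_{N_0}$ and $\mu'^2=k^2\nu^2$; as $F$ is reflective this forces $\mu'^2=2/s$ and $s\mu'\in N_0$ for some positive integer $s$. Because $\delta\in N_1$, we have $\mu'\equiv k\nu \pmod{N_1}$, so writing $k\nu=\mu'+w$ with $w\in N_1$ gives $sk^2\nu=k(s\mu')+sk\,w\in N_1$, while $\nu^2=\mu'^2/k^2=2/(sk^2)$. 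By the reflectivity criterion for rational quadratic divisors, $\nu^\perp$ is reflective for $N_1$. Hence the divisor of $\Borch(g)$ is supported on reflective rational quadratic divisors of $N_1$, independently of any positivity consideration.

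It remains to see that $\Borch(g)$ is holomorphic and actually vanishes on $\lambda^\perp$. Here the hypothesis $\lambda\in M'$ is essential: $\lambda$ is orthogonal to $H\subset L_1\otimes\QQ$, so in the multiplicity $\sum_{k}\sum_{\delta\in H}f_{k\lambda+\delta}(-k^2\lambda^2/2)$ the $\delta=0$ summand is exactly the (positive) order of $F$ along $\lambda^\perp$, and no other $\delta$ perturbs the $M$‑component. The remaining summands are non‑negative because the principal part of $f$ is non‑negative for the reflective products at issue — for singular weight this is the simple‑zero statement \cite[Theorem 1.2]{WW23}, which makes all relevant coefficients $0$ or $1$ — and the same non‑negativity shows every mirror multiplicity of $\Borch(g)$ is a non‑negative integer, so $\Borch(g)$ is holomorphic; its order along $\lambda^\perp$ is then at least that of $F$, hence positive. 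I expect the main difficulty to be precisely this control of effectivity under the averaging $f{\downarrow}$: one must exclude cancellation among the $H$‑translated coefficients, and the cleanest route is to combine the orthogonality $\lambda\perp H$ with the non‑negativity of the reflective input, rather than trying to equate the two regularized theta lifts $\Phi_{N_1}(g)$ and $\Phi_{N_0}(f)$, which differ by cross terms and are not equal in general.
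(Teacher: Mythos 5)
Your overall strategy is the same as the paper's: your ``arrow-down averaging'' $f\mapsto g$ is precisely the operator $f|\!\uparrow_{M\oplus L}^{M\oplus L_1}$ of \cite[Lemma 5.6]{Bru02} that the paper applies; your lattice computation showing that a nonzero averaged singular coefficient forces $\nu^2=2/(sk^2)$ with $sk^2\nu\in M\oplus L_1$ is the paper's (briefly indicated) verification that the new product is reflective; and the use of $\lambda\in M'$, hence $\lambda\perp H$, to isolate the $\delta=0$ contribution is exactly the paper's key observation. However, there is a genuine gap in your effectivity step. You assert that ``the principal part of $f$ is non-negative for the reflective products at issue,'' justified by the simple-zero theorem \cite[Theorem 1.2]{WW23} --- but that theorem applies only to products of \emph{singular} weight, whereas the lemma is stated, and deliberately applied in Chapter 11 of the paper, for reflective Borcherds products of \emph{arbitrary} weight on the overlattice (this generality is the whole point of the lemma). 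Moreover, even granting holomorphy of $F$, coefficient-wise non-negativity of the principal part is simply not available: writing $\lambda^2=2/d$, the order of $\lambda$ in the discriminant group may be $d/2$ (with $d/2$ even), and in that case only the combination $c(2\lambda,-4/d)+c(\lambda,-1/d)$ is a divisor multiplicity and hence non-negative; the individual coefficient $c(\lambda,-1/d)$ may a priori be negative. Your claimed chain --- every averaged summand is $\geq 0$, so $\Borch(g)$ is holomorphic and its order along $\lambda^\perp$ dominates that of $F$ --- fails at exactly this point, and you flagged this difficulty yourself without actually closing it.

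The paper closes the gap by a case distinction on $\ord(\lambda)$, using the structure of principal parts of reflective forms from \cite[Lemma 2.1]{Wan23a}. If $\ord(\lambda)=d$, every coefficient $c(x,-1/d)$ with $\ord(x)=d$ is non-negative, so $c'(\lambda,-1/d)\geq c(\lambda,-1/d)>0$. If $\ord(\lambda)=d/2$, one groups the classes $x\in(\lambda+M\oplus L_1)/(M\oplus L)$ into order-$d$ vectors (each contributing $c(x,-1/d)\geq 0$) and order-$d/2$ vectors $y$ paired with $2y$ (each pair contributing $c(2y,-4/d)+c(y,-1/d)\geq 0$), which yields $c'(2\lambda,-4/d)+c'(\lambda,-1/d)\geq c(2\lambda,-4/d)+c(\lambda,-1/d)>0$. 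This pairing of $y$ with $2y$ is the missing idea in your proposal; with it inserted, both the non-vanishing of the averaged input along the classes of $\lambda$ and the holomorphy of the lift follow, and the rest of your argument coincides with the paper's proof.
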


\begin{proof}
Let $f$ be the input of the Borcherds product on $M\oplus L$ as a vector-valued modular form. Recall that the principal part of $f$ was described in \cite[Lemma 2.1]{Wan23a}. Note that 
$$
M\oplus L < M\oplus L_1 < M'\oplus L_1' < M'\oplus L'.
$$
By applying the arrow operator of \cite[Lemma 5.6]{Bru02} to $f$, we obtain a weakly holomorphic modular form of weight $1-l/2$ for the Weil representation $\rho_{M\oplus L_1}$ that is given by
$$
f|\uparrow_{M\oplus L}^{M\oplus L_1}\; = \sum_{\gamma \in (M'\oplus L_1') / (M\oplus L_1)}\; \sum_{n\in \ZZ-\gamma^2/2} \; \sum_{x \in (\gamma+M\oplus L_1)/(M\oplus L)} c(x,n)q^n e_\gamma,
$$
where $c(x,n)$ are the coefficients of $q^n e_x$ in the Fourier expansion of $f$. We will write 
$$
f|\uparrow_{M\oplus L}^{M\oplus L_1}\; = \sum_{\gamma \in (M' \oplus L_1') / (M \oplus L_1)} c'(\gamma, n) q^n e_\gamma.
$$

It is not hard to see that the nonzero coefficients of $f|\uparrow_{M\oplus L}^{M\oplus L_1}$ are reflective (similarly to the proof of \cite[Lemma 3.3]{Wan23a}); it is less obvious that $f|\uparrow_{M\oplus L}^{M\oplus L_1}$ is not identically zero. Suppose without loss of generality that $\lambda$ is primitive in $M'$, and write $\lambda^2=2/d$ where $d \in \mathbb{N}$. We have two cases to consider. 
\begin{enumerate}
\item[(a)] If the order of $\lambda$ in the discriminant group of $M\oplus L$ is $\ord(\lambda)=d$, then we know by \cite[Section 2.1 and Lemma 3.3]{Wan23a} that $c(\lambda, -1/d) > 0$ and thus
$$
c'(\lambda, -1/d) = \sum_{\substack{x \in (\lambda+M\oplus L_1)/(M\oplus L) \\ \ord(x)=d}} c(x,-1/d) \geq c(\lambda, -1/d) > 0,
$$
because every $c(x,-1/d)$ is non-negative. 
\item[(b)] If $\ord(\lambda)\neq d$, then $\ord(\lambda)=d/2$ and $d/2$ is even. In this case, 
$$
c(2\lambda, -4/d) + c(\lambda, -1/d) > 0
$$
and therefore
\begin{align*}
c'(2\lambda, -4/d)+c'(\lambda, -1/d) =& \sum_{\substack{x \in (\lambda+M\oplus L_1)/(M\oplus L)\\ \ord(x)=d}} c(x,-1/d) \\
&+ \sum_{\substack{y \in (\lambda+M\oplus L_1)/(M\oplus L)\\ \ord(y)=d/2}} \Big( c(2y,-4/d) + c(y,-1/d)\Big) \\
\geq&\, c(2\lambda, -4/d) + c(\lambda, -1/d) > 0,
\end{align*}
because $c(x,-1/d)\geq 0$ and $c(2y,-4/d) + c(y,-1/d)\geq 0$. 
\end{enumerate}
In particular, $\Borch(f|\uparrow_{M\oplus L}^{M\oplus L_1})$ is a  reflective Borcherds product on $M\oplus L_1$ that vanishes on $\lambda^\perp$. Note that the coefficients of $q^{-1}e_0$ in the Fourier expansions of $f$ and $f|\uparrow_{M\oplus L}^{M\oplus L_1}$ are the same, so $\Borch(f|\uparrow_{M\oplus L}^{M\oplus L_1})$ remains symmetric if $\Borch(f)$ was. 
\end{proof}

\begin{proof}[Proof of Theorem \ref{th:non-existence-symmetric}]
To prove the theorem we have to rule out the $5$ extraneous semi-simple $\mathfrak{g}$ of symmetric type with non-integral $1/C$. We do this by cases. 
\begin{enumerate}
\item $A_{2,4}B_{2,4}$: The lattice $L$ is bounded by
$$
A_2(4)\oplus 2A_1(4) < L < A_2'(4)\oplus 2A_1(2). 
$$
Since $L$ is even, we conclude
$$
A_2(4)\oplus 2A_1(4) < L < A_2(4)\oplus 2A_1(2).
$$
It follows that $L=A_2(4)\oplus 2A_1(4)$ or $A_2(4)\oplus 2A_1(2)$. By a direct calculation, we can prove that $2U\oplus A_2(4)\oplus 2A_1(2)$ has no symmetric reflective Borcherds product of any weight. Lemma \ref{lem:sym-overlattice} then shows that $2U\oplus L$ has no symmetric reflective Borcherds products.

\item $A_{2,2}D_{4,4}$: The lattice $L$ is bounded by
$$
A_2(2)\oplus D_4(4) < L < A_2'(2)\oplus D_4(2),
$$
and further by 
$$
A_2(2)\oplus D_4(4) < L < A_2(2)\oplus D_4(2). 
$$
Since both $L$ and $L(3/2)$ are integral, $L(1/2)$ is also integral. Since
$$
A_2\oplus D_4(2) < L(1/2) < A_2\oplus D_4,
$$
$L(1/2)$ can only be $A_2\oplus D_4(2)$, $A_2\oplus 4A_1$ or $A_2\oplus D_4$, and therefore $L=A_2(2)\oplus D_4(4)$, $A_2(2)\oplus 4A_1(2)$ or $A_2(2)\oplus D_4(2)$. We were able to check by a direct calculation that there are no symmetric reflective Borcherds products of singular weight on $2U\oplus L$ in the latter two cases; in the first case, the discriminant was prohibitively large and we needed a more subtle argument. Let $v_2$ be a $2$-root of $D_4$ and write $K$ for the lattice generated by $D_4$ and $v_2/2$, such that $K(4)$ is an even lattice of discriminant $4^4 = 256$. We were able to compute that there are no symmetric reflective Borcherds products of any weights on $2U\oplus A_2(2)\oplus K(4)$. By Lemma \ref{lem:sym-overlattice}, there are also no symmetric reflective Borcherds products of any weight on $2U\oplus A_2(2)\oplus D_4(4)$.

\item $A_{2,2}^2 B_{2,2}^2$: The lattice $L$ is bounded by
$$
2A_2(2)\oplus 4A_1(2) < L < 2A_2'(2)\oplus 4A_1.
$$
Since both $L$ and $L(3/2)$ are integral, $L(1/2)$ is also integral. From
$$
2A_2\oplus 4A_1 < L(1/2) < 2A_2'\oplus \ZZ^4,
$$
and the integrality of $L(1/2)$, it follows that
$$
2A_2\oplus 4A_1 < L(1/2) < 2A_2\oplus \ZZ^4.
$$
This forces $L(1/2)$ to be one of $2A_2\oplus 4A_1$, $A_2\oplus 2A_1\oplus \ZZ^2$, $2A_2\oplus D_4$ or $2A_2\oplus \ZZ^4$. Therefore, $L=2A_2(2)\oplus 4A_1(2)$, $2A_2(2)\oplus 2A_1(2)\oplus 2A_1$, $2A_2(2)\oplus D_4(2)$ or $2A_2(2)\oplus 4A_1$.  Here, $2A_2(2)\oplus D_4$ was the forbidden component (39) - there is no reflective Borcherds product (symmetric or anti-symmetric) of any weight on $2U\oplus 2A_2(2)\oplus D_4$. Since $L$ is of type $2A_2(2)\oplus K$ and $K<D_4$, we can use Lemma \ref{lem:sym-overlattice} to conclude that $2U\oplus L$ has no symmetric reflective Borcherds products of any weight.

\item $A_{4,2} C_{4,2}$: The lattice $L$ is bounded by
$$
A_4(2)\oplus 4A_1(2) < L < A_4'(2)\oplus D_4(2).
$$
Since $L$ is even, we have
$$
A_4(2)\oplus 4A_1(2) < L < A_4(2)\oplus D_4(2).
$$
It follows that $L=A_4(2)\oplus 4A_1(2)$ or $A_4(2)\oplus D_4(2)$. By direct calculation, we were able to verify that there are no reflective Borcherds products of any weight on $2U\oplus A_4(2)\oplus D_4$. Lemma \ref{lem:sym-overlattice} then shows that $2U\oplus L$ also has no symmetric reflective Borcherds products of any weight.

\item $A_{6,2} B_{4,2}$: The lattice $L$ is bounded by
$$
A_6(2)\oplus D_4(2) < L < A_6'(2)\oplus 4A_1.
$$
Since $L$ is even, we have
$$
A_6(2)\oplus D_4(2) < L < A_6(2)\oplus 4A_1.
$$
It follows that $L=A_6(2)\oplus D_4(2)$ or $A_6(2)\oplus 4A_1$. By direct calculation, we were able to verify that there are no symmetric reflective Borcherds products of any weight on $2U\oplus A_6(2)$. Using the pullback from $2U\oplus L$ to $2U\oplus A_6(2)$, this rules out the root system $A_{6,2} B_{4,2}$.
\end{enumerate}
\end{proof}

\chapter{Application: anti-symmetric Siegel paramodular forms of weight 3}\label{sec:paramodular}

The restrictions of reflective Borcherds products of singular weight on appropriate lattices yield anti-symmetric Siegel paramodular forms of degree $2$ and weight $3$. This gives an application of our previous results. 

Siegel paramodular forms of degree two and level $t$ are holomorphic functions on the genus-two Siegel upper half space
$$
\HH_2=\left\{Z=\begin{psmallmatrix}
\tau & z \\ 
  z & \omega    
\end{psmallmatrix}
\in M(2,\CC): \im Z>0\right\}
$$
which are modular under the level $t$ paramodular group
$$
\Gamma_t=\left(\begin{array}{cccc}
  * & t* & * & * \\ 
  * & * & * & */t \\ 
  * & t* & * & * \\ 
  t* & t* & t* & *
  \end{array}   \right) \cap \Sp_2(\QQ),\quad \text{all }\  *\in \ZZ.
$$
These can be realized as modular forms on orthogonal groups of the  lattice $2U\oplus A_1(t)$ of signature $(3,2)$. More precisely, modular forms on $\widetilde{\Orth}^+(2U\oplus A_1(t))$ correspond exactly to Siegel paramodular forms that are modular under the normal extension 
$$
\Gamma_t^+=\Gamma_t\cup \Gamma_t V_t,\qquad V_t=\frac{1}{\sqrt{t}}\left(
\begin{matrix}
0 & t & 0 & 0 \\ 
-1 & 0 & 0 & 0 \\ 
0 & 0 & 0 & 1 \\ 
0 & 0 & -t & 0
\end{matrix}\right).
$$
Let $\chi_t: \Gamma_t^+\to \{\pm 1\}$ be the unique nontrivial character with kernel $\Gamma_t$. Then $M_k(\Gamma_t)$ is decomposed into the direct sum of plus and minus $V_t$-eigenspaces, that is,
$$
M_k(\Gamma_t)=M_k(\Gamma_t^+)\oplus M_k(\Gamma_t^+, \chi_t).
$$
Moreover, symmetric modular forms of weight $k$ on $\widetilde{\Orth}^+(2U\oplus A_1(t))$ correspond to $M_k(\Gamma_t^+, \chi_t^{k})$. Therefore, we call Siegel paramodular forms in $M_k(\Gamma_t^+, \chi_t^{k})$ \textit{symmetric} and Siegel paramodular forms in $M_k(\Gamma_t^+, \chi_t^{k+1})$ \textit{anti-symmetric}. For a paramodular eigenform, the distinction between symmetry and anti-symmetry is exactly the sign in the functional equation of the associated $L$-function.

Anti-symmetric Siegel paramodular forms of weight $3$ have applications to birational geometry. Let $t$ be squarefree. By \cite[Proposition 1.5]{GH98}, 
the modular variety  
$${\mathcal A}_t^+=\Gamma_t^+\setminus \HH_2$$
is isomorphic to {\it the moduli space of polarized $K3$ surfaces with a polarization of type} $\latt{2t}\oplus\, 2E_8(-1)$.
\cite[Theorem 1.5]{GH98} further yields that if $t$ is prime then the above modular variety is actually {\it the moduli space of
Kummer surfaces associated to $(1,t)$-polarized abelian surfaces}. 
We know from \cite{Fre83} that the geometric genus of the variety ${\mathcal A}_t^+$ is given by
$$
h^{3,0}\left(\overline{{\mathcal A}_t^+}\right)
=\hbox{dim}_\CC\, S_3(\Gamma_t^+).
$$ 
Recently, Ibukiyama \cite{Ibu22} found a dimension formula for $S_3(\Gamma_t^+)$ for squarefree $t$. In particular, he proved \cite[Proposition 6.1]{Ibu22} that $\hbox{dim}_\CC\, S_3(\Gamma_t^+)>0$ for prime $t$ if and only if $t>163$ and $t\neq 179, 181, 191, 193, 199, 211, 229, 241$. In particular the moduli space ${\mathcal A}_t^+$ always has positive geometric genus if $t>241$ is prime. 

In this chapter, we construct a large infinite family of anti-symmetric Siegel paramodular forms of weight $3$ that seems to be new. 

Recall that there are exactly two semi-simple Lie algebras $\mathfrak{g}$ of rank $6$ with integral $C$ in Schellekens' list. Let $z\in \CC$. For any nonzero $v\in L_\mathfrak{g}$ with $\vartheta_\mathfrak{g}(\tau,vz)\neq 0$, the pullback of the corresponding singular-weight reflective Borcherds product $\Borch(\chi_V)$ along the embedding
$$
2U\oplus \ZZ v \hookrightarrow 2U\oplus L_\mathfrak{g}
$$
defines a nonzero anti-symmetric paramodular form of weight $3$ and level $v^2/2$. The case $\mathfrak{g}=A_{6,7}$ was considered in \cite[Theorem 2.1]{GW19}. Here we use the other Lie algebra, $\mathfrak{g}=A_{1,2}D_{5,8}$. In this case, the orbit lattice is
$$
L_\mathfrak{g}=A_1(2)\oplus D_5'(8).
$$
By restricting to vectors in $L_\mathfrak{g}$ we obtain the following theorem.

\begin{theorem}\label{th:Siegel-3}
For $\mathbf{a}=(a_1,a_2,a_3,a_4,a_5,a_6)\in \ZZ^6$,
the theta block
\begin{equation}
\begin{split}
\Theta_\mathbf{a}=\,&\vartheta_{a_1}\vartheta_{a_2}\vartheta_{a_3}\vartheta_{a_4}
\vartheta_{a_5}\vartheta_{a_1+a_2}\vartheta_{a_1+a_2+a_3}
\vartheta_{a_1+a_2+a_3+a_4}\vartheta_{a_2+a_3}\\
&\vartheta_{a_2+a_3+a_4}
\vartheta_{a_3+a_4}\vartheta_{a_1+a_2+a_3+a_5}\vartheta_{a_2+a_3+a_5}\vartheta_{a_3+a_5}\\
&\vartheta_{a_1+2a_2+2a_3+a_4+a_5}\vartheta_{a_1+a_2+2a_3+a_4+a_5}\vartheta_{a_1+a_2+a_3+a_4+a_5}\\
&\vartheta_{a_2+2a_3+a_4+a_5}\vartheta_{a_2+a_3+a_4+a_5}\vartheta_{a_3+a_4+a_5}\vartheta_{2a_6} /\eta^{15} \\
=\,&q^2(\cdots)  \in  J_{3, N(\mathbf{a})}
\end{split}
\end{equation}
of type $\frac{21-\vartheta}{15-\eta}$ is a holomorphic Jacobi form of weight $3$ and index $N(\mathbf{a})$ for $A_1$, where 
$$
\vartheta_b(\tau,z):=\vartheta(\tau,bz), \quad b\in \ZZ,
$$
and the index $N(\mathbf{a})$ is half the sum of squares of the subscripts in $\vartheta_*$. If this theta block is not identically zero, then there 
exists an anti-symmetric Siegel paramodular form 
$F_{\mathbf{a}}$ of weight $3$ and 
level $N(\mathbf{a})$ with trivial character whose leading Fourier--Jacobi coefficient is exactly $\Theta_\mathbf{a}$. Moreover, if $N(\mathbf{a})$ is squarefree, then $F_{\mathbf{a}}$ is a cusp form.
\end{theorem}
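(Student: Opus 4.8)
The plan is to realize $F_{\mathbf{a}}$ as the restriction of the singular-weight reflective Borcherds product attached to $\mathfrak{g}=A_{1,2}D_{5,8}$ to a one-dimensional family of rational quadratic divisors. First I would record that, by Theorem~\ref{th:VOA}, the product $\Borch(\chi_V)$ for the holomorphic VOA $V$ with $V_1=A_{1,2}D_{5,8}$ is a reflective Borcherds product of singular weight $3=\rank(L_\mathfrak{g})/2$ on $2U\oplus L_\mathfrak{g}$, $L_\mathfrak{g}=A_1(2)\oplus D_5'(8)$, whose leading Fourier--Jacobi coefficient is the denominator $\vartheta_\mathfrak{g}$ of $\hat{\mathfrak{g}}$. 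Since $A_1\oplus D_5$ has rank $6$ and $1+20=21$ positive roots, $\vartheta_\mathfrak{g}=\eta^{-15}\prod_{\alpha\in\Delta_\mathfrak{g}^+}\vartheta(\tau,\latt{\alpha,\mathfrak{z}})$ is a quotient of $21$ theta factors by $\eta^{15}$. Writing $\mathfrak{z}=v_{\mathbf{a}}z$ for the vector $v_{\mathbf{a}}$ whose pairing $\latt{\alpha,v_{\mathbf{a}}}$ against each positive root is exactly the corresponding subscript in $\Theta_{\mathbf{a}}$ (the $20$ forms in $a_1,\dots,a_5$ being the positive roots of $D_5$ in the chosen basis, and $2a_6$ the $A_1$-root at level $2$), the restriction $\vartheta_\mathfrak{g}(\tau,v_{\mathbf{a}}z)$ is precisely $\Theta_{\mathbf{a}}(\tau,z)$. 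Thus $\Theta_{\mathbf{a}}$ is the pullback of a holomorphic Jacobi form of weight $3$; pullback preserves holomorphy and weight and turns the lattice index into the scalar index $\tfrac12\sum_\alpha\latt{\alpha,v_{\mathbf{a}}}^2=N(\mathbf{a})$, half the sum of squares of the subscripts. This is the first assertion.

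Next I would build $F_{\mathbf{a}}$. The vector $v_{\mathbf{a}}$ spans a rank-one sublattice $\ZZ v_{\mathbf{a}}\cong A_1(N(\mathbf{a}))$, inducing an embedding $2U\oplus A_1(N(\mathbf{a}))\hookrightarrow 2U\oplus L_\mathfrak{g}$ and a corresponding embedding of tube domains; I define $F_{\mathbf{a}}$ as the restriction of $\Borch(\chi_V)$ along it. Restriction preserves weight, so $F_{\mathbf{a}}$ is a weight-$3$ modular form on $\widetilde{\Orth}^+(2U\oplus A_1(N(\mathbf{a})))$, i.e. a Siegel paramodular form of weight $3$ and level $N(\mathbf{a})$ by the dictionary recalled at the start of the chapter. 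Its leading Fourier--Jacobi coefficient is the restriction of that of $\Borch(\chi_V)$, namely $\Theta_{\mathbf{a}}$; hence $F_{\mathbf{a}}\not\equiv 0$ exactly when $\Theta_{\mathbf{a}}\not\equiv 0$, which is the hypothesis. Because $\mathfrak{g}=A_{1,2}D_{5,8}$ lies in Schellekens' list it is of anti-symmetric type (Theorem~\ref{th:root-system}(1)), so $\Borch(\chi_V)$ is anti-invariant under $(\omega,\mathfrak{z},\tau)\mapsto(\tau,\mathfrak{z},\omega)$; this involution restricts to the analogous one on $2U\oplus\ZZ v_{\mathbf{a}}$, so $F_{\mathbf{a}}$ is anti-symmetric. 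Under the correspondence, anti-symmetry in weight $3$ means the character $\chi_t^{4}=\mathrm{id}$ is trivial, as claimed. This mirrors the $\mathfrak{g}=A_{6,7}$ case of \cite{GW19}.

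The remaining and hardest point is cuspidality when $t=N(\mathbf{a})$ is squarefree. I would first observe that $\Theta_{\mathbf{a}}$ is a Jacobi \emph{cusp} form: since $\vartheta_\mathfrak{g}$ has singular weight, its Fourier support consists of $\ell$ lying on the light cone of the index lattice, and for such $\ell$ the Cauchy--Schwarz inequality $\latt{\ell,v_{\mathbf{a}}}^2\le \latt{\ell,\ell}\,\latt{v_{\mathbf{a}},v_{\mathbf{a}}}$, together with $N(\mathbf{a})=\tfrac12 h_\mathfrak{g}^\vee\latt{v_{\mathbf{a}},v_{\mathbf{a}}}$ and the dual Coxeter numbers all exceeding $C=1$, forces every nonzero coefficient $c(n,r)$ of $\Theta_{\mathbf{a}}$ into the strict cone $4N(\mathbf{a})n-r^2>0$, consistent with the stated $q$-order $2$. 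A weight-$3$ paramodular form is cuspidal iff its Siegel $\Phi$-image vanishes at every $1$-dimensional cusp of $\Gamma_t^+$. At the standard cusp this image is the $\xi^0$-coefficient of $F_{\mathbf{a}}$, which vanishes because by Theorem~\ref{th:product} the Fourier--Jacobi expansion of $\Borch(\chi_V)$ begins at $\xi^{C}$ with $C=\dim\mathfrak{g}/24-1=1>0$.

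The genuine difficulty is the other $1$-dimensional cusps, and this is where I expect the main obstacle to lie. I would reduce them to the standard one using that $\Borch(\chi_V)$ is a Borcherds product, so its Fourier--Jacobi expansion at any $1$-dimensional cusp of $2U\oplus L_\mathfrak{g}$ again begins with a strictly positive power of the local cusp coordinate; restricting to $2U\oplus\ZZ v_{\mathbf{a}}$ then kills the $\Phi$-image at the induced cusp. The squarefree hypothesis enters to guarantee that the $1$-dimensional cusps of $\Gamma_t^+$ are exhausted by these induced ones: for $t$ prime there is a single $\Gamma_t^+$-orbit of isotropic planes, and for squarefree $t$ the orbits are indexed by the divisors of $t$ and permuted by the Atkin--Lehner/Fricke elements, so that anti-symmetry of $F_{\mathbf{a}}$ together with vanishing at the standard cusp propagates to all of them. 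Carrying out this enumeration of isotropic planes in $2U\oplus A_1(t)$ for squarefree $t$, and checking the Borcherds-product vanishing along each corresponding boundary component of $2U\oplus L_\mathfrak{g}$, is the technical heart of the argument; the weight, index, the shape of $\Theta_{\mathbf{a}}$, the anti-symmetry and the trivial character are all formal consequences of the restriction construction and Theorems~\ref{th:VOA} and~\ref{th:root-system}.
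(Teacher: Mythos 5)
Your construction is exactly the paper's: Chapter \ref{sec:paramodular} defines $F_{\mathbf{a}}$ as the pullback of the singular-weight reflective Borcherds product $\Borch(\chi_V)$ for $\mathfrak{g}=A_{1,2}D_{5,8}$, with orbit lattice $L_\mathfrak{g}=A_1(2)\oplus D_5'(8)$, along the embedding $2U\oplus \ZZ v_{\mathbf{a}}\hookrightarrow 2U\oplus L_\mathfrak{g}$, and your identifications are correct: the $21$ subscripts are the pairings of $v_{\mathbf{a}}$ with the positive roots of $A_1\oplus D_5$, the index is $N(\mathbf{a})=v_{\mathbf{a}}^2/2$ (using $C=1$, so $\sum_{\alpha}\latt{\alpha,v_{\mathbf{a}}}^2=(v_{\mathbf{a}},v_{\mathbf{a}})$), anti-symmetry comes from the anti-symmetric type of $\mathfrak{g}$, the character $\chi_t^{3+1}$ is trivial, and the leading Fourier--Jacobi coefficient is $\vartheta_\mathfrak{g}(\tau,v_{\mathbf{a}}z)=\Theta_{\mathbf{a}}$, whose non-vanishing is the hypothesis. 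This mirrors the treatment of $A_{6,7}$ in \cite[Theorem 2.1]{GW19}, as the paper intends.

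The genuine gap is your cuspidality argument, in two places. First, your orbit claim is false: $\Gamma_t^+$ contains only the Fricke involution $V_t$, not the full Atkin--Lehner group, and $V_t$ pairs the one-dimensional cusp attached to a divisor $d$ of $t$ with the one attached to $t/d$; for squarefree $t$ with at least two prime factors, say $t=pq$, there are at least two $\Gamma_t^+$-orbits of such cusps ($\{1,t\}$ and $\{p,q\}$), so vanishing at the standard cusp plus anti-symmetry cannot propagate to all of them. Second, your claim that the Fourier--Jacobi expansion of $\Borch(\chi_V)$ at \emph{every} one-dimensional cusp of $2U\oplus L_\mathfrak{g}$ begins at a strictly positive power of the local coordinate is unproved, and for singular-weight products such vanishing can genuinely fail (for the fake monster product the Weyl vector gives $C=0$ in Theorem \ref{th:product}, so the expansion starts at $\xi^0$); worse, if this blanket vanishing held, your argument would give cuspidality for arbitrary level, making the squarefree hypothesis superfluous --- a sign the argument proves too much. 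The intended argument, as in \cite{GW19}, is local at each cusp and needs no comparison between cusps: when $t$ is squarefree, the Siegel $\Phi$-operator image of a weight-$3$ paramodular form at each one-dimensional cusp is an elliptic modular form of weight $3$ for the full group $\SL_2(\ZZ)$ (squarefreeness guarantees the boundary modular curve has level one), and since $3$ is odd this space is zero; hence $F_{\mathbf{a}}$ is automatically a cusp form. As a side point, your Cauchy--Schwarz claim that every coefficient of $\Theta_{\mathbf{a}}$ lies strictly inside the cone $4N(\mathbf{a})n-r^2>0$ is also incorrect: equality occurs whenever a supporting vector $\ell$ of the singular-weight form $\vartheta_\mathfrak{g}$ is proportional to $v_{\mathbf{a}}$, which is consistent with the theorem asserting only $\Theta_{\mathbf{a}}\in J_{3,N(\mathbf{a})}$ rather than cuspidality of the Jacobi form.
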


In squarefree levels $t < 300$, Theorem \ref{th:Siegel-3} produces $64$ anti-symmetric Siegel paramodular forms of weight $3$ that are listed in Table \ref{tableanti3prime}. Using \cite[Tables 1-3]{GW19} and Table \ref{tableanti3prime}, we obtain a basis of $S_3(\Gamma_t^+)$ consisting of Borcherds products for the prime levels
\begin{align*}
t=&\,167,\, 173,\, 197,\, 223,\, 227,\, 239,\, 251,\, 257,\, 263,\, 269,\, 271,\, 277,\, 283,\, 293.
\end{align*} 
Note that Ibukiyama \cite{Ibu22} also showed that $\hbox{dim}_\CC\, S_3(\Gamma_t^+)=1$ for $t=233$, $281$; however, the corresponding paramodular forms cannot be obtained from the two infinite series \cite[Theorem 2.2]{GW19} and Theorem \ref{th:Siegel-3}. 

Anti-symmetric Siegel paramodular forms of weight $2$ and trivial character are very interesting due to their role in the paramodular conjecture. Unfortunately, there is no semi-simple $V_1$ structure $\mathfrak{g}$ of rank $4$ with integral $C$, so the above method does not work. However, there does exist a unique semi-simple $V_1$ structure $\mathfrak{g}$ of rank $4$ with non-integral $C$. In particular, we can use a similar argument to construct anti-symmetric Siegel paramodular forms of weight $2$ with a character of order $2$ by considering the pullbacks of $\Borch(\chi_V)$ where $V_1=\mathfrak{g}=C_{4,10}$.

\chapter{Remarks, questions and conjectures}\label{sec:problems}

In this chapter, we raise some questions and conjectures that are related to our work in this paper.

\section{Uniqueness of hyperbolizations}
We have proved that there are exactly $81$ affine Kac--Moody algebras $\hat{\mathfrak{g}}$ which have hyperbolizations. For each of these algebras, we have constructed a natural hyperbolization with underlying lattice $L_\mathfrak{g}$.

\begin{question}
Does every affine Lie algebra have a unique hyperbolization? More concretely, let $\mathfrak{g}$ be one of the $81$ affine Lie algebras with a hyperbolization, and suppose that $L$ is an even positive-definite lattice for which there is a singular-weight reflective Borcherds product $F$ on $2U\oplus L$ whose associated semi-simple Lie algebra is $\mathfrak{g}$.
\begin{enumerate}
\item[(a)]  Is the lattice $L$ uniquely determined? 
\item[(b)]  Is the modular form $F$ uniquely determined?
\end{enumerate}
\end{question}
Note in (b) that a single function can have interpretations as a modular form on $\Orth^+(2U\oplus L)$ for different lattices $L$.

Recall that the lattice $L$ must satisfy the bounds $\mathbf{Q}_\mathfrak{g}<L<\mathbf{P}_\mathfrak{g}$. When $\mathfrak{g}$ is of symmetric type, there are examples for which the lattice $L$ is not unique:
\begin{enumerate}
    \item $\mathfrak{g}=A_{1,16}$: In this case, $\bQ=A_1(16)$ and $\bP=A_1(4)$. Then $L$ has only two possibilities: $A_1(16)$ and $A_1(4)$. We verify by direct calculation that both $2U\oplus A_1(16)$ and $2U\oplus A_1(4)$ have unique reflective Borcherds products of singular weight; however, the two products are expansions of the same modular form.
    \item $\mathfrak{g}=A_{1,4}^4$: We have constructed a reflective Borcherds product of singular weight on $2U\oplus L_\mathfrak{g}$ for $L_\mathfrak{g}=4A_1$.  This product can also be viewed as a reflective Borcherds product on $2U\oplus D_4(2)$. 
    \item $\mathfrak{g}=A_{1,8}^2$: the singular Borcherds product we constructed can be defined on $2U\oplus L$ for both $L=L_\mathfrak{g}=2A_1(2)$ and $L=2A_1(4)$. 
    \item $\mathfrak{g}=A_{1,2}^8$: besides $L_\mathfrak{g}=D_8$, we can also take any of $L=2D_4, D_8'(2),$ or $E_8(2)$. 
    \item $\mathfrak{g}=A_{2,3}^3$: besides $L_\mathfrak{g}=3A_2$, we can also take $L=E_6'(3)$.
\end{enumerate}
When $\mathfrak{g}=B_{2,3}G_{2,4}$, we have $\mathbf{Q}_\mathfrak{g}=2A_1(3)\oplus A_2(4)$ and $\mathbf{P}_\mathfrak{g}=\ZZ^2(3)\oplus A_2(4)$. Since $L$ is even, it is contained in the maximal even sublattice of $\bP$, which coincides with $\bQ$. Therefore, $L=2A_1(3)\oplus A_2(4)=L_\mathfrak{g}$. It follows that $L$ is unique.  

Regarding the question above, we propose the following conjecture.
\begin{conjecture}
The singular Borcherds product $F$ is always unique as functions on symmetric domains. For any semi-simple $\mathfrak{g}$ of anti-symmetric type, the lattice $L$ is unique, i.e. $L=L_\mathfrak{g}$. Furthermore, the hyperbolization of $\mathfrak{g}$ is unique. 
\end{conjecture}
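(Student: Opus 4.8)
The three assertions should be attacked in increasing order of difficulty, and I would isolate the genuinely formal content first. \emph{Uniqueness of $F$ as a function.} The plan is to show that the Jacobi form input $\phi$ is completely determined by $\mathfrak{g}$, independently of the ambient lattice. By \cite{WW23} a singular-weight reflective product has only simple zeros, and reflectivity bounds the order of the pole: a coefficient $f(n,\ell)$ with $n<0$ would produce a divisor on vectors of norm $(\ell,\ell)-2n\le 2$, which forces $n=-1,\ \ell=0$. Hence the only singular Fourier coefficients are $f(-1,0)q^{-1}$ and $\sum_{\ell\in\cR}f(0,\ell)\zeta^\ell=\sum_{\ell\in\cR}\zeta^\ell$, where $f(-1,0)\in\{0,1\}$ records the symmetric/anti-symmetric dichotomy and $\cR$ is the rescaled root system of $\mathfrak{g}$ from Theorem \ref{th:root-system}; both data depend only on $\mathfrak{g}$. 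Any two weakly holomorphic Jacobi forms of weight $0$ and index $\bQ$ with this principal part differ by a \emph{holomorphic} Jacobi form of weight $0$ and positive-definite index, which vanishes because $0$ lies strictly below the singular weight $\rank(\mathfrak{g})/2$. Thus $\phi$ is unique as a function, and since the infinite product of Theorem \ref{th:product} depends only on the coefficients of $\phi$, the product $F=\Borch(\phi)$ is literally the same function on the common symmetric domain $\cD(2U\oplus L)$ (all admissible $L$ being commensurable, sandwiched between $\bQ$ and $\bP$). Equivalently, two such products have identical divisors, so their quotient is a nowhere-vanishing holomorphic modular form of weight $0$, hence constant.

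\emph{Uniqueness of the hyperbolization.} Given that $F$ is determined, I would reconstruct the BKM (super)algebra purely from $F$, following Definition \ref{def:hyperbolization} and Theorem \ref{th:construction}(4): its root lattice is the lattice generated by $d\rho$ and the primitive vectors $\lambda$ along which $F$ vanishes, its (super)multiplicities are the coefficients $f(nm,\ell)$, and the real and imaginary simple roots are singled out by the Weyl-vector conditions. None of this data references the ambient $L$, so the hyperbolization is determined up to isomorphism by $F$, hence by $\mathfrak{g}$. This already settles the symmetric cases, where $L$ itself need not be unique (as the listed examples show) but the intrinsic root lattice, and thus the algebra, is.

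\emph{Uniqueness of $L$ in the anti-symmetric case.} One inclusion is formal: the reflective vectors of $F$ lie in $U\oplus L'$ and, by Theorem \ref{th:construction}(4), generate $U\oplus L_\mathfrak{g}'$ together with $\rho$; hence $L_\mathfrak{g}'\subseteq L'$, i.e. $L\subseteq L_\mathfrak{g}$. For the reverse inclusion the key is that reflectivity is lattice-dependent while the zero set is intrinsic: a hyperplane $\lambda^\perp$ with $\lambda^2=2/d$ is a reflective divisor of $2U\oplus L$ only if $d\lambda\in 2U\oplus L$. Thus every normalized reflective vector $d\lambda$ of $F$ must already lie in $U\oplus L$, and I would argue that these vectors generate $U\oplus L_\mathfrak{g}$, forcing $L\supseteq L_\mathfrak{g}$ and therefore $L=L_\mathfrak{g}$.

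\emph{The main obstacle} is exactly the last claim, that the normalized reflective vectors of the anti-symmetric $\Psi_\mathfrak{g}$ generate all of $L_\mathfrak{g}$ rather than a proper sublattice. The cleanest route is the vertex-algebra dictionary: by Theorem \ref{MTH1} the input $\phi$ is the full character of the unique holomorphic VOA $V$ of central charge $24$ with $V_1=\mathfrak{g}$, and $L_\mathfrak{g}=\mathrm{Com}_V(\mathrm{Com}_V(V_{\bQ}))$ is canonically attached to $V$ by H\"ohn's construction \cite{Hoh17,BLS22}; one must then match the lattice generated by the reflective data with this double commutant. Failing a conceptual matching, the finiteness of the list allows a case-by-case verification: for each of the $69$ algebras, enumerate the even lattices $L$ with $\bQ<L<\bP$ and $L(C)$ integral and check, via the obstruction principle and a Hilbert-basis computation as in Section \ref{subsec:Borcherds}, that only $L=L_\mathfrak{g}$ admits a singular-weight reflective product. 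The contrast with the symmetric case, where the absence of the norm-$2$ reflective root ($f(-1,0)=0$) leaves genuine freedom in $L$, strongly suggests that the extra anti-symmetric $2$-root is what rigidifies the lattice, and making this mechanism explicit would yield the most satisfying proof.
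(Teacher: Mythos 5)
This statement is stated in the paper as a \emph{conjecture}; the authors explicitly write that ``some cases of this conjecture can be proved easily, but in general it appears difficult to check,'' and they prove it only in isolated instances (rank $24$ via \cite[Theorem 4.7]{Wan21b}; $\mathfrak{g}=E_{8,2}B_{8,1}$ via \cite[Lemma 3.2]{Wan23a}). Your proposal therefore has to stand on its own, and it fails at the very first step. You identify the singular Fourier coefficients of the input $\phi$ with the coefficients $f(n,\ell)$, $n<0$, together with the $q^0$-term. But for a Jacobi form of index $L$ the singular coefficients are \emph{all} $f(n,\ell)$ with $2n<(\ell,\ell)$, and these genuinely occur at $n\geq 1$: under the theta-decomposition isomorphism they constitute the principal part of the vector-valued input, and Remark \ref{rem:singular} together with Lemmas \ref{lem:B12}, \ref{lem:F4A2} and \ref{lem:C4} shows they persist up to the $q^{\hat{\delta}_L}$-term (e.g.\ up to $q^4$ for $C_{4,10}$, where $\delta_L=10$); they are exactly the coefficients producing the reflective divisors of norm $2/d$ with $d>1$. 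Consequently the difference of two candidate inputs sharing the same $q^{-1}$- and $q^0$-terms is only a \emph{weak} Jacobi form of weight $0$, not a holomorphic one, and $J^{\w}_{0,L}\neq 0$ in general (already $J^{\w}_{0,A_1}$ contains the Eichler--Zagier generator $\phi_{0,1}$), so the appeal to ``weight $0$ below singular weight $\rank(\mathfrak{g})/2$'' does not apply. Worse, whether a singular term $f(n,\ell)q^n\zeta^\ell$ with $n\geq 1$ is admissible depends on the ambient lattice through the reflectivity condition $d\ell\in L$, so this data cannot be read off from $\mathfrak{g}$ alone; pinning down these higher singular coefficients \emph{is} the content of the conjecture's first clause, not a formality.

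The gap propagates. Your second step (uniqueness of the hyperbolization) presupposes the first. Your third step is moreover circular: Theorem \ref{th:construction}(4) is a property of the \emph{specific} product $\Psi_\mathfrak{g}$ constructed in Chapters \ref{sec:construct-anti}--\ref{sec:construct-symmetric-C<1}, so to conclude that the reflective vectors of an arbitrary $F$ on an arbitrary $2U\oplus L$ generate $U\oplus L_\mathfrak{g}'$ you must already know $F=\Psi_\mathfrak{g}$ as functions, which is precisely the unproven first clause. Your fallback of a finite case-by-case check over the $69$ anti-symmetric algebras is not carried out and is not routine: for each $\mathfrak{g}$ one must enumerate all even $L$ with $\bQ<L<\bP$ and $L(C)$ integral and prove uniqueness of the reflective product on each, which is the computation the authors indicate is feasible only in special cases. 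What you have correctly isolated — that the norm-$2/d$ divisors encoded in the positive-power singular coefficients are the obstruction, and that the anti-symmetric $2$-root is what should rigidify $L$ — is a reasonable heuristic, but the proposal as written does not close it, and the statement should be treated as open.
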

Some cases of this conjecture can be proved easily, but in general it appears difficult to check. For example, \cite[Theorem 4.7]{Wan21b} implies the uniqueness of $L$ and $F$ for the $23$ semi-simple $\mathfrak{g}$ of rank $24$. When $\mathfrak{g}=E_{8,2}B_{8,1}$, we have $\mathbf{Q}_\mathfrak{g}=E_8(2)\oplus D_8$ and $\mathbf{P}_\mathfrak{g}=E_8(2)\oplus \ZZ^8$, which uniquely determines $L=E_8(2)\oplus D_8$. The uniqueness of $F$ follows from \cite[Lemma 3.2]{Wan23a}.

We proved in Chapter \ref{sec:construct-anti} that the $69$ singular Borcherds products on $2U\oplus L_\mathfrak{g}$ for anti-symmetric $\mathfrak{g}$ come from only $11$ different modular forms. The above conjecture would therefore imply the following classification.
\begin{conjecture}
There are exactly $23$ distinct modular forms that can be realized as reflective Borcherds products of singular weight on a lattice of type $2U\oplus L$ whose input forms have non-negative principal parts.
\end{conjecture}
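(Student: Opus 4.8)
The final conjecture is, in spirit, a conditional statement: the hypotheses it needs are precisely the preceding uniqueness conjecture together with the classification and construction theorems already established, and the plan is to deduce the count $23$ from these. First I would reduce the problem to a finite, explicit list of functions. By Theorem~\ref{th:non-existence}, any reflective Borcherds product of singular weight on some $2U\oplus L$ whose input has non-negative $q^0$-term either is the fake monster form $\Phi_{12}$ (the Leech case $\rank(L)=24$ of Theorem~\ref{th:root-system}) or has associated semi-simple Lie algebra $\mathfrak{g}$ among the $81$ of Theorem~\ref{th:non-existence}; by Theorem~\ref{th:construction} each such $\mathfrak{g}$ carries a constructed product $\Psi_\mathfrak{g}$. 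Granting the uniqueness conjecture, any admissible $F$ with associated algebra $\mathfrak{g}$ equals $\Psi_\mathfrak{g}$ as a function on the type IV symmetric domain, even though $F$ may be realized as a modular form for several different lattices $L$ (as in the multiple realizations recorded in Chapter~\ref{sec:problems}). Hence the set of modular forms in question is exactly $\{\Psi_\mathfrak{g}\}$, and everything reduces to counting the distinct functions among these.

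Next I would collapse the anti-symmetric family. The $69$ algebras of anti-symmetric type are indexed, through H\"{o}hn's construction, by a $\mathrm{Co}_0$-conjugacy class $[g]$, and by the Remark following Theorem~\ref{th:VOA} the product $\Borch(\chi_V)=\Psi_\mathfrak{g}$ depends only on $[g]$: the several $\Psi_\mathfrak{g}$ attached to a fixed class are merely the distinct Fourier--Jacobi expansions of one form $\Psi_g$ at inequivalent $1$-dimensional cusps. The identity class $1^{24}$ produces the $23$ rank-$24$ Niemeier root-lattice VOAs and, as an additional expansion, the Leech case, all of which are realizations of the single form $\Phi_{12}$; thus $\Phi_{12}$ is already one of these and need not be counted separately. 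Exactly $11$ conjugacy classes occur ($8$ from Table~\ref{table:8cycleshapes} and $3$ from Table~\ref{table:3cycleshapes}), so the entire anti-symmetric contribution collapses to $11$ functions. The symmetric family then contributes the $8$ products with $C=1$ (Table~\ref{tab:sym-cycle}) and the $4$ with $C<1$ (Table~\ref{tab:extra}), a priori $12$ functions, for a provisional total of $11+12=23$.

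It remains to verify that these $23$ forms are genuinely pairwise distinct, which I would argue in layers. No symmetric form equals an anti-symmetric one, since a nonzero form cannot be simultaneously invariant and anti-invariant under the involution $(\omega,\mathfrak{z},\tau)\mapsto(\tau,\mathfrak{z},\omega)$. Within each family, forms living on symmetric domains of different dimension $\rank(L)+2$ are automatically distinct, so only the finitely many equal-dimension coincidences require work; these I would separate using the uniform invariant supplied by Chapter~\ref{sec:Fourier-expansion}, namely the eta-quotient $\Delta_{\Psi_\mathfrak{g}}=\eta_g$ appearing in the Fourier expansion \eqref{eq:Singular-Fourier} at the standard $0$-dimensional cusp. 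Since $\Delta_F$ is read off the Fourier expansion and is therefore an invariant of the form, and since the $11+12$ (possibly fake) cycle shapes tabulated in Table~\ref{table:8cycleshapes}, Table~\ref{table:3cycleshapes} and Table~\ref{tab:fake-cycle-shape} are pairwise distinct, the corresponding forms are pairwise distinct. Assembling the three layers gives exactly $23$.

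The genuine difficulty lies entirely in the hypothesis: the uniqueness conjecture must supply both that $L$ is forced (up to the harmless multiple realizations such as $4A_1$ versus $D_4(2)$ for $A_{1,4}^4$, or $D_8,\,2D_4,\,D_8'(2),\,E_8(2)$ for $A_{1,2}^8$) and that $F$ is uniquely determined as a function on the domain. For the rank-$24$ anti-symmetric cases this is within reach from the structure theory of reflective forms (as noted, \cite[Theorem 4.7]{Wan21b} and \cite[Lemma 3.2]{Wan23a} settle a number of cases), but a uniform argument for all $81$ algebras is open and is the true obstacle. A secondary but still delicate point is the bookkeeping of multiple realizations: to avoid both double-counting and undercounting I would phrase the entire count in terms of the isometry class of the pair (symmetric domain, modular form) rather than in terms of the lattice $L$, so that forms coinciding on isomorphic domains through different embeddings are automatically identified, while the cycle-shape invariant certifies that genuinely different entries remain separated.
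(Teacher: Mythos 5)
Your proposal is correct and follows essentially the same route as the paper, where this statement is a conjecture justified exactly by the conditional count $11+12=23$: granting the uniqueness conjecture, the $69$ anti-symmetric products collapse to the $11$ forms indexed by $\mathrm{Co}_0$-conjugacy classes (with $\Phi_{12}$, the class $1^{24}$, absorbing the Leech case), to which the $12$ symmetric products are added. Your extra distinctness layer via the (fake) cycle-shape eta quotients $\Delta_{\Psi_\mathfrak{g}}=\eta_g$ from Chapter \ref{sec:Fourier-expansion} is a reasonable elaboration of a point the paper leaves implicit.
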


\section{Other questions and conjectures}
We have the following conjecture related to Remark \ref{rem:Lam-paper}.
\begin{conjecture}
Let $[g]$ be a $\mathrm{Co}_0$-conjugacy class such that $\Lambda^g$ is nonzero.  Let $M_g$ be an even lattice of signature $(\rank(\Lambda^g)+2,2)$ whose discriminant form is isomorphic to $(R(V_{\Lambda_g}^{\hat{g}}),-\mathfrak{q})$ as introduced by Lam \cite{Lam19}. Then the multiplicative theta lift of the (vector-valued) characters of $V_{\Lambda_g}^{\hat{g}}$ (divided by $\eta^{\rank(\Lambda^g)}$) is a reflective Borcherds product of singular weight on $M_g$. Moreover, the Fourier expansion of this product at a certain $0$-dimensional cusp is the $g$-twisted denominator of the fake monster algebra.
\end{conjecture}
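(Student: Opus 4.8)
The plan is to treat this as the natural extension of Theorem~\ref{th:VOA} and Theorem~\ref{th:SVOA} to every class $[g]$ with $\Lambda^g\neq\{0\}$, carrying out explicitly the conformal-weight computation flagged in Remark~\ref{rem:Lam-paper}. Write $l=\rank(\Lambda^g)$, so that $M_g$ has signature $(l+2,2)$ and the Borcherds input must have weight $1-\tfrac{l+2}{2}=-l/2$. First I would assemble the input
\[
F_g(\tau)=\sum_{\gamma}\eta(\tau)^{-l}\,\chi_{V_{\Lambda_g,\gamma}^{\hat g}}(\tau)\,e_\gamma,
\]
the $\eta$-normalized vector of characters of the irreducible $V_{\Lambda_g}^{\hat g}$-modules, exactly as in \eqref{eq:character-orbifold}. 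By the modularity of orbifold characters (M\"oller \cite{Mol21}, together with \cite{Zhu96, Miy00, KM12}) and by Lam's identification \cite{Lam19} of the discriminant form of $M_g$ with $(R(V_{\Lambda_g}^{\hat g}),-\mathfrak{q})$, the family $F_g$ is a weakly holomorphic modular form of weight $-l/2$ for $\rho_{M_g}$, with non-negative integral Fourier coefficients since the $\chi_{V_{\Lambda_g,\gamma}^{\hat g}}$ are genuine graded dimensions. The $e_0$-component is $\eta^{-l}\chi_{V_{\Lambda_g}^{\hat g}}=q^{-1}+l+O(q)$, the $q^0$-term being $l$ precisely because the weight-one subspace of $V_{\Lambda_g}^{\hat g}$ vanishes; hence $c_0(0)=l$ and $\Borch(F_g)$ is a holomorphic Borcherds product of the singular weight $l/2$ on $M_g$. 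This is the ``obvious'' half already recorded in Remark~\ref{rem:Lam-paper}.

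For reflectivity I would show that every irreducible component $\lambda^\perp$ of the divisor of $\Borch(F_g)$ is a reflective divisor of $M_g$. By Theorem~\ref{th:product} the multiplicity of $\lambda^\perp$, for primitive $\lambda\in M_g'$ of positive norm, is $\sum_{d\ge1}c_{d\lambda}(-d^2\lambda^2/2)$, so it suffices to control the principal part of $F_g$. The $\gamma$-component of $F_g$ begins at $q^{h_\gamma-1}$, where $h_\gamma=\mathrm{wt}(V_{\Lambda_g,\gamma}^{\hat g})$ is the conformal weight of the corresponding module; thus a polar coefficient $c_\gamma(n)$ with $n<0$ forces a vector $\lambda$ in the class $\gamma$ with $\lambda^2=-2n\le 2-2h_\gamma$. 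The key claim is that for every $\gamma$ with $h_\gamma<1$ one has $h_\gamma=1-1/\ord(\gamma)$ (with the usual modification when $\ord(\gamma)=d/2$), so that the divisor vector satisfies $\lambda^2=2/\ord(\gamma)$ and $\ord(\gamma)\lambda\in M_g$ — exactly the criterion for $\lambda^\perp$ to be reflective. Establishing this is the conformal-weight computation announced in Remark~\ref{rem:Lam-paper}: using Lam's and M\"oller's explicit description of the modules of the lattice orbifold $V_{\Lambda_g}^{\hat g}$ and their conformal weights in terms of the $g$-eigenvalue data on $\Lambda_g$, one verifies the sharp identity $h_\gamma=1-1/\ord(\gamma)$ for all low-lying modules, and simultaneously rules out any further polar terms that would produce a non-reflective component. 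Anti-invariance and simplicity of the zeros then follow from \cite[Theorem 1.2]{WW23}.

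For the second assertion I would exploit that the \emph{same} input governs the twisted denominator. By \cite[Theorem 6.5, Remark 6.13]{WW22} the $g$-twisted denominator of the fake monster algebra is the Borcherds product $\Phi_g$ on $U(N_g)\oplus U\oplus\Lambda^g$ whose input is again the characters of $V_{\Lambda_g}^{\hat g}$ divided by $\eta^l$. Since $U(N_g)\otimes\QQ\cong U\otimes\QQ$, the lattices $M_g$ and $U(N_g)\oplus U\oplus\Lambda^g$ lie in a common rational quadratic space of signature $(l+2,2)$, and Lam's isometry identifies their discriminant data compatibly with the input. Consequently $\Psi_g:=\Borch(F_g)$ and $\Phi_g$ are two expansions of one modular form on this space: expanding $\Psi_g$ at the $0$-dimensional cusp attached to the isotropic line realizing the summand $U(N_g)$ recovers the product over positive vectors of $\Lambda^g\oplus\ZZ$ with exponents $f(nm,\ell)$, which are exactly the $g$-twisted root multiplicities. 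This mirrors the prediction of Remark~\ref{rem:isomorphic}(2) in the anti-symmetric case, and for $N_g=n_g$ it reduces to the already-established identity $\Psi_g=\Phi_g$ of \cite[Remark 6.14]{WW22}.

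The main obstacle is the uniform verification of the sharp identity $h_\gamma=1-1/\ord(\gamma)$ across all relevant classes $[g]$, that is, controlling the entire principal part rather than just its leading term. In the anti-symmetric setting this was done only case by case (Lemmas~\ref{lem:B12}, \ref{lem:F4A2}, \ref{lem:C4}), and a general argument would require a genuinely uniform input: one must combine M\"oller's orbifold conformal-weight formula with an a~priori estimate on the theta coefficients of the coinvariant lattice $\Lambda_g$ — playing the role of the $\delta_L$ bound of Remark~\ref{rem:singular} — in order to exclude spurious polar coefficients $c_\gamma(n)$, $n<0$, lying in classes of the wrong order. By contrast, the identification-of-cusp step should be comparatively routine once the commensurability of $M_g$ with $U(N_g)\oplus U\oplus\Lambda^g$ and the compatibility of Lam's discriminant-form isometry with the input coefficients are checked; the genuine representation-theoretic content of the conjecture is concentrated entirely in the reflectivity statement.
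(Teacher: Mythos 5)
You have not proved the conjecture; note first that the paper itself offers no proof of this statement (it is explicitly left open, with only the case $N_g=n_g$ known, and then only indirectly through \cite{WW22}), and your proposal in fact reproduces the reduction already recorded in Remark~\ref{rem:Lam-paper} while deferring exactly the two steps that constitute the conjecture's content. The half you do carry out -- modularity of $F_g$ for $\rho_{M_g}$ via Lam's identification of the discriminant form \cite{Lam19}, non-negativity and integrality of the coefficients, and $c_0(0)=\rank(\Lambda^g)$ because the Leech lattice has no norm-$2$ vectors and $g$ acts without nonzero fixed points on $\Lambda_g$, so the weight-one space of $V_{\Lambda_g}^{\hat g}$ vanishes and the lift has singular weight -- is correct, but it is precisely the sentence ``Obviously, the theta lift defines a holomorphic Borcherds product of singular weight on $M_g$'' in Remark~\ref{rem:Lam-paper}. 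Your reflectivity criterion $h_\gamma = 1-1/\ord(\gamma)$ (with the order-halving modification) is the right translation of the problem, but you supply no mechanism for establishing it uniformly in $[g]$: the paper's only evidence is case-by-case computation (Lemmas~\ref{lem:B12}, \ref{lem:F4A2} and \ref{lem:C4}, covering three of the eleven classes relevant to H\"ohn's construction) plus the indirect route through \cite{WW22} when the order equals the level, whereas the conjecture ranges over \emph{all} $\mathrm{Co}_0$-classes with $\Lambda^g\neq 0$, including classes with negative frame shapes such as $1^{-8}2^{16}$ that lie entirely outside Tables~\ref{table:8cycleshapes} and \ref{table:3cycleshapes}, and for which no analogue of the $\delta_L$-bound of Remark~\ref{rem:singular} is available. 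Writing ``one verifies the sharp identity'' names the problem; it does not solve it. (A minor technical point in the same direction: Theorem~\ref{th:product} is stated for lattices $2U\oplus L$, so for a general $M_g$, which need not split two hyperbolic planes, you must invoke Borcherds' theorem in the form of Section~\ref{subsec:Borcherds}.)

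The second assertion is also not ``comparatively routine.'' When $N_g\neq n_g$ the lattices $M_g$ and $U(N_g)\oplus U\oplus\Lambda^g$ are not isomorphic (Remark~\ref{rem:Lam-paper}(2)), and your claim that they ``lie in a common rational quadratic space'' with Lam's isometry ``compatible with the input'' is exactly what has to be proved rather than assumed: two even lattices of equal signature need not span isometric quadratic spaces over $\QQ$, and even granting a rational isometry one must exhibit a specific $0$-dimensional cusp of $M_g$ -- equivalently, show that $M_g$ plays the role of the lattice $\mathbb{M}$ in \cite[Theorem 1.4]{WW23} -- at which the expansion of $\Borch(F_g)$ reproduces the $g$-twisted root multiplicities. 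In the cases where this identification is known, it rested on M\"oller's explicit character computations \cite{Mol21} together with \cite[Remarks 6.13--6.14]{WW22}, not on general position arguments. In short: your proposal is an accurate map of the conjecture and correctly observes that the case $N_g=n_g$ reduces to \cite{WW22}, but both load-bearing steps (uniform reflectivity via conformal weights, and the cusp identification) are left as assertions, so the proposal does not constitute a proof -- it restates the open problem in sharper language.
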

As we mentioned in Chapter \ref{sec:construct-anti}, this conjecture was proved in \cite{WW22} whenever $g$ has level equal to its order. However, the proof is indirect.

Remarks \ref{rem:fE8} and \ref{rem:Conway-SCFT} lead us to ask the following questions:
\begin{question}
Let $[g]$ be a $\mathrm{Co}_0$-conjugacy class whose level $n_g$ is equal to its order. By \cite{WW22}, the $g$-twisted denominator of the fake monster algebra is the Fourier expansion of a singular-weight reflective Borcherds product $\Phi_g$ for $\Orth^+(U_1(n_g)\oplus U\oplus \Lambda^g)$ at the $0$-dimensional cusp determined by $U_1(n_g)$.  The function $\Phi_g$ can also be viewed as a singular-weight reflective Borcherds product on $U_1\oplus U(n_g)\oplus {(\Lambda^g)}'(n_g)$, and we can consider its Fourier expansion there at the $0$-dimensional cusp determined by $U(n_g)$. When ${(\Lambda^g)}'(n_g)\neq \Lambda^g$, this Fourier expansion may define the denominator of a new BKM superalgebra, denoted $\mathbb{G}_g'$, which is different from the $g$-twist of the fake monster algebra. Is $\mathbb{G}_g'$ related to a vertex algebra? If so, can we express its input Jacobi form
$$
\phi_d(\tau,\mathfrak{z}) \in J^!_{0,{(\Lambda^g)}'(n_g)}(\Gamma_0(n_g/d)), \quad d|n_g
$$
in terms of characters of the vertex algebra? Does $\mathbb{G}_g'$ have a natural construction by the BRST cohomology? Note that the corresponding vertex algebra is $V^{fE_8}$ when $g$ has cycle shape $1^{-8}2^{16}$ and it is the Conway SCFT $V^{f\natural}$ when $g$ has cycle shape $1^{-24}2^{24}$. 
\end{question}

We have the following question concerning Chapter \ref{sec:construct-symmetric-C<1}. 
\begin{question}
Let $\mathcal{G}_\mathfrak{g}$ denote the BKM superalgebra corresponding to any of the four affine Lie algebras possessing exceptional modular invariants. Can we realize $\mathcal{G}_\mathfrak{g}$ as the BRST cohomology related to a vertex algebra? Our expression of the Jacobi form input in terms of affine characters may be a hint towards finding such a desirable vertex algebra. 
\end{question}

\chapter{Long tables}

\begin{table}[ht]
\caption{The 17 solutions of Equation~\eqref{symC} in the order of increasing $C$. Those allowing hyperbolization are colored blue.}
\label{table:sym}
\renewcommand\arraystretch{1.5}
\[
\begin{array}{r|l} 
C & \mathfrak{g} \\\hline\hline
{1}/{8} & \color{blue}A_{1,16}\\ 
1/4 & \color{blue}A_{1,8}^2 \\
1/3 & \color{blue}A_{2,9} \\
1/2 & \color{blue}A_{1,4}^4 \\
3/4 & A_{2,4} B_{2,4} \\
1 &  \color{blue}A_{1,2}^8 \\
1 & \color{blue} A_{2,3}^3\\
1 & \color{blue}A_{4,5} \\
1 & \color{blue}A_{3,4}A_{1,2}^3\\
1 & \color{blue}B_{2,3}G_{2,4}\\
1 & \color{blue}B_{2,3}A_{2,3}A_{1,2}^2\\
1 & \color{blue}B_{3,5}A_{1,2}\\ 
1 &  \color{blue}C_{3,4}A_{1,2} \\ 
3/2  &  A_{2,2} D_{4,4} \\ 
3/2  &  A_{2,2}^2 B_{2,2}^2 \\ 
5/2  &  A_{4,2}C_{4,2} \\
7/2  &  A_{6,2} B_{4,2} \\ \hline
\end{array}
\]
\end{table}

\clearpage

\begin{table}[ht]
\caption{The 221 solutions of Equation~\eqref{antiC} in the order of increasing $C$. Those allowing hyperbolization are colored blue (continued on next page).}
\label{table:anti1}
\begin{minipage}[t]{0.303\textwidth}
\[
\begin{array}{r|l}
C & \mathfrak{g} \\\hline\hline
{1}/{24} & A_{1,48}^3A_{2,72}^2
\\ 
{1}/{24} & A_{1,48}^5 B_{2,72}\\ 
{1}/{24} & A_{1,48}A_{2,72}G_{2,96} \\ 
{1}/{24} & A_{3,96}B_{2,72} \\ 
\hline
1/12 &  A_{1,24}^6A_{2,36}\\ 
{1}/{12} &
A_{1,24}^4G_{2,48}\\ 
{1}/{12} &
A_{1,24}^2B_{2,36}^2\\ 
{1}/{12} &
A_{2,36}^2B_{2,36}\\ 
{1}/{12} &
A_{1,24}A_{2,36}A_{3,48}
 \\
\hline
1/8 & A_{1,16}B_{2,24}G_{2,32}\\ 
{1}/{8} &
A_{1,16}A_{2,24}^3\\ 
{1}/{8} &
A_{1,16}^2C_{3,32}\\ 
{1}/{8} &
A_{1,16}^3A_{2,24}B_{2,24}\\ 
{1}/{8} &
A_{1,16}^9\\ 
{1}/{8} &
A_{1,16}^4A_{3,32}\\ 
{1}/{8} &
A_{1,16}^2B_{3,40}\\ 
{1}/{8} &
A_{1,16}A_{4,40}
 \\\hline
1/6 &   A_{1,12}^2 A_{2,18} G_{2,24}\\ 
{1}/{6} &
A_{1,12}^6 B_{2,18}\\ 
{1}/{6} &
A_{1,12}^4 A_{2,18}^2\\ 
{1}/{6} &
D_{4,36}\\ 
{1}/{6} &
G_{2,24}^2\\ 
{1}/{6} &
A_{1,12} A_{3,24} B_{2,18}\\ 
{1}/{6} &
A_{2,18} B_{2,18}^2
\\\hline
1/4 &  A_{1,8}^3 C_{3,16}\\ 
{1}/{4} &
A_{1,8}^{10}\\ 
{1}/{4} &
A_{1,8}^2 A_{2,12}^3\\ 
{1}/{4} &
A_{1,8}^5 A_{3,16}\\ 
{1}/{4} &
A_{1,8}^2 B_{2,12} G_{2,16}\\ 
{1}/{4} &
A_{1,8}^3 B_{3,20}\\ 
{1}/{4} &
A_{3,16}^2\\ 
{1}/{4} &
A_{1,8}^2 A_{4,20}\\ 
{1}/{4} &
A_{1,8}^4 A_{2,12} B_{2,12}\\ 
{1}/{4} &
A_{2,12}^2 G_{2,16}\\ 
{1}/{4} &
B_{2,12}^3
\\\hline
1/3 &   A_{2,9} B_{2,9} G_{2,12} \\
1/3 &A_{1,6}^8 A_{2,9}\\
1/3 &
A_{1,6} A_{2,9} C_{3,12}\\
1/3 &
A_{2,9} A_{4,15} \\
1/3 &
A_{1,6} A_{2,9} B_{3,15} \\
1/3 &
A_{1,6}^6 G_{2,12}
 \\
\end{array}
\]
\end{minipage}
\begin{minipage}[t]{0.34\textwidth}
\[
\begin{array}{r|l}
C & \mathfrak{g} \\\hline\hline
1/3 & A_{1,6}^2 A_{2,9}^2 B_{2,9}\\
1/3 &
A_{1,6}^4 B_{2,9}^2\\
1/3 &
A_{1,6}^3 A_{2,9} A_{3,12}\\
1/3 &
A_{2,9}^4\\
1/3 &
A_{1,6} A_{3,12} G_{2,12} \\ \hline
1/2 & A_{2,6} G_{2,8}^2\\
1/2 &
A_{1,4}^2 A_{3,8}^2\\
1/2 &
A_{1,4}^4 A_{2,6}^3\\
1/2 &
A_{1,4}^4 A_{4,10}\\
1/2 &
A_{1,4} A_{2,6} A_{3,8} B_{2,6}\\
1/2 &
A_{1,4}^7 A_{3,8}\\
1/2 &
\color{blue}C_{4,10}\\
1/2 &
A_{1,4}^5 B_{3,10}\\
1/2 &
A_{2,6}^2 B_{2,6}^2\\
1/2 &
A_{3,8} B_{3,10}\\
1/2 &
A_{1,4}^6 A_{2,6} B_{2,6}\\
1/2 &
A_{1,4}^2 A_{2,6}^2 G_{2,8}\\
1/2 &
B_{4,14}\\
1/2 &
\color{blue}A_{1,4}^{12}\\
1/2 &
A_{1,4}^4 B_{2,6} G_{2,8}\\
1/2 &
\color{blue}A_{2,6} D_{4,12}\\
1/2 &
A_{1,4}^5 C_{3,8}\\
1/2 &
A_{1,4}^2 B_{2,6}^3\\
1/2 &
A_{3,8} C_{3,8}\\ \hline
2/3 & A_{1,3}^4 G_{2,6}^2
 \\
2/3 & A_{1,3}^4 D_{4,9}\\ \hline
3/4 & A_{2,4}^4 B_{2,4}\\ \hline
1 & A_{1,2} A_{2,3}^3 B_{3,5}\\
1&
A_{1,2}^3 A_{2,3} B_{2,3} C_{3,4}\\
1&
A_{1,2} A_{3,4} B_{2,3}^3\\
1&
A_{1,2} A_{4,5} C_{3,4}\\
1&
\color{blue}A_{6,7}\\
1&
A_{1,2}^4 A_{2,3} G_{2,4}^2\\
1&
A_{1,2}^4 B_{4,7}\\
1&
\color{blue}A_{1,2}^{16}\\
1&
\color{blue}A_{1,2} A_{5,6} B_{2,3}\\
1&
\color{blue}A_{1,2} D_{5,8}\\
1&
A_{1,2}^6 A_{2,3}^2 G_{2,4}\\
1&
A_{1,2}^2 A_{2,3} B_{2,3}^2 G_{2,4}\\
1&
A_{1,2}^9 B_{3,5}\\
1&
B_{2,3}^2 D_{4,6} \\
\end{array}
\]
\end{minipage}
\begin{minipage}[t]{0.32\textwidth}
\[
\begin{array}{r|l}
C & \mathfrak{g} \\\hline\hline
1 &  A_{1,2} B_{2,3} B_{3,5} G_{2,4}\\
1&
A_{2,3}^3 A_{4,5}\\
1&
A_{1,2} A_{4,5} B_{3,5}\\
1&
A_{1,2}^2 A_{2,3} A_{4,5} B_{2,3}\\
1&
A_{1,2}^2 B_{3,5}^2\\
1&
A_{1,2} B_{2,3} C_{3,4} G_{2,4}\\
1&
A_{1,2}^3 A_{2,3}^3 A_{3,4}\\
1&
\color{blue}A_{1,2} A_{3,4}^3\\
1&
A_{1,2}^8 A_{2,3}^3\\
1&
\color{blue}A_{2,3}^6\\
1&
A_{1,2}^{11} A_{3,4}\\
1&
A_{1,2}^5 A_{2,3} A_{3,4} B_{2,3}\\
1&
A_{1,2}^8 B_{2,3} G_{2,4}\\
1&
A_{1,2} A_{2,3}^3 C_{3,4}\\
1&
A_{2,3}^3 B_{2,3} G_{2,4}\\
1&
A_{1,2}^4 A_{2,3} D_{4,6}\\
1&
A_{1,2}^4 A_{3,4} C_{3,4}\\
1&
A_{1,2}^3 A_{3,4} A_{4,5}\\
1&
A_{1,2}^2 G_{2,4}^3\\
1&
A_{1,2}^2 C_{3,4}^2\\
1&
A_{1,2}^6 A_{3,4}^2\\
1&
A_{2,3} A_{3,4}^2 B_{2,3}\\
1&
A_{1,2}^2 A_{2,3}^4 B_{2,3}\\
1&
A_{1,2}^2 B_{3,5} C_{3,4}\\
1&
A_{1,2}^8 A_{4,5}\\
1&
\color{blue}A_{4,5}^2\\
1&
A_{1,2}^4 A_{2,3}^2 B_{2,3}^2\\
1&
A_{2,3} B_{2,3}^4\\
1&
B_{2,3}^2 G_{2,4}^2\\
1&
A_{1,2}^9 C_{3,4}\\
1&
A_{1,2}^4 A_{3,4} B_{3,5}\\
1&
A_{4,5} B_{2,3} G_{2,4}\\
1&
A_{1,2} A_{2,3}^2 A_{3,4} G_{2,4}\\
1&
A_{1,2}^4 C_{4,5}\\
1&
A_{1,2}^3 A_{2,3} B_{2,3} B_{3,5}\\
1&
A_{1,2}^{10} A_{2,3} B_{2,3}\\
1&
A_{1,2}^3 A_{3,4} B_{2,3} G_{2,4}\\
1&
A_{1,2}^2 D_{4,6} G_{2,4}\\
1&
A_{1,2}^6 B_{2,3}^3 \\ \hline
4/3 & C_{3,3}^2 G_{2,3} \\
4/3 & G_{2,3}^4 \\ \hline
\end{array}
\]
\end{minipage}
\end{table}
\addtocounter{table}{-1}
\begin{table}[ht]
\caption{(continued).}
\begin{minipage}[t]{0.303\textwidth}
\[
\begin{array}{r|l}
C & \mathfrak{g} \\\hline\hline
{3}/{2} & A_{2,2}^5 B_{2,2}^2 \\
{3}/{2} &
\color{blue}A_{2,2}^4 D_{4,4}\\
{3}/{2} &
\color{blue}B_{2,2}^6\\
{3}/{2} &
\color{blue}A_{2,2} F_{4,6} \\ \hline
2 & \color{blue} A_{1,1}^2 C_{3,2} D_{5,4} \\
2&
A_{1,1}^{10} C_{3,2}^2\\
2&
A_{1,1}^5 A_{3,2} C_{3,2}^2\\
2&
A_{1,1}^9 D_{5,4}\\
2&
A_{3,2}^2 G_{2,2}^3\\
2&
A_{1,1}^{14} A_{3,2}^2\\
2&
A_{1,1}^{10} G_{2,2}^3\\
2&
 A_{1,1}^{19} A_{3,2}\\
2&
 A_{1,1}^5 A_{3,2} G_{2,2}^3\\
2&
\color{blue}A_{1,1}^3 A_{5,3} D_{4,3}\\
2&
 A_{1,1}^9 A_{3,2}^3\\
2&
\color{blue}A_{1,1}^{24} \\
2&
 A_{1,1}^3 C_{3,2} D_{4,3} G_{2,2}\\
2&
 A_{1,1}^7 A_{3,2}^2 C_{3,2}\\
2&
\color{blue} A_{1,1}^3 A_{7,4}\\
2&
\color{blue} A_{1,1}^2 D_{6,5}\\
2&
 A_{1,1}^5 A_{3,2} D_{4,3} G_{2,2}\\
2&
A_{1,1}^3 C_{3,2} G_{2,2}^3\\
2&
A_{1,1}^3 C_{3,2}^3\\
2&
A_{3,2}^2 D_{4,3} G_{2,2}\\
2&
\color{blue}A_{1,1}^4 A_{3,2}^4\\
2&
 A_{1,1}^{17} C_{3,2}\\
2&
\color{blue} A_{1,1} C_{5,3} G_{2,2}\\
2&
A_{3,2}^2 C_{3,2}^2\\
2&
 A_{1,1}^2 A_{3,2}^3 C_{3,2}\\
2&
A_{1,1}^3 A_{5,3} G_{2,2}^2\\
2&
 A_{1,1}^{12} A_{3,2} C_{3,2}\\
2&
 A_{1,1}^{10} D_{4,3} G_{2,2}\\
2&
 A_{1,1}^4 A_{3,2} D_{5,4} \\ \hline
\end{array}
\]
\end{minipage}
\begin{minipage}[t]{0.34\textwidth}
\[
\begin{array}{r|l}
C & \mathfrak{g} \\\hline\hline
5/2 & \color{blue}A_{4,2}^2 C_{4,2} \\
5/2 & \color{blue}B_{3,2}^4 \\ \hline
3 & \color{blue}A_{2,1}^2 A_{8,3} \\
3&
\color{blue} A_{2,1}  B_{2,1} E_{6,4}\\
3&
 A_{2,1}^5 D_{4,2}^2\\
3&
A_{2,1}^2  B_{2,1}^8 \\
3&
 A_{2,1}^7  B_{2,1}^4 \\
3&
 A_{2,1}^6  B_{2,1}^2 D_{4,2}\\
3&
\color{blue} A_{2,1}^{12} \\
3&
 A_{2,1}^2 D_{4,2} F_{4,3}\\
3&
 A_{2,1}^3  B_{2,1}^2 F_{4,3}\\
3&
\color{blue} B_{2,1}^4 D_{4,2}^2\\
3&
\color{blue}A_{2,1}^2 A_{5,2}^2 B_{2,1}\\
3&
 A_{2,1}  B_{2,1}^6 D_{4,2} \\ \hline
7/2 & \color{blue}B_{4,2}^3 \\ \hline 
4 &\color{blue}  A_{3,1} C_{7,2} \\
4&
A_{3,1}^5 D_{5,2}\\
4&
 A_{3,1}  C_{3,1}^5 \\
4&
\color{blue} A_{3,1}^2 D_{5,2}^2\\
4&
\color{blue} A_{3,1} A_{7,2}  C_{3,1}^2 \\
4&
 C_{3,1}^2 E_{6,3}\\
4&
\color{blue}A_{3,1}^8 \\
4&
 A_{3,1}  C_{3,1}  G_{2,1}^6 \\
4&
 A_{3,1} A_{7,2}  G_{2,1}^3 \\
4&
\color{blue} A_{3,1} D_{7,3}  G_{2,1} \\
4&
 A_{3,1}  C_{3,1}^3  G_{2,1}^3 \\
4&
\color{blue}E_{6,3}  G_{2,1}^3 \\ \hline
9/2 & \color{blue}A_{8,2} F_{4,2}
\\ \hline
5 &  A_{4,1}  B_{3,1}^4  C_{4,1} \\
5&
\color{blue}A_{4,1}^6\\
5&
 A_{4,1}^3  C_{4,1}^2 \\
5&
\color{blue} A_{4,1} A_{9,2} B_{3,1}\\
5&
 \color{blue}C_{4,1}^4 \\
\end{array}
\]
\end{minipage}
\begin{minipage}[t]{0.32\textwidth}
\[
\begin{array}{r|l}
C & \mathfrak{g} \\\hline\hline
5&
\color{blue} B_{3,1}^2  C_{4,1} D_{6,2} \\ \hline
 11/2 & \color{blue}B_{6,2}^2 \\ \hline
 6 & \color{blue} D_{4,1}^6 \\
 6&
\color{blue} A_{5,1}  C_{5,1} E_{6,2}\\
 6&
\color{blue} A_{5,1} E_{7,3}\\
 6&
\color{blue} A_{5,1}^4  D_{4,1} \\ \hline
7 & \color{blue} B_{4,1}^2 D_{8,2} \\
7&
\color{blue}B_{4,1} C_{6,1}^2\\
7&
\color{blue}A_{6,1}^4\\
7&
A_{6,1} B_{4,1}^4  \\ \hline
8 &\color{blue} A_{7,1} D_{9,2} \\
8&\color{blue} A_{7,1}^2 D_{5,1}^2 \\ \hline
9 &\color{blue} C_{8,1} F_{4,1}^2 \\
9& \color{blue}B_{5,1} E_{7,2} F_{4,1}\\
9& \color{blue}A_{8,1}^3 \\ \hline
10 &\color{blue} D_{6,1}^4 \\
10& \color{blue}A_{9,1}^2 D_{6,1} \\ \hline
11 & \color{blue}B_{6,1} C_{10,1} \\ \hline
23/2 &\color{blue} B_{12,2} \\ \hline
12 & \color{blue} E_{6,1}^4 \\
12 & \color{blue} A_{11,1} D_{7,1} E_{6,1}
 \\ \hline
13 & \color{blue} A_{12,1}^2  \\ \hline
14 & \color{blue} D_{8,1}^3
  \\ \hline
15 & \color{blue} B_{8,1} E_{8,2}
  \\ \hline
16 & \color{blue} A_{15,1} D_{9,1}  \\ \hline
18 & \color{blue}D_{10,1} E_{7,1}^2\\
18&\color{blue} A_{17,1} E_{7,1}
\\ \hline
22 & \color{blue} D_{12,1}^2 \\ \hline
25 &\color{blue} A_{24,1}\\ \hline
30& \color{blue} E_{8,1}^3 \\
30&\color{blue} D_{16,1} E_{8,1} \\
\hline
46&\color{blue} D_{24,1}\\ \hline
\end{array}
\]
\end{minipage}
\end{table}

\clearpage

\begin{footnotesize}
\begin{table}[ht]
\caption{H\"{o}hn's construction: the $8$ conjugacy classes with the same order and level.} 
\label{table:8cycleshapes}
\begin{minipage}[t]{0.46\textwidth}
\[
\renewcommand\arraystretch{1.3}
\begin{array}{c|c|l|c}
g & \text{genus}  &  {\mathfrak{g}} &  C  \\ \hline
\multirow{24}{*}{$1^{24}$} & \multirow{24}{*}{$\II_{24,0}(1)$}
     & D_{24,1}        & 46            \\ \cline{3-4}
   & & D_{16,1}E_{8,1}      & 30       \\ \cline{3-4}
      &   & E_{8,1}^3           & 30        \\ \cline{3-4}
   &   & A_{24,1}            &25        \\ \cline{3-4}
   &    & D_{12,1}^2           &22       \\ \cline{3-4}
   &    & A_{17,1}E_{7,1}       &18     \\ \cline{3-4}
   &    & D_{10,1}E_{7,1}^2     & 18      \\ \cline{3-4}
   &   & A_{15,1}D_{9,1}    & 16          \\ \cline{3-4}
 &   & D_{8,1}^3                & 14   \\ \cline{3-4}
   &   & A_{12,1}^2         &13          \\ \cline{3-4}
   &   & A_{11,1}D_{7,1}E_{6,1} &12     \\ \cline{3-4}
  &   & E_{6,1}^4               & 12     \\ \cline{3-4}
   &   & A_{9,1}^2D_{6,1}       &10     \\ \cline{3-4}
   &   & A_{8,1}^3                 &9   \\ \cline{3-4}
   &   & A_{7,1}^2D_{5,1}^2          &8 \\ \cline{3-4}
   &   & A_{6,1}^4         &7           \\ \cline{3-4}
   &   & A_{5,1}^4D_{4,1}    &6         \\ \cline{3-4}
   &   & D_{4,1}^6             &6      \\ \cline{3-4}
   &   & A_{4,1}^6         &5          \\ \cline{3-4}
   &   & A_{3,1}^8        &4            \\ \cline{3-4}
   &    & A_{2,1}^{12}    &3             \\ \cline{3-4}
  &    & A_{1,1}^{24}       &2          \\ \cline{3-4}
   &     & \mathbb{C}^{24}    &1                \\ \hline
\multirow{6}{*}{$1^82^8$} & \multirow{6}{*}{$\II_{16, 0}(2_{\II}^{+10})$}
     & E_{8,2}B_{8,1}   &    15       \\ \cline{3-4}
 &  & C_{10,1}B_{6,1}    &     11    \\ \cline{3-4}
  & & C_{8,1}F_{4,1}^2     &     9  \\ \cline{3-4}
&   & E_{7,2}B_{5,1}F_{4,1}   &   9  \\ \cline{3-4}
 &  & D_{9,2}A_{7,1}        &  8 \\ \cline{3-4}
 &  & D_{8,2}B_{4,1}^2      &   7   
\end{array}
\]
\end{minipage}
\begin{minipage}[t]{0.51\textwidth}
\[
\renewcommand\arraystretch{1.3}
\begin{array}{c|c|l|c}
g & \text{genus}  &  {\mathfrak{g}} &  C  \\ \hline
\multirow{11}{*}{$1^82^8$} & \multirow{11}{*}{$\II_{16, 0}(2_{\II}^{+10})$} 
   & C_{6,1}^2B_{4,1}     &  7   \\ \cline{3-4}
 & & E_{6,2}C_{5,1}A_{5,1}   & 6 \\ \cline{3-4}
 &   & A_{9,2}A_{4,1}B_{3,1}      & 5  \\ \cline{3-4}
&  & D_{6,2}C_{4,1}B_{3,1}^2    &  5 \\ \cline{3-4}
&   & C_{4,1}^4                  &  5 \\ \cline{3-4}
&   & A_{7,2}C_{3,1}^2A_{3,1}    &  4 \\ \cline{3-4}
&   & D_{5,2}^2A_{3,1}^2         &  4 \\ \cline{3-4}
&   & A_{5,2}^2B_{2,1}A_{2,1}^2  &  3\\ \cline{3-4}
&    & D_{4,2}^2B_{2,1}^4         &  3 \\ \cline{3-4}
&    & A_{3,2}^4A_{1,1}^4         &  2 \\ \cline{3-4}
&  & A_{1,2}^{16}               & 1 \\ \hline
\multirow{6}{*}{$1^63^6$} & \multirow{6}{*}{$\II_{12,0}(3^{-8})$}
  &   E_{7,3}A_{5,1}             & 6\\ \cline{3-4}
&  & D_{7,3}A_{3,1}G_{2,1}      &   4\\ \cline{3-4}
&   & E_{6,3}G_{2,1}^3           & 4  \\ \cline{3-4}
&   & A_{8,3}A_{2,1}^2           &  3 \\ \cline{3-4}
&   & A_{5,3}D_{4,3}A_{1,1}^3    &  2 \\ \cline{3-4}
&   & A_{2,3}^{6}                &  1 \\ \hline
\multirow{5}{*}{$1^42^24^4$} & \multirow{5}{*}{$\II_{10,0}(2_{2}^{+2}4_{\II}^{+6})$}
  &   C_{7,2}A_{3,1}             & 4 \\ \cline{3-4}
&   & E_{6,4}B_{2,1}A_{2,1}      &  3 \\ \cline{3-4}
&   & A_{7,4}A_{1,1}^3           &  2 \\ \cline{3-4}
&   & D_{5,4}C_{3,2}A_{1,1}^2    & 2  \\ \cline{3-4}
&   & A_{3,4}^3A_{1,2}           & 1 \\ \hline
\multirow{2}{*}{$1^45^4$} & \multirow{2}{*}{$\II_{8,0}(5^{+6})$}
    & D_{6,5}A_{1,1}^2           &  2 \\ \cline{3-4}
&   & A_{4,5}^2                  &  1 \\ \hline 
\multirow{2}{*}{$1^22^23^26^2$} & \multirow{2}{*}{$\II_{8,0}(2_{\II}^{+6}3^{-6})$}
  &   C_{5,3}G_{2,2}A_{1,1}      & 2 \\ \cline{3-4}
&   & A_{5,6}B_{2,3}A_{1,2}      & 1 \\ \hline 
\multirow{1}{*}{$1^37^3$} & \multirow{1}{*}{$\II_{6,0}(7^{-5})$}
  &     A_{6,7}                   &  1 \\ \hline 
\multirow{1}{*}{$1^22^14^18^2$} & \multirow{1}{*}{$\II_{6,0}(2_{5}^{+1}4_{1}^{+1}8_{\II}^{+4})$}
  &    D_{5,8}A_{1,2}             &  1 \\ \hline
\end{array}
\]
\end{minipage}
\end{table}
\end{footnotesize}

\clearpage

\begin{table}[ht]
\caption{H\"{o}hn's construction: the 3 conjugacy classes with distinct order and level.}
\label{table:3cycleshapes}
\renewcommand\arraystretch{1.6}
\[
\begin{array}{c|c|c|c|l|c}
g & \text{genus} &    L_{\mathfrak{g}} & n(L_{\mathfrak{g}}) & \mathfrak{g}  & C  \\ \hline
\multirow{9}{*}{$2^{12}$} & \multirow{9}{*}{$\II_{12,0}(2_{\II}^{-10}4_{\II}^{-2})$} 
  & \multirow{6}{*}{ $D_{12}(2)$} &  \multirow{6}{*}{6} & B_{12,2}                   &  23/2 \\  \cline{5-6}
& &   &  & B_{6,2}^2                  &  11/2 \\ \cline{5-6}
& &   &  & B_{4,2}^3                  &  7/2 \\ \cline{5-6}
& &    &  & B_{3,2}^4                  &  5/2  \\ \cline{5-6}
& &    & & B_{2,2}^6                  & 3/2   \\ \cline{5-6}
& &    &  & A_{1,4}^{12}               &  1/2  \\ \cline{3-6}
& &   \multirow{3}{*}{ $E_{8}(2)\oplus D_4(2)$} & \multirow{3}{*}{3} & A_{8,2}F_{4,2}         & 9/2  \\ \cline{5-6}
& &  &    & C_{4,2}A_{4,2}^2           &  5/2 \\ \cline{5-6}
& &     &  & D_{4,4}A_{2,2}^4           &  3/2 \\ \hline
\multirow{2}{*}{$2^36^3$} & \multirow{2}{*}{$\II_{6,0}(2_{\II}^{+4}4_{\II}^{-2}3^{+5})$}
   &   \multirow{2}{*}{ $D_4(6)\oplus A_2(2)$} & \multirow{2}{*}{$2$} & F_{4,6}A_{2,2}             &  1/2 \\ 
&    &   &  & D_{4,12}A_{2,6}            &  1/2 \\ \hline 
\multirow{1}{*}{$2^210^2$} & \multirow{1}{*}{$\II_{4,0}(2_{\II}^{-2}4_{\II}^{-2}5^{+4})$}
    & D_4(10)  & 1 & C_{4,10}                   &   1/2 \\ \hline 
\end{array}
\]
\end{table}

\clearpage

\begin{table}[ht]
\caption{The hyperbolizations related to $F_{24}$}
\label{tab:sym-cycle}
\renewcommand\arraystretch{1.6}
\[
\begin{array}{c|c|c|c|c} 
 g & \mathfrak{g} & \text{genus of $L_\mathfrak{g}$}  & L_\mathfrak{g} & \delta_{L_\mathfrak{g}} \\ \hline
1^{-1}5^5 & A_{4,5} & \II_{4,0}(5^{+3}) & A_4'(5) & 2\\ \hline
1^{-2}2^35^210^1 & A_{1,2}B_{3,5} &  \II_{4,0}(2_{\II}^{+2}5^{+3})& L_1 & 4  \\ \hline
1^{-2}2^34^18^2 & A_{1,2}C_{3,4} & \II_{4,0}(2_3^{-1}4_1^{+1}8_{\II}^{-2}) & A_1(2)\oplus A_3'(8) & 7/2 \\ \hline
1^{-2}2^23^24^112^1 & B_{2,3}G_{2,4} & \II_{4,0}(2_6^{+2}4_{\II}^{-2}3^{-3})& 2A_1(3)\oplus A_2(4) & 17/3 \\ \hline
1^{-3}3^9 & A_{2,3}^3 & \II_{6,0}(3^{-3}) & 3A_2 & 2 \\ \hline                      
1^{-4}2^64^4 & A_{1,2}^3A_{3,4} & \II_{6,0}(2_6^{+2}4_{\II}^{-2}) & L_2 & 2 \\ \hline  
1^{-4}2^53^46^1 & A_{1,2}^2 A_{2,3} B_{2,3} & \II_{6,0}(2_{\II}^{+2}3^{-3}) & L_3 & 8/3 \\ \hline  
1^{-8}2^{16} & A_{1,2}^8 & \II_{8,0}(2_{\II}^{+2}) & D_8 & 2\\ \hline
\end{array}
\] 
\smallskip

\begin{align*}
&L_1=\begin{psmallmatrix}
4 & 2 & 2 & 2 \\ 
2 & 6 & 1 & 1 \\ 
2 & 1 & 6 & 1 \\ 
2 & 1 & 1 & 6    
\end{psmallmatrix}&
&L_2=\begin{psmallmatrix}
2 & 0 & 1 & 1 & 1 & 0 \\ 
0 & 2 & 1 & 1 & 1 & 0 \\ 
1 & 1 & 4 & 2 & 2 & 3 \\ 
1 & 1 & 2 & 4 & 0 & 1 \\ 
1 & 1 & 2 & 0 & 4 & 1 \\ 
0 & 0 & 3 & 1 & 1 & 4    
\end{psmallmatrix}&
&L_3=\begin{psmallmatrix}
4 & 2 & 0 & 0 & -2 & 0 \\ 
2 & 4 & 0 & 0 & -1 & 0 \\ 
0 & 0 & 2 & -1 & 0 & 0 \\ 
0 & 0 & -1 & 2 & 0 & 0 \\ 
-2 & -1 & 0 & 0 & 2 & 1 \\ 
0 & 0 & 0 & 0 & 1 & 4    
\end{psmallmatrix}
\end{align*}
\end{table}

\clearpage

\begin{table}[ht]
\caption{The $64$ antisymmetric paramodular cusp forms of weight 3 and squarefree level $<300$ constructed from Theorem \ref{th:Siegel-3} (continued on the next page).} \label{tableanti3prime}
\renewcommand\arraystretch{1.3}
\[
\begin{array}{c|c|c}
N(\mathbf{a}) & \mathbf{a}=(a_1,...,a_6) & \text{Theta block}\cdot\eta^{15} \\ 
\hline 
  122 &(1,1,1,1,1,1) & 1^5 2^5 3^4 4^3 5^2 6 7 \\
 138 &(-7,1,1,1,1,3) & 1^5 2^4 3^4 4^3 5^2 6^2 7 \\
 158 &(-8,1,1,1,1,1) & 1^4 2^5 3^4 4^2 5^3 6 7 8 \\
 167 &(-8,1,1,1,2,1) & 1^4 2^5 3^3 4^3 5^2 6^2 7 8 \\
 170 &(-7,1,1,1,1,5) & 1^5 2^4 3^4 4^3 5^2 6 7 (10) \\
 173 &(-8,1,1,1,2,2) & 1^4 2^4 3^3 4^4 5^2 6^2 7 8 \\
 174 &(-8,1,1,1,1,3) & 1^4 2^4 3^4 4^2 5^3 6^2 7 8 \\
 183 &(-8,1,1,1,2,3) & 1^4 2^4 3^3 4^3 5^2 6^3 7 8 \\
 186 &(-9,2,1,1,1,1) & 1^4 2^4 3^3 4^3 5^3 6^2 7 9 \\
 197 &(-8,1,1,1,2,4) & 1^4 2^4 3^3 4^3 5^2 6^2 7 8^2 \\
 202 &(-9,1,1,1,1,1) & 1^4 2^4 3^4 4^2 5^2 6^2 7 8 9 \\
 202 &(-9,2,1,1,1,3) & 1^4 2^3 3^3 4^3 5^3 6^3 7 9 \\
 206 &(-9,1,2,1,1,1) & 1^4 2^3 3^3 4^4 5^3 6 7 8 9 \\
 206 &(-8,1,1,1,1,5) & 1^4 2^4 3^4 4^2 5^3 6 7 8 (10) \\
 213 &(-9,1,1,1,2,2) & 1^3 2^4 3^3 4^4 5^2 6^2 7 8 9 \\
 215 &(-8,1,1,1,2,5) & 1^4 2^4 3^3 4^3 5^2 6^2 7 8 (10) \\
 218 &(-9,1,1,1,1,3) & 1^4 2^3 3^4 4^2 5^2 6^3 7 8 9 \\
 218 &(-9,1,1,2,2,1) & 1^3 2^5 3^3 4^2 5^3 6 7^2 8 9 \\
 218 &(-7,1,1,1,1,7) & 1^5 2^4 3^4 4^3 5^2 6 7 (14)\\
 222 &(-9,1,1,1,3,1) & 1^4 2^4 3^3 4^2 5^2 6^2 7^2 8 9 \\
 222 &(-9,1,2,1,1,3) & 1^4 2^2 3^3 4^4 5^3 6^2 7 8 9 \\
 223 &(-9,1,1,1,2,3) & 1^3 2^4 3^3 4^3 5^2 6^3 7 8 9 \\
 230 &(-10,2,1,1,1,1) & 1^3 2^5 3^2 4^3 5^2 6^3 7 8 (10) \\
 237 &(-9,1,1,1,2,4) & 1^3 2^4 3^3 4^3 5^2 6^2 7 8^2 9 \\
 237 &(-8,1,1,1,2,6) & 1^4 2^4 3^3 4^3 5^2 6^2 7 8 (12)\\
 238 &(-9,1,1,1,3,3) & 1^4 2^3 3^3 4^2 5^2 6^3 7^2 8 9 \\
 239 &(-10,2,1,1,2,1) & 1^3 2^4 3^3 4^3 5^2 6^2 7^2 8 (10) \\
 246 &(-10,2,1,1,1,3) & 1^3 2^4 3^2 4^3 5^2 6^4 7 8 (10) \\
 254 &(-10,1,1,1,1,1) & 1^4 2^4 3^3 4^2 5^2 6 7^2 8 9 (10) \\
 254 &(-9,1,2,1,1,5) & 1^4 2^2 3^3 4^4 5^3 6 7 8 9 (10) \\
 254 &(-8,1,1,1,1,7) & 1^4 2^4 3^4 4^2 5^3 6 7 8 (14)\\
 255 &(-10,1,1,1,2,1) & 1^3 2^4 3^3 4^3 5^2 6^2 7 8 9 (10) \\
 \hline
 \end{array}
\]
\end{table}

\clearpage

\begin{table}[ht]
\caption{(continued).}
\renewcommand\arraystretch{1.3}
\[
\begin{array}{c|c|c}
N(\mathbf{a}) & \mathbf{a}=(a_1,...,a_6) & \text{Theta block}\cdot\eta^{15} \\ 
\hline 
 255 &(-10,2,1,1,2,3) & 1^3 2^3 3^3 4^3 5^2 6^3 7^2 8 (10) \\
 262 &(-10,1,1,2,2,1) & 1^2 2^5 3^3 4^3 5 6^3 7 8 9 (10) \\
 263 &(-10,1,2,1,2,1) & 1^3 2^4 3^2 4^3 5^3 6^2 7 8 9 (10) \\
 263 &(-8,1,1,1,2,7) & 1^4 2^4 3^3 4^3 5^2 6^2 7 8 (14)\\
 266 &(-10,1,1,1,3,1) & 1^3 2^4 3^3 4^2 5^3 6 7^2 8 9 (10) \\
 266 &(-10,1,2,1,1,3) & 1^3 2^2 3^4 4^3 5^2 6^3 7^2 9 (10) \\
 266 &(-8,-3,4,1,1,1) & 1^4 2^3 3^2 4^2 5^3 6^3 7^2 8 (11) \\
 269 &(-10,1,2,1,2,2) & 1^3 2^3 3^2 4^4 5^3 6^2 7 8 9 (10) \\
 269 &(-10,2,1,1,2,4) & 1^3 2^3 3^3 4^3 5^2 6^2 7^2 8^2 (10) \\
 271 &(-10,1,1,1,2,3) & 1^3 2^3 3^3 4^3 5^2 6^3 7 8 9 (10) \\
 277 &(-9,1,1,1,2,6) & 1^3 2^4 3^3 4^3 5^2 6^2 7 8 9 (12)\\
 278 &(-10,1,1,2,2,3) & 1^2 2^4 3^3 4^3 5 6^4 7 8 9 (10) \\
 278 &(-10,2,1,1,1,5) & 1^3 2^4 3^2 4^3 5^2 6^3 7 8 (10)^2 \\
 282 &(-10,1,1,1,3,3) & 1^3 2^3 3^3 4^2 5^3 6^2 7^2 8 9 (10) \\
 282 &(-9,-2,3,1,1,1) & 1^3 2^4 3^3 4^2 5^2 6^2 7^2 8 9 (11) \\
 282 &(-9,2,1,1,1,7) & 1^4 2^3 3^3 4^3 5^3 6^2 7 9 (14)\\
 282 &(-8,-3,4,1,1,3) & 1^4 2^2 3^2 4^2 5^3 6^4 7^2 8 (11) \\
 282 &(-7,1,1,1,1,9) & 1^5 2^4 3^4 4^3 5^2 6 7 (18)\\
 285 &(-10,1,1,2,3,2) & 1^3 2^3 3^3 4^3 5^2 6^2 7 8^2 9 (10) \\
 286 &(-9,-2,4,1,1,1) & 1^3 2^3 3^3 4^2 5^3 6^3 7 8 9 (11) \\
 287 &(-10,1,1,1,4,1) & 1^4 2^4 3^2 4^2 5 6^2 7^2 8^2 9 (10) \\
 287 &(-10,2,1,1,2,5) & 1^3 2^3 3^3 4^3 5^2 6^2 7^2 8 (10)^2 \\
 287 &(-9,-2,3,1,2,1) & 1^2 2^5 3^2 4^3 5^2 6^2 7^2 8 9 (11) \\
 293 &(-10,1,1,1,4,2) & 1^4 2^3 3^2 4^3 5 6^2 7^2 8^2 9 (10) \\
 293 &(-10,1,2,1,2,4) & 1^3 2^3 3^2 4^3 5^3 6^2 7 8^2 9 (10) \\
 293 &(-9,-2,3,1,2,2) & 1^2 2^4 3^2 4^4 5^2 6^2 7^2 8 9 (11) \\
 293 &(-8,1,1,1,2,8) & 1^4 2^4 3^3 4^3 5^2 6^2 7 8 (16)\\
 295 &(-10,1,1,2,3,3) & 1^3 2^3 3^3 4^2 5^2 6^3 7 8^2 9 (10) \\
 298 &(-10,1,2,1,1,5) & 1^3 2^2 3^4 4^3 5^2 6^2 7^2 9 (10)^2 \\
 298 &(-9,-2,3,1,1,3) & 1^3 2^3 3^3 4^2 5^2 6^3 7^2 8 9 (11) \\
 298 &(-9,-2,3,2,2,1) & 1^2 2^4 3^4 4 5^3 6^2 7 8^2 9 (11) \\
 298 &(-9,1,1,1,1,7) & 1^4 2^3 3^4 4^2 5^2 6^2 7 8 9 (14)\\
\hline
\end{array} 
\]
\end{table}


\backmatter
\bibliographystyle{amsplain}
\bibliofont
\bibliography{refs}

\providecommand{\bysame}{\leavevmode\hbox to3em{\hrulefill}\thinspace}
\providecommand{\MR}{\relax\ifhmode\unskip\space\fi MR }
\providecommand{\MRhref}[2]{%
  \href{http://www.ams.org/mathscinet-getitem?mr=#1}{#2}
}
\providecommand{\href}[2]{#2}
\begin{thebibliography}{100}

\bibitem{Bar03}
Alexander~Graham Barnard, \emph{The singular theta correspondence, {L}orentzian lattices and {B}orcherds-{K}ac-{M}oody algebras}, ProQuest LLC, Ann Arbor, MI, 2003, Thesis (Ph.D.)--University of California, Berkeley.

\bibitem{BLS22}
Koichi Betsumiya, Ching~Hung Lam, and Hiroki Shimakura, \emph{Automorphism groups and uniqueness of holomorphic vertex operator algebras of central charge 24}, Comm. Math. Phys. \textbf{399} (2023), no.~3, 1773--1810.

\bibitem{Bor88}
Richard Borcherds, \emph{Generalized {K}ac-{M}oody algebras}, J. Algebra \textbf{115} (1988), no.~2, 501--512.

\bibitem{Bor86}
Richard~E. Borcherds, \emph{Vertex algebras, {K}ac-{M}oody algebras, and the {M}onster}, Proc. Nat. Acad. Sci. U.S.A. \textbf{83} (1986), no.~10, 3068--3071.

\bibitem{Bor90}
\bysame, \emph{The monster {L}ie algebra}, Adv. Math. \textbf{83} (1990), no.~1, 30--47.

\bibitem{Bor92}
\bysame, \emph{Monstrous moonshine and monstrous {L}ie superalgebras}, Invent. Math. \textbf{109} (1992), no.~2, 405--444.

\bibitem{Bor95}
\bysame, \emph{Automorphic forms on {${\rm O}_{s+2,2}({\bf R})$} and infinite products}, Invent. Math. \textbf{120} (1995), no.~1, 161--213.

\bibitem{Bor96}
\bysame, \emph{The moduli space of {E}nriques surfaces and the fake {M}onster {L}ie superalgebra}, Topology \textbf{35} (1996), no.~3, 699--710.

\bibitem{Bor98}
\bysame, \emph{Automorphic forms with singularities on {G}rassmannians}, Invent. Math. \textbf{132} (1998), no.~3, 491--562.

\bibitem{Bor99}
\bysame, \emph{The {G}ross-{K}ohnen-{Z}agier theorem in higher dimensions}, Duke Math. J. \textbf{97} (1999), no.~2, 219--233.

\bibitem{Bor00}
\bysame, \emph{Reflection groups of {L}orentzian lattices}, Duke Math. J. \textbf{104} (2000), no.~2, 319--366.

\bibitem{BKP98}
Richard~E. Borcherds, Ludmil Katzarkov, Tony Pantev, and N.~I. Shepherd-Barron, \emph{Families of {$K3$} surfaces}, J. Algebraic Geom. \textbf{7} (1998), no.~1, 183--193.

\bibitem{Bou82}
Nicolas Bourbaki, \emph{\'{E}l\'{e}ments de math\'{e}matique}, Masson, Paris, 1981, Groupes et alg\`ebres de Lie. Chapitres 4, 5 et 6. [Lie groups and Lie algebras. Chapters 4, 5 and 6].

\bibitem{Bru02}
Jan~H. Bruinier, \emph{Borcherds products on {O}(2, {$l$}) and {C}hern classes of {H}eegner divisors}, Lecture Notes in Mathematics, vol. 1780, Springer-Verlag, Berlin, 2002.

\bibitem{Bru14}
Jan~Hendrik Bruinier, \emph{On the converse theorem for {B}orcherds products}, J. Algebra \textbf{397} (2014), 315--342.

\bibitem{Normaliz}
W.~Bruns, B.~Ichim, C.~S\"oger, and U.~von~der Ohe, \emph{Normaliz. algorithms for rational cones and affine monoids}, Available at \url{https://www.normaliz.uni-osnabrueck.de}.

\bibitem{Cappelli}
A.~Cappelli, C.~Itzykson, and J.-B. Zuber, \emph{The {${\rm A}$}-{${\rm D}$}-{${\rm E}$} classification of minimal and {$A^{(1)}_1$} conformal invariant theories}, Comm. Math. Phys. \textbf{113} (1987), no.~1, 1--26.

\bibitem{CG13}
F.~Cl\'{e}ry and V.~Gritsenko, \emph{Modular forms of orthogonal type and {J}acobi theta-series}, Abh. Math. Semin. Univ. Hambg. \textbf{83} (2013), no.~2, 187--217.

\bibitem{CS99}
J.~H. Conway and N.~J.~A. Sloane, \emph{Sphere packings, lattices and groups}, third ed., Grundlehren der mathematischen Wissenschaften [Fundamental Principles of Mathematical Sciences], vol. 290, Springer-Verlag, New York, 1999, With additional contributions by E. Bannai, R. E. Borcherds, J. Leech, S. P. Norton, A. M. Odlyzko, R. A. Parker, L. Queen and B. B. Venkov.

\bibitem{CDR18}
Thomas Creutzig, John F.~R. Duncan, and Wolfgang Riedler, \emph{Self-dual vertex operator superalgebras and superconformal field theory}, J. Phys. A \textbf{51} (2018), no.~3, 034001, 29.

\bibitem{CKM22}
Thomas Creutzig, Shashank Kanade, and Robert McRae, \emph{Gluing vertex algebras}, Adv. Math. \textbf{396} (2022), Paper No. 108174, 72.

\bibitem{CKS07}
Thomas Creutzig, Alexander Klauer, and Nils~R. Scheithauer, \emph{Natural constructions of some generalized {K}ac-{M}oody algebras as bosonic strings}, Commun. Number Theory Phys. \textbf{1} (2007), no.~3, 453--477.

\bibitem{CFT}
Philippe Di~Francesco, Pierre Mathieu, and David S\'{e}n\'{e}chal, \emph{Conformal field theory}, Graduate Texts in Contemporary Physics, Springer-Verlag, New York, 1997.

\bibitem{Dit19}
Moritz Dittmann, \emph{Reflective automorphic forms on lattices of squarefree level}, Trans. Amer. Math. Soc. \textbf{372} (2019), no.~2, 1333--1362.

\bibitem{DHS15}
Moritz Dittmann, Heike Hagemeier, and Markus Schwagenscheidt, \emph{Automorphic products of singular weight for simple lattices}, Math. Z. \textbf{279} (2015), no.~1-2, 585--603.

\bibitem{DW21}
Moritz Dittmann and Haowu Wang, \emph{Theta blocks related to root systems}, Math. Ann. \textbf{384} (2022), no.~3-4, 1157--1180.

\bibitem{dong2007uniqueness}
Chongying Dong, Robert~L Griess, and Ching~Hung Lam, \emph{Uniqueness results for the moonshine vertex operator algebra}, Amer. J. Math. \textbf{129} (2007), no.~2, 583--609.

\bibitem{DM04}
Chongying Dong and Geoffrey Mason, \emph{Holomorphic vertex operator algebras of small central charge}, Pacific J. Math. \textbf{213} (2004), no.~2, 253--266.

\bibitem{DM04b}
\bysame, \emph{Rational vertex operator algebras and the effective central charge}, Int. Math. Res. Not. (2004), no.~56, 2989--3008.

\bibitem{DM06}
\bysame, \emph{Integrability of {$C_2$}-cofinite vertex operator algebras}, Int. Math. Res. Not. (2006), Art. ID 80468, 15.

\bibitem{DS22}
Thomas~Maximilian Driscoll-Spittler, \emph{Reflective modular forms and vertex operator algebras}, Ph.D. Thesis, Technische Universität Darmstadt, 2022.

\bibitem{Dun07}
John~F. Duncan, \emph{Super-moonshine for {C}onway's largest sporadic group}, Duke Math. J. \textbf{139} (2007), no.~2, 255--315.

\bibitem{EZ85}
Martin Eichler and Don Zagier, \emph{The theory of {J}acobi forms}, Progress in Mathematics, vol.~55, Birkh\"{a}user Boston, Inc., Boston, MA, 1985.

\bibitem{FF83}
Alex~J. Feingold and Igor~B. Frenkel, \emph{A hyperbolic {K}ac-{M}oody algebra and the theory of {S}iegel modular forms of genus {$2$}}, Math. Ann. \textbf{263} (1983), no.~1, 87--144.

\bibitem{Fre83}
E.~Freitag, \emph{Siegelsche {M}odulfunktionen}, Grundlehren der mathematischen Wissenschaften [Fundamental Principles of Mathematical Sciences], vol. 254, Springer-Verlag, Berlin, 1983.

\bibitem{FLM88}
Igor Frenkel, James Lepowsky, and Arne Meurman, \emph{Vertex operator algebras and the {M}onster}, Pure and Applied Mathematics, vol. 134, Academic Press, Inc., Boston, MA, 1988.

\bibitem{Gannon}
Terry Gannon, \emph{The classification of affine {${\rm SU}(3)$} modular invariant partition functions}, Comm. Math. Phys. \textbf{161} (1994), no.~2, 233--263.

\bibitem{Gannon:1994sp}
\bysame, \emph{Towards a classification of {${\rm su}(2)\oplus\cdots\oplus{\rm su}(2)$} modular invariant partition functions}, J. Math. Phys. \textbf{36} (1995), no.~2, 675--706.

\bibitem{gannon2023exotic}
\bysame, \emph{Exotic quantum subgroups and extensions of affine lie algebra {VOA}s--{p}art {I}}, Preprint, 2023.

\bibitem{GH14}
V.~Gritsenko and K.~Hulek, \emph{Uniruledness of orthogonal modular varieties}, J. Algebraic Geom. \textbf{23} (2014), no.~4, 711--725.

\bibitem{GHS09}
V.~Gritsenko, K.~Hulek, and G.~K. Sankaran, \emph{Abelianisation of orthogonal groups and the fundamental group of modular varieties}, J. Algebra \textbf{322} (2009), no.~2, 463--478.

\bibitem{Gri18}
V.~A. Gritsenko, \emph{Reflective modular forms and their applications}, Uspekhi Mat. Nauk \textbf{73} (2018), no.~5(443), 53--122.

\bibitem{GHS07}
V.~A. Gritsenko, K.~Hulek, and G.~K. Sankaran, \emph{The {K}odaira dimension of the moduli of {$K3$} surfaces}, Invent. Math. \textbf{169} (2007), no.~3, 519--567.

\bibitem{GN96a}
V.~A. Gritsenko and V.~V. Nikulin, \emph{Igusa modular forms and ``the simplest'' {L}orentzian {K}ac-{M}oody algebras}, Mat. Sb. \textbf{187} (1996), no.~11, 27--66.

\bibitem{GN02}
\bysame, \emph{On the classification of {L}orentzian {K}ac-{M}oody algebras}, Uspekhi Mat. Nauk \textbf{57} (2002), no.~5(347), 79--138.

\bibitem{GH98}
Valeri Gritsenko and Klaus Hulek, \emph{Minimal {S}iegel modular threefolds}, Math. Proc. Cambridge Philos. Soc. \textbf{123} (1998), no.~3, 461--485.

\bibitem{GN96b}
Valeri~A. Gritsenko and Viacheslav~V. Nikulin, \emph{Siegel automorphic form corrections of some {L}orentzian {K}ac-{M}oody {L}ie algebras}, Amer. J. Math. \textbf{119} (1997), no.~1, 181--224.

\bibitem{GN98a}
\bysame, \emph{Automorphic forms and {L}orentzian {K}ac-{M}oody algebras. {I}}, Internat. J. Math. \textbf{9} (1998), no.~2, 153--199.

\bibitem{GN98}
\bysame, \emph{Automorphic forms and {L}orentzian {K}ac-{M}oody algebras. {II}}, Internat. J. Math. \textbf{9} (1998), no.~2, 201--275.

\bibitem{Gri10}
Valery Gritsenko, \emph{Reflective modular forms in algebraic geometry}, Preprint, 2010.

\bibitem{Gri12}
\bysame, \emph{24 faces of the {B}orcherds modular form ${\Phi_{12}}$}, Preprint, 2012.

\bibitem{GN18}
Valery Gritsenko and Viacheslav~V. Nikulin, \emph{Lorentzian {K}ac-{M}oody algebras with {W}eyl groups of 2-reflections}, Proc. Lond. Math. Soc. (3) \textbf{116} (2018), no.~3, 485--533.

\bibitem{GPY15}
Valery Gritsenko, Cris Poor, and David~S. Yuen, \emph{Borcherds products everywhere}, J. Number Theory \textbf{148} (2015), 164--195.

\bibitem{GSZ19}
Valery Gritsenko, Nils-Peter Skoruppa, and Don Zagier, \emph{Theta blocks.}, J. Eur. Math. Soc. publish online, 2024.

\bibitem{GW19}
Valery Gritsenko and Haowu Wang, \emph{Weight 3 antisymmetric paramodular forms}, Mat. Sb. \textbf{210} (2019), no.~12, 43--66.

\bibitem{GW20}
\bysame, \emph{Theta block conjecture for paramodular forms of weight 2}, Proc. Amer. Math. Soc. \textbf{148} (2020), no.~5, 1863--1878.

\bibitem{F24}
Sarah~M. Harrison, Natalie~M. Paquette, Daniel Persson, and Roberto Volpato, \emph{Fun with {$ F_{24}$}}, J. High Energy Phys. (2021), no.~2, Paper No. 039, 47.

\bibitem{HPV19}
Sarah~M. Harrison, Natalie~M. Paquette, and Roberto Volpato, \emph{A {B}orcherds-{K}ac-{M}oody superalgebra with {C}onway symmetry}, Comm. Math. Phys. \textbf{370} (2019), no.~2, 539--590.

\bibitem{HV96}
Jeffrey~A. Harvey and Gregory~W. Moore, \emph{{Algebras, BPS states, and strings}}, Nucl. Phys. B \textbf{463} (1996), 315--368.

\bibitem{HV98}
\bysame, \emph{{On the algebras of BPS states}}, Commun. Math. Phys. \textbf{197} (1998), 489--519.

\bibitem{HM20}
Gerald H\"{o}hn and Sven M\"{o}ller, \emph{Systematic orbifold constructions of {S}chellekens' vertex operator algebras from {N}iemeier lattices}, J. Lond. Math. Soc. (2) \textbf{106} (2022), no.~4, 3162--3207.

\bibitem{HS03}
Gerald H\"{o}hn and Nils~R. Scheithauer, \emph{A natural construction of {B}orcherds' fake {B}aby {M}onster {L}ie algebra}, Amer. J. Math. \textbf{125} (2003), no.~3, 655--667.

\bibitem{HS14}
\bysame, \emph{A generalized {K}ac-{M}oody algebra of rank 14}, J. Algebra \textbf{404} (2014), 222--239.

\bibitem{Hoh17}
Gerald Höhn, \emph{On the genus of the moonshine module}, Preprint, 2017.

\bibitem{Ibu22}
Tomoyoshi Ibukiyama, \emph{Dimensions of paramodular forms and compact twist modular forms with involutions}, Preprint, 2022.

\bibitem{Igu62}
Jun-ichi Igusa, \emph{On {S}iegel modular forms of genus two}, Amer. J. Math. \textbf{84} (1962), 175--200.

\bibitem{Igu64}
\bysame, \emph{On {S}iegel modular forms genus two. {II}}, Amer. J. Math. \textbf{86} (1964), 392--412.

\bibitem{Kac90}
Victor~G. Kac, \emph{Infinite-dimensional {L}ie algebras}, third ed., Cambridge University Press, Cambridge, 1990.

\bibitem{Kato}
Akishi Kato, \emph{Classification of modular invariant partition functions in two dimensions}, Modern Phys. Lett. A \textbf{2} (1987), no.~8, 585--600.

\bibitem{KLL18}
Kazuya Kawasetsu, Ching~Hung Lam, and Xingjun Lin, \emph{{$\mathbb{Z}_2$}-orbifold construction associated with {$(-1)$}-isometry and uniqueness of holomorphic vertex operator algebras of central charge 24}, Proc. Amer. Math. Soc. \textbf{146} (2018), no.~5, 1937--1950.

\bibitem{KM12}
Matthew Krauel and Geoffrey Mason, \emph{Vertex operator algebras and weak {J}acobi forms}, Internat. J. Math. \textbf{23} (2012), no.~6, 1250024, 10.

\bibitem{Lam11}
Ching~Hung Lam, \emph{On the constructions of holomorphic vertex operator algebras of central charge 24}, Comm. Math. Phys. \textbf{305} (2011), no.~1, 153--198.

\bibitem{Lam19}
\bysame, \emph{Cyclic orbifolds of lattice vertex operator algebras having group-like fusions}, Lett. Math. Phys. \textbf{110} (2019), no.~5, 1081--1112.

\bibitem{LL20}
Ching~Hung Lam and Xingjun Lin, \emph{A holomorphic vertex operator algebra of central charge 24 with the weight one {L}ie algebra {$F_{4,6}A_{2,2}$}}, J. Pure Appl. Algebra \textbf{224} (2020), no.~3, 1241--1279.

\bibitem{LS12}
Ching~Hung Lam and Hiroki Shimakura, \emph{Quadratic spaces and holomorphic framed vertex operator algebras of central charge 24}, Proc. Lond. Math. Soc. (3) \textbf{104} (2012), no.~3, 540--576.

\bibitem{LS15}
\bysame, \emph{Classification of holomorphic framed vertex operator algebras of central charge 24}, Amer. J. Math. \textbf{137} (2015), no.~1, 111--137.

\bibitem{LS16b}
\bysame, \emph{A holomorphic vertex operator algebra of central charge 24 whose weight one {L}ie algebra has type {$A_{6,7}$}}, Lett. Math. Phys. \textbf{106} (2016), no.~11, 1575--1585.

\bibitem{LS16a}
\bysame, \emph{Orbifold construction of holomorphic vertex operator algebras associated to inner automorphisms}, Comm. Math. Phys. \textbf{342} (2016), no.~3, 803--841.

\bibitem{LS19}
\bysame, \emph{Reverse orbifold construction and uniqueness of holomorphic vertex operator algebras}, Trans. Amer. Math. Soc. \textbf{372} (2019), no.~10, 7001--7024.

\bibitem{LS22}
\bysame, \emph{Inertia groups and uniqueness of holomorphic vertex operator algebras}, Transform. Groups \textbf{25} (2020), no.~4, 1223--1268.

\bibitem{LS20}
\bysame, \emph{On orbifold constructions associated with the {L}eech lattice vertex operator algebra}, Math. Proc. Cambridge Philos. Soc. \textbf{168} (2020), no.~2, 261--285.

\bibitem{Lin17}
Xingjun Lin, \emph{Mirror extensions of rational vertex operator algebras}, Trans. Amer. Math. Soc. \textbf{369} (2017), no.~6, 3821--3840.

\bibitem{Ma17}
Shouhei Ma, \emph{Finiteness of 2-reflective lattices of signature {$(2,n)$}}, Amer. J. Math. \textbf{139} (2017), no.~2, 513--524.

\bibitem{Ma18}
\bysame, \emph{On the {K}odaira dimension of orthogonal modular varieties}, Invent. Math. \textbf{212} (2018), no.~3, 859--911.

\bibitem{Miy00}
Masahiko Miyamoto, \emph{A modular invariance on the theta functions defined on vertex operator algebras}, Duke Math. J. \textbf{101} (2000), no.~2, 221--236.

\bibitem{Mol21}
Sven M\"{o}ller, \emph{Natural construction of ten {B}orcherds-{K}ac-{M}oody algebras associated with elements in {$M_{23}$}}, Comm. Math. Phys. \textbf{383} (2021), no.~1, 35--70.

\bibitem{SM19}
Sven M\"{o}ller and Nils~R. Scheithauer, \emph{Dimension formulae and generalised deep holes of the {L}eech lattice vertex operator algebra}, Ann. of Math. (2) \textbf{197} (2023), no.~1, 221--288.

\bibitem{SM21}
Sven M\"oller and Nils~R. Scheithauer, \emph{A geometric classification of the holomorphic vertex operator algebras of central charge 24}, Algebra Number Theory \textbf{18} (2024), no.~10, 1891--1922.

\bibitem{Moore:1988ss}
Gregory Moore and Nathan Seiberg, \emph{Naturality in conformal field theory}, Nuclear Phys. B \textbf{313} (1989), no.~1, 16--40.

\bibitem{Nie02}
Peter Niemann, \emph{Some generalized {K}ac-{M}oody algebras with known root multiplicities}, Mem. Amer. Math. Soc. \textbf{157} (2002), no.~746, x+119.

\bibitem{OS19}
Sebastian Opitz and Markus Schwagenscheidt, \emph{Holomorphic {B}orcherds products of singular weight for simple lattices of arbitrary level}, Proc. Amer. Math. Soc. \textbf{147} (2019), no.~11, 4639--4653.

\bibitem{Ray06}
Urmie Ray, \emph{Automorphic forms and {L}ie superalgebras}, Algebra and Applications, vol.~5, Springer, Dordrecht, 2006.

\bibitem{SS16}
Daisuke Sagaki and Hiroki Shimakura, \emph{Application of a {$\mathbb{Z}_3$}-orbifold construction to the lattice vertex operator algebras associated to {N}iemeier lattices}, Trans. Amer. Math. Soc. \textbf{368} (2016), no.~3, 1621--1646.

\bibitem{Sch00}
Nils~R. Scheithauer, \emph{The fake monster superalgebra}, Adv. Math. \textbf{151} (2000), no.~2, 226--269.

\bibitem{Sch04}
\bysame, \emph{Generalized {K}ac-{M}oody algebras, automorphic forms and {C}onway's group. {I}}, Adv. Math. \textbf{183} (2004), no.~2, 240--270.

\bibitem{Sch06}
\bysame, \emph{On the classification of automorphic products and generalized {K}ac-{M}oody algebras}, Invent. Math. \textbf{164} (2006), no.~3, 641--678.

\bibitem{Sch09}
\bysame, \emph{The {W}eil representation of {${\rm SL}_2(\mathbb Z)$} and some applications}, Int. Math. Res. Not. IMRN (2009), no.~8, 1488--1545.

\bibitem{Sch15}
\bysame, \emph{Some constructions of modular forms for the {W}eil representation of {${\rm SL}_2(\mathbb{Z})$}}, Nagoya Math. J. \textbf{220} (2015), 1--43.

\bibitem{Sch17}
\bysame, \emph{Automorphic products of singular weight}, Compos. Math. \textbf{153} (2017), no.~9, 1855--1892.

\bibitem{Sch93}
A.~N. Schellekens, \emph{Meromorphic {$c=24$} conformal field theories}, Comm. Math. Phys. \textbf{153} (1993), no.~1, 159--185.

\bibitem{Schellekens:1989uf}
A.~N. Schellekens and S.~Yankielowicz, \emph{Field identification fixed points in the coset construction}, Nuclear Phys. B \textbf{334} (1990), no.~1, 67--102.

\bibitem{ELM21}
Jethro van Ekeren, Ching~Hung Lam, Sven M\"{o}ller, and Hiroki Shimakura, \emph{Schellekens' list and the very strange formula}, Adv. Math. \textbf{380} (2021), Paper No. 107567, 33.

\bibitem{EMS20}
Jethro van Ekeren, Sven M\"{o}ller, and Nils~R. Scheithauer, \emph{Construction and classification of holomorphic vertex operator algebras}, J. Reine Angew. Math. \textbf{759} (2020), 61--99.

\bibitem{EMS20b}
\bysame, \emph{Dimension formulae in genus zero and uniqueness of vertex operator algebras}, Int. Math. Res. Not. IMRN (2020), no.~7, 2145--2204.

\bibitem{Verstegen:1990my}
D.~Verstegen, \emph{New exceptional modular invariant partition functions for simple {K}ac-{M}oody algebras}, Nuclear Phys. B \textbf{346} (1990), no.~2-3, 349--386.

\bibitem{Wan21a}
Haowu Wang, \emph{The classification of free algebras of orthogonal modular forms}, Compos. Math. \textbf{157} (2021), no.~9, 2026--2045.

\bibitem{Wan21b}
\bysame, \emph{Reflective modular forms: a {J}acobi forms approach}, Int. Math. Res. Not. IMRN (2021), no.~3, 2081--2107.

\bibitem{Wan21}
\bysame, \emph{Weyl invariant {J}acobi forms: a new approach}, Adv. Math. \textbf{384} (2021), Paper No. 107752, 13.

\bibitem{Wan22}
\bysame, \emph{Reflective modular forms on lattices of prime level}, Trans. Amer. Math. Soc. \textbf{375} (2022), no.~5, 3451--3468.

\bibitem{Wan23b}
\bysame, \emph{2-reflective lattices of signature {$(n, 2)$} with {$n\geq 8$}}, Int. Math. Res. Not. IMRN (2023), no.~20, 17953--17971.

\bibitem{Wan23a}
\bysame, \emph{On the classification of reflective modular forms.}, to appear in Eur. J. Math., 2023.

\bibitem{Wan19}
\bysame, \emph{The classification of 2-reflective modular forms}, J. Eur. Math. Soc. (JEMS) \textbf{26} (2024), no.~1, 111--151.

\bibitem{Wan23-star}
\bysame, \emph{There are no extremal eutactic stars other than root systems}, Bull. Lond. Math. Soc. \textbf{57} (2025), no.~1, 16--22.

\bibitem{WW23}
Haowu Wang and Brandon Williams, \emph{On the non-existence of singular borcherds products.}, to appear in Amer. J. Math., 2023.

\bibitem{WW20c}
\bysame, \emph{Simple lattices and free algebras of modular forms}, Adv. Math. \textbf{413} (2023), 108835, 51.

\bibitem{WW21}
\bysame, \emph{Modular forms with poles on hyperplane arrangements}, Algebr. Geom. \textbf{11} (2024), no.~4, 506--568.

\bibitem{WW22}
\bysame, \emph{The fake monster algebra and singular {B}orcherds products}, Adv. Math. \textbf{461} (2025), 110083, 52.

\bibitem{Wil2018}
Brandon Williams, \emph{Poincar\'{e} square series for the {W}eil representation}, Ramanujan J. \textbf{47} (2018), no.~3, 605--650.

\bibitem{Zhu96}
Yongchang Zhu, \emph{Modular invariance of characters of vertex operator algebras}, J. Amer. Math. Soc. \textbf{9} (1996), no.~1, 237--302.

\end{thebibliography}

\end{document}